\setlist[enumerate]{label=\rm{(\roman*)}}
\theoremstyle{plain}
    \newtheorem{theorem}{Theorem}[section]
    \newtheorem*{theorem*}{Theorem}
    \newtheorem{lemma}[theorem]{Lemma}
    \newtheorem*{conj*}{Conjecture}
    \newtheorem{prop}[theorem]{Proposition}
    \newtheorem{cor}[theorem]{Corollary}
\theoremstyle{definition}
    \newtheorem{definition}[theorem]{Definition}
    \newtheorem{remark}[theorem]{Remark}  
    \newtheorem{example}[theorem]{Example}
    \newtheorem*{asss}{The Assumptions}
\theoremstyle{remark}
\numberwithin{equation}{section}
\numberwithin{theorem}{section}
\numberwithin{table}{section}
\numberwithin{figure}{section}
\def\th@plain{%
	\thm@notefont{}
	\itshape 
}
\def\th@definition{%
	\thm@notefont{}
	\normalfont 
}
\newcommand{\biggg}{\bBigg@{3}}
\newcommand{\Biggg}{\bBigg@{3.5}}
\newcommand{\bigggg}{\bBigg@{4}}
\newcommand{\Bigggg}{\bBigg@{4.5}}
\newcommand{\n}{\mathbb{N}}
\newcommand{\cx}{\mathbb{C}}
\newcommand{\ccx}{\widehat{\mathbb{C}}}
\newcommand{\real}{\mathbb{R}}
\providecommand{\defn}[1]{\emph{#1}}  
\newcommandx{\floor}[2][1=,]{\mathopen#1\lcloor #2 \mathclose#1\rfloor}
\newcommandx{\ceil}[2][1=,]{\mathopen#1\lceil #2 \mathclose#1\rceil}
\newcommandx{\abs}[2][1=,]{\mathopen#1\lvert #2 \mathclose#1\rvert}
\newcommand{\holder}{H\"{o}lder }
\newcommand{\holderexp}{\alpha} 
\newcommandx{\holderspace}[4][1=, 2=\holderexp, 3=X, 4=d, usedefault]{ C^{0, #2}\mathopen#1(#3, #4 \mathclose#1) }
\newcommandx{\holderspacesphere}[2][1=\holderexp, 2=d]{ C^{0, #1}(S^2, #2) }
\newcommandx{\listings}[2][2=n]{ #1_{1}, \, \dots, \, #1_{#2} }
\newcommandx{\collection}[3][1=, 3=n]{#1\{ #2_{1}, \, \dots, \, #2_{#3} #1\}} 
\newcommandx{\queue}[3][1=, 3=n]{ \mathopen#1( #2_{1}, \, \dots, \, #2_{#3} \mathclose#1)} 
\newcommandx{\sequen}[3][1=, 3=n]{\mathopen{}#1\{ #2 \mathclose{} #1\}_{#3 \in \n}}
\newcommandx{\parentheses}[2][1=,]{\mathopen#1(#2 \mathclose#1)}
\newcommandx{\set}[2][1=,]{\mathopen#1\{#2 \mathclose#1\}}
\newcommand{\juxtapose}[2]{ #1, \, #2 }
\newcommand{\define}{\coloneqq}
\newcommand{\mapping}{\rightarrow}
\newcommandx{\functional}[3][1=,]{ \mathopen#1 \langle #2, #3 \mathclose#1 \rangle} 
\newcommandx{\Functional}[2]{\bigl\langle #1, #2 \bigr\rangle}
\newcommand{\describe}{:}  
\newcommand{\crit}[1]{\operatorname{crit}{#1}}  
\newcommand{\post}[1]{\operatorname{post}{#1}}  
\newcommand{\supp}[1]{\operatorname{supp}{#1}}
\newcommandx{\card}[2][1=,]{\operatorname{card}\mathopen#1(#2 \mathclose#1)}
\newcommandx{\inte}[2][1=,]{\operatorname{inte}\mathopen#1(#2 \mathclose#1)}
\newcommandx{\diam}[3][1=,]{\operatorname{diam}_{#2}\mathopen#1(#3 \mathclose#1) }
\newcommandx{\diamn}[3][1=,]{\operatorname{diam}_{#2} #3   }
\newcommandx{\mesh}[3][1=,]{\operatorname{mesh}_{#2}\mathopen#1(#3 \mathclose#1) }
\newcommandx{\sgn}[2][1=,]{\operatorname{sgn}\mathopen#1(#2 \mathclose#1) }
\newcommand{\id}[1]{\operatorname{id}_{#1}}
\newcommandx{\interior}[2][1=,]{\operatorname{int}_\circ\mathopen#1(#2 \mathclose#1)} 
\newcommandx{\interiorn}[2][1=,]{\operatorname{int}_\circ #2  } 
\newcommand{\indicator}[1]{\mathbbm{1}_{#1}}
\newcommandx{\myexp}[2][1=,]{\operatorname{exp}\mathopen#1(#2 \mathclose#1) }
\newcommand{\tangent}[3]{ {#1}^{*}_{#2, #3} }
\newcommandx{\norm}[2][1=,]{\mathopen#1\|#2 \mathclose#1\|}
\newcommandx{\uniformnorm}[2][1=,]{\mathopen #1\| #2 \mathclose #1\|_{\infty}}
\newcommandx{\sumnorm}[2][1=,]{\mathopen #1\|#2 \mathclose #1\|_{\operatorname{sum}}}
\newcommandx{\normcontinuous}[4][1=, 4=]{\mathopen #1\| #2 \mathclose #1\|_{C \mathopen #4( #3 \mathclose #4)}}
\newcommandx{\holderseminorm}[3][1=, 3=\holderexp]{ \abs[#1]{#2}_{#3} }
\newcommandx{\holdernorm}[6][1=, 3=\holderexp, 5=d, 6=]{\mathopen #1\| #2 \mathclose #1\|_{C^{0, #3} \mathopen #6(#4, #5 \mathclose #6)} }
\newcommand{\probmea}[1]{\mathcal{P}(#1)}
\newcommandx{\ergmea}[2][1=S^2, 2=f]{ \mathcal{M}_{\operatorname{erg}}(#1, #2) }
\newcommandx{\invmea}[2][1=S^2, 2=f]{ \mathcal{M}(#1, #2) }
\newcommand{\black}{b} 
\newcommand{\white}{w} 
\newcommand{\blacktile}{X^{0}_{\black}}
\newcommand{\whitetile}{X^{0}_{\white}}
\newcommand{\Tile}[1]{\mathbf{X}^{#1}(f, \mathcal{C})}
\newcommand{\Edge}[1]{\mathbf{E}^{#1}(f, \mathcal{C})}
\newcommand{\cflower}[2]{\overline{W}^{#1}(#2)} 
\newcommandx{\ccFTile}[4][1=F,]{\mathbf{\mathfrak{X}}^{#2}_{#3 #4}(#1, \mathcal{C})} 
\newcommandx{\neighbortile}[5][1=F]{\mathbf{\mathfrak{X}}^{#2}_{#3 #4}(#1, \mathcal{C}, #5)}
\newcommandx{\ccndegF}[5][1=,]{\deg_{#2 #3}\mathopen#1(F^{#4}, #5 \mathclose#1)}
\newcommandx{\Deg}[3][1=,]{ \operatorname{Deg}\mathopen#1(F^{#2}, #3 \mathclose#1) }
\newcommand{\potential}{\phi} 
\newcommand{\normpotential}{\overline{\phi}} 
\newcommandx{\holdertilespace}[3][1=\holderexp, 3=d]{ C^{0, #1}\bigl(X^{0}_{#2}, #3 \bigr) }
\newcommandx{\splholderspace}[2][1=\holderexp, 2=d]{ C^{0, #1}\bigl(\blacktile, #2 \bigr) \times C^{0, #1}\bigl(\whitetile, #2 \bigr) }
\newcommand{\ruelleopt}[1][\phi]{\mathcal{L}_{#1}}  
\newcommand{\normopt}{\mathcal{L}_{\normpotential}}  
\newcommand{\moduconst}{\tau}
\newcommand{\modufun}{\eta}
\newcommandx{\moduspace}[4][1=\moduconst, 2=\modufun, 3=X, 4=d, usedefault]{ C^{#1}_{#2} ( #3, #4) }
\newcommandx{\modutilespace}[4][1=\moduconst, 2=\modufun, 4=d]{ C^{#1}_{#2} \bigl( X^{0}_{#3}, #4 \bigr) }
\newcommandx{\splmoduspace}[3][1=\moduconst, 2=\modufun, 3=d]{ C^{#1}_{#2} \bigl( X^{0}_{\black}, #3 \bigr) \times C^{#1}_{#2} \bigl( X^{0}_{\white}, #3 \bigr) }
\newcommandx{\deltameasure}[2][1=n]{V_{#1}(#2)}
\newcommandx{\birkhoffmeasure}[1][1=n]{\Sigma_{#1}}
\newcommandx{\periodorbit}[2][1=n, 2=f]{\operatorname{Per}_{#1}(#2)}
\newcommandx{\mutifun}[2][1=\ell, 2=\varphi]{\queue{#2}[#1]}
\newcommandx{\mutiavg}[2][1=\ell, 2=\alpha]{\queue{#2}[#1]}
\renewcommand{\leq}{\leqslant}
\renewcommand{\geq}{\geqslant}
\renewcommand{\:}{\colon}
\newcommand{\PPP}{\mathcal{P}}
\newcommand{\N}{\mathbb{N}}
\newcommand{\Q}{\mathbb{Q}}
\newcommand{\R}{\mathbb{R}}
\newcommand{\cA}{\mathcal{A}}
\newcommand{\cB}{\mathcal{B}}
\newcommand{\cE}{\mathcal{E}}
\newcommand{\cI}{\mathcal{I}}
\newcommand{\cK}{\mathcal{K}}
\newcommand{\cL}{\mathcal{L}}
\newcommand{\cM}{\mathcal{M}}
\newcommand{\cO}{\mathcal{O}}
\newcommand{\cP}{\mathcal{P}}
\newcommand{\cQ}{\mathcal{Q}}
\newcommand{\cS}{\mathcal{S}}
\newcommand{\cV}{\mathcal{V}}
\newcommand{\cW}{\mathcal{W}}
\renewcommand{\th}{\widetilde{h}}
\providecommand{\Absbig}[1]{\bigl\lvert#1\bigr\rvert}
\providecommand{\Absbigg}[1]{\biggl\lvert#1\biggr\rvert}
\providecommand{\norm}[1]{\|#1\|}
\renewcommand{\=}{\coloneqq}
\newcommand{\Sing}{\operatorname{Sing}}
\begin{document}


\title{Computable Thermodynamic Formalism}
    \author{Ilia~Binder \and Qiandu~He \and Zhiqiang~Li \and Xianghui~Shi}


\address{Ilia~Binder, Department of Mathematics, University of Toronto, Bahen Centre, 40 St. George St., Toronto, Ontario, M5S 2E4, CANADA}
\email{ilia@math.toronto.edu}

\address{Qiandu~He, School of Mathematical Sciences, Peking University, Beijing 100871, CHINA}
\email{heqiandu@stu.pku.edu.cn}
    
\address{Zhiqiang~Li, School of Mathematical Sciences \& Beijing International Center for Mathematical Research, Peking University, Beijing 100871, China}
\email{zli@math.pku.edu.cn}

\address{Xianghui~Shi, Beijing International Center for Mathematical Research, Peking University, Beijing 100871, China}
\email{xhshi@pku.edu.cn}


\subjclass[2020]{Primary: 03D78, 37D35; Secondary: 37D25, 37F15.}

\keywords{computability, computable analysis, equilibrium state, thermodynamic formalism, expanding Thurston map, Misiurewicz--Thurston rational map.}

\begin{abstract}
	We investigate the theory of thermodynamic formalism from the perspective of computable analysis, with a special focus on the computability of equilibrium states. 
	Specifically, we develop two complementary general approaches to verify the computability of equilibrium states for nonuniformly expanding computable dynamical systems. 
	The first approach applies to dynamical systems whose topological pressure functions admit effective approximations and whose measure-theoretic entropy functions are upper semicontinuous.
	As a concrete application, we establish the computability of the equilibrium states for Misiurewicz--Thurston rational maps with \holder continuous potentials. 
	The second approach exploits prescribed Jacobians of equilibrium states through a local analysis and applies to settings where the measure-theoretic entropy functions may lack upper semicontinuity.
\end{abstract}

\maketitle
\tableofcontents

\section{Introduction}
\label{sec:Introduction}



The study of computability in dynamical systems lies at the intersection of mathematics, physics, and computer science, and has become increasingly vital for understanding complex physical phenomena through computational perspectives. 
A fundamental insight driving this field is that while chaotic systems exhibit sensitive dependence on initial conditions, 
their typical statistical behaviors are often described by computable objects. 
This interplay between chaos and computability underscores a fundamental principle: the more expansive and chaotic a system's dynamics, the more tractable its typical behavior becomes. 

This perspective builds on the well-established principle that higher dynamical complexity, particularly through uniform expansion or hyperbolicity, facilitates the analysis of associated dynamical invariants. 
In such uniformly expanding systems, the dynamical mechanisms driving long-term behavior become sufficiently regular to facilitate effective computation of key quantities such as entropy, pressure, and key invariant measures. 
Thus, the central challenge, and our primary focus, lies in extending these computational methodologies to systems where uniform expansion fails.

Thermodynamic formalism emerges as the natural framework for studying chaotic dynamics from a statistical viewpoint.
This theory, which draws inspiration from statistical mechanics, was pioneered by Ruelle, Sinai, Bowen, and others in the 1970s (\cite{dobruschin1968description, sinai1972gibbs,bowen1975equilibrium,walters1982introduction}). 
Since its birth, thermodynamic formalism has been extensively applied in various classical contexts and has remained at the frontier and core of research in dynamical systems.
It focuses on \emph{equilibrium states}, which are invariant measures that maximize pressure functionals encoding both entropy\footnote{
The concepts of entropy in dynamical systems have their roots in the early works on the notions of entropy by Boltzmann and Gibbs (statistical mechanics, 1875), von Neumann (quantum mechanics, 1932), and Shannon (information theory, 1948). 
These notions of entropy are all designed to describe the complexity of their respective systems or objects. 
In recent years, there have been exciting developments and diverse applications of entropy and complexity theory, see e.g.\ Braverman's report at the International Congress of Mathematicians in 2022 \cite{braverman2022communication}. 
}
and integrals of potentials.
Crucially, equilibrium states characterize the typical behavior of the dynamics and thus possess central mathematical and physical significance.
In many settings, equilibrium states describe the weighted distribution of random backward orbits (see e.g.\ \cite{hawkins2003maximal,li2018equilibrium}), iterated preimages, and periodic orbits. 

In complex dynamics, Brolin--Lyubich measures \cite{brolin1965invariant,lyubich1982maximum} are measures of maximal entropy for rational maps. 
A uniform algorithm to compute such measures was developed in \cite{binder2011computability}.
This complements the discovery of polynomials with computable coefficients but non-computable Julia sets, as explored in the pioneering works of Braverman and Yampolsky \cite{braverman2006noncomputable, braverman2009computability}, which can be traced back to a question posed by Milnor (see \cite[Section~1]{braverman2006noncomputable}).
For further research on algorithmic aspects of Julia sets, we refer the reader to recent works of Rojas and Yampolsky \cite{rojas2021real} and Dudko and Yampolsky \cite{dudko2021computational} and references therein.

The computability of Brolin--Lyubich measures presents an apparent paradox. 
Intuitively, one might expect a measure to contain more information than its support.
However, computable analysis reveals the existence of a computable invariant probability measure (the Brolin--Lyubich measure) whose support is non-computable.
Indeed, this paradox can be reconciled by interpreting these results as reflecting distinct computability properties of the system from geometric and statistical perspectives. 
From this statistical viewpoint, questions regarding the computability of equilibrium states via thermodynamic formalism gain critical significance.

In this article, we study thermodynamic formalism from the point of view of computable analysis, with a special focus on the computability of equilibrium states in dynamical systems. 
Classical hyperbolic systems admit well-developed techniques and are generally regarded as well-understood. 
By contrast, nonuniformly hyperbolic systems resist conventional approaches.
The relaxation of uniform expansion requirements therefore represents a fundamental challenge in dynamical systems. Our central contribution lies in demonstrating that statistical computability can coexist with dynamical complexity, even in nonuniformly hyperbolic regimes.

We develop two complementary approaches that establish fundamental links between the computability of thermodynamic quantities and equilibrium states.
Each approach offers distinct advantages and ranges of applicability, designed to cover a broad class of dynamical systems. 

The first approach utilizes the set of tangent functionals to the topological pressure function to link the computability of the topological pressure to that of the equilibrium state. 
This method is suitable for dynamical systems where the measure-theoretic entropy map is upper semicontinuous and the topological pressure function can be effectively approximated.
For dynamical systems whose measure-theoretic entropy functions may lack upper semicontinuity, we implement the second approach, which establishes the computability of equilibrium states through local (rather than global) considerations. 
The novelties of our second approach include the following two aspects: 
(i) on the thermodynamic formalism side, using general transfer operators to study equilibrium states of full measure away from singular points with prescribed Jacobians, and 
(ii) on the computable analysis side, the verification of Jacobians away from singular points, and the construction of a recursively compact set of measures to exclude atomic measures supported on singular points.

To demonstrate applications of these approaches, we focus on expanding Thurston maps as primary case studies, whose ergodic theory has been actively studied.
A \emph{Thurston map} is a branched covering map on a topological $2$-sphere that is not a homeomorphism and satisfies the postcritically-finite condition (i.e., every critical point has a finite forward orbit).
Such maps play a central role in the study of complex dynamics, and the most important examples are given by postcritically-finite rational maps on the Riemann sphere\footnote{
There has been active research on algorithmic aspects of Thurston maps. 
For example, Bonnet, Braverman, and Yampolsky \cite{sylvain2012thurston} devised an algorithm to determine whether a Thurston map is Thurston equivalent to a rational map; 
Hubbard and Schleicher \cite{hubbard49spider}, in the setting of quadratic rational maps, provided an algorithm that, given a convenient description of the Thurston map, outputs the coefficients of the rational map; 
Selinger, Rafi, and Yampolsky \cite{selinger2015constructive,rafi2020centralizers} investigated the decidability of Thurston equivalence.
}.
Inspired by Sullivan's dictionary and their interest in Cannon's Conjecture \cite{cannon1994combinatorial}, Bonk and Meyer \cite{bonk2010expanding,bonk2017expanding}, as well as Ha{\"i}ssinsky and Pilgrim \cite{haissinsky2009coarse}, studied a subclass of Thurston maps, called \emph{expanding Thurston maps}, by imposing some additional condition of weak expansion (see Definition~\ref{def:expanding_Thurston_maps}).

Based on these works, ergodic theory for expanding Thurston maps has been actively investigated in \cite{bonk2010expanding,bonk2017expanding, haissinsky2009coarse, li2015weak,li2017ergodic,li2018equilibrium,li2025ground,shi2025thermodynamic,shi2024uniqueness,shi2024entropy}.
In particular, the third-named author \cite{li2018equilibrium} developed the thermodynamic formalism and investigated the existence, uniqueness, and other ergodic properties of equilibrium states for expanding Thurston maps\footnote{cf.~the monograph \cite{li2017ergodic}.}. 
Notably, expanding Thurston maps exhibit weak expansion; they are neither expansive nor $h$-expansive. Furthermore, those with at least one periodic critical point are not even asymptotically $h$-expansive \cite{li2015weak}.
Recent advances by the third-named and fourth-named authors \cite{shi2024entropy} further demonstrated that the measure-theoretic entropy function is upper semicontinuous if and only if the expanding Thurston map has no periodic critical points.
Leveraging these results, we apply our methods to investigate the computability of measures of maximal entropy and equilibrium states for expanding Thurston maps, demonstrating the applicability of our two distinct approaches.

Our approaches to investigating the computability of equilibrium states extend way beyond the setting of expanding Thurston maps. See the discussions below.

\subsection{Main results}
\label{sub:Main results}

As mentioned earlier, two complementary approaches link the computability of thermodynamic quantities to that of equilibrium states.

The first approach (Theorem~\ref{theorem A}) applies to systems with upper semicontinuous measure-theoretic entropy functions. 
It shows that certain computability properties of the topological pressures guarantee the computability of the equilibrium states, with a concrete application to Misiurewicz--Thurston rational maps in Theorem~\ref{Application A}.
The second approach, established in Theorem~\ref{theorem B}, applies to systems without relying on the upper semicontinuity of the entropy, where the computability of equilibrium states arises from the computability of their prescribed Jacobians. 
As an application, we establish the computability of the measures of maximal entropy for computable expanding Thurston maps with computable critical points in Theorem~\ref{Application B}.

We adopt the conventions and terminology for computable analysis from \cite{weihrauch2000computable} and refer the reader to Section~\ref{sec:Preliminaries} for a more detailed introduction. 
Below, we introduce the uniformly computable systems that form the setting for our approaches.

We say that the quintuple $(X,\,\rho,\,\cS,\,\{X_n\}_{n\in\N},\,\{T_n\}_{n\in\N})$ is a \emph{uniformly computable system} if the following properties are satisfied:
\begin{enumerate}
    \smallskip
    \item $(X,\,\rho,\,\cS)$ is a computable metric space in which $X$ is recursively compact.
    \smallskip
    \item $X_n\subseteq X$ is open and $T_n\colon X\mapping X$ is a Borel-measurable function for each $n\in\N$.
    \smallskip 
    \item In $(X,\,\rho,\,\cS)$, $\{X_n\}_{n\in\N}$ is a sequence of uniformly lower semi-computable open sets and $\{T_n\}_{n\in\N}$ is a sequence of uniformly computable functions with respect to $\{X_n\}_{n\in\N}$.
\end{enumerate}

Here the computable structure of metric spaces is given in Definition~\ref{def:computablemetricspace}, the lower semi-computable openness of a set is an effective version of openness given in Definition~\ref{def:lower semicomputability and uniform version of open set}, and the recursive compactness of a set is an effective version of compactness (with an additional algorithmic covering procedure); see Definition~\ref{def:recursively compact}. Moreover, the computability of functions is given in Definition~\ref{def:Algorithm about computable functions}. 

In the first approach, we exploit the set of tangent functionals to the topological pressure function $P(T, \cdot)$ (see Section~\ref{sec:Approach I}) at the potential $\phi$ and demonstrate that the computability of the equilibrium state follows from certain computability properties of the topological pressure. We note that this set of tangent functionals is naturally identified with a subset of the set $\PPP(X)$ of Borel probability measures (see Remark~\ref{rm:identification}).
We denote by $\mathcal{E}(T,\potential)$ the set of equilibrium state(s) for a map $T$ and a potential $\potential$.
We refer the reader to Subsection~\ref{sub:Computable Analysis} for a detailed discussion on the computability of measures.

\begin{theorem}\label{theorem A}
    Let $(X,\,\rho,\,\cS,\,\{X_n\}_{n\in\N},\,\{T_n\}_{n\in\N})$ be a uniformly computable system with $X_n=X$ for all $n\in\N$, and $\{\phi_n\}_{n\in\N}$ be a sequence of uniformly computable functions $\phi_n\colon X\mapping\R$. Suppose $T_n$ has finite topological entropy, and the measure-theoretic entropy map $\nu\mapsto h_{\nu}(T_n)$ is upper semicontinuous on $\cM(X,T_n)$
    for each $n\in\N$. Assume that the following statements are true:
	\begin{enumerate}
        \smallskip
		\item There exists a sequence $\{\psi_{n,i}\}_{(n,i)\in\N^2}$ of uniformly computable functions $\psi_{n,i}\colon X\rightarrow\R$ such that the closure $\overline{D}_n$ of $D_n\=\{\psi_{n,i}:i\in\N\}$ in $C(X)$ contains a neighborhood of $\potential_n$ for each $n\in\N$ and $\{P(T_n,\psi_{n,i})\}_{(n,i)\in\N^2}$ is a sequence of uniformly upper semi-computable real numbers.
		\smallskip
		\item $\{P(T_n,\potential_n)\}_{n\in\N}$ is a sequence of uniformly lower semi-computable real numbers.
        \smallskip
        \item There exists a unique equilibrium state $\mu_n$ for $T_n$ and $\phi_n$ for each $n\in\N$.
	\end{enumerate}
    Then $\{\mu_n\}_{n\in\N}$ is a sequence of uniformly computable measures.
\end{theorem}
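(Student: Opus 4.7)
The plan is to exploit the convex-analytic characterization of equilibrium states as tangent functionals to the topological pressure. Under the upper semicontinuity of entropy assumed in the statement, the tangent functionals to $P(T_n,\cdot)\colon C(X)\to\R$ at $\phi_n$ correspond bijectively, via Riesz representation, with the equilibrium states $\cE(T_n,\phi_n)$ (cf.~Walters~\cite{walters1982introduction}), so assumption~(iii) forces $P(T_n,\cdot)$ to be G\^ateaux differentiable at $\phi_n$ with derivative $\varphi\mapsto\int\varphi\,d\mu_n$. The strategy is then to squeeze this derivative between upper and lower semi-computable finite differences of the pressure, uniformly in $n$.

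First I would verify that $\{P(T_n,\phi_n)\}_{n\in\N}$ is uniformly computable. Hypothesis~(ii) already supplies uniform lower semi-computability, so only the upper side must be produced. Since $\phi_n$ and each $\psi_{n,i}$ are uniformly computable on the recursively compact space $X$, the sup-norm distances $\|\phi_n-\psi_{n,i}\|_\infty$ are uniformly computable reals. Combining the classical Lipschitz estimate $|P(T,\psi)-P(T,\phi)|\le\|\psi-\phi\|_\infty$ with the uniform upper semi-computability of the $P(T_n,\psi_{n,i})$ from~(i), I obtain the uniformly upper semi-computable envelope
\[
    U_n(\phi)\=\inf_{i\in\N}\bigl(P(T_n,\psi_{n,i})+\|\phi-\psi_{n,i}\|_\infty\bigr)\;\ge\;P(T_n,\phi),
\]
with equality whenever $\phi\in\overline{D}_n$. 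In particular $U_n(\phi_n)=P(T_n,\phi_n)$, yielding uniform upper, hence full, computability of $P(T_n,\phi_n)$.

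Next, for a fixed uniformly computable dense family $\{\varphi_j\}_{j\in\N}$ in $C(X)$, each perturbation $\phi_n+t\varphi_j$ is uniformly computable in $(n,j)$ and rational $t$, so $U_n(\phi_n+t\varphi_j)$ is uniformly upper semi-computable. Denoting by $L_n$ a rational lower approximation to $P(T_n,\phi_n)$ coming from~(ii), the finite differences
\[
    \Delta_n^+(t;\varphi_j)\=\frac{U_n(\phi_n+t\varphi_j)-L_n}{t}\quad(t>0),\qquad \Delta_n^-(s;\varphi_j)\=\frac{U_n(\phi_n+s\varphi_j)-L_n}{s}\quad(s<0),
\]
are respectively uniformly upper and lower semi-computable (the sign of $s$ flips the relevant inequality). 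Since $\phi_n$ lies in the interior of $\overline{D}_n$, for $|t|$ sufficiently small one has $\phi_n+t\varphi_j\in\overline{D}_n$ and hence $U_n(\phi_n+t\varphi_j)=P(T_n,\phi_n+t\varphi_j)$; the convexity of the pressure combined with the G\^ateaux differentiability established above then yields
\[
    \Delta_n^-(s;\varphi_j)\;\le\;\int\varphi_j\,d\mu_n\;\le\;\Delta_n^+(t;\varphi_j),\qquad s<0<t,
\]
with both sides converging to $\int\varphi_j\,d\mu_n$ as $s,t\to 0$ and $L_n\to P(T_n,\phi_n)$.

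Sandwiching $\int\varphi_j\,d\mu_n$ between these uniformly upper and lower semi-computable quantities yields uniform computability of $\{\int\varphi_j\,d\mu_n\}_{(n,j)\in\N^2}$. By the standard test-function characterization of computable probability measures against a dense computable family (see Subsection~\ref{sub:Computable Analysis}), this delivers the uniform computability of $\{\mu_n\}_{n\in\N}$. The main obstacle I foresee is the effective execution of the finite-difference approximation without an a priori lower bound on the radius of the $\overline{D}_n$-neighborhood of $\phi_n$: the algorithm must dovetail simultaneously over the precision, the rational parameter $t$, and the index $i$ witnessing the infimum defining $U_n$, so that the equality $U_n(\phi_n+t\varphi_j)=P(T_n,\phi_n+t\varphi_j)$ is automatically activated for sufficiently small $|t|$. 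Once this effective approximation scheme is in place, the remainder of the argument is a uniform effectivization of the classical convex analysis underlying equilibrium states.
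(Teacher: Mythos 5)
Your proposal is correct, but it takes a genuinely different route from the paper. The paper never differentiates the pressure: it shows that the whole set $\Gamma_n$ of tangent functionals to $P(T_n,\cdot)$ at $\potential_n$ is uniformly recursively compact, by writing $\Gamma_n=\cM(X,T_n)\smallsetminus\bigcup_m f_n^{-1}((-\infty,q_{m,n}))$ for the uniformly upper semi-computable function $f_n(\nu)=\inf_i\{P(T_n,\psi_{n,i})-\functional{\nu}{\psi_{n,i}}\}+\functional{\nu}{\potential_n}$ (here the variational formula $\overline{h}_\nu(T_n)=\inf_\theta\{P(T_n,\theta)-\functional{\nu}{\theta}\}$ plays the role that the subgradient inequality plays for you, and the same convexity argument reduces the infimum from $C(X)$ to $D_n$); uniqueness then enters only at the last line via the principle that a recursively compact singleton has a computable element. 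You instead invoke uniqueness up front to get G\^ateaux differentiability at $\potential_n$, sandwich $\int\varphi_j\,\mathrm{d}\mu_n$ between the upper semi-computable forward difference quotient built from $U_n(\phi)=\inf_i\bigl(P(T_n,\psi_{n,i})+\norm{\phi-\psi_{n,i}}_\infty\bigr)$ and the lower semi-computable backward one, and recover $\mu_n$ from its integrals against a dense computable family. All the individual steps check out: the Lipschitz bound $\abs{P(T,\psi)-P(T,\phi)}\leq\norm{\psi-\phi}_\infty$ gives $U_n\geq P(T_n,\cdot)$ with equality on $\overline{D}_n$; the two-sided subgradient inequality holds for \emph{all} rational $s<0<t$ (so your dovetailing concern about not knowing the radius of the $\overline{D}_n$-neighborhood is only a termination issue, not a correctness issue, and is resolved exactly as you describe); and unique subgradient of a continuous convex function does imply G\^ateaux differentiability with derivative $\varphi\mapsto\functional{\mu_n}{\varphi}$. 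The one step you should make explicit is the final one: the paper only proves the direction ``computable measure $\Rightarrow$ uniformly computable integrals'' (Proposition~\ref{prop:computability of integral operator}); for the converse you can note that the sets $\bigcap_{j\leq J}\{\nu\in\cP(X):\abs{\functional{\nu}{\tau_j}-\functional{\mu_n}{\tau_j}}\leq 2^{-k}\}$ are uniformly recursively compact and shrink to $\{\mu_n\}$, and then apply Propositions~\ref{prop:operatorsonrecursivelycompactsets}~(i) and~\ref{thecomplementofrecursivelycompactset}~(i) — which is, amusingly, the same closing move as the paper's. The trade-off: your argument is more explicit and computes the measure directly, but leans on uniqueness throughout; the paper's version isolates a statement (recursive compactness of $\Gamma_n$) that holds without uniqueness and hence is reusable when equilibrium states are not unique.
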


Here the space $\cM(X,T_n)$ of $T_n$-invariant Borel probability measures is equipped with the weak$^*$ topology, the computability properties of real numbers are recalled in Definitions~\ref{def:computability of real}~and~\ref{def:semicomputabilityoverR}, and the computable structure on the space $\PPP(X)$ of Borel probability measures is specified in Proposition~\ref{prop:recursively compactness of measure space}.

Applying Theorem~\ref{theorem A}, we establish the computability of the equilibrium states for computable Misiurewicz--Thurston rational maps (i.e., postcritically-finite rational maps without periodic critical points which are computable) and computable \holder continuous potentials. 
The existence and uniqueness of the equilibrium state for an expanding Thurston map follow from Theorem~\ref{thm:properties of equilibrium state}.

\begin{theorem}\label{Application A}
    Let $f \colon \widehat{\cx} \rightarrow \widehat{\cx}$ be a computable Misiurewicz--Thurston rational map, $\sigma$ be the chordal metric, and $\alpha\in (0,1]$. Assume that $\{\phi_n\}_{n\in\N}$ is a sequence of uniformly computable functions on $\widehat{\cx}$, and $\{Q_n\}_{n\in\N}$ is a sequence of uniformly computable real numbers. Suppose $\phi_n\in C^{0,\alpha}\bigl(\ccx,\sigma\bigr)$, $\cE(f,\phi_n)=\{\mu_n\}$, and $\abs{\phi_n}_{\alpha,\sigma}\leq Q_n$ for each $n\in\N$. Then $\{\mu_n\}_{n\in\N}$ is a sequence of uniformly computable measures.
\end{theorem}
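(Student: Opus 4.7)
The plan is to apply Theorem~\ref{theorem A} with $X = \widehat{\cx}$ equipped with the chordal metric $\sigma$ and the standard computable dense set $\cS$ of Gaussian rational points on the sphere, making $(\widehat{\cx},\sigma,\cS)$ a recursively compact computable metric space; set $X_n = \widehat{\cx}$ and $T_n = f$ for every $n \in \N$. Since $f$ is computable, this data defines a uniformly computable system. The topological entropy of $f$ equals $\log\deg f$, which is finite. Because a Misiurewicz--Thurston rational map is an expanding Thurston map with no periodic critical points, the entropy map $\nu \mapsto h_\nu(f)$ on $\cM(\widehat{\cx}, f)$ is upper semicontinuous by the characterization in \cite{shi2024entropy}. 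Uniqueness of the equilibrium state $\mu_n$ is already assumed, so hypothesis~(iii) holds.

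To supply the data required by hypothesis~(i), I would fix once and for all a uniformly computable sequence $\{\psi_i\}_{i\in\N}$ of $\alpha$-\holder continuous functions on $(\widehat{\cx},\sigma)$ that is sup-norm dense in $C(\widehat{\cx})$, for example by taking restrictions of spherical polynomials with Gaussian rational coefficients; each $\psi_i$ has a \holder seminorm $|\psi_i|_\alpha$ that is computable from its defining coefficients. Setting $\psi_{n,i} \= \psi_i$, the closure $\overline{D}_n$ equals $C(\widehat{\cx})$ and therefore contains an entire neighborhood of $\phi_n$. The core step is then to verify that the topological pressures in~(i) are uniformly upper semi-computable, and those in~(ii) are uniformly lower semi-computable. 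For this I would invoke the preimage formula available for expanding Thurston maps: choosing a computable reference point $y_0 \in \widehat{\cx}$ outside the postcritical set and writing
\begin{equation*}
    Z_N(\phi) \= \sum_{x \in f^{-N}(y_0)} \deg(f^N, x) \exp(S_N \phi(x)),
\end{equation*}
one has $P(f, \phi) = \lim_{N \to \infty} \frac{1}{N}\log Z_N(\phi)$ for every $\phi \in C^{0,\alpha}(\widehat{\cx}, \sigma)$, with an explicit error $\bigl|P(f,\phi) - \tfrac{1}{N}\log Z_N(\phi)\bigr| \leq C(|\phi|_\alpha)/N$ coming from the standard distortion estimates for \holder potentials developed in \cite{li2018equilibrium}. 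Since $f$ is a computable rational map, the preimages $f^{-N}(y_0)$ and the local degrees $\deg(f^N, x)$ can be computed to arbitrary precision by solving polynomial equations of degree $(\deg f)^N$, and computability of $\phi_n$ (respectively $\psi_i$), together with the effective \holder bounds $Q_n$ (respectively the computable $|\psi_i|_\alpha$), then renders $\tfrac{1}{N}\log Z_N(\phi_n)$ and $\tfrac{1}{N}\log Z_N(\psi_i)$ uniformly computable in $(n,N)$ and $(i,N)$ with an effective modulus of convergence; this yields in fact full uniform computability of $\{P(f,\phi_n)\}$ and $\{P(f,\psi_i)\}$, which is strictly stronger than what~(i) and~(ii) demand.

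The main obstacle is the quantitative bookkeeping in the last step: locating a computable $y_0$ effectively off the postcritical set, propagating preimages algorithmically through $N$ iterations with controlled precision, and, most importantly, turning the distortion estimate into an explicit computable modulus that remains uniform in $n$ through the given bound $Q_n$ on $|\phi_n|_\alpha$. Once this quantitative step is carried out, all three hypotheses of Theorem~\ref{theorem A} are verified and the uniform computability of $\{\mu_n\}_{n \in \N}$ follows immediately.
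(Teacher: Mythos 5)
Your proposal is correct and follows essentially the same route as the paper: apply Theorem~\ref{theorem A} after (a) invoking upper semicontinuity of the entropy map, (b) building a uniformly computable, sup-norm dense family of \holder functions with computable \holder constants (the paper uses finite products of the distance functions $x\mapsto\sigma(x,s_i)$ via Stone--Weierstrass, which plays the role of your ``spherical polynomials''), and (c) computing $P(f,\phi)$ as $\frac1N\log\cL_\phi^N(\mathbbm{1})(y_0)$ with the $O\bigl(\abs{\phi}_\alpha/N\bigr)$ error from the distortion bound of \cite[Lemma~5.15]{li2018equilibrium}, evaluating preimages by polynomial root-finding. The only detail you elide — which the paper handles explicitly — is converting \holder regularity with respect to $\sigma$ into \holder regularity with respect to the visual metric $d_v$ (via quasisymmetry of the identity map), since the distortion estimate is stated for $d_v$; this conversion is uniform and does not affect the argument.
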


The computable structure $\bigl(\ccx,\,\sigma,\,\cS\bigl(\ccx\bigr)\bigr)$ for $\ccx$ is given in Proposition~\ref{prop:hatCiscompactmetricspace}. Moreover, $\abs{\potential}_{\alpha,\sigma}$ denotes the H\"older constant of $\potential\in C^{0,\alpha}\bigl(\ccx,\sigma\bigr)$ (see (\ref{eq:Holder_constant})).

It is worth noting that Theorem~\ref{theorem A} can be applied to establish the computability of the equilibrium states for a wide range of dynamical systems with a unique equilibrium state and an upper semicontinuous measure-theoretic entropy map, such as rational maps with H\"older continuous hyperbolic potentials (see e.g.\ \cite{Denker1991ergodic}). 
Due to space limitations, we focus on the current examples and postpone further investigations to future work.  

For many dynamical systems, the measure-theoretic entropy map may not be upper semicontinuous, or verifying this property may be difficult. 
To overcome this issue, we establish a second approach by considering the prescribed Jacobians for equilibrium states.

For a compact metric space $(X,\rho)$, a Borel-measurable transformation $T\colon X \rightarrow X$, we say that $A\subseteq X$ is \emph{admissible} (for $T$) if $A,\,T(A)\in\cB(X)$ and $T|_A$ is injective. Given a Borel subset $Y\subseteq X$, and a Borel function $J\colon X \rightarrow [0,+\infty)$, define
    \begin{align}
		\cM(X,T;Y)&\=\bigl\{\mu\in\cP(X):\mu\bigl(T^{-1}(A)\cap Y\bigr)\leq\mu(A)\text{ for each Borel }A\subseteq X\bigr\},\quad\text{ and}\label{eq:defofcMonsubset}\\ 
        \cM(X,T;Y,J)&\= \biggl\{\mu\in\cP(X):\mu(T(A))\geq\int_A\!J\,\mathrm{d}\mu\text{ for each admissible set }A\subseteq Y\text{ for }T\biggr\}.\label{eq:defofcMforJonsubset}
	\end{align}

\begin{theorem}\label{theorem B}
	Let $(X,\,\rho,\,\cS,\,\{X_n\}_{n\in\N},\,\{T_n\}_{n\in\N})$ be a uniformly computable system, and $Y_n$ be an open subset of $X_n$ for each $n\in\N$. Assume that there exist two recursively enumerable sets $K,\,L$ with $L\subseteq\N\times K$, and a sequence $\{Y_{n,k}\}_{(n,k)\in L}$ of uniformly lower semi-computable open sets in $(X,\,\rho,\,\cS)$ with the properties that $Y_{n,k}$ is admissible for $T_n$, and $Y_n=\bigcup_{(n,k)\in L_n}Y_{n,k}$, where $L_n\coloneqq\{(n,k)\in L:k\in K\}$ for each $n\in\N$. 
    
    Assume that $\{J_n\}_{n\in\N}$ is a sequence of uniformly lower semi-computable functions $J_n\colon X \rightarrow[0,+\infty)$ with respect to $\{Y_n\}_{n\in\N}$ satisfying that $J_n$ is nonnegative on $Y_n$ and Borel for each $n\in\N$. Suppose there exists a sequence $\{\cK_n\}_{n\in\N}$ of uniformly recursively compact sets in $(\cP(X),\,W_{\rho},\,\cQ_{\cS})$ such that
    \begin{equation}\label{eq:propertyofKn}
        \cM(X,T_n;Y_n,J_n)\cap \mathcal{K}_n=\{\mu_n\}\quad\text{ for each }n\in\N.
    \end{equation}
    
	Then $\{\mu_n\}_{n\in\n}$ is a sequence of uniformly computable measures.
\end{theorem}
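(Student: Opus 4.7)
The plan is to reduce the uniform computability of $\{\mu_n\}_{n \in \N}$ to the following algorithmic principle: if $\{\cH_n\}_{n \in \N}$ is a sequence of uniformly recursively compact singletons in $(\cP(X), W_\rho, \cQ_\cS)$, then the unique element of each $\cH_n$ is uniformly computable. Since recursive compactness is preserved under intersection with a uniformly effectively closed (i.e., co-r.e.) set and $\{\cK_n\}_{n \in \N}$ is already uniformly recursively compact, it suffices to prove that $\{\cM(X, T_n; Y_n, J_n)\}_{n \in \N}$ is a sequence of uniformly effectively closed subsets of $\cP(X)$; the singleton hypothesis \eqref{eq:propertyofKn} then delivers uniform computability of $\mu_n$.

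The decisive step is to rewrite the Jacobian inequality in a form compatible with the computable structure. Using the injectivity of $T_n$ on each $Y_{n, k}$ together with a layer-cake argument, one verifies that $\mu \in \cM(X, T_n; Y_n, J_n)$ is equivalent to the test-function inequality
\[
    \int g \,\mathrm{d}\mu \;\geq\; \int \mathbf{1}_{Y_{n, k}} \cdot (g \circ T_n) \cdot J_n \,\mathrm{d}\mu
\]
holding for every $(n, k) \in L_n$ and every nonnegative continuous $g \colon X \to [0, +\infty)$. The ``only if'' direction substitutes $g = \mathbf{1}_{T_n(A)}$ for admissible Borel $A \subseteq Y_{n, k}$, whose composition with $T_n$ reduces to $\mathbf{1}_A$ by injectivity, and extends by monotone approximation. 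The ``if'' direction exhausts an arbitrary admissible $A \subseteq Y_n$ by countable disjoint Borel pieces inside the $Y_{n, k}$'s via $Y_n = \bigcup_{(n,k) \in L_n} Y_{n, k}$, and approximates $\mathbf{1}_{T_n(A)}$ from below by nonnegative continuous functions using Radon regularity of $\mu$.

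Given the reformulation, uniform effective closedness follows from a direct enumeration of witnesses. Fix a computable dense sequence $\{g_i\}_{i \in \N} \subseteq C(X; [0, 1])$. For each such $g_i$, the quantity $\mu \mapsto \int g_i \,\mathrm{d}\mu$ is computable, whereas $\mu \mapsto \int \mathbf{1}_{Y_{n, k}} \cdot (g_i \circ T_n) \cdot J_n \,\mathrm{d}\mu$ is uniformly lower semi-computable in $(n, k, i, \mu)$: $\mathbf{1}_{Y_{n, k}}$ is lower semi-computable by the uniform lower semi-computable openness of $Y_{n, k}$, $g_i \circ T_n$ is computable since $T_n$ is uniformly computable with respect to $\{X_n\}_{n \in \N}$ and $Y_{n, k} \subseteq X_n$, and $J_n$ is lower semi-computable on $Y_n$. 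Consequently the condition ``there exist $q_1, q_2 \in \Q$ with $\int g_i \,\mathrm{d}\mu < q_1 < q_2 < \int \mathbf{1}_{Y_{n, k}} \cdot (g_i \circ T_n) \cdot J_n \,\mathrm{d}\mu$'' yields a uniform enumeration of lower semi-computable open witnesses certifying $\mu \notin \cM(X, T_n; Y_n, J_n)$ as the tuple $(k, i, q_1, q_2)$ ranges over $L_n \times \N \times \Q^2$ with $q_1 < q_2$; this is the required uniform effective closedness.

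The main technical obstacle will be the reformulation step, especially the ``if'' direction, where one must transfer approximations of $\mathbf{1}_{T_n(A)}$ by continuous functions back through the injective but only Borel (not necessarily open) restriction $T_n|_{Y_{n, k}}$, and where the lower semicontinuity of $J_n$ must be reconciled with monotone convergence inside the integral. Once this reformulation is in place, the conclusion is immediate: $\cM(X, T_n; Y_n, J_n) \cap \cK_n$ is a uniformly recursively compact singleton by \eqref{eq:propertyofKn}, and a singleton effectively closed subset of a recursively compact space has uniformly computable unique point, yielding the claimed uniform computability of $\{\mu_n\}_{n \in \N}$.
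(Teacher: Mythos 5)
Your proposal is correct and follows the same overall architecture as the paper's proof: establish that $\cM(X,T_n;Y_n,J_n)$ is the complement (in $\cP(X)$) of a uniformly lower semi-computable open set, intersect with the uniformly recursively compact $\cK_n$ via Proposition~\ref{thecomplementofrecursivelycompactset}~(iii) (or~\ref{prop:operatorsonrecursivelycompactsets}~(i)), and conclude from the singleton hypothesis via Proposition~\ref{thecomplementofrecursivelycompactset}~(i). The interesting divergence is in how you encode the Jacobian inequality as a semi-computable test. The paper pulls the test function back through the inverse branch, testing $\int_{T_n(Y_{n,k})} (\tau_s^+ h_{m,n,k})\circ (T_n|_{Y_{n,k}})^{-1}\,\mathrm{d}\mu \geq \int J_n\tau_s^+ h_{m,n,k}\,\mathrm{d}\mu$; this forces it to prove upper semi-computability of the fiberwise supremum $W_{m,s}^{n,k}(x)=\sup\{\tau_s^+(y)h_{m,n,k}(y): y\in T_n^{-1}(x)\}$, which is the most delicate part of its Claim~1 and relies on the fact that computable images of recursively compact sets are recursively compact. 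You instead push the test function forward, testing $\int g\,\mathrm{d}\mu \geq \int \mathbbm{1}_{Y_{n,k}}\cdot(g\circ T_n)\cdot J_n\,\mathrm{d}\mu$, so the only computability inputs are the computability of $g\circ T_n$ on $X_n\supseteq Y_{n,k}$ and the lower semi-computability of $\mathbbm{1}_{Y_{n,k}}$ and $J_n$ — a genuinely lighter route that avoids the image-of-compacta machinery. The two formulations are equivalent by the change-of-variable formula for the Borel isomorphism $T_n|_{Y_{n,k}}$, and your passage between ``all admissible $A\subseteq Y_n$'' and ``all $A\subseteq Y_{n,k}$'' by disjointification is exactly what the paper does.

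Two points to tighten. First, in your ``if'' direction you should not speak of approximating $\mathbbm{1}_{T_n(A)}$ pointwise from below by continuous functions (that fails for general Borel sets); the correct step is to compare the two finite Borel measures $\nu_1=\mu$ and $\nu_2(B)=\int_{T_n^{-1}(B)\cap Y_{n,k}}J_n\,\mathrm{d}\mu$ via the test-function criterion (the paper's Proposition~\ref{prop:test function}), which gives $\nu_1(B)\geq\nu_2(B)$ for every Borel $B$, and then evaluate at $B=T_n(A)$ using injectivity of $T_n$ on $Y_{n,k}$. Second, you should note why $\int_{Y_{n,k}}J_n\,\mathrm{d}\mu<+\infty$ may be assumed when arguing that failure of the inequality for some continuous $g\geq 0$ forces failure at some $g_i$ from the dense sequence (if the integral is infinite, any $g_i$ bounded below by a positive constant already witnesses failure). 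Neither issue affects the validity of the argument.
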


By Theorem~\ref{thm:prescribed Jacobian of equilibrium state}, in some settings (cf.\ Definition~\ref{def:admissible}), the set defined in (\ref{eq:defofcMforJonsubset}) describes the set of equilibrium states. Moreover, as applications of Theorem~\ref{theorem B}, in Theorems~\ref{thm:additionalassumptionone}~and~\ref{thm:additionalassumptiontwo}, we construct two corresponding families of uniformly recursively compact sets $\{\cK_n\}_{n\in\N}$ satisfying the additional assumptions for two classes of dynamical systems.

Theorem~\ref{theorem B} extends the methodologies developed in \cite[Theorem A]{binder2011computability} and \cite[Theorem 1.1]{binder2025computability}, introducing new techniques to handle both nonuniform expansion and the possible failure of upper semicontinuity of entropy.
Compared to the previous results, our approach significantly broadens applicability by relaxing the computability requirement on prescribed Jacobians to only lower semi-computability.


To demonstrate the applicability of Theorem~\ref{theorem B} while minimizing unnecessary technicalities, we employ it to prove the computability of the measures of maximal entropy for computable expanding Thurston maps. 
In fact, one can establish the computability of equilibrium states for these maps with H\"older continuous potentials using the cone method adapted for computability as in \cite{binder2025computability}.

\begin{theorem}\label{Application B}
    Let $\sigma$ be the chordal metric and $f\colon \ccx\rightarrow\ccx$ be an expanding Thurston map that is computable in the computable metric space $\bigl(\ccx,\,\sigma,\,\cS\bigl(\ccx\bigr)\bigr)$. 
    Assume that all critical points of $f$ are computable. 
    Then the measure of maximal entropy of $f$ is a computable measure.
\end{theorem}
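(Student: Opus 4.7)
The plan is to verify the hypotheses of Theorem~\ref{theorem B} with the constant sequences $T_n = f$, $X_n = \ccx$, $Y_n = \ccx \setminus \Crit(f)$, and $J_n \equiv \deg(f)$ (extended by zero off $Y_n$), paired with a recursively compact set $\cK_n \subseteq \cP(\ccx)$ supplied by one of the auxiliary results Theorem~\ref{thm:additionalassumptionone} or Theorem~\ref{thm:additionalassumptiontwo}. This second approach is the natural route for general expanding Thurston maps, since periodic critical points can destroy the upper semicontinuity of the measure-theoretic entropy and block a direct use of Theorem~\ref{theorem A}. Since $f$ is computable and $\Crit(f)$ is a finite set of computable points, $Y = \ccx \setminus \Crit(f)$ is lower semi-computable open, and the positive integer $\deg(f)$, computable from a description of $f$, is trivially a lower semi-computable function.

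The first technical step is to build a uniformly lower semi-computable admissible cover $\{Y_k\}_{k \in K}$ of $Y$. I would take $Y_k = B(x_k, r_k)$ indexed over an enumerable dense recursive sequence $\{x_k\}$ in $Y$, with $r_k$ chosen strictly smaller than both the chordal distance from $x_k$ to $\Crit(f)$ and the modulus of injectivity of $f$ near $x_k$. Away from $\Crit(f)$ the branched covering $f$ is a local homeomorphism, and combining this with the expansion estimates supplied by the Bonk--Meyer tile structure yields a uniformly computable positive lower bound for $r_k$, so that each $Y_k$ is admissible for $f$ and the family $\{Y_k\}$ is uniformly lower semi-computable open and covers $Y$.

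Next, I would identify the measure of maximal entropy $\mu$ as the unique element of $\cM(\ccx, f; Y, \deg(f)) \cap \cK_n$. Uniqueness of the MME is supplied by Theorem~\ref{thm:properties of equilibrium state}, and the prescribed Jacobian characterization from Theorem~\ref{thm:prescribed Jacobian of equilibrium state} places $\mu$ in $\cM(\ccx, f; Y, \deg(f))$: for admissible $A \subseteq Y$, the MME satisfies $\mu(f(A)) = \deg(f)\mu(A) = \int_A \deg(f) \,\mathrm{d}\mu$. Without the further restriction to $\cK_n$ the set $\cM(\ccx, f; Y, \deg(f))$ is much larger, since any probability measure carried by $\Crit(f)$ lies in it vacuously (the defining inequality constrains only admissible $A \subseteq Y$); these spurious candidates are precisely what $\cK_n$ must exclude.

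The main obstacle is therefore the construction of $\cK_n$, which is the content of Theorems~\ref{thm:additionalassumptionone} and~\ref{thm:additionalassumptiontwo}. With $\Crit(f)$ treated as the distinguished exceptional set and with the explicit integer Jacobian $J = \deg(f) \geq 2$ forcing any periodic critical point to carry zero $\mu$-mass (by iterating the Jacobian inequality on shrinking admissible neighborhoods near its forward orbit), one of those auxiliary theorems delivers a recursively compact $\cK_n \subseteq \cP(\ccx)$ that contains $\mu$ yet intersects $\cM(\ccx, f; Y, \deg(f))$ only in $\mu$. Once this assembly is complete and the hypothesis (\ref{eq:propertyofKn}) is verified, Theorem~\ref{theorem B} applies and yields the computability of $\mu$.
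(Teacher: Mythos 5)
Your overall strategy is the paper's: Theorem~\ref{Application B} is indeed proved via Approach~II with the constant Jacobian $\deg F\cdot\mathbbm{1}_{\ccx}$ and with Theorem~\ref{thm:additionalassumptionone} supplying the recursively compact set that excludes atoms at periodic critical points. However, two of your choices fail as written. The domain $Y=\ccx\smallsetminus\crit{f}$ is wrong: item~(iii) of the Assumptions, which Theorem~\ref{thm:additionalassumptionone} requires, includes property~(ii) of Theorem~\ref{thm:prescribed Jacobian of equilibrium state}, namely $\sum_{y\in f^{-1}(x)\cap Y}1/\deg f=1$ for all $x\in\bigcap_i f^i(Y)$. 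If $x$ is a critical value, then $f^{-1}(x)$ has strictly fewer than $\deg f$ elements as a set, so $\card{f^{-1}(x)\cap Y}<\deg f$ and the sum is $<1$; such $x$ generically lie in $\bigcap_i f^i(Y)$. The paper avoids this by passing to an iterate $F=f^{km}$ (chosen so that all periodic postcritical points are fixed and an $F$-invariant Jordan curve through $\post{F}$ exists) and taking $Y=\ccx\smallsetminus F^{-1}(\post{F})$, so that every $x\notin\post{F}$ has exactly $\deg F$ preimages, all in $Y$. Your fallback option, Theorem~\ref{thm:additionalassumptiontwo}, is not available either: it requires uniform \emph{contraction} near the periodic singular points, which an expanding Thurston map does not provide.

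The decisive quantitative step is also missing. To feed Theorem~\ref{thm:additionalassumptionone} you must exhibit uniformly lower semi-computable open sets $U_{n,i,k}\ni p$ around each periodic critical point $p$ together with uniformly \emph{computable} bounds $t_{n,i,k}$ with $\inf_k t_{n,i,k}=0$ and $\mu_0(U_{n,i,k})\le t_{n,i,k}$. This cannot be obtained by ``iterating the Jacobian inequality on shrinking admissible neighborhoods,'' because no neighborhood of a critical point is admissible ($f$ is not injective there), so the Jacobian inequality says nothing about the mass near $p$. The paper instead uses the tile structure: with $\mathcal{C}$ the invariant curve and $y$ a fixed critical point, $\mu_0\bigl(\cflower{m+1}{y}\bigr)=(\deg_F(y)/\deg F)\,\mu_0\bigl(\cflower{m}{y}\bigr)$, it takes $U_{n,i,k}$ to be iterated preimages of a computable ball inside $\cflower{1}{y}$, and sets $t_{n,i,k}=(\deg_F(y)/\deg F)^{k-1}$. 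Finally, your admissible cover by balls of radius below a ``modulus of injectivity'' has a computability gap, since that modulus is not a computable quantity; the paper pulls back the balls $B(s_i,\sigma(s_i,\post{F}))$ under $F$ and uses covering-space theory together with Lemma~\ref{lem:lowersemicomputableopennessofconnectedcomponents} to obtain a uniformly lower semi-computable cover by connected components on which $F$ is injective.
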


We provide examples of such expanding Thurston maps and apply Theorem~\ref{Application B} to them at the end of Subsection~\ref{subsec:The measures of maximal entropy for computable expanding Thurston maps}. 
Even though the result in Theorem~\ref{Application B} is not uniform and considers only computable expanding Thurston maps on $\ccx$, Theorem~\ref{theorem B} can indeed be leveraged to extend the above result to the \emph{uniform} computability of the equilibrium states of computable expanding Thurston maps on a general topological $2$-sphere $S^2$. 
However, such applications would require a discussion on the computability of \emph{visual metrics} (see e.g.~\cite[Chapter~8]{bonk2017expanding}), which would take us too far astray from the core topic of this article.

The conditions in Theorems~\ref{theorem A} and~\ref{theorem B} illustrate some methodological distinctions. Specifically, Theorem~\ref{theorem A} provides an efficient pathway to proving computability when equilibrium states are unique and measure-theoretic entropy functions are regular. 
In contrast, Theorem~\ref{theorem B} becomes useful for systems with nonunique equilibrium states or those (potentially) lacking upper semicontinuous entropy, though it demands more sophisticated analysis.
Note that computable functions on some subsets are always continuous on the corresponding subsets. 
Theorem~\ref{theorem B} can be applied to some Borel-measurable dynamical systems (not necessarily continuous on the whole space), for example, Pomeau--Manneville maps with geometric potentials.



In fact, Theorem~\ref{theorem B} can be applied and further extended to study the computability of equilibrium states with \emph{time complexity} in the following settings:

\begin{enumerate}
\smallskip
    \item[(i)] Pomeau--Manneville maps with geometric potentials. 
\smallskip
    \item[(ii)] Holomorphic endomorphisms of $\mathbb{P}^k$ with $\log^q$-continuous hyperbolic potential.
\smallskip
    \item[(iii)] Nonuniformly expanding local diffeomorphism on smooth manifolds and H\"older continuous potentials with not very large oscillation (see \cite{binder2025nonuniformly}).
\end{enumerate}
We leave these investigations for future work.


\subsection{Strategy and organization}

In thermodynamic formalism, identifying equilibrium states often hinges on verifying the identity $P(T, \phi) = h_{\mu}(T) + \int \! \phi \,\mathrm{d} \mu$.
From the perspective of computable analysis, computing the topological pressure $P(T, \phi)$ and the integral $\int \! \phi \,\mathrm{d} \mu$ is straightforward. 
The core difficulty lies in evaluating the measure-theoretic entropy function $\mu \mapsto h_{\mu}(T)$, which lacks universal computational methods.

Our approaches reflect two distinct strategies for addressing this challenge. 
The first approach applies to dynamical systems where the measure-theoretic entropy $h_{\mu}(T)$ coincides with its upper semicontinuous regularization $\overline{h}_{\mu}(T)$. 
Under this hypothesis, we use convex analysis to characterize the set of equilibrium states as the set of tangent functionals to the topological pressure function (see Section~\ref{sec:Approach I}).
The second approach avoids relying on semicontinuity of entropy by instead investigating Jacobians for equilibrium states, a technique motivated by Rokhlin's formula (cf.~Proposition~\ref{prop:rokhlin'sformula}). 

The proofs of the two approaches (Theorems~\ref{theorem A} and~\ref{theorem B}) follow the same general strategy: we prove the recursive compactness of a subset $\mathcal{K}\subseteq\mathcal{E}(T, \potential)$. 
Then the assumption that $\mathcal{K} = \{\mu\}$ implies that the equilibrium state $\mu$ is computable (cf.~Proposition~\ref{thecomplementofrecursivelycompactset}~(i)). 
The strategy for identifying a compact subset $\cK\subseteq\cE(T,\phi)$ can be summarized as follows: for some dynamical systems, the measure-theoretic entropy has the following upper and lower bounds:
\begin{equation*}
    \overline{h}_{\mu}(T)\geq h_{\mu}(T)\geq\int\!\log(J_{\mu})\,\mathrm{d}\mu,
\end{equation*}
where $J_{\mu}$ is a Jacobian of $T$. Hence, we obtain that
\begin{equation*}
    \{\mu\in\cM(X,T):\overline{h}_{\mu}(T)+\functional{\mu}{\phi}=P(T,\phi)\}\supseteq \cE(T,\phi)\supseteq\{\mu\in\cM(X,T):\functional{\mu}{\phi+\log(J_{\mu})}=P(T,\phi)\}.
\end{equation*}
For simplicity, we denote the former (resp.\ latter) set by $\cE_1(T,\phi)$ (resp.\ $\cE_2(T,\phi)$). 
Indeed, by results in convex analysis (cf.\ Lemma~\ref{lem:representationoninvariantmeasure} and Proposition~\ref{prop:relationbetweentangentspaceandsetofequilibriumstate}), the set $\mathcal{E}_1(T,\potential)$ coincides with the set $C(X)^*_{\potential,P_T}$. 
Moreover, by our investigations in ergodic theory (cf.\ Theorem~\ref{thm:prescribed Jacobian of equilibrium state}), the set $\cE_2(T,\phi)$ can be described by $\cM(X,T;Y,J)$ (defined in (\ref{eq:defofcMforJonsubset})), where $Y\subseteq X$ is a Borel set and $J\colon X\mapping\R$ is a Borel function satisfying some assumptions (indeed, $\cE_2(T,\phi)\cap\cP(X;Y)$ coincides with $\cM(X,T;Y,J)\cap\cM(X,T)\cap\cP(X;Y)$). 
It is worth noting that the set $\mathcal{E}(T,\potential)$ of equilibrium states may not be weak$^*$ compact. 
However, the sets $C(X)^*_{\potential,P_T}$ and $\cM(X,T;Y,J)$ are always weak$^*$ compact. 
Hence, we demonstrate the recursive compactness of these sets instead of investigating $\cE(T,\phi)$ directly.

We now outline the proofs of the recursive compactness properties of $C(X)^*_{\potential,P_T}$ and $\cM(X,T;Y,J)$. 
For the set $C(X)^*_{\potential,P_T}$, by some results in convex analysis, it is the set of measures $\mu\in\cM(X,T)$ such that $\phi$ is the minimizing point of the operator defined by $h\mapsto P(T,h)-\functional{\mu}{h}$. 
Hence, the recursive compactness of the set $C(X)^*_{\potential,P_T}$ can be derived from some computability properties of the topological pressure function. 
Moreover, due to the convexity of such operator, the conditions can be further reduced to the computability properties of the topological pressure function near the potential $\phi$. 
For the set $\cM(X,T;Y,J)$, a local version of a method from \cite{binder2011computability,binder2025computability} is applied to check if a Jacobian for $T$ with respect to $\mu$ is greater than or equal to the given function $J$ in the ``good'' subset $Y$, which is a union of open and admissible sets. 
It is worth noting that we improve this method and relax the requirements for the domains of computability of dynamical systems.

Moreover, in the proof of Theorem~\ref{thm:prescribed Jacobian of equilibrium state}, the existence of singular points prevents the direct application of Ruelle operators to characterize the prescribed Jacobians of equilibrium states. 
To address this, we instead use transfer operators to provide an equivalent description of the Jacobians of invariant measures in Theorem~\ref{thm:description of Jacobian}. 
Combined with the Variational Principle, this yields Theorem~\ref{thm:prescribed Jacobian of equilibrium state}, which allows us to identify equilibrium states by verifying if a Jacobian for $T$ is greater than the prescribed function $J$ in the ``good'' subset $Y$.

Finally, we summarize the two approaches as follows. In the global approach via convex analysis, we assume the upper semicontinuity of the pressure function to ensure that $\cE_1(T,\phi) = \cE(T,\phi)$. 
In the local approach via transfer operator and Jacobian, the main challenge is to construct a recursively compact set $\cK \subseteq \PPP(X)$ that excludes measures with positive mass on a ``bad'' region. 
Since a uniform construction of such a set $\cK$ is not feasible for all systems, we hypothesize its existence in Theorem~\ref{theorem B} to obtain a general result, and provide explicit constructions for specific families in Theorems~\ref{thm:additionalassumptionone},~\ref{thm:additionalassumptiontwo},~and~\ref{Application B}.
More precisely, we consider the case where the ``bad'' regions are indeed sets of finitely many points. In Theorem~\ref{thm:additionalassumptionone} we use a sequence of open sets containing these ``bad'' regions to eliminate the mass supported on them, in Theorem~\ref{thm:additionalassumptiontwo} we investigate the dynamical systems that are uniformly contracting near all the periodic singular points, and in Theorem~\ref{Application B} study expanding Thurston maps, a family of maps which are uniformly expanding near all the periodic singular points.

\smallskip

We now describe the structure of this article.
After fixing some notation in Section~\ref{sec:Notation}, we review some notions and results in computable analysis and ergodic theory in Section~\ref{sec:Preliminaries}.

Section~\ref{sec:Approach I} focuses on the first (global) approach. 
We prove Theorem~\ref{theorem A} by establishing the recursive compactness of the set of tangent functionals to the topological pressure function at the potential with upper semicontinuous entropy. 

Section~\ref{sec:Approach II} is devoted to the second (local) approach as stated in Theorem~\ref{theorem B}, which uses the prescribed Jacobian with respect to an equilibrium state.
Through employing the transfer operators to establish Theorem~\ref{thm:prescribed Jacobian of equilibrium state} (in Subsection~\ref{subsec:jacobian and transfer operator}), which characterizes an equilibrium state in terms of its Jacobian, we complete the proof of Theorem~\ref{theorem B} in Subsection~\ref{subsec:proof of theorem B}. 
Theorems~\ref{thm:additionalassumptionone} and~\ref{thm:additionalassumptiontwo}, presented as consequences of Theorem~\ref{theorem B}, are stated in Subsection~\ref{sub:applications}.

Section~\ref{sec:Computable Analysis on thermodynamic formalism} examines the computability of the equilibrium states for expanding Thurston maps. 
We first provide the definitions and properties of these maps in Subsection~\ref{sub:Expanding Thurston_maps}.  
Then in Subsection~\ref{subsec:Misiurewicz--Thurston rational maps}, we apply Theorem~\ref{theorem A} to demonstrate the computability of the equilibrium state for a Misiurewicz--Thurston rational map and a H\"older continuous potential, thereby establishing Theorem~\ref{Application A}. 
Finally, Subsection~\ref{subsec:The measures of maximal entropy for computable expanding Thurston maps} addresses a broader class of expanding Thurston maps whose measure-theoretic entropy maps may lack upper semicontinuity. 
Here, we use Theorem~\ref{theorem B} to study the measures of maximal entropy and establish Theorem~\ref{Application B}.

\medskip

\noindent\textbf{Acknowledgments.}
Q.~H., Z.~L., and X.~S.\ were partially supported by Beijing Natural Science Foundation (JQ25001 and 1214021) and National Natural Science Foundation of China (12471083, 12101017, 12090010, and 12090015). I.~B.\ was partially supported by an NSERC Discovery grant. 
Q.~H.\ was also supported by Peking University Funding (7101303303 and 6201001846).

\section{Notation}
\label{sec:Notation}

The chordal metric $\sigma$ on the Riemann sphere $\ccx$ is defined as follows: $\sigma (z,w)  \=  \frac{2\abs{z - w}}{\sqrt{1 + \abs{z}^2} \sqrt{1 + \abs{w}^2}}$ for all $\juxtapose{z}{w} \in \cx$, and $\sigma(\infty,z) = \sigma(z,\infty)  \=  \frac{2}{\sqrt{1 + \abs{z}^2}}$ for all $z \in \cx$. 
Let $S^2$ denote an oriented topological $2$-sphere.
We use $\n$ to denote the set of integers greater than or equal to $1$ and $\N^*\=\bigcup_{k\in\N} \N^k$. We write $\n_0  \=  \{0\} \cup \n$ and $\N_0^*\=\{0\}\cup\N^*$. We denote by $\Q^+$ (resp.\ $\R^+$) the set of all positive rational (resp.\ real) numbers.
The symbol $\log$ denotes the natural logarithm. 
For $x \in \real$, we define $\lfloor x \rfloor$ as the greatest integer $\leqslant x$, $\lceil x \rceil$ as the smallest integer $\geqslant x$, and $x^+ = (x)^{+} \=\max\{x,\,0\}$. For a function $f\colon X\mapping\R$ on a set $X$, we define $f^+=(f)^+\colon X\mapping\R$ by $f^+(x)\coloneqq (f(x))^+$ for each $x\in X$.
The cardinality of a set $A$ is denoted by $\operatorname{card} A$.

Consider a map $f \colon X \rightarrow X$ on a set $X$. 
We write $f^n$ for the $n$-th iterate of $f$, and $f^{-n} \=  (f^n)^{-1}$, for each $n \in \n$.
We set $f^0  \=  \id{X}$, the identity map on $X$. 
For a real-valued function $\phi \colon X \rightarrow \real$, we write $S_n \phi(x) = S^f_n \phi(x)  \=  \sum_{m=0}^{n-1} \phi ( f^m(x) )$ for $x\in X$ and $n\in \n_0$. 
We omit the superscript $f$ when the map $f$ is clear from the context. When $n = 0$, by definition $S_0 \phi = 0$.

Let $(X,d)$ be a metric space. We denote by $\cB(X)$ the $\sigma$-algebra of all Borel subsets of $X$. For each subset $Y \subseteq X$, we denote the diameter of $Y$ by $\diamn{d}{Y}  \=  \sup\{d(x, y) \describe \juxtapose{x}{y} \in Y\}$, the interior of $Y$ by $\interiorn{Y}$, and the characteristic function of $Y$ by $\indicator{Y}$. 

For each $r\in\R$ and each $x\in X$, we denote the open (resp.\ closed) ball of radius $r$ centered at $x$ by $B_{d}(x,r) \= \{ y \in X : d(x,y) < r \}$. 
For each $r\in\R$ and each nonempty set $K\subseteq X$, we define $d(x,K)  \=  \inf_{y\in K}d(x,y)$, and $B_d(K,r) \= \{x\in X :  d(x,K)<r\}$. 
We often omit the metric $d$ in the subscript when it is clear from the context. 

For a compact metric space $(X,d)$, we denote by $C(X)$ the space of continuous functions from $X$ to $\real$, and by $\cM(X)$ (resp.\ $\PPP(X)$) the set of finite signed Borel measures (resp.\ Borel probability measures) on $X$.
Let $g\colon X\rightarrow X$ be a Borel-measurable transformation. We denote by $\cM(X,g)$ the set of $g$-invariant Borel probability measures on $X$.
Moreover, for each Borel subset $C\in\cB(X)$, $\PPP(X;C)$ denotes the set $\{\mu\in\PPP(X) : \mu(C)=1\}$. By the Riesz representation theorem, 
we can identify the dual of $C(X)$ with the space $\cM(X)$. For $\mu \in \cM(X)$, we use $\norm{\mu}$ to denote the total variation norm of $\mu$, $\supp{\mu}$ to denote the support of $\mu$, and 
\begin{equation*} 
    \functional{\mu}{u}  \=  \int \! u \,\mathrm{d}\mu
\end{equation*}
for each $\mu$-integrable Borel function $u$ on $X$. If we do not specify otherwise, we equip $C(X)$ with the uniform norm $\normcontinuous{\,\cdot\,}{X}  \=  \uniformnorm{\,\cdot\,}$, and equip $\cM(X)$, $\PPP(X)$, and $\cM(X, g)$ with the weak$^*$ topology. 

The space of real-valued \holder continuous functions with an exponent $\holderexp \in (0,1]$ on a compact metric space $(X, d)$ is denoted as $\holderspace$. 
For each $\phi \in \holderspace$, 
\begin{equation}  \label{eq:Holder_constant}
    \abs{\phi}_{\alpha,d}  \=  \sup\{ \abs{\phi(x) - \phi(y)} / d(x, y)^{\holderexp} \describe \juxtapose{x}{y} \in X, \, x \ne y \}.
\end{equation}

For a complete separable metric space $(X,d)$, we recall the Wasserstein--Kantorovich metric $W_{d}$ on $\PPP(X)$ given by
\begin{equation}   \label{eq:WK_metric}
	W_{d}(\mu,\nu) \= \sup\bigl\{\abs{\functional{\mu}{f}-\functional{\nu}{f}} :  f \in C^{0,1}(X,d), \, \abs{f}_{1,d}\leq 1\bigr\}.
\end{equation}
Note that for Borel probability measures in $\probmea{X}$, the convergence in $W_{d}$ is equivalent to the convergence in the weak$^*$ topology (see e.g.~\cite[Corollary~6.13]{villani2009optimal}).


\section{Preliminaries}
\label{sec:Preliminaries}

\subsection{Computable analysis}
\label{sub:Computable Analysis}

We recall fundamental notions and results from recursion theory and computable analysis.\footnote{Our notion of algorithm is consistent with \emph{Type-2 machines} defined in \cite[Definition~2.1.1]{weihrauch2000computable}.} We present, in order, definitions and results concerning the computability of real numbers, computable structures on metric spaces, computability of open sets, functions, compact sets, and probability measures.


    \medskip
    \noindent\textbf{Computability over the reals.} We begin by reviewing basic notations and concepts from classical recursion theory; for an introduction, see e.g.~\cite[Chapter~3]{Bridges1994computability}.

\begin{definition}[Effective enumeration and recursively enumerable set]\label{def:effectiveenumeration}
    Let $S\subseteq\N^*$ be a nonempty set. An \emph{effective enumeration} of $S$ is a sequence $\{x_i\}_{i\in\N}$ with $S = \{x_i:i\in\N\}$ such that there exists an algorithm that, for each $i\in\N$, upon input $i$, outputs $x_i$. 
    
    Moreover, a set $I\subseteq\N^*$ is said to be a \emph{recursively enumerable set}\footnote{We emphasize that recursively enumerable sets in this article are subsets of $\N^*$.} if $I=\emptyset$ or there exists an effective enumeration of $I$.
\end{definition}

For brevity, the symbol $I$ denotes a nonempty recursively enumerable set throughout this subsection.

Note that $\N^k$, for $k\in\N$, and $\N^*$ are all recursively enumerable sets by Definition~\ref{def:effectiveenumeration}. 

\begin{definition}[Partial recursive and recursive function] \label{def:computable function}
    Let $\{i_n\}_{n\in\N}$ be an effective enumeration of $I$. We say that $f\colon I\rightarrow\N_0^*$ is \emph{partial recursive} if there exists an algorithm that, for each $n\in\N$, on input $n$, outputs $f(i_n)$ if $f(i_n)\in\N^*$, and runs forever otherwise, namely, if $f(i_n)=0$. We say that $f\colon I\rightarrow\N_0^*$ is \emph{recursive} if $f$ is a partial recursive function with $f(I)\subseteq\N^*$.
\end{definition}

We now define the computability of real numbers.

\begin{definition}[Computable real number]\label{def:computability of real}
    A real number $x$ is called \emph{computable} if there exist three recursive functions $f\colon\N\mapping\N$, $g\colon\N\mapping\N$, and $h\colon\N\mapping\N$ such that $\Absbig{(-1)^{h(n)}f(n)/g(n)-x}<2^{-n}$ for all $i\in I$ and $n\in\N$. 
    
    Let $\{x_i\}_{i\in I}$ be a sequence of real numbers. We say that $\{x_i\}_{i\in I}$ is a \emph{sequence of uniformly computable real numbers} if there exist three recursive functions $f\colon\N\times I\rightarrow\N$, $g\colon\N\times I\rightarrow\N$, and $h\colon\N\times I\rightarrow\N$ such that $\Absbig{(-1)^{h(n,i)}  f(n,i)/g(n,i)-x_i}<2^{-n}$ for all $i\in I$ and $n\in\N$.
\end{definition}

Clearly, $x\in\R$ is computable if and only if $\{x_i\}_{i\in\N}$ defined by $x_i\=x$ for all $i\in\N$ is uniformly computable. For analogous concepts in the sequel, we will define the uniform sequence version and regard the individual case as the special case of constant sequences.

    \medskip
    \noindent\textbf{Computable metric spaces.} 
\begin{definition}[Computable metric space]\label{def:computablemetricspace}
    A \emph{computable metric space} is a triple $(X,\,\rho,\,\mathcal{S})$ satisfying that
    \begin{enumerate}
        \smallskip

        \item $(X,\rho)$ is a separable metric space,
        
        \smallskip

        \item $\mathcal{S}= \{s_n\}_{n\in\N}$ forms a countable dense subset $\{s_n : n \in\N\}$ of $X$, and
        
        \smallskip
        
        \item $\{\rho(s_m,s_n)\}_{(m,n)\in\N^2}$ is a sequence of uniformly computable real numbers.
    \end{enumerate}
    The points in $\mathcal{S}$ are called \emph{ideal}. Since $\N^3$ is recursively enumerable, the collection $\cB \= \{B(s_i,m/n) : i,\,m,\,n\in\N\}$ can be enumerated as $\{B_l\}_{l\in\N}$ satisfying the following: there exists an algorithm that, for each $l\in\N$, upon input $l$, outputs $i,\,m,\,n\in\N$ with $B_l=B(s_i,m/n)$. We call the elements in $\cB$ \emph{ideal balls} and such an enumeration of $\cB$ an \emph{effective enumeration of ideal balls} in $(X,\,\rho,\,\cS)$.
\end{definition}

We then define the computability of points in a computable metric space.

\begin{definition}[Computable point] \label{def:computability and uniform computability of point}
    Let $(X,\,\rho,\,\mathcal{S})$ be a computable metric space with $\cS=\{s_i\}_{i\in\N}$, and $\{x_i\}_{i\in I}$ be a sequence of points in $X$. Then $\{x_i\}_{i\in I}$ is called \emph{uniformly computable} (in $(X,\,\rho,\,\mathcal{S})$) if there exists a recursive function $f\colon\N\times I\rightarrow\N$ such that $\rho\bigl(s_{f(n,i)},x_i\bigr)<2^{-n}$ for all $n\in\N$ and $i\in I$. Moreover, a point $x$ in $X$ is \emph{computable} (in $(X,\,\rho,\,\cS)$) if $\{x_i\}_{i\in \N}$ defined by $x_i\=x$ for all $i\in\N$ is uniformly computable.
\end{definition}

We now specify the computable structure on $\R$. 
Let $\cS_{\Q}=\{q_n\}_{n\in\N}$ be the enumeration of $\Q$ induced by an effective enumeration of $\N^3$ via the mapping $(a,b,c) \mapsto (-1)^c a/b$. 
Note that $\{d_{\R}(q_m,q_n)\}_{(m,n)\in\N^2}$ is a sequence of uniformly computable real numbers, where $d_{\R}$ is the Euclidean metric. Then the triple $\bigl(\R,\,d_{\R},\,\cS_{\Q}\bigr)$ forms a computable metric space according to Definition~\ref{def:computablemetricspace}.
A similar construction provides a computable structure for $\R^+$. 
In this article, we fix these as the standard computability structures on $\R$ and $\R^+$. It is clear that under these structures, Definitions~\ref{def:computability of real}~and~\ref{def:computability and uniform computability of point} are equivalent for the computability of real numbers.
That is, a sequence of reals is uniformly computable in one sense if and only if it is in the other.

We also consider a weaker notion of computability over $\R$ that leverages its natural ordered structure.
    \begin{definition}[Semi-computable real number]\label{def:semicomputabilityoverR}
    Let $\{x_i\}_{i\in I}$ be a sequence of real numbers. We say that $\{x_i\}_{i\in I}$ is \emph{uniformly lower} (resp.\ \emph{upper}) \emph{semi-computable} if there exist three recursive functions $f\colon\N\times I\rightarrow\N$, $g\colon\N\times I\rightarrow\N$, and $h\colon\N\times I\rightarrow\N$ such that for each $i\in I$, $\bigl\{(-1)^{h(n,i)}f(n,i)/g(n,i)\bigr\}_{n\in\N}$ is nondecreasing (resp.\ nonincreasing) and converges to $x_i$ as $n\to+\infty$. Moreover, a real number $x$ is called \emph{lower} (resp.\ \emph{upper}) \emph{semi-computable} if the sequence $\{x_i\}_{i\in \N}$ defined by $x_i\coloneqq x$ for each $i\in \N$ is uniformly lower (resp.\ upper) semi-computable.
    \end{definition}

    \medskip
    \noindent\textbf{Lower semi-computable open sets.} We define an effective version of open sets and collect some relevant results.

    Let $(X,\,\rho,\,\cS)$ be a computable metric space. Let $\cB$ be the set of ideal balls, and $\{B_l\}_{l\in\N}$ be an effective enumeration of ideal balls in $(X,\,\rho,\,\cS)$. 
    We define the set $\cB_0 \= \cB\cup\{\emptyset\}$ of \emph{extended ideal balls} and an enumeration $\{D_l\}_{l\in\N}$ of $\cB_0$ such that $D_1=\emptyset$ and $D_l=B_{l-1}$ for each integer $l\geq 2$. 
    We call such an enumeration an \emph{effective enumeration of extended ideal balls} in $(X,\,\rho,\,\cS)$. 

\begin{definition}[Lower semi-computable open set] \label{def:lower semicomputability and uniform version of open set}
    Let $(X,\,\rho,\,\mathcal{S})$ be a computable metric space, and $\{D_l\}_{l\in\N}$ be an effective enumeration of extended ideal balls. 
    Then a sequence $\{U_i\}_{i\in I}$ of open sets in $X$ is said to be \emph{uniformly lower semi-computable open} (in $(X,\,\rho,\,\cS)$) if there exists a recursive function $f\colon\N\times I\rightarrow\N$ such that $U_i=\bigcup_{n\in\N}D_{f(n,i)}$ for each $i\in I$. Moreover, an open set $U\subseteq X$ is called \emph{lower semi-computable open} (in $(X,\,\rho,\,\cS)$) if the sequence $\{U_i\}_{i\in \N}$ defined by $U_i\=U$ for $i\in \N$ is uniformly lower semi-computable open.
\end{definition}

The above definition of a lower semi-computable open set differs slightly from the ones in \cite[Definition~3.4]{binder2011computability} and \cite[Definition~2.4]{binder2014computable}. In our definition, we use extended ideal balls, which include the empty set $\emptyset$.



The term \emph{recursively open set} in the literature (e.g.~\cite[Subsection~2.2~and~Definition~2.4]{galatolo2011dynamics} and \cite[Subsection~3.3]{hoyrup2009computability}) is equivalent to the notion of \emph{lower semi-computable open set} defined above. 
A detailed discussion of this equivalence is provided in \cite[Subsection~3.3]{Qiandu2025Lecture}.

Note that we can algorithmically decide whether $s\in B$ for each ideal point $s\in\cS$ and each extended ideal ball $B\in\cB_0$. The following result then follows immediately from Definition~\ref{def:lower semicomputability and uniform version of open set} (see e.g.~\cite[Proposition~3.9]{Qiandu2025Lecture}).

\begin{prop}\label{prop:semidecideinopenset}
    Let $(X,\,\rho,\,\cS)$ be a computable metric space with $\cS=\{s_n\}_{n\in\N}$. Assume that $\{U_i\}_{i\in I}$ is uniformly lower semi-computable open. Then there exists a recursively enumerable set $E\subseteq\N\times I$ such that $\{s_n:(n,i)\in E_i\}=\{s_n:n\in\N\}\cap U_i$, where $E_i\=\{(n,i)\in E:n\in\N\}$ for each $i\in I$.
\end{prop}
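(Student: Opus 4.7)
The plan is to define $E \coloneqq \{(n,i) \in \N \times I : s_n \in U_i\}$ and exhibit an algorithm that enumerates $E$. This set clearly satisfies the conclusion: for each $i \in I$, one has $\{s_n : (n,i) \in E_i\} = \{s_n : n \in \N,\, s_n \in U_i\} = \{s_n : n \in \N\} \cap U_i$.

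First I would unpack the hypothesis. By Definition~\ref{def:lower semicomputability and uniform version of open set}, there is a recursive function $f \colon \N \times I \to \N$ such that $U_i = \bigcup_{k\in\N} D_{f(k,i)}$ for every $i \in I$, where $\{D_l\}_{l\in\N}$ is a fixed effective enumeration of extended ideal balls. Hence $(n,i) \in E$ if and only if there exists $k \in \N$ with $s_n \in D_{f(k,i)}$.

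The key substep is a uniform semi-decision procedure for the predicate ``$s_n \in D_l$'' in the parameters $(n,l) \in \N^2$. From the input $l$ one can algorithmically decide whether $D_l = \emptyset$ (in which case the predicate is trivially false), and otherwise extract integers $c(l), p(l), q(l) \in \N$ with $D_l = B(s_{c(l)}, p(l)/q(l))$. By clause~(iii) of Definition~\ref{def:computablemetricspace}, the double sequence $\{\rho(s_m, s_n)\}_{(m,n) \in \N^2}$ is uniformly computable, so we can produce rational approximations $a_j = a_j(n,l)$ with $\abs{a_j - \rho(s_n, s_{c(l)})} < 2^{-j}$, effectively from $(n, l, j)$. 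Now $s_n \in D_l$ is equivalent to $\rho(s_n, s_{c(l)}) < p(l)/q(l)$, which holds if and only if there exists $j \in \N$ with $a_j(n,l) + 2^{-j} \leq p(l)/q(l)$; this last inequality is a decidable predicate in the rational number system and is triggered after finitely many steps precisely when $s_n \in D_l$.

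Finally I would fix an effective enumeration $\{i_m\}_{m \in \N}$ of $I$ and dovetail across $\N^4$: for each tuple $(n, m, k, j)$, compute $l \coloneqq f(k, i_m)$, check whether $D_l \neq \emptyset$, and if so test the inequality $a_j(n, l) + 2^{-j} \leq p(l)/q(l)$; whenever this test succeeds, output the pair $(n, i_m)$ to the enumeration of $E$. Since $\{D_{f(k,i)}\}_{k\in\N}$ exhausts $U_i$ and the semi-decision succeeds for every $s_n$ lying in some $D_{f(k,i)}$ once $j$ is large enough, this procedure enumerates exactly $E$ (with possible repetitions, which is harmless). The only technical subtlety, rather than a genuine obstacle, is that the strict inequality $\rho(s_n, s_{c(l)}) < p(l)/q(l)$ is only semi-decidable (we cannot refute the boundary case of equality), but semi-decidability is precisely what recursive enumeration requires.
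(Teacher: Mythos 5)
Your proposal is correct and follows essentially the route the paper intends: the paper gives no detailed proof, only the remark that membership of an ideal point in an extended ideal ball can be checked algorithmically, and your argument fleshes out exactly that via the computability of $\{\rho(s_m,s_n)\}_{(m,n)\in\N^2}$ together with a dovetailed enumeration over the recursive function $f$ witnessing lower semi-computability. Your closing observation that the strict inequality $\rho(s_n,s_{c(l)})<p(l)/q(l)$ is only semi-decidable (not decidable, as the paper's informal remark might suggest) is a correct and worthwhile refinement, and semi-decidability is indeed all that recursive enumerability requires.
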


The following two results are two classical results in computable analysis which both follow immediately from Definitions~\ref{def:effectiveenumeration}~and~\ref{def:lower semicomputability and uniform version of open set} (see e.g.~\cite[Propositions~3.10~\&~3.11]{Qiandu2025Lecture}).  

\begin{prop}\label{prop:lower semi-computable open}
    Let $(X,\,\rho,\,\mathcal{S})$ be a computable metric space. Assume that $H$ and $L$ are two nonempty recursively enumerable sets with $L\subseteq I\times H$, and that $\{U_{i,h}\}_{(i,h)\in L}$ is uniformly lower semi-computable open. Then $\{\bigcup\{U_{i,h}:(i,h)\in L_h\}\}_{h\in H}$ is uniformly lower semi-computable open, where $L_h\=\{(i,h)\in L:i\in I\}$ for each $h\in H$. In particular, if $\{U_i\}_{i\in I}$ is uniformly lower semi-computable open, then $\bigcup_{i\in I}U_i$ is lower semi-computable open.
\end{prop}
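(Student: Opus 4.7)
The plan is to unpack the hypothesis, combine the enumerations via a pairing function, and invoke Definition~\ref{def:lower semicomputability and uniform version of open set}. Fix an effective enumeration $\{D_l\}_{l\in\N}$ of extended ideal balls in $(X,\rho,\cS)$, and let $f\colon\N\times L\to\N$ be the recursive function witnessing uniform lower semi-computability of $\{U_{i,h}\}_{(i,h)\in L}$, so that $U_{i,h}=\bigcup_{n\in\N}D_{f(n,(i,h))}$ for every $(i,h)\in L$. Since $L$ is a nonempty recursively enumerable subset of $\N^*$, fix an effective enumeration $\{(i_k,h_k)\}_{k\in\N}$ of $L$.

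The main construction is a recursive function $g\colon\N\times H\to\N$ realising $V_h\=\bigcup\{U_{i,h}:(i,h)\in L_h\}$ as a union of extended ideal balls uniformly in $h\in H$. Fix a computable bijection $\pi\colon\N\times\N\to\N$ with recursive inverse (for instance, Cantor's pairing), and define $g$ by the following algorithm: on input $(m,h)\in\N\times H$, decode $m=\pi(n,k)$, consult the enumeration of $L$ to recover the pair $(i_k,h_k)$, and output $f(n,(i_k,h_k))$ if $h_k=h$ and output $1$ otherwise. Equality on $\N^*$ is decidable, so the test $h_k=h$ is algorithmic; all remaining steps are recursive, hence $g$ is recursive. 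Because $D_1=\emptyset$, the default output $1$ contributes nothing to the ensuing union.

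The verification is then a direct computation:
\begin{align*}
\bigcup_{m\in\N}D_{g(m,h)} &= \bigcup_{(n,k)\in\N^2}D_{g(\pi(n,k),h)}= \bigcup_{k\in\N,\,h_k=h}\bigcup_{n\in\N}D_{f(n,(i_k,h_k))}\\
&= \bigcup_{(i,h)\in L_h}U_{i,h}= V_h,
\end{align*}
which by Definition~\ref{def:lower semicomputability and uniform version of open set} exhibits $\{V_h\}_{h\in H}$ as uniformly lower semi-computable open. The ``in particular'' clause is then the special case $H=\{(1)\}$, $L=I\times\{(1)\}$, and $U_{i,(1)}\=U_i$; both $H$ and $L$ are recursively enumerable since $I$ is, and $L_{(1)}=L$, so the conclusion of the main statement yields that $\bigcup_{i\in I}U_i$ is lower semi-computable open.

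The main obstacle here is purely bookkeeping rather than substance: it suffices to invoke decidability of equality on $\N^*$, the existence of a recursive pairing, and the padding trick afforded by $D_1=\emptyset$ to absorb the case $h_k\neq h$. No semantic difficulty arises, since openness of each $V_h$ is automatic as a union of open sets.
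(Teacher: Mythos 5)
Your proof is correct and is exactly the argument the paper has in mind: the result is stated as following immediately from Definitions~\ref{def:effectiveenumeration} and~\ref{def:lower semicomputability and uniform version of open set}, and your pairing-function construction with the padding ball $D_1=\emptyset$ (to absorb the indices with $h_k\neq h$, including the case $L_h=\emptyset$) is the standard way to spell that out.
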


\begin{prop}		\label{prop:lowersemicomputableraiusball}
	Let $(X,\,\rho,\,\cS)$ be a computable metric space. Assume that $\{r_i\}_{i\in I}$ is a sequence of uniformly lower semi-computable real numbers and $\{x_i\}_{i\in I}$ is uniformly computable in $(X,\,\rho,\,\cS)$. Then $\{B(x_i,r_i)\}_{i\in I}$ is uniformly lower semi-computable open.
\end{prop}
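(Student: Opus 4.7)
The plan is to verify Definition~\ref{def:lower semicomputability and uniform version of open set} directly by constructing a recursive function $f\colon\N\times I\to\N$ such that $B(x_i,r_i)=\bigcup_{n\in\N}D_{f(n,i)}$ for every $i\in I$, where $\{D_l\}_{l\in\N}$ is the fixed effective enumeration of extended ideal balls in $(X,\,\rho,\,\cS)$.

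The first step will be the geometric covering identity
$$B(x_i,r_i)=\bigcup\bigl\{B(s_m,q) : s_m\in\cS,\ q\in\Q^+,\ \rho(x_i,s_m)+q<r_i\bigr\}.$$
The inclusion ``$\supseteq$'' is immediate from the triangle inequality. For ``$\subseteq$'', given $y\in B(x_i,r_i)$ I would set $\delta\=(r_i-\rho(x_i,y))/3>0$, pick an ideal point $s_m$ with $\rho(s_m,y)<\delta$ by density of $\cS$, and choose a rational $q\in(\delta,2\delta)$; then $y\in B(s_m,q)$ and $\rho(x_i,s_m)+q<\rho(x_i,y)+3\delta=r_i$.

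The second step will be to argue that the predicate ``$\rho(x_i,s_m)+q<r_i$'' is uniformly semi-decidable in $(i,m,q)\in I\times\N\times\Q^+$. The uniform computability of $\{x_i\}_{i\in I}$ (Definition~\ref{def:computability and uniform computability of point}) together with the uniform computability of $\{\rho(s_k,s_m)\}_{(k,m)\in\N^2}$ (Definition~\ref{def:computablemetricspace}) yields, via the triangle inequality, arbitrarily sharp rational approximations of $\rho(x_i,s_m)$ computable uniformly in $(i,m)$. Combined with uniformly lower semi-computable rational approximations of $r_i$ from below, a dovetailed comparison of these two sequences halts precisely when the target strict inequality holds. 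I would then enumerate candidate triples $(m,a,b)\in\N^3$ encoding $(s_m,a/b)$ using the recursive enumerability of $\N^3$, dovetail the above semi-decision over all such triples, and define $f(n,i)$ to be the index in $\{D_l\}$ of the $n$-th ideal ball so verified at parameter $i$; the covering identity above then gives $B(x_i,r_i)=\bigcup_{n\in\N}D_{f(n,i)}$.

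The main subtlety will be ensuring the totality of $f$: when $B(x_i,r_i)=\emptyset$ (for instance when $r_i\leq 0$), no triple ever satisfies the predicate, and the naive dovetailing could appear to stall before producing its $n$-th output. This is precisely why the empty set $D_1=\emptyset$ was added to the enumeration of extended ideal balls in Definition~\ref{def:lower semicomputability and uniform version of open set}; by running the dovetailing in discrete stages and outputting the index of $D_1$ as the default value of $f(n,i)$ whenever fewer than $n$ witnesses have appeared after $n$ stages, $f$ becomes total and the union $\bigcup_n D_{f(n,i)}$ is unchanged. This completes the plan.
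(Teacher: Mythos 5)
The paper does not actually prove this proposition; it is recorded as a classical fact that ``follows immediately from the definitions,'' with a pointer to the lecture notes it cites. Your argument is exactly the standard one: the covering identity $B(x_i,r_i)=\bigcup\{B(s_m,q):\rho(x_i,s_m)+q<r_i\}$, the semi-decidability of the strict inequality obtained by comparing rational upper approximations of $\rho(x_i,s_m)$ (via the oracle for $x_i$ and the computable distances between ideal points) against the nondecreasing rational approximations of $r_i$, and a dovetailed enumeration padded by the empty extended ideal ball $D_1$. Both inclusions of the covering identity and the semi-decidability step are correct as you wrote them.

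The one flaw is in the final scheduling. As literally stated, $f(n,i)$ returns the $n$-th verified ball only if at least $n$ witnesses have appeared within $n$ stages, and the index of $D_1$ otherwise. In a dovetailed search the $n$-th witness typically completes only at some stage $t_n>n$; in that case $f(n,i)$ is the index of $D_1$, and since that witness is emitted at no other argument of $f$, the union $\bigcup_{n}D_{f(n,i)}$ could omit it and be a proper subset of $B(x_i,r_i)$. The repair is immediate: let $f(n,i)$ be the index of the ball whose verification completes at stage $n$ of the dovetailing, and the index of $D_1$ if no verification completes at that stage. Every witness ever found is then emitted at the stage it is found, $f$ is total and recursive, and the union is exactly $B(x_i,r_i)$. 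With that adjustment the proof is complete.
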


    \medskip
    \noindent\textbf{Computability of functions.} We begin with the definition of oracles for points.

\begin{definition}[Oracle]  \label{def:oracle}
   	Let $(X,\,\rho,\,\mathcal{S})$ be a computable metric space with $\cS=\{s_i\}_{i\in\N}$, and $x\in X$. 
    We say that a function $\tau \colon \N \rightarrow \N$ is an \emph{oracle} for $x$ if $\rho( s_{\tau(n)},x)<2^{-n}$ for each $n\in\N.$
\end{definition}

With the above definition, computable functions can be defined as follows.

\begin{definition}[Computable function]  \label{def:Algorithm about computable functions}
    Let $(X,\,\rho,\,\mathcal{S})$ and $(X',\,\rho',\,\mathcal{S}')$ be computable metric spaces with $\cS = \{s_n\}_{n\in\N}$ and $\cS'=\{s'_n\}_{n\in\N}$. Assume that $\{i_n\}_{n\in\N}$ is an effective enumeration of $I$, and $C_i\subseteq X$ for each $i\in I$.
    Then a sequence $\{f_i\}_{i\in I}$ of functions $f_i\colon X \rightarrow X'$ is called a \emph{sequence of uniformly computable functions with respect to $\{C_i\}_{i\in I}$} if there exists an algorithm that, for all $l,\,n\in\N$, $x\in C_{i_n}$, and oracle $\tau$ for $x$, on input $l,\,n$, and $\tau$, outputs $m\in\N$ with $\rho'(s'_m,f_{i_n}(x))<2^{-l}$. We often omit the phrase ``with respect to $\{C_i\}_{i\in I}$'' when $C_i=X$ for all $i\in I$. Moreover, a function $f\colon X\rightarrow X^{\prime}$ is said to be a \emph{computable function on $C$} if $\{f_i\}_{i\in \N}$, defined by $f_i\=f$ for all $i\in \N$, is a sequence of uniformly computable functions with respect to $\{C_i\}_{i\in\N}$ defined by $C_i\=C$ for all $i\in\N$. We often omit the phrase ``with respect to $C$'' when $C=X$.
\end{definition}

Computable functions serve as an effective version of continuous functions. The following result provides examples of computable functions (see e.g.~\cite[Examples~4.3.3~and~4.3.13.5]{weihrauch2000computable}).

\begin{example}\label{exp:computablefunctionexample}
    The exponential function $\exp\:\R\rightarrow\R$ and the logarithmic function $\log\colon\R^+\rightarrow\R$ are computable functions.
\end{example}

We recall the following classical characterization of computable functions (cf.~\cite[Proposition~5.2.14]{rojas2021computable} and \cite[Proposition~3.6]{binder2011computability}; see also \cite[Theorem~3.17]{Qiandu2025Lecture}).

\begin{prop}\label{prop:computable function}
    Let $(X,\,\rho,\,\mathcal{S})$ and $(X^{\prime},\,\rho^{\prime},\,\cS^{\prime})$ be computable metric spaces. Suppose $\{B_n^{\prime}\}_{n\in\N}$ is an effective enumeration of ideal balls in $(X^{\prime},\,\rho^{\prime},\,\cS^{\prime})$. Given $f_i\colon X\rightarrow X^{\prime}$ and $C_i\subseteq X$ for each $i\in I$, the following statements are equivalent:
    \begin{enumerate}
        \smallskip
        \item The sequence $\{f_i\}_{i\in I}$ is a sequence of uniformly computable functions with respect to $\{C_i\}_{i\in I}$.
        \smallskip
        \item There exists a sequence $\{U_{n,i}\}_{(n,i)\in\N\times I}$ of uniformly lower semi-computable open sets in $(X,\,\rho,\,\mathcal{S})$ such that $f_i^{-1}(B_n^{\prime})\cap C_i=U_{n,i}\cap C_i$ for all $i\in I$ and $n\in\N$.
        \smallskip
        \item For each nonempty recursively enumerable set $M$ and each sequence $\{V_m^{\prime}\}_{m\in M}$ of uniformly lower semi-computable open sets, there exists a sequence $\{W_{m,i}\}_{(m,i)\in M\times I}$ of uniformly lower semi-computable open sets in $(X,\,\rho,\,\mathcal{S})$ such that $f_i^{-1}(V_m^{\prime})\cap C_i=W_{m,i}\cap C_i$ for all $m\in M$ and $i\in I$.
    \end{enumerate}
\end{prop}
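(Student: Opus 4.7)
My plan is to establish the cyclic chain $(i)\Rightarrow(ii)\Rightarrow(iii)\Rightarrow(i)$. A common ingredient is the standard fact that in a computable metric space, membership of a point (given by an oracle) in an ideal ball is semi-decidable: for $B(s_a, q)$ and an oracle $\tau$ for $x$, one searches for $j\in\N$ with $\rho(s_a, s_{\tau(j)}) + 2^{-j} < q$, which is semi-decidable because ideal distances are uniformly computable, and succeeds precisely when $x\in B(s_a, q)$.

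For $(iii)\Rightarrow(i)$, apply (iii) to the trivially uniformly lower semi-computable open family $\{B'_n\}_{n\in\N}$ to obtain $\{W_{n,i}\}$ with $W_{n,i}\cap C_i = f_i^{-1}(B'_n)\cap C_i$. Given $l, k\in\N$ and an oracle for $x\in C_{i_k}$, search in parallel over $n\in\N$ for an index such that $B'_n$ has radius less than $2^{-l}$ and the oracle witnesses $x\in W_{n,i_k}$ (using that each $W_{n,i_k}$ is a recursively enumerable union of ideal balls); this halts because $f_{i_k}(x)$ lies in arbitrarily small ideal balls, and the center-index of such a $B'_n$ is a valid $2^{-l}$-approximation of $f_{i_k}(x)$. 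For $(ii)\Rightarrow(iii)$, write each $V'_m$ as a recursively enumerable union $\bigcup_p D'_{g(p,m)}$ via the enumeration of extended ideal balls, discarding empty entries to extract a recursive reindexing $B'_{k(p,m)}$; by (ii), $f_i^{-1}(V'_m)\cap C_i = \bigl(\bigcup_p U_{k(p,m), i}\bigr)\cap C_i$, and Proposition~\ref{prop:lower semi-computable open} yields the required uniform lower semi-computability.

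The main obstacle is $(i)\Rightarrow(ii)$, which rests on the \emph{finite use property} of Type-2 algorithms: whenever the algorithm $A$ computing $\{f_i\}$ halts on input $(l, k, \tau)$, its output depends only on a finite prefix $\tau(1), \ldots, \tau(N)$ of the oracle. Fixing $B'_n = B(s'_{a_n}, r_n)$, I would enumerate tuples $(l, N, j_1, \ldots, j_N, m)$ such that $A$ simulated on an oracle whose first $N$ values are $j_1, \ldots, j_N$ halts by reading only those $N$ values, outputs $m$, and satisfies the semi-decidable inequality $\rho'(s'_m, s'_{a_n}) + 2^{-l} < r_n$. For each accepted tuple, add to $U_{n, i_k}$ every ideal ball $B(s_c, q)$ with $\rho(s_c, s_{j_p}) + q < 2^{-p}$ for all $1 \leq p \leq N$; the totality of such data is recursively enumerable uniformly in $(n, k)$, so $\{U_{n, i_k}\}_{(n, k) \in \N^2}$ is uniformly lower semi-computable open.

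To verify $U_{n, i_k}\cap C_{i_k} = f_{i_k}^{-1}(B'_n)\cap C_{i_k}$: for $\subseteq$, any $x$ in the left-hand side lies in some ideal ball $B(s_c, q)$ produced as above, and the conditions $\rho(s_c, s_{j_p}) + q < 2^{-p}$ guarantee that the recorded prefix $(j_1,\ldots,j_N)$ extends to a genuine oracle for $x$; since $x\in C_{i_k}$, the validity of $A$ gives $\rho'(s'_m, f_{i_k}(x)) < 2^{-l}$, which combined with the accepted inequality places $f_{i_k}(x)$ in $B'_n$ by the triangle inequality. Conversely, given $x\in f_{i_k}^{-1}(B'_n)\cap C_{i_k}$, choose $l$ with $\rho'(s'_{a_n}, f_{i_k}(x)) + 2^{-l+1} < r_n$, run $A$ with precision $l$ on any oracle for $x$ to obtain a halting output $m$, and read off the induced tuple $(l, N, \tau(1), \ldots, \tau(N), m)$: it is accepted, and the ideal balls produced around its prefix manifestly contain $x$. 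The delicate point is that $A$ need only halt on oracles of points in the $C_{i_k}$, so the enumerated tuples carry no content off the $C_i$; but the required equality is only asserted after intersecting with $C_i$, which matches the structure of (ii) precisely.
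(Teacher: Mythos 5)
Your proof is correct, and since the paper states this proposition without proof (deferring to the cited references), your cyclic argument $(i)\Rightarrow(ii)\Rightarrow(iii)\Rightarrow(i)$ --- with the finite-use property of Type-2 machines driving $(i)\Rightarrow(ii)$, composition through the basis of extended ideal balls for $(ii)\Rightarrow(iii)$, and a dovetailed search over small ideal balls for $(iii)\Rightarrow(i)$ --- is exactly the standard argument those references give. Your closing observation, that tuples arising from oracle prefixes of points outside $C_{i_k}$ are harmless because the equality is only asserted after intersecting with $C_{i_k}$ and the produced balls force the recorded prefix to be a genuine oracle prefix for every point they contain, is precisely the point that makes the relativized statement go through.
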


We now define a notion of weaker computability property for functions.

\begin{definition}[Semi-computable function] \label{def:domain of lower or upper computability}
    Let $(X,\,\rho,\,\mathcal{S})$ be a computable metric space, $\{i_n\}_{n\in\N}$ be an effective enumeration of $I$, and $C_i\subseteq X$ for each $i\in I$. A sequence $\{f_i\}_{i\in I}$ of functions $f_i\:X\rightarrow\R$ is a \emph{sequence of uniformly upper} (resp.\ \emph{lower}) \emph{semi-computable functions with respect to $\{C_i\}_{i\in I}$} if there exists an algorithm that, for all $l,\,n\in\N$, $x\in C_{i_n}$, and oracle $\tau$ for $x$, on input $l,\,n$, and $\tau$, outputs $q_{l,n,\tau}\in\Q$ such that for each $n\in\N$, each $x\in C_{i_n}$, and each oracle $\tau$ for $x$, $\{q_{l,n,\tau}\}_{l\in\N}$ is nonincreasing (resp.\ nondecreasing) and converges to $f_{i_n}(x)$ as $l\to+\infty$. We often omit the phrase ``with respect to $\{C_i\}_{i\in I}$''  when $C_i=X$ for each $i\in I$. Moreover, a function $f\colon X\rightarrow\R$ is said to be an \emph{upper} (resp.\ a \emph{lower}) \emph{semi-computable function on $C$} if $\{f_i\}_{i\in \N}$ defined by $f_i\=f$ for each $i\in \N$, is a sequence of uniformly upper (resp.\ lower) semi-computable functions with respect to $\{C_i\}_{i\in\N}$ defined by $C_i\=C$ for all $i\in\N$. We often omit the phrase ``with respect to $C$'' when $C=X$. 
\end{definition}

The following proposition is an immediate consequence of Proposition~\ref{prop:computable function} (see e.g.~\cite[Theorem~3.19]{Qiandu2025Lecture}).

\begin{prop}\label{prop:semicomputability}
    Let $(X,\,\rho,\,\mathcal{S})$ be a computable metric space, and $\cS_{\Q}=\{q_n\}_{n\in\N}$. Given $f_i\colon X\rightarrow\R$ and $C_i\subseteq X$ for all $i\in I$, the following statements are equivalent:
    \begin{enumerate}
        \smallskip
        \item The sequence $\{f_i\}_{i\in I}$ is a sequence of uniformly upper (resp.\ lower) semi-computable functions with respect to $\{C_i\}_{i\in I}$.
        \smallskip
        \item There exists a sequence $\{U_{n,i}\}_{(n,i)\in\N\times I}$ of uniformly lower semi-computable open sets in $(X,\,\rho,\,\mathcal{S})$ such that $f_i^{-1}(Q_n)\cap C_i=U_{n,i}\cap C_i$ with $Q_n \= (-\infty,q_n)$ (resp.\ $Q_n \= (q_n,+\infty)$) for all $i\in I$ and $n\in\N$.
        \smallskip
        \item For each nonempty recursively enumerable set $L$ and each sequence $\{r_l\}_{l\in L}$ of uniformly computable real numbers, there exists a sequence $\{W_{l,i}\}_{(l,i)\in L\times I}$ of uniformly lower semi-computable open sets in $(X,\,\rho,\,\mathcal{S})$ such that $f_i^{-1}(R_l)\cap C_i=W_{l,i}\cap C_i$ with $R_l\coloneqq(-\infty,r_l)$ (resp.\ $R_l\coloneqq(r_l,+\infty)$) for all $l\in L$ and $i\in I$.
    \end{enumerate}
\end{prop}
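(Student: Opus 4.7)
The plan is to prove (i) $\Leftrightarrow$ (ii) $\Leftrightarrow$ (iii) in the upper semi-computable case; the lower case is symmetric by applying the upper case to $\{-f_i\}$. The implication (iii) $\Rightarrow$ (ii) is immediate by taking $L \coloneqq \N$ and $r_n \coloneqq q_n$, which is uniformly computable in the standard structure on $\R$. For (ii) $\Rightarrow$ (iii), I will observe that the set $E \coloneqq \{(m, l) \in \N \times L : q_m < r_l\}$ is recursively enumerable, since a uniformly computable rational approximation of $r_l$ to accuracy $2^{-k}$ certifies $q_m < r_l$ once it exceeds $q_m + 2^{-k}$. Because $(-\infty, r_l) = \bigcup\{(-\infty, q_m) : (m, l) \in E\}$, setting $W_{l, i} \coloneqq \bigcup\{U_{m, i} : (m, l) \in E\}$ and invoking Proposition~\ref{prop:lower semi-computable open} produces the required uniformly lower semi-computable open sets with $W_{l, i} \cap C_i = f_i^{-1}(R_l) \cap C_i$.

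For (ii) $\Rightarrow$ (i), I would write $U_{m, i_n} = \bigcup_{k \in \N} D_{g(k, m, n)}$ using the recursive function $g$ from Definition~\ref{def:lower semicomputability and uniform version of open set}, where $\{D_l\}_{l \in \N}$ enumerates extended ideal balls. Given an oracle $\tau$ for $x \in C_{i_n}$, membership ``$x \in D_{g(k, m, n)}$'' is semi-decidable: one certifies $x \in B(s, r)$ as soon as the oracle yields $s_{\tau(j)}$ with $\rho(s_{\tau(j)}, s) + 2^{-j} < r$. At stage $l$, I dovetail these semi-decisions in a fixed enumeration of pairs $(m, k)$ until $l + 1$ distinct values of $m$ have been certified; this unbounded search always terminates for $x \in C_{i_n}$ because $\{m : x \in U_{m, i_n}\} = \{m : q_m > f_{i_n}(x)\}$ is infinite. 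Letting $M_l$ denote the resulting set of $l + 1$ certified values, I define $q_{l, n, \tau} \coloneqq \min\{q_m : m \in M_l\}$. The inclusion $M_l \subseteq M_{l+1}$ makes the sequence nonincreasing; each certified $m$ witnesses $f_{i_n}(x) < q_m$; and for any $q^* > f_{i_n}(x)$ some certified $q_m \in (f_{i_n}(x), q^*)$ eventually enters $M_l$, yielding convergence to $f_{i_n}(x)$ from above.

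The main obstacle lies in (i) $\Rightarrow$ (ii), which demands extracting uniformly lower semi-computable open sets $U_{m, i_n}$ from the Type-2 algorithm $A$ producing $q_{l, n, \tau}$. I will exploit the continuity of Type-2 computations: every terminating run of $A$ on oracle $\tau$ consults only a finite prefix $\tau(0), \dots, \tau(K)$, and the output depends solely on that prefix. I then enumerate all finite tuples $(s'_0, \dots, s'_K)$ of ideal points together with $l \in \N$ for which the simulation of $A$ on input $(l, n)$ — supplying $s'_j$ at query $j \leq K$ and aborting on any query of index $> K$ — halts with output strictly below $q_m$. Each such certified tuple guarantees $f_{i_n} < q_m$ on $C_{i_n} \cap \bigcap_{j = 0}^{K} B(s'_j, 2^{-j})$, since every $x$ in this intersection admits an oracle extending the tuple. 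The hard part will be to re-express these finite intersections of open balls as unions of \emph{extended ideal balls}, as demanded by Definition~\ref{def:lower semicomputability and uniform version of open set}: I resolve this by enumerating inner ideal balls $B(s, r)$ with $s \in \cS$ and $r \in \Q^+$ satisfying the semi-decidable condition $\rho(s, s'_j) + r < 2^{-j}$ for all $j \leq K$, which both forces $B(s, r) \subseteq \bigcap_j B(s'_j, 2^{-j})$ and still covers every point of the intersection. The union of all such inner balls, indexed recursively over all certified tuples and all $l$, defines $U_{m, i_n}$. Conversely, any $x \in C_{i_n}$ with $f_{i_n}(x) < q_m$ admits an oracle $\tau$ and a stage $l$ for which $A$ outputs a rational strictly below $q_m$, and the finite prefix of $\tau$ consulted during that run becomes one of the certified tuples witnessing $x \in U_{m, i_n}$.
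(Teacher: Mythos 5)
Your argument is correct, but note that the paper does not actually prove this proposition: it is recorded as an immediate consequence of Proposition~\ref{prop:computable function} (the analogous three-way characterization for computable functions), with details deferred to an external reference. What you have done is essentially reprove, in the one-sided setting, the machinery underlying that proposition: the use-principle argument in your (i)$\Rightarrow$(ii) --- enumerating finite oracle prefixes on which the simulation halts with output below $q_m$, and replacing the resulting finite intersections $\bigcap_{j\le K}B(s'_j,2^{-j})$ by unions of inscribed ideal balls certified by the semi-decidable condition $\rho(s,s'_j)+r<2^{-j}$ --- is exactly the standard proof of Proposition~\ref{prop:computable function}, specialized to preimages of $(-\infty,q_m)$ instead of arbitrary ideal balls. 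The paper's route buys brevity (upper semi-computability into $\R$ is computability into $\R$ equipped with the one-sided topology generated by the rays $(-\infty,q_n)$, so the earlier characterization applies verbatim); yours buys self-containedness. Three small points should be made explicit, though none affects correctness: (a) in (i)$\Rightarrow$(ii) the certified tuples should record the oracle \emph{indices} $(\tau(0),\dots,\tau(K))$ rather than the ideal points themselves, so that ``an oracle extending the tuple'' is well defined; (b) Definition~\ref{def:lower semicomputability and uniform version of open set} requires a total recursive function enumerating extended ideal balls, so your r.e.\ union of inscribed balls must be padded with the empty ball $D_1=\emptyset$ while the search has produced nothing; (c) the reduction of the lower case to the upper case via $-f_i$ needs the (decidable) index translation between the enumerations $\{q_n\}$ and $\{-q_n\}$ of $\Q$, since statement~(ii) is phrased relative to the fixed enumeration $\cS_{\Q}$.
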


The following result indicates that the characteristic function of a lower semi-computable open set is a lower semi-computable function (see e.g.~\cite[Proposition~3.33]{Qiandu2025Lecture}).

\begin{prop}		\label{prop:characterfunctionislowersemicomputable}
	Let $(X,\,\rho,\,\mathcal{S})$ be a computable metric space. Assume that $\{U_i\}_{i\in I}$ is uniformly lower semi-computable open. Then there exists a sequence $\{h_{n,i}\}_{(n,i)\in\N\times I}$ of uniformly computable functions $h_{n,i}\colon X\rightarrow\R$ such that for each $i\in I$, the following properties are satisfied:
	\begin{enumerate}
		\smallskip
		
		\item For each $x\in X$, $\{h_{n,i}(x)\}_{n\in\N}$ is nondecreasing and converges to $\mathbbm{1}_{U_i}(x)$ as $n\to+\infty$.
		
		\smallskip
		
		\item For each $n\in\N$, $h_{n,i}(x)\geq 0$ for each $x\in X$ and $h_{n,i}(x)=0$ for each $x\notin U_{i}$.
	\end{enumerate}  
\end{prop}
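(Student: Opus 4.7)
The plan is to approximate each $U_i$ from within by uniformly computable piecewise-linear bumps supported in the ideal balls whose union is $U_i$, and then to take finite suprema to obtain the desired monotone approximating family. First I would fix an effective enumeration $\{D_l\}_{l\in\N}$ of extended ideal balls in $(X,\rho,\cS)$ with $D_1=\emptyset$ and, for $l\geq 2$, $D_l=B(s_{a_l},r_l)$ where $a_l\in\N$ and $r_l\in\Q^+$ are extractable from $l$ by an algorithm. By Definition~\ref{def:lower semicomputability and uniform version of open set}, there exists a recursive function $f\colon\N\times I\to\N$ with $U_i=\bigcup_{k\in\N}D_{f(k,i)}$ for every $i\in I$.

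Next, for each $(l,m)\in\N\times\N$ with $l\geq 2$ I would define
\[
    \phi_{l,m}(x) \= \max\bigl\{0,\,\min\{1,\,m(r_l-\rho(x,s_{a_l}))\}\bigr\},
\]
and set $\phi_{1,m}\equiv 0$. Since the sequence of centers $\{s_{a_l}\}_{l\in\N}$ is uniformly computable and arithmetic operations together with $\min$ and $\max$ preserve uniform computability on $\R$, the family $\{\phi_{l,m}\}_{(l,m)\in\N^2}$ is uniformly computable on $X$. By construction $0\leq\phi_{l,m}\leq\indicator{D_l}$, $\phi_{l,m}$ is nondecreasing in $m$, and $\phi_{l,m}(x)\to\indicator{D_l}(x)$ as $m\to+\infty$ for every fixed $x$. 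I then put
\[
    h_{n,i}(x) \= \max_{1\leq k\leq n}\phi_{f(k,i),\,n}(x).
\]
Because $f$ is recursive, the integers $f(k,i)$ for $k\leq n$ are produced algorithmically from $(n,i)$, and a finite max of uniformly computable functions whose indices are generated by a recursive function is again uniformly computable; thus $\{h_{n,i}\}_{(n,i)\in\N\times I}$ is uniformly computable.

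Property (ii) would then follow immediately: $h_{n,i}\geq 0$ always, and if $x\notin U_i$ then $x\notin D_{f(k,i)}$ for every $k$, whence $\phi_{f(k,i),n}(x)=0$ for all $k$. For (i), monotonicity in $n$ follows from the enlargement of the index set in the max combined with monotonicity of $\phi_{l,m}$ in $m$. Convergence is trivial off $U_i$; and for $x\in U_i$ I would pick $k_0$ with $x\in D_{f(k_0,i)}$ and note that $\phi_{f(k_0,i),n}(x)=1$ for every $n\geq\max\bigl\{k_0,\,\lceil (r_{f(k_0,i)}-\rho(x,s_{a_{f(k_0,i)}}))^{-1}\rceil\bigr\}$, forcing $h_{n,i}(x)\to 1=\indicator{U_i}(x)$.

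The hard part is honestly quite mild: it is only to maintain joint uniformity of the computability in $(n,i)$ when assembling the finite max. This reduces to the standard facts that (a) the center–radius data of $D_l$ is recovered from $l$ algorithmically by the effective enumeration of extended ideal balls, (b) composition and arithmetic are uniformly computable on $\R$, and (c) a max indexed by a recursively bounded set of recursively generated indices is uniformly computable. No single step is subtle, but one must be careful to route all the indices through the fixed recursive function $f$ and the fixed enumeration $\{D_l\}$ so that the whole construction is algorithmic in the pair $(n,i)$.
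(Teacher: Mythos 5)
Your proof is correct and follows essentially the approach the paper intends (the paper defers the proof to [He25, Proposition~3.33], and its underlying device is exactly yours): approximate $\indicator{U_i}$ from below by Lipschitz bumps supported inside the constituent extended ideal balls $D_{f(k,i)}$ and take finite maxima with a steepness parameter tied to $n$. All the uniformity bookkeeping you flag — recovering center/radius data from the effective enumeration, closure of uniform computability under arithmetic, $\min$, $\max$, and finite maxima over recursively generated index sets — goes through as you describe.
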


    \medskip
    \noindent\textbf{Recursively compact sets and recursively precompact metric spaces.} Here we recall the definitions of recursive compactness and recursive precompactness. For a more detailed discussion, see \cite[Section~2]{galatolo2011dynamics}.

\begin{definition}[Recursively compact set]\label{def:recursively compact}
    Let $(X,\,\rho,\,\mathcal{S})$ be a computable metric space with $\cS=\{s_i\}_{i\in\N}$, and $\{i_l\}_{l\in\N}$ be an effective enumeration of $I$. A sequence $\{K_i\}_{i\in I}$ of compact sets in $X$ is called \emph{uniformly recursively compact} (in $(X,\,\rho,\,\cS)$) if there exists an algorithm that, for each $n\in\N$, each sequence $\{m_n\}_{n=1}^p$ of integers, and each sequence $\{q_n\}_{n=1}^p$ of positive rational numbers, upon input, halts if and only if $K_{i_l}\subseteq\bigcup_{n=1}^pB(s_{m_n},q_n)$. 
    Moreover, a set $K\subseteq X$ is called \emph{recursively compact} (in $(X,\,\rho,\,\cS)$) if the sequence $\{K_i\}_{i\in \N}$ defined by $K_i\=K$ for each $i\in\N$, is uniformly recursively compact.
\end{definition}

Note that for each compact set $K$ and each function $f\colon\N\rightarrow\N$, $K\subseteq\bigcup_{n\in\N}D_{f(n)}$ if and only if $K\subseteq\bigcup_{n=1}^kD_{f(n)}$ for some $k\in\N$. This implies the following result.

\begin{prop}\label{prop:equivalentdefinitionofrecursivelycompact}
    Let $(X,\,\rho,\,\cS)$ be a computable metric space. Suppose $\{h_m\}_{m\in\N}$ (resp.\ $\{l_n\}_{n\in\N}$) is an effective enumeration of a nonempty recursively enumerable set $H$ (resp.\ $L$). Assume that $\{K_h\}_{h\in H}$ is uniformly recursively compact and $\{U_l\}_{l\in L}$ is uniformly lower semi-computable open. Then there exists an algorithm that, for all $m,\,n\in\N$, upon input, halts if and only if $K_{h_m}\subseteq U_{l_n}$.
\end{prop}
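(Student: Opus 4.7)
The plan is to reduce the containment $K_{h_m} \subseteq U_{l_n}$ to a dovetailed search for a finite subcover of $K_{h_m}$ by ideal balls drawn from the effective presentation of $U_{l_n}$, and then to invoke the uniform recursive compactness of $\{K_h\}_{h \in H}$ on each such candidate subcover.

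First, I will apply Definition~\ref{def:lower semicomputability and uniform version of open set} to $\{U_l\}_{l \in L}$ to obtain a recursive function $f \colon \N \times L \to \N$ such that $U_l = \bigcup_{k \in \N} D_{f(k, l)}$ for each $l \in L$, where $\{D_k\}_{k \in \N}$ is the fixed effective enumeration of extended ideal balls in $(X, \rho, \cS)$. Since this enumeration is effective, on input $k$ one can algorithmically decide whether $D_k = \emptyset$ (equivalently, whether $k = 1$), and otherwise extract an ideal point index $i \in \N$ together with a positive rational radius $q \in \Q^+$ satisfying $D_k = B(s_i, q)$. By compactness of $K_{h_m}$, the containment $K_{h_m} \subseteq U_{l_n}$ holds if and only if there exists $N \in \N$ with $K_{h_m} \subseteq \bigcup_{k = 1}^{N} D_{f(k, l_n)}$, and discarding empty terms does not affect this equivalence.

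The algorithm on input $(m, n) \in \N^2$ will first compute $h_m$ and $l_n$ from the effective enumerations of $H$ and $L$. It then dovetails over $N \in \N$ the following subroutine: compute $f(k, l_n)$ for $k = 1, \dots, N$, discard the indices with $f(k, l_n) = 1$, extract the center index and rational radius for each of the remaining $D_{f(k, l_n)}$, and feed this finite sequence of ball parameters together with $h_m$ into the uniform recursive compactness procedure of $\{K_h\}_{h \in H}$ from Definition~\ref{def:recursively compact}. The main algorithm halts as soon as any subroutine halts. By the recursive compactness hypothesis, the subroutine associated with a fixed $N$ halts exactly when the corresponding finite union of ideal balls covers $K_{h_m}$; combining this with the equivalence above, the main algorithm halts if and only if $K_{h_m} \subseteq U_{l_n}$.

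I do not expect any serious obstacle. The only conceptual ingredient is the reduction of an open cover to a finite subcover by compactness of $K_{h_m}$, and everything else is routine dovetailing over the effective enumerations of $H$, $L$, and the extended ideal balls. One minor edge case---when the first $N$ terms of the expansion of $U_{l_n}$ all equal $\emptyset$---is handled by the convention that the recursive compactness procedure accepts an empty sequence of balls and halts precisely when $K_{h_m} = \emptyset$.
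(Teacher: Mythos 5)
Your proposal is correct and follows exactly the route the paper intends: the paper gives no detailed proof, only the preceding remark that $K\subseteq\bigcup_{n\in\N}D_{f(n)}$ iff $K$ is covered by a finite initial segment, and your dovetailed search over $N$ combined with the Definition~\ref{def:recursively compact} procedure is precisely how that remark yields the proposition. Your explicit handling of the empty-cover edge case is a reasonable reading of Definition~\ref{def:recursively compact} and does not change the argument.
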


We collect some fundamental properties of recursively compact sets (cf.\ \cite[Propositions~1~\&~3]{galatolo2011dynamics}; see also \cite[Proposition~3.23]{Qiandu2025Lecture}).

\begin{prop}\label{thecomplementofrecursivelycompactset}
    Let $(X,\,\rho,\,\mathcal{S})$ be a computable metric space. Assume that $X$ is recursively compact, and $\{K_i\}_{i\in I}$ is uniformly recursively compact. Then the following statements are true:
    \begin{enumerate}
        \smallskip
        \item Let $x_i\in X$ for each $i\in I$. Then $\{x_i\}_{i\in I}$ is uniformly computable if and only if the sequence $\{\{x_i\}\}_{i\in I}$ of singletons is uniformly recursively compact.
        \smallskip
        \item $\{X\smallsetminus K_i\}_{i\in I}$ is uniformly lower semi-computable open.
        \smallskip
        \item If $\{U_i\}_{i\in I}$ is uniformly lower semi-computable open, then $\{K_i\smallsetminus U_i\}_{i\in I}$ is uniformly recursively compact.
        \smallskip
        \item If $\{f_i\}_{i\in I}$ is a sequence of uniformly lower (resp.\ upper) semi-computable functions $f_i\colon X\rightarrow\R$ with respect to $\{K_i\}_{i\in I}$, then $\{\inf_{x\in K_i}f_i(x)\}_{i\in I}$ (resp.\ $\{\sup_{x\in K_i}f_i(x) \}_{i\in I}$) is uniformly lower (resp.\ upper) semi-computable.
        \smallskip
        \item If $\{T_i\}_{i\in I}$ is a sequence of uniformly computable functions $T_i\colon X\rightarrow X$ with respect to $\{K_i\}_{i\in I}$, then $\{T_i(K_i)\}_{i\in I}$ is uniformly recursively compact.
    \end{enumerate}
\end{prop}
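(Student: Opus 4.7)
\medskip

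\textbf{Proof proposal for Proposition~\ref{thecomplementofrecursivelycompactset}.} The plan is to treat the five parts in an order that lets later parts reuse the earlier ones, with the bulk of the work concentrated in (ii) and (iii); parts (i), (iv), (v) are then variations on the theme ``a closed set $K$ is contained in an open set $U$ is semi-decidable uniformly in codes of $K$ and $U$'' (this is exactly the content of Proposition~\ref{prop:equivalentdefinitionofrecursivelycompact}, which will be my main workhorse).

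First I would handle (i). For the forward direction, given a modulus function $f\colon\N\times I\to\N$ for $\{x_i\}$, a finite cover $\bigcup_{k=1}^{p}B(s_{m_k},q_k)$ contains $\{x_i\}$ iff $x_i$ lies in one of the balls, and this is semi-decidable: keep enumerating $n\in\N$ and halt as soon as $\rho(s_{f(n,i)},s_{m_k})+2^{-n}<q_k$ for some $k$, which is possible because all relevant distances are uniformly computable reals. For the reverse direction, use the singleton recursive compactness to semi-decide, for each ideal ball $B(s_m,2^{-n})$, whether $\{x_i\}\subseteq B(s_m,2^{-n})$; the first success for each $n$ gives the required modulus.

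For (ii), the key observation is that each point $y\in X\smallsetminus K_i$ lies in some ideal ball $B(s,r)$ that is \emph{verifiably} disjoint from $K_i$ in the following sense: there is a finite cover $K_i\subseteq\bigcup_{k=1}^{p}B(s_{m_k},r_k)$ by ideal balls satisfying the rational inequalities $\rho(s,s_{m_k})\geq r+r_k$, and this configuration certifies $B(s,r)\cap K_i=\emptyset$ while being semi-decidable (the distance inequalities are semi-decidable by computability of the ideal distances, and the cover condition is semi-decidable by Proposition~\ref{prop:equivalentdefinitionofrecursivelycompact}). Using $\rho(y,K_i)>0$ together with the triangle inequality, one can produce such an $(s,r)$ around any $y\notin K_i$, so the union of all verifiably disjoint ideal balls exhausts $X\smallsetminus K_i$. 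Enumerating these pairs in parallel over $i$ and over extended-ideal-ball indices yields the required recursive function presenting $\{X\smallsetminus K_i\}$ as uniformly lower semi-computable open. For (iii), write $U_i=\bigcup_n D_{f(n,i)}$ and note that a finite ideal cover $V$ satisfies $K_i\smallsetminus U_i\subseteq V$ iff $K_i\subseteq V\cup U_i$; one dovetails over finite sub-collections of $\{D_{f(n,i)}\}$ and invokes Proposition~\ref{prop:equivalentdefinitionofrecursivelycompact} on $K_i\subseteq V\cup D_{f(1,i)}\cup\cdots\cup D_{f(N,i)}$, halting as soon as the test succeeds for some $N$.

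For (iv), I would reduce to Proposition~\ref{prop:semicomputability}: given a rational $q$, the set $\{f_i>q\}$ cut with $K_i$ agrees on $K_i$ with some lower semi-computable open $W_{q,i}$, uniformly in $(q,i)$. Because $f_i$ is lower semicontinuous and $K_i$ is compact the infimum is attained, so $\inf_{K_i}f_i>q$ iff $K_i\subseteq W_{q,i}$, which is uniformly semi-decidable by Proposition~\ref{prop:equivalentdefinitionofrecursivelycompact}; enumerating rationals $q$ for which this test succeeds furnishes a nondecreasing rational approximation of $\inf_{K_i}f_i$, and the supremum case is symmetric. For (v), a finite ideal cover $V$ contains $T_i(K_i)$ iff $K_i\subseteq T_i^{-1}(V)$; by Proposition~\ref{prop:computable function}(iii) applied to the uniformly lower semi-computable open $V$, the preimage equals some $W_{V,i}$ on $K_i$ up to intersection, so the test $K_i\subseteq W_{V,i}$ is again semi-decidable by Proposition~\ref{prop:equivalentdefinitionofrecursivelycompact}.

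The main obstacle is the verification step in (ii): one must be careful that the \emph{enumerable} notion of ``ideal ball verifiably disjoint from $K_i$'' (captured by a rational distance inequality plus a recursive-compactness cover test) still exhausts the complement $X\smallsetminus K_i$ rather than only a proper sub-open-set. All other parts are essentially direct applications of the Proposition~\ref{prop:equivalentdefinitionofrecursivelycompact} criterion once the right open set has been identified.
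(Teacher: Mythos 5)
The paper does not prove this proposition; it is quoted from the literature (Galatolo--Hoyrup--Rojas and the companion notes), and your argument is essentially the standard proof given there: everything reduces to the uniform semi-decidability of inclusions $K\subseteq U$ (Proposition~\ref{prop:equivalentdefinitionofrecursivelycompact}), with part~(ii) handled by enumerating ideal balls that are verifiably disjoint from $K_i$ and checking via the exhaustion argument that these cover all of $X\smallsetminus K_i$. Your reductions in (i), (iii), (iv), (v) are all correct, including the use of lower semicontinuity plus compactness in (iv) to get attainment of the infimum, which is exactly what makes ``$\inf_{K_i}f_i>q$'' equivalent to ``$K_i\subseteq W_{q,i}$''.

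One small repair is needed in (ii): the certificate $\rho(s,s_{m_k})\geq r+r_k$ is \emph{not} semi-decidable, since for a computable real $x$ and a rational $q$ the predicate $x\geq q$ cannot be confirmed when $x=q$ (only strict inequalities $x>q$ are semi-decidable). You should use the strict inequality $\rho(s,s_{m_k})>r+r_k$; it still certifies $B(s,r)\cap B(s_{m_k},r_k)=\emptyset$, and the exhaustion argument goes through unchanged because the construction of the certificate around a point $y\notin K_i$ (choosing $r$ and the mesh of the cover with a definite amount of slack relative to $\rho(y,K_i)$) yields strict inequalities anyway. With that one-character change the proof is complete.
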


Next, we investigate whether the property of uniform computability for recursively compact sets is preserved under the union and intersection.

\begin{prop}    \label{prop:operatorsonrecursivelycompactsets}
    Let $(X,\,\rho,\,\mathcal{S})$ be a computable metric space. Suppose $X$ is recursively compact, $H$ and $L$ are two nonempty recursively enumerable sets with $L\subseteq I\times H$, and $\{K_{i,h}\}_{(i,h)\in L}$ is uniformly recursively compact. Denote $L_h\=\{(i,h)\in L:i\in I\}$ for each $h\in H$. Then the following statements are true:
    \begin{enumerate}
        \smallskip
        \item $\{\bigcap\{K_{i,h}:(i,h)\in L_h\}\}_{h\in H}$ is uniformly recursively compact. 
        \smallskip
        \item If the function $F\colon H\mapping\N$ defined by $F(h)\=\operatorname{card} L_h$ for $h\in H$ is recursive, then $\{\bigcup\{K_{i,h}:(i,h)\in L_h\}\}_{h\in H}$ is uniformly recursively compact.
    \end{enumerate}
\end{prop}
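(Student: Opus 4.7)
For part (i), the plan is to recast the intersection as the complement of a union of open sets and then apply the preceding propositions in sequence. First, by Proposition~\ref{thecomplementofrecursivelycompactset}~(ii), the sequence $\{X\smallsetminus K_{i,h}\}_{(i,h)\in L}$ is uniformly lower semi-computable open in $(X,\rho,\cS)$. Since $H$ and $L$ are recursively enumerable with $L\subseteq I\times H$, Proposition~\ref{prop:lower semi-computable open} then shows that the sequence $\{V_h\}_{h\in H}$ defined by $V_h\=\bigcup\{X\smallsetminus K_{i,h}:(i,h)\in L_h\}$ is uniformly lower semi-computable open. Because $X$ is recursively compact, the constant sequence $\{X\}_{h\in H}$ is uniformly recursively compact, so Proposition~\ref{thecomplementofrecursivelycompactset}~(iii) gives that $\{X\smallsetminus V_h\}_{h\in H}$ is uniformly recursively compact. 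Finally, De~Morgan's law yields $X\smallsetminus V_h=\bigcap\{K_{i,h}:(i,h)\in L_h\}$, which is the desired conclusion for~(i).

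For part (ii), the plan is to use the recursiveness of $F$ to reduce the infinite-looking union $\bigcup\{K_{i,h}:(i,h)\in L_h\}$ to an algorithmically accessible finite union, and then to test the cover condition componentwise. Fix an effective enumeration of $L$; given $h\in H$, compute $F(h)$, then scan through the enumeration of $L$ until exactly $F(h)$ pairs with second coordinate $h$ have been produced, thereby listing all of $L_h$ algorithmically in $h$ (with the degenerate case $F(h)=0$ handled trivially). Now, for any finite collection of ideal balls $\set{B(s_{m_n},q_n)}_{n=1}^p$, one has $\bigcup_{(i,h)\in L_h}K_{i,h}\subseteq\bigcup_{n=1}^p B(s_{m_n},q_n)$ if and only if $K_{i,h}\subseteq\bigcup_{n=1}^p B(s_{m_n},q_n)$ for every $(i,h)\in L_h$. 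Running the algorithm provided by the uniform recursive compactness of $\{K_{i,h}\}_{(i,h)\in L}$ in parallel on each of the finitely many produced pairs, and halting precisely when all of these runs halt, yields the required uniform procedure for $\{\bigcup\{K_{i,h}:(i,h)\in L_h\}\}_{h\in H}$.

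The main subtlety lies in part (ii): without the recursiveness of $F$, one could not know when the list of pairs in $L_h$ is exhausted, and an infinite union of recursively compact sets need not be recursively compact even when the cover holds set-theoretically, because a single unchecked $K_{i,h}$ could a priori leak out of $\bigcup_{n=1}^p B(s_{m_n},q_n)$. The recursiveness of $h\mapsto \operatorname{card} L_h$ is exactly what converts the semi-decidable union into a decidable finite one; part (i), by contrast, needs no such hypothesis because intersections dualize to unions of opens, whose lower semi-computability is automatically closed under recursively enumerable unions via Proposition~\ref{prop:lower semi-computable open}.
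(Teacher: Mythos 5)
Your argument is correct and matches the paper's intended proof: the paper states that (i) follows immediately from Proposition~\ref{prop:lower semi-computable open} together with Proposition~\ref{thecomplementofrecursivelycompactset}~(ii) and~(iii) (exactly your De~Morgan reduction), and that (ii) follows from Definition~\ref{def:recursively compact} (exactly your use of $F$ to exhaust $L_h$ and then run the cover tests in parallel). The only cosmetic remark is that the degenerate case $F(h)=0$ cannot occur, since $F$ maps into $\N=\{1,2,\dots\}$ in this paper's conventions.
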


Proposition~\ref{prop:operatorsonrecursivelycompactsets}~(i) follows immediately from Proposition~\ref{prop:lower semi-computable open} and Proposition~\ref{thecomplementofrecursivelycompactset}~(ii) and~(iii). Moreover, Proposition~\ref{prop:operatorsonrecursivelycompactsets}~(ii) follows from Definition~\ref{def:recursively compact}. As a corollary of Proposition~\ref{prop:operatorsonrecursivelycompactsets}~(ii), we obtain the following result, which is important in the proof of Theorem~\ref{Application B}.

\begin{cor}\label{cor:preimageisloweropen}
    Let $(X,\,\rho,\,\cS)$ be a computable metric space. Assume that $X$ is recursively compact, $T\colon X\mapping X$ is a computable function, and $\{U_i\}_{i\in I}$ is uniformly lower semi-computable open. Then $\{V_{n,i}\}_{(n,i)\in\N\times I}$ is uniformly lower semi-computable open, where $V_{n,i}$ is defined inductively by setting $V_{1,i} \= U_i$ and $V_{n+1,i}\=T^{-1}(V_{n,i})\cap U_i$ for each $n\in\N$ and each $i\in I$.
\end{cor}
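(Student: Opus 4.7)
The plan is to work with complements: convert the lower semi-computable open sets into recursively compact sets via Proposition~\ref{thecomplementofrecursivelycompactset}~(ii)~and~(iii), then apply Proposition~\ref{prop:operatorsonrecursivelycompactsets}~(ii) to handle the finite unions that arise from unrolling the recursion. A direct induction in terms of lower semi-computable open sets would require showing that binary intersections of such sets remain uniformly lower semi-computable open, which is awkward to extract from the given propositions; passing to complements bypasses this difficulty, since unions of recursively compact sets are already covered by Proposition~\ref{prop:operatorsonrecursivelycompactsets}~(ii).

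First I would unroll the recursion. A straightforward induction on $n$ shows
\begin{equation*}
    V_{n,i} = \bigcap_{k=0}^{n-1} T^{-k}(U_i), \qquad \text{equivalently,} \qquad X\smallsetminus V_{n,i} = \bigcup_{k=0}^{n-1} T^{-k}(X\smallsetminus U_i).
\end{equation*}
Next I would observe that $\{T^k\}_{k\in\N_0}$ is a sequence of uniformly computable functions on $X$: given oracles for a point $x$ together with an index $k$, iterating the algorithm for $T$ produces successive approximations of $T(x), T^2(x), \ldots, T^k(x)$, and this composition procedure is itself uniform in $k$ by construction. Combining this with Proposition~\ref{prop:computable function}~(iii) applied to the uniformly lower semi-computable open family $\{U_i\}_{i\in I}$, we obtain that $\{T^{-k}(U_i)\}_{(k,i)\in\N_0\times I}$ is uniformly lower semi-computable open.

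Since $X$ is recursively compact, Proposition~\ref{thecomplementofrecursivelycompactset}~(iii) then yields that $\{X\smallsetminus T^{-k}(U_i)\}_{(k,i)\in\N_0\times I} = \{T^{-k}(X\smallsetminus U_i)\}_{(k,i)\in\N_0\times I}$ is uniformly recursively compact. Now I would invoke Proposition~\ref{prop:operatorsonrecursivelycompactsets}~(ii) with index set $H\=\N\times I$ and, for $h=(n,i)\in H$, the finite subfamily $L_h\=\{(k,(n,i)):0\le k\le n-1\}$ of cardinality $n$, which is manifestly recursive in $h$. The conclusion is that $\{\bigcup_{k=0}^{n-1} T^{-k}(X\smallsetminus U_i)\}_{(n,i)\in\N\times I} = \{X\smallsetminus V_{n,i}\}_{(n,i)\in\N\times I}$ is uniformly recursively compact. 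A final application of Proposition~\ref{thecomplementofrecursivelycompactset}~(ii) gives that $\{V_{n,i}\}_{(n,i)\in\N\times I}$ is uniformly lower semi-computable open, as desired.

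The main obstacle is the (otherwise routine) uniform computability of $\{T^k\}_{k\in\N_0}$: one must exhibit a single algorithm that, given $k$ and an oracle for $x$, manages the accumulated precision required at each of the $k$ composition stages so that the final output approximates $T^k(x)$ within the demanded error. Everything else is a bookkeeping exercise with the stated propositions, and the reformulation via complements is precisely what makes Proposition~\ref{prop:operatorsonrecursivelycompactsets}~(ii) applicable.
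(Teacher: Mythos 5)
Your proposal is correct and follows essentially the same route as the paper's proof: unroll the recursion to $V_{n,i}=\bigcap_{k=0}^{n-1}T^{-k}(U_i)$, note that $\{T^{k}\}_{k\in\N_0}$ is uniformly computable so that $\{T^{-k}(U_i)\}$ is uniformly lower semi-computable open via Proposition~\ref{prop:computable function}, pass to complements with Proposition~\ref{thecomplementofrecursivelycompactset}~(iii), take the finite unions with Proposition~\ref{prop:operatorsonrecursivelycompactsets}~(ii), and complement back with Proposition~\ref{thecomplementofrecursivelycompactset}~(ii). No gaps.
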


\begin{proof}
    Since $T$ is a computable function, by Definition~\ref{def:computable function}, we obtain that $\{T^n\}_{n\in\N_0}$ is a sequence of uniformly computable functions. Then by Proposition~\ref{prop:computable function}, $\{T^{-n}(U_i)\}_{(n,i)\in\N_0\times I}$ is a sequence of uniformly lower semi-computable open sets. Hence, since $X$ is recursively compact, by Proposition~\ref{thecomplementofrecursivelycompactset}~(iii), $\{X\smallsetminus T^{-n}(U_i)\}_{(n,i)\in\N_0\times I}$ is a sequence of uniformly recursively compact sets. 
    
    Define $L\subseteq\N_0\times\N\times I$ by $L\coloneqq\{(m,n,i)\in\N_0\times\N\times I:m< n\}$. Then $L$ is a recursively enumerable set and $\{X\smallsetminus T^{-m}(U_i)\}_{(m,n,i)\in L}$ is uniformly recursively compact. Note that $\mathrm{card}\{m\in\N_0:(m,n,i)\in L\}=n$ for all $n\in\N$ and $i\in I$. Then the function $F\colon\N\times I\mapping\N_0$ given by $F(n,i)\=\mathrm{card}\{m\in\N_0:(m,n,i)\in L\}$ is a recursive function. Thus, by Proposition~\ref{prop:operatorsonrecursivelycompactsets}~(ii), we obtain that $\{\bigcup\{X\smallsetminus T^{-m}(U_i):m\in\N_0,\,(m,n,i)\in L\}\}_{(n,i)\in\N\times I}$ is a sequence of uniformly recursively compact sets. Hence, since $X$ is a recursively compact set, by Proposition~\ref{thecomplementofrecursivelycompactset}~(ii), $\{X\smallsetminus\bigcup\{X\smallsetminus T^{-m}(U_i):m\in\N_0,\,(m,n,i)\in L\}\}_{(n,i)\in\N\times I}=\{\bigcap\{T^{-m}(U_i):m\in\N_0,\,(m,n,i)\in L\}\}_{(n,i)\in\N\times I}$ is uniformly lower semi-computable open. Since $V_{1,i}=U_i$ and $V_{n+1,i}=T^{-1}(V_{n,i})\cap U_i$ for all $n\in\N$ and $i\in I$, it follows by induction that $V_{n+1,i}=\bigcap_{k=0}^{n}T^{-k}(U_i)$ for each $n\in\N_0$. Therefore, $\{V_{n,i}\}_{(n,i)\in\N\times I}$ is uniformly lower semi-computable open.
\end{proof}

Moreover, given the recursive compactness of $X$, the computability of functions is preserved under a finite number of operations among additions and multiplications. We summarize this property in the following result (cf.~\cite[Corollary~4.3.4]{weihrauch2000computable}; see also \cite[Proposition~3.26]{Qiandu2025Lecture}).

\begin{prop}\label{prop:computabilityoffunctionspreservedunderoperators}
    Let $(X,\,\rho,\,\cS)$ be a computable metric space in which $X$ is recursively compact, and $H$ be a nonempty recursively enumerable set. Assume that $\{f_i\}_{i\in I}$ (resp.\ $\{g_h\}_{h\in H}$) is a sequence of uniformly computable functions $f_i\colon X\mapping\R$ (resp.\ $g_h\colon X\mapping\R$). Then $\{f_i+g_h\}_{(i,h)\in I\times H}$, $\{f_i\cdot g_h\}_{(i,h)\in I\times H}$ are two sequences of uniformly computable functions.
\end{prop}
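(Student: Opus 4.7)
The plan is to verify both claims directly from Definition~\ref{def:Algorithm about computable functions}, using that $I\times H$ is recursively enumerable and admits an effective enumeration induced by those of $I$ and $H$. It then suffices to describe, for each operation, a single algorithm which, on input $(i,h,l,\tau)$ with $l\in\N$ and $\tau$ an oracle for a point $x\in X$, outputs an index $m\in\N$ with $q_m\in\cS_{\Q}$ within $2^{-l}$ of $f_i(x)\ast g_h(x)$, where $\ast$ denotes $+$ or $\cdot$.

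For the sum, the algorithm is immediate: run the algorithms witnessing uniform computability of $\{f_i\}_{i\in I}$ and $\{g_h\}_{h\in H}$ each at precision $l+2$ to obtain rationals $q_f, q_g$ that are $2^{-(l+2)}$-close to $f_i(x)$ and $g_h(x)$ respectively, and output an ideal point within $2^{-(l+2)}$ of $q_f+q_g$. A single application of the triangle inequality delivers the required $2^{-l}$-bound, and recursive compactness is not needed here.

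For the product, the natural error estimate splits $|q_f q_g - f_i(x)g_h(x)|$ into a term bounded by $|q_f|\,|q_g-g_h(x)|$ plus one bounded by $|g_h(x)|\,|q_f-f_i(x)|$, which forces me to obtain effective upper bounds on $|f_i(x)|$ and $|g_h(x)|$ before choosing the internal approximation precisions. This is where the recursive compactness of $X$ is essential. Since $\{f_i\}_{i\in I}$ is uniformly computable on the recursively compact space $X$, it is simultaneously uniformly upper and lower semi-computable; by Proposition~\ref{thecomplementofrecursivelycompactset}~(iv), the sequences $\{\sup_{x\in X} f_i(x)\}_{i\in I}$ and $\{\inf_{x\in X} f_i(x)\}_{i\in I}$ are uniformly upper and lower semi-computable respectively. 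From Definition~\ref{def:semicomputabilityoverR}, the initial term of each approximating sequence is accessible algorithmically and provides a rational upper (resp.\ lower) bound; combining them yields, uniformly in $i$, a rational $M_i \geq \|f_i\|_{\infty}$, and analogously $N_h \geq \|g_h\|_{\infty}$ uniformly in $h$. With these in hand, I choose internal precisions $\varepsilon_f \= 2^{-(l+2)}/(N_h+1)$ and $\varepsilon_g \= 2^{-(l+2)}/(M_i+1)$, compute $q_f, q_g$ to those precisions (so in particular $|q_f| \leq M_i+1$), and output an ideal point within $2^{-(l+2)}$ of $q_f q_g$; the adaptive choice and a final triangle inequality deliver the required $2^{-l}$-accuracy.

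The only nonroutine step is extracting the rational bounds $M_i$ and $N_h$, which is where the recursive compactness of $X$ is used decisively; without it the suprema of $|f_i|$ need not even be finite, let alone effectively bounded. Once these bounds are secured, the remainder is a straightforward uniformization of the classical pointwise estimates for sums and products, carried out along the effective enumeration of $I\times H$.
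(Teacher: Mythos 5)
Your argument is correct, and since the paper only cites \cite[Corollary~4.3.4]{weihrauch2000computable} for this proposition rather than proving it, your direct approximate-and-combine argument is a perfectly acceptable substitute; the bookkeeping for the sum and the adaptive precision choice for the product both check out (modulo the routine step of rounding $\varepsilon_f,\varepsilon_g$ down to dyadic precisions, which are what Definition~\ref{def:Algorithm about computable functions} actually accepts). The one point I would push back on is your closing claim that recursive compactness of $X$ is \emph{essential} for the product. It is not: given the oracle $\tau$ for $x$, you can first run the algorithms for $f_i$ and $g_h$ at precision $2^{-1}$ to obtain rationals $q_f^{(0)}, q_g^{(0)}$, whence $\abs{f_i(x)}\leq\absbig{q_f^{(0)}}+1$ and $\abs{g_h(x)}\leq\absbig{q_g^{(0)}}+1$ pointwise, and these rough bounds serve exactly the role of your $M_i$ and $N_h$ in choosing the refined precisions. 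This pointwise bootstrap avoids the detour through semi-computability of $\sup_X f_i$ and $\inf_X f_i$ and Proposition~\ref{thecomplementofrecursivelycompactset}~(iv), and shows the statement holds without the recursive compactness hypothesis. Your use of that hypothesis is of course legitimate since it is assumed, but the proof is cleaner and more general without it.

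($\absbig{\,\cdot\,}$ above denotes the paper's \verb|\Absbig| macro; concretely, $\bigl\lvert q_f^{(0)}\bigr\rvert$.)
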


Next, we recall the definition of recursively precompact metric space.

\begin{definition}[Recursively precompact metric space]		\label{def:recursivelyprecompact}
	Let $(X,\,\rho,\,\cS)$ be a computable metric space with $\cS=\{s_i\}_{i\in\N}$. Then $(X,\,\rho,\,\cS)$ is called \emph{recursively precompact} if there exists an algorithm that, for each $n\in\N$, on input $n$, outputs a finite subset $\{r_i:1\leq i\leq m\}$ of $\N$ such that $X=\bigcup_{i=1}^mB (s_{r_i},2^{-n})$.
\end{definition}

Finally, we record the following useful characterization of complete recursively precompact metric spaces (see e.g.~\cite[Proposition~4]{galatolo2011dynamics}).

\begin{prop}\label{prop:relationbetweencompactandprecompact}
	Let $(X,\,\rho,\,\cS)$ be a computable metric space. Then $X$ is recursively compact if and only if $(X,\rho)$ is complete and $(X,\,\rho,\,\cS)$ is recursively precompact.
\end{prop}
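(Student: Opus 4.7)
The plan is to prove both directions by translating between the two effective notions, exploiting the density of the ideal set $\cS$ and the uniform computability of distances between its points.

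For the forward direction, suppose $X$ is recursively compact. Then $X$ is in particular classically compact (as the halting condition in Definition~\ref{def:recursively compact} implies compactness), and hence $(X,\rho)$ is complete. To establish recursive precompactness, fix $n\in\N$ and dovetail, over all finite tuples $(r_1,\dots,r_m)\in\N^m$, the halting test of Definition~\ref{def:recursively compact} applied to the sequence $(r_1,2^{-n}),\dots,(r_m,2^{-n})$. Classical compactness guarantees that some tuple yields a cover of $X$ by balls of radius $2^{-n}$, so the dovetailing terminates on some $\{r_i:1\leq i\leq m\}$, which is then output as the required $2^{-n}$-net.

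For the converse, assume $(X,\rho)$ is complete and $(X,\rho,\cS)$ is recursively precompact. Total boundedness plus completeness gives compactness, so the task is to build the halting algorithm required by Definition~\ref{def:recursively compact}. Given a proposed cover specified by $(m_i,q_i)_{i=1}^{p}$, I would use the equivalence
\begin{equation*}
X\subseteq\bigcup_{i=1}^{p}B(s_{m_i},q_i)\iff\exists\,n\in\N\text{ and a }2^{-n}\text{-net }\{s_{r_1},\dots,s_{r_l}\}\text{ of }X\text{ s.t. }\forall\,j\,\exists\,i\colon\rho(s_{r_j},s_{m_i})<q_i-2^{-n}.
\end{equation*}
The ``$\Leftarrow$'' direction is immediate from the triangle inequality. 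For ``$\Rightarrow$'', the continuous function $f(x)\=\max_{i}(q_i-\rho(x,s_{m_i}))$ is strictly positive on $X$ whenever the cover succeeds, so by compactness there is $\delta>0$ with $f\geq\delta$; any $n$ with $2\cdot 2^{-n}<\delta$ then works for any $2^{-n}$-net of $X$ produced by recursive precompactness.

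The algorithm then dovetails over $n\in\N$: at stage $n$ it invokes the recursive precompactness algorithm to produce a $2^{-n}$-net $\{s_{r_1},\dots,s_{r_l}\}\subseteq\cS$, and it semi-decides, for every $j\in\{1,\dots,l\}$, whether some $i\in\{1,\dots,p\}$ satisfies $\rho(s_{r_j},s_{m_i})<q_i-2^{-n}$. Each such strict inequality between a uniformly computable real (the distance between two ideal points, cf.\ Definition~\ref{def:computablemetricspace}) and a rational threshold is semi-decidable via the rational approximations to $\rho(s_{r_j},s_{m_i})$. If all $l$ of these tests succeed at stage $n$, the algorithm halts. By the displayed equivalence, the algorithm halts iff $X$ is contained in the proposed cover, which is precisely the condition required by Definition~\ref{def:recursively compact}. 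The main obstacle is precisely this translation of the containment $X\subseteq\bigcup B(s_{m_i},q_i)$ into a condition verifiable on a finite, effectively producible subset of $\cS$; the margin trick with $2^{-n}$ and dovetailing over $n$ bridges compactness (which supplies the margin) with the semi-decidability of strict inequalities on computable reals.
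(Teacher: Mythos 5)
The paper does not prove this proposition; it only cites \cite[Proposition~4]{galatolo2011dynamics}, so there is no internal proof to compare against. Your argument is a correct, self-contained proof along the standard lines: the forward direction dovetails the recursive-compactness test over all candidate $2^{-n}$-ball covers drawn from $\cS$, and the converse reduces the containment $X\subseteq\bigcup_i B(s_{m_i},q_i)$ to a finite, semi-decidable condition on an effectively produced $2^{-n}$-net, with compactness supplying the positive margin $\delta$ that makes the strict inequalities $\rho(s_{r_j},s_{m_i})<q_i-2^{-n}$ eventually verifiable. Both the equivalence you display and the dovetailed halting algorithm are sound, provided (as you intend) that at each stage $n$ the semi-decision procedures are interleaved with the outer loop rather than run to completion.

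One small inaccuracy worth fixing: your parenthetical claim that ``the halting condition in Definition~\ref{def:recursively compact} implies compactness'' is not right as stated --- the halting condition alone does not force $X$ to be compact. Compactness is simply part of the hypothesis of that definition (it applies only to compact sets $K$), so the completeness of $(X,\rho)$ in the forward direction follows directly from the definition rather than from the algorithmic condition. This does not affect the validity of the proof.
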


\medskip
\noindent\textbf{Computability of probability measures.} 
Building upon the theory of computable functions and recursively compact sets, we now discuss the computability of probability measures. 
We begin by reviewing the computable structure on the measure space $\PPP(X)$ introduced in \cite[Section~4]{hoyrup2009computability} (cf.~\cite[Proposition~4.1.3]{hoyrup2009computability}; see also \cite[Proposition~3.29]{Qiandu2025Lecture}).

\begin{prop}    \label{prop:recursively compactness of measure space}
    Let $(X,\,\rho,\,\mathcal{S})$ be a computable metric space with $\cS=\{s_n\}_{n\in\N}$. 
    Assume that $X$ is recursively compact in $(X,\,\rho,\,\cS)$. Then the following statements are true:
    \begin{enumerate}
        \smallskip
        \item There exists an enumeration $\mathcal{Q}_{\mathcal{S}}=\{\nu_k\}_{k\in\N}$ of the set of Borel probability measures that are supported on finitely many points in $\{s_n:n\in\N\}$ and assign rational values to them such that there exists an algorithm that, for each $k\in\N$, upon input $k$, outputs a sequence $\{n_l\}_{l=1}^p$ of integers and a sequence $\{q_l\}_{l=1}^p$ of positive rational numbers satisfying that $\sum_{l=1}^pq_l=1$ and $\nu_k=\sum_{l=1}^pq_l\delta_{s_{n_l}}$.
        \smallskip
        \item $(\PPP(X),\,W_{\rho},\,\mathcal{Q}_{\mathcal{S}})$ is also a computable metric space in which $\PPP(X)$ is recursively compact, where $W_{\rho}$ is the Wasserstein--Kantorovich metric on $\PPP(X)$ (see (\ref{eq:WK_metric})).
    \end{enumerate}
\end{prop}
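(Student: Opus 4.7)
The plan is to handle part~(i) by a direct effective enumeration argument, and then derive part~(ii) in three stages: density of $\cQ_\cS$, uniform computability of the distances $W_\rho(\nu_j,\nu_k)$, and recursive compactness of $\PPP(X)$ via recursive precompactness.

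For part~(i), since $\N^*$ is recursively enumerable and the enumeration $\cS_\Q=\{q_n\}_{n\in\N}$ of $\Q$ is recursive, the family of tuples $(n_1,a_1,b_1,\ldots,n_p,a_p,b_p)\in\N^*$ is recursively enumerable. From each such tuple I form the candidate measure $\sum_{l=1}^p (a_l/b_l)\,\delta_{s_{n_l}}$, then algorithmically keep exactly those tuples with $a_l/b_l>0$ for every $l$ and $\sum_{l=1}^p a_l/b_l=1$ (a decidable test in $\Q$) and suppress repetitions by normalizing the tuple (sorted indices, reduced fractions). This yields $\cQ_\cS=\{\nu_k\}_{k\in\N}$, and the required output algorithm is just the unpacking of the normalized tuple indexed by $k$.

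For part~(ii), I begin with effective density of $\cQ_\cS$. Given $\mu\in\PPP(X)$ and $\varepsilon>0$, the recursive compactness of $X$ allows one to find $s_{i_1},\ldots,s_{i_m}\in\cS$ with $X\subseteq\bigcup_j B(s_{i_j},\varepsilon/4)$. Setting $V_1=B(s_{i_1},\varepsilon/4)$ and $V_j=B(s_{i_j},\varepsilon/4)\setminus\bigcup_{k<j}B(s_{i_k},\varepsilon/4)$ produces a Borel partition, and the measure $\nu_0=\sum_j\mu(V_j)\delta_{s_{i_j}}$ satisfies $W_\rho(\mu,\nu_0)\leq\varepsilon/4$, since any $1$-Lipschitz $f$ varies by at most $\varepsilon/4$ on each $V_j$. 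Perturbing the $\mu(V_j)$ to rationals $q_j$ with $\sum q_j=1$ and $\sum|q_j-\mu(V_j)|$ sufficiently small (using compactness of $X$ to bound $1$-Lipschitz test functions modulo constants) produces $\nu\in\cQ_\cS$ with $W_\rho(\mu,\nu)<\varepsilon$. Next, for the uniform computability of $\{W_\rho(\nu_j,\nu_k)\}$, I use the transportation-plan formulation of $W_\rho$: on finitely supported measures in $\cQ_\cS$, $W_\rho(\nu_j,\nu_k)$ is the optimum of a finite linear program whose constraints are rational and whose cost matrix is drawn from the uniformly computable sequence $\{\rho(s_m,s_n)\}_{(m,n)\in\N^2}$; approximating the cost entries to error $\leq 2^{-(n+1)}/\operatorname{diam}(X)$ and solving the resulting rational LP exactly yields $W_\rho(\nu_j,\nu_k)$ within $2^{-n}$. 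Combined with density, this shows $(\PPP(X),W_\rho,\cQ_\cS)$ is a computable metric space.

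For the recursive compactness of $\PPP(X)$, completeness of $(\PPP(X),W_\rho)$ is standard when $X$ is compact, so by Proposition~\ref{prop:relationbetweencompactandprecompact} it suffices to prove that $(\PPP(X),W_\rho,\cQ_\cS)$ is recursively precompact. On input $n$, I use the recursive compactness of $X$ together with Proposition~\ref{prop:equivalentdefinitionofrecursivelycompact} (applied to semi-decidability of ``$X\subseteq\bigcup B(s_{i_j},2^{-(n+3)})$'') to effectively extract such a finite covering, and then list all $\nu=\sum_j q_j\delta_{s_{i_j}}\in\cQ_\cS$ whose weight vectors lie on a $2^{-(n+3)}/m$-grid inside the simplex. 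The effective-density estimate then ensures these finitely many measures form a $2^{-n}$-net, as required. The step I expect to be the most delicate is this last piece: the grid spacing and covering radius must be coordinated so that the $W_\rho$-balls really cover $\PPP(X)$ at resolution $2^{-n}$, the extraction of the covering of $X$ from the recursive-compactness certificate must be algorithmic and bounded in terms of $n$, and the passage from the abstract density bound to a concrete net in $\cQ_\cS$ must respect the constraint $\sum q_j=1$; once these constants are tracked carefully, everything reduces to the estimates already carried out for effective density and to Definition~\ref{def:recursively compact}.
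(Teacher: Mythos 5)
The paper does not prove this proposition; it is quoted from the literature (Hoyrup--Rojas and the lecture notes \cite{Qiandu2025Lecture}), and your argument is the standard one used there: effectively enumerate the finitely supported rational measures, get effective density and computable pairwise distances via Kantorovich duality on finite supports, and deduce recursive compactness from completeness plus an effective finite net. The proof is essentially correct; the only repairs needed are two constants: perturbing the LP cost entries by $\delta$ changes the optimal value by at most $\delta$ (total transported mass is $1$), so the right precision is $\delta\leq 2^{-(n+1)}$ rather than $2^{-(n+1)}/\operatorname{diam}_\rho(X)$ (which is insufficient when $\operatorname{diam}_\rho(X)<1/2$), and dually the grid spacing for the weight vectors must be of order $2^{-n}/(m\cdot\operatorname{diam}_\rho(X))$, since after normalizing a $1$-Lipschitz test function to vanish at a point its sup-norm is only bounded by $\operatorname{diam}_\rho(X)$, a quantity for which a computable upper bound must be (and easily can be) extracted from the recursive compactness of $X$.
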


Let $(X,\,\rho,\,\mathcal{S})$ be a computable metric space and assume that $X$ is recursively compact.
We endow the measure space $\cP(X)$ with the computable structure $(\cP(X),\,W_{\rho},\,\cQ_{\cS})$ given by Proposition~\ref{prop:recursively compactness of measure space}.

The computability of measures is then defined via Definition~\ref{def:computability and uniform computability of point}.
Specifically, a sequence $\{\mu_i\}_{i\in I}$ of measures in $\cP(X)$ is \emph{a sequence of uniformly computable measures} if it is uniformly computable in $(\PPP(X),\,W_{\rho},\,\mathcal{Q}_{\mathcal{S}})$, and a single measure $\mu\in\cP(X)$ is \emph{a computable measure} if the corresponding constant sequence consisting of $\mu$ is uniformly computable.

We now recall a key result on the computability of the integration function (cf.\ \cite[Corollary~4.3.2]{hoyrup2009computability}; see also \cite[Proposition~3.30]{Qiandu2025Lecture}).

\begin{prop}\label{prop:computability of integral operator}
    Let $(X,\,\rho,\,\mathcal{S})$ be a computable metric space. Assume that $X$ is recursively compact in $(X,\,\rho,\,\cS)$, and that $\{f_i\}_{i\in I}$ is a sequence of uniformly computable functions $f_i\colon X\rightarrow\R$. Then the sequence $\{\cI_i\}_{i\in I}$ of functions $\cI_i\colon\cP(X)\rightarrow\R$ defined by $\cI_i(\mu) \= \functional{\mu}{f_i}$ for $\mu \in \cP(X)$ is a sequence of uniformly computable functions.
\end{prop}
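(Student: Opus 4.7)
The plan is to exhibit an algorithm that, given $i\in I$, a precision parameter $l\in\N$, and an oracle for $\mu\in\cP(X)$, outputs a rational $q$ with $|q-\cI_i(\mu)|<2^{-l}$, uniformly in $i$ and $l$. By Definition~\ref{def:computability and uniform computability of point} applied to the computable metric space $(\cP(X),\,W_{\rho},\,\cQ_{\cS})$ from Proposition~\ref{prop:recursively compactness of measure space}, the oracle supplies a sequence $\{\nu_{k_n}\}_{n\in\N}\subseteq\cQ_{\cS}$ with $W_{\rho}(\mu,\nu_{k_n})<2^{-n}$, and each $\nu_{k_n}$ is algorithmically decoded as a finite rational convex combination $\sum_{j}q_j^{(n)}\delta_{s_{n_j}}$ of Dirac measures at ideal points. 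The strategy has two steps: first, compute $\cI_i$ to arbitrary precision on $\cQ_{\cS}$; second, transfer these approximations to arbitrary $\mu$ by an effective continuity estimate for $\cI_i$.

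For the first step, the uniform computability of $\{f_i\}_{i\in I}$ at ideal points yields a doubly-indexed family $\{f_i(s_n)\}_{(i,n)\in I\times\N}$ of uniformly computable reals (Definition~\ref{def:Algorithm about computable functions}), so the finite sums $\int\! f_i\,\mathrm{d}\nu_k = \sum_j q_j^{(k)}f_i(s_{n_j})$ form a uniformly computable family of reals indexed by $(i,k)\in I\times\N$. For the second step, I would approximate $f_i$ by the Moreau--Yosida envelope
\begin{equation*}
g_{i,L}(x)\=\inf_{y\in X}\bigl(f_i(y)+L\rho(x,y)\bigr),
\end{equation*}
which is $L$-Lipschitz and satisfies $0\le f_i(x)-g_{i,L}(x)\le\omega_{f_i}(2M_i/L)$, where $M_i$ is any upper bound on $\|f_i\|_\infty$ and $\omega_{f_i}$ is a modulus of uniform continuity for $f_i$. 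The Kantorovich--Rubinstein inequality built into the definition of $W_{\rho}$ in (\ref{eq:WK_metric}) then gives
\begin{equation*}
\Absbig{\cI_i(\mu)-\cI_i(\nu_{k_n})}\le 2\|f_i-g_{i,L}\|_\infty + L\cdot W_\rho(\mu,\nu_{k_n}) \le 2\omega_{f_i}(2M_i/L) + L\cdot 2^{-n},
\end{equation*}
so that choosing $L=L(i,l)$ large, then $n=n(i,l)$ sufficiently large, forces the right-hand side below $2^{-l-1}$; the conclusion follows by computing $\int\! f_i\,\mathrm{d}\nu_{k_n}$ to precision $2^{-l-1}$ using the first step.

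Two effective ingredients, uniform in $i$, are required for the algorithm above. A rational upper bound $M_i$ on $\|f_i\|_\infty$ is immediate from Proposition~\ref{thecomplementofrecursivelycompactset}~(iv) applied to $|f_i|$ on the recursively compact space $X$, which delivers upper semi-computability of $\sup_X|f_i|$ uniformly in $i$. The genuine obstacle is producing an effective modulus of uniform continuity: I would apply Proposition~\ref{thecomplementofrecursivelycompactset}~(iv) on the recursively compact product $X\times X$ (together with Proposition~\ref{prop:computabilityoffunctionspreservedunderoperators}) to obtain upper semi-computability, jointly in $(i,\delta)$, of the oscillation $\Omega_{i}(\delta)\=\sup\{|f_i(x)-f_i(y)|:\rho(x,y)\le\delta\}$, and then exploit the Heine--Cantor uniform continuity of $f_i$ (valid since $X$ is compact and $f_i$ is continuous) to semi-decide, for each target $2^{-k}$, the existence of a rational $\delta>0$ with $\Omega_i(\delta)<2^{-k}$. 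Such a $\delta$ must exist by uniform continuity and can be found by exhaustive search once upper bounds on $\Omega_i(\delta)$ drop below $2^{-k}$. With both ingredients in hand, the algorithm runs uniformly in $(i,l)$, proving the claim.
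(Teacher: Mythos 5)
Your proof is correct, and it is worth noting that the paper itself does not prove this proposition but only cites \cite[Corollary~4.3.2]{hoyrup2009computability}; what you have written is a self-contained re-proof. The two halves of your argument are both sound: evaluating $\cI_i$ on the ideal measures $\nu_k=\sum_j q_j^{(k)}\delta_{s_{n_j}}$ reduces to finite rational combinations of the uniformly computable reals $f_i(s_{n_j})$, and the transfer to arbitrary $\mu$ via the Moreau--Yosida envelope is a clean way to convert the Lipschitz-dual definition of $W_\rho$ in \eqref{eq:WK_metric} into an effective continuity estimate for $\cI_i$ (the envelope $g_{i,L}$ is used purely in the analysis and never computed, which is fine). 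The standard proof in the cited source instead approximates $f_i$ from below and above by finite combinations of hat functions with \emph{known} Lipschitz constants, which sidesteps the need for an effective modulus of continuity; your route pays for avoiding that machinery by having to extract the modulus yourself, which you do correctly via upper semi-computability of the oscillation $\Omega_i(\delta)$ on $\{(x,y):\rho(x,y)\le\delta\}$ together with Heine--Cantor and a dovetailed search (the search terminates precisely because the upper approximations converge to $\Omega_i(\delta)$ from above and some rational $\delta$ genuinely satisfies $\Omega_i(\delta)<2^{-k}$). The only step you take for granted that the paper does not set up is the computable metric space structure and recursive compactness of the product $X\times X$ (needed to apply Proposition~\ref{thecomplementofrecursivelycompactset}~(iv) to the oscillation); this is a standard construction (take the max metric, the dense set $\cS\times\cS$, and use Proposition~\ref{prop:relationbetweencompactandprecompact}), but you should state it explicitly. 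With that caveat, the algorithm is uniform in $(i,l)$ and the error budget $2^{-l-1}+2^{-l-1}=2^{-l}$ closes, so the proof stands.
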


As immediate corollaries of Proposition~\ref{prop:computability of integral operator}, we have the following results.

\begin{cor}\label{cor:semicomputability of integral operator}
    Let $(X,\,\rho,\,\mathcal{S})$ be a computable metric space. Assume that $X$ is recursively compact in $(X,\,\rho,\,\cS)$, and that $\{f_i\}_{i\in I}$ is a sequence of uniformly upper (resp.\ lower) semi-computable functions $f_i\colon X\rightarrow\R$. Then the sequence $\{\cI_i\}_{i\in I}$ of functions $\cI_i\colon\cP(X)\rightarrow\R$ given by $\cI_i(\mu) \= \functional{\mu}{f_i}$ is a sequence of uniformly upper (resp.\ lower) semi-computable functions.
\end{cor}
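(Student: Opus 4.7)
The plan is to reduce the statement to Proposition~\ref{prop:computability of integral operator} by approximating each $f_i$ monotonically by a uniformly computable sequence of functions. Since $f$ is upper semi-computable if and only if $-f$ is lower semi-computable, and integration is linear, it suffices to establish the lower semi-computable case.

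The key intermediate step I would prove first is an effective monotone approximation lemma: given a uniformly lower semi-computable sequence $\{f_i\}_{i\in I}$ on the recursively compact space $X$, there exist uniformly computable functions $\{g_{n,i}\}_{(n,i)\in\N\times I}$ such that $g_{n,i}(x)\leq g_{n+1,i}(x)\leq f_i(x)$ for every $x\in X$ and $i\in I$, and $g_{n,i}(x)\to f_i(x)$ as $n\to\infty$. To build such $g_{n,i}$, I would apply Proposition~\ref{prop:semicomputability}(ii) with $\cS_{\Q}=\{q_k\}_{k\in\N}$ to obtain a uniformly lower semi-computable open family $\{U_{k,i}\}_{(k,i)\in\N\times I}$ with $U_{k,i}=f_i^{-1}((q_k,+\infty))$. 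Proposition~\ref{prop:characterfunctionislowersemicomputable} then furnishes uniformly computable nonnegative functions $\{h_{n,k,i}\}$ with $h_{n,k,i}(x)\nearrow\indicator{U_{k,i}}(x)$ from below. Exploiting a layer-cake representation, in which $f_i$ is realized as a limit of dyadic step functions built from the $\indicator{U_{k,i}}$, and substituting $h_{n,k,i}$ for $\indicator{U_{k,i}}$ together with the observation that $\inf_X f_i$ is a finite, uniformly lower semi-computable real (by Proposition~\ref{thecomplementofrecursivelycompactset}(iv) applied with $K_i\=X$), one assembles the required $g_{n,i}$ with monotonicity in $n$, dominance by $f_i$, and pointwise convergence.

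Once $\{g_{n,i}\}$ is in place, Proposition~\ref{prop:computability of integral operator} yields that the functions $J_{n,i}\colon\cP(X)\to\R$ defined by $J_{n,i}(\mu)\=\functional{\mu}{g_{n,i}}$ are uniformly computable in $(n,i)$. The Monotone Convergence Theorem then gives $J_{n,i}(\mu)\nearrow\cI_i(\mu)$ as $n\to\infty$ for every $\mu\in\cP(X)$, so $\cI_i$ is the pointwise supremum of a uniformly computable nondecreasing sequence. Uniform lower semi-computability of $\{\cI_i\}_{i\in I}$ then follows directly from Definition~\ref{def:domain of lower or upper computability} by executing the algorithms for $J_{n,i}(\mu)$ at increasing precisions $l$ and returning a monotonized sequence of rational lower bounds. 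The main obstacle is the approximation lemma: one must simultaneously ensure uniform computability, monotonicity in the index, and pointwise convergence to $f_i$, which requires the careful combination of Propositions~\ref{prop:semicomputability} and~\ref{prop:characterfunctionislowersemicomputable} sketched above. Once that lemma is in hand, the remainder of the argument is routine.
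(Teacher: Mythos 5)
Your proof is correct and is essentially the argument the paper has in mind: the paper states this as an immediate corollary of Proposition~\ref{prop:computability of integral operator} without proof, and the intended derivation is exactly your reduction — approximate each lower semi-computable $f_i$ from below by a uniformly computable nondecreasing sequence (via Proposition~\ref{prop:semicomputability} and Proposition~\ref{prop:characterfunctionislowersemicomputable}, using that $\inf_X f_i$ is finite and uniformly lower semi-computable on the recursively compact $X$), integrate each approximant, and pass to the monotone limit. The only caveat worth noting is inherited from the statement itself rather than from your argument: since a lower semicontinuous real-valued function need not be bounded above on a compact set, $\functional{\mu}{f_i}$ could in principle be $+\infty$, so the claim implicitly assumes (or should be read as asserting semi-computability of) the extended-real-valued limit.
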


\begin{cor}\label{cor:seminormrecursivelycompact}
    Let $(X,\,\rho,\,\cS)$ be a computable metric space in which $X$ is recursively compact. Assume that $H$ and $L$ are two nonempty recursively enumerable sets with $L\subseteq I\times H$, $\{U_{i,h}\}_{(i,h)\in L}$ is uniformly lower semi-computable open in $(X,\,\rho,\,\cS)$, and $\{r_{i,h}\}_{(i,h)\in L}$ is a sequence of uniformly computable real numbers. Define, for each $i\in I$, $L_i\=\{(i,h)\in L:h\in H\}$ and $\cK_i\=\{\mu\in\PPP(X):\mu(U_{i,h})\leq r_{i,h}\text{ for each }(i,h)\in L_i\}$. Then $\{\cK_i\}_{i\in I}$ is uniformly recursively compact in $(\cP(X),\,W_{\rho},\,\cQ_{\cS})$.
\end{cor}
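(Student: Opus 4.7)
My plan is to realize each $\cK_i$ as the complement in $\cP(X)$ of a lower semi-computable open set, uniformly in $i\in I$. Since $\cP(X)$ is itself recursively compact by Proposition~\ref{prop:recursively compactness of measure space}, the conclusion will then follow from Proposition~\ref{thecomplementofrecursivelycompactset}~(iii) applied to the constant sequence $\{\cP(X)\}_{i\in I}$.

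The first step is to approximate $\mu\mapsto\mu(U_{i,h})$ from below by uniformly computable functions on $\cP(X)$. Applying Proposition~\ref{prop:characterfunctionislowersemicomputable} to the uniformly lower semi-computable open family $\{U_{i,h}\}_{(i,h)\in L}$ yields a sequence $\{g_{n,i,h}\}_{(n,i,h)\in\N\times L}$ of uniformly computable non-negative functions on $X$ satisfying $g_{n,i,h}(x)\nearrow\mathbbm{1}_{U_{i,h}}(x)$ for each $x\in X$. By Proposition~\ref{prop:computability of integral operator} (applicable since $X$ is recursively compact), the integrals $\cJ_{n,i,h}(\mu)\coloneqq\langle\mu,g_{n,i,h}\rangle$ form a uniformly computable sequence of real-valued functions on $\cP(X)$, and by the monotone convergence theorem $\cJ_{n,i,h}(\mu)\nearrow\mu(U_{i,h})$ as $n\to+\infty$.

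Next, combining Proposition~\ref{prop:computabilityoffunctionspreservedunderoperators} with the uniformly computable real sequence $\{r_{i,h}\}_{(i,h)\in L}$ (viewed as constant functions on $\cP(X)$), the maps $\mu\mapsto\cJ_{n,i,h}(\mu)-r_{i,h}$ are uniformly computable in $(n,i,h)\in\N\times L$. By Proposition~\ref{prop:computable function}, the preimages of $(0,+\infty)$ under these maps form a uniformly lower semi-computable open family in $\cP(X)$. By Step 1, $\mu(U_{i,h})>r_{i,h}$ if and only if $\cJ_{n,i,h}(\mu)>r_{i,h}$ for some $n\in\N$, so
\[
M_{i,h} \coloneqq \{\mu\in\cP(X):\mu(U_{i,h})>r_{i,h}\} = \bigcup_{n\in\N}\{\mu\in\cP(X):\cJ_{n,i,h}(\mu)>r_{i,h}\}.
\]
Proposition~\ref{prop:lower semi-computable open} (applied with the free index being $(i,h)\in L$ and the union taken over $\N$) then shows that $\{M_{i,h}\}_{(i,h)\in L}$ is uniformly lower semi-computable open in $(\cP(X),W_{\rho},\cQ_{\cS})$.

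Finally, $\cP(X)\smallsetminus\cK_i = \bigcup_{(i,h)\in L_i}M_{i,h}$. The symmetric counterpart of Proposition~\ref{prop:lower semi-computable open} (obtained by interchanging the roles of the two index sets $I$ and $H$) produces a uniformly lower semi-computable open family $\{V_i\}_{i\in I}$ with $V_i=\cP(X)\smallsetminus\cK_i$, and Proposition~\ref{thecomplementofrecursivelycompactset}~(iii) applied to the constant uniformly recursively compact family $\{\cP(X)\}_{i\in I}$ delivers the desired uniform recursive compactness of $\{\cK_i\}_{i\in I}$. The main issue to watch is the bookkeeping of index sets and the uniformity at each intermediate step; once this is in place, the argument is essentially a chaining of tools already established in the preliminaries.
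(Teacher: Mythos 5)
Your proof is correct and follows essentially the route the paper intends: the corollary is left as an immediate consequence of Proposition~\ref{prop:computability of integral operator} (via Corollary~\ref{cor:semicomputability of integral operator}), and your argument is precisely the natural fleshing-out, showing $\mu\mapsto\mu(U_{i,h})$ is uniformly lower semi-computable via Proposition~\ref{prop:characterfunctionislowersemicomputable} and monotone convergence, then writing $\cK_i$ as $\cP(X)$ minus a uniformly lower semi-computable open union and invoking Proposition~\ref{thecomplementofrecursivelycompactset}~(iii).
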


Recall the definition of $\cM(X,T;Y)$ from (\ref{eq:defofcMonsubset}). The following result indicates the recursive compactness of the set $\cM(X,T;Y)$ (cf.~\cite[Lemma~4.12]{binder2025computability}; see also \cite[Theorem~3.36]{Qiandu2025Lecture}).

\begin{prop}\label{prop:recursivecompactofsetinvariant}
    Let $(X,\,\rho,\,\cS)$ be a computable metric space in which $X$ is recursively compact, and $\{U_i\}_{i\in I}$ is a sequence of uniformly lower semi-computable open sets. Assume that $\{T_i\}_{i\in I}$ is a sequence of uniformly computable functions $T_i\colon X\rightarrow X$ with respect to $\{U_i\}_{i\in I}$. Then $\{\cM(X,T_i;U_i)\}_{i\in I}$ is uniformly recursively compact in $(\cP(X),\,W_{\rho},\,\cQ_{\cS})$. In particular, if $\{T_i\}_{i\in I}$ is a sequence of uniformly computable functions, then $\{\cM(X,T_i)\}_{i\in I}$ is uniformly recursively compact.
\end{prop}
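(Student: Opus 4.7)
The plan is to realize $\cM(X, T_i; U_i)$ as a countable intersection of uniformly recursively compact subsets of $\cP(X)$. First I would reduce the set-theoretic defining condition to an integral one: by the monotone class theorem together with outer regularity of Borel measures on the metric space $X$, $\mu \in \cM(X, T_i; U_i)$ if and only if
\begin{equation*}
	\int (f \circ T_i) \cdot \indicator{U_i} \,\mathrm{d}\mu \leq \int f \,\mathrm{d}\mu \quad\text{for every continuous } f \colon X \rightarrow [0,1].
\end{equation*}
The ``only if'' direction is a linearity and monotone convergence argument. For the converse, given an open $A \subseteq X$, I approximate $\indicator{A}$ from below by the continuous functions $f_k(x) \= \min\{1,\, k \rho(x, X \setminus A)\}$; monotone convergence on both sides yields $\mu(T_i^{-1}(A) \cap U_i) \leq \mu(A)$ for all open $A$, and outer regularity upgrades this to all Borel sets.

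Second, I invoke Proposition~\ref{prop:characterfunctionislowersemicomputable} to extract uniformly computable functions $h_{n,i} \geq 0$ with $h_{n,i} \nearrow \indicator{U_i}$ pointwise and $h_{n,i} \equiv 0$ on $X \setminus U_i$. Monotone convergence on the left-hand side makes the preceding integral inequality equivalent to $\int (f \circ T_i) \cdot h_{n,i} \,\mathrm{d}\mu \leq \int f \,\mathrm{d}\mu$ for every $n \in \N$. Since both sides depend continuously on $f$ in the uniform norm, I may further restrict to a uniformly computable dense family $\{f_j\}_{j \in \N}$ of $[0,1]$-valued continuous functions (e.g.\ truncations of rational polynomial combinations of the distance functions $\rho(\cdot, s_k)$, which are dense by Stone--Weierstrass on the compact set $X$). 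This produces the identification
\begin{equation*}
    \cM(X, T_i; U_i) = \bigcap_{(n,j) \in \N^2} \bigl\{ \mu \in \cP(X) : \functional{\mu}{g_{n,j,i}} \leq \functional{\mu}{f_j} \bigr\},
\end{equation*}
with $g_{n,j,i}(x) \= h_{n,i}(x) \cdot f_j(T_i(x))$ for $x \in U_i$ and $g_{n,j,i}(x) \= 0$ otherwise.

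The crucial step, and the main obstacle, is to verify that $\{g_{n,j,i}\}_{(n,j,i) \in \N^2 \times I}$ is a uniformly computable family of continuous functions on the whole of $X$, despite $T_i$ being only computable with respect to $U_i$. Continuity at $\partial U_i$ follows from $h_{n,i}$ being continuous on $X$ and vanishing there, combined with the boundedness of $f_j$. For uniform computability, given an oracle for $x$ and a target precision $\varepsilon > 0$, I would run two processes in parallel: one refining approximations of $h_{n,i}(x)$ on $X$, the other attempting to evaluate $f_j \circ T_i$ at $x$ (which succeeds precisely when $x \in U_i$). Once the first certifies $h_{n,i}(x) < \varepsilon$, the bound $\abs{f_j} \leq 1$ forces $\abs{g_{n,j,i}(x)} < \varepsilon$ so that $0$ is a valid approximation; otherwise, any strictly positive lower bound on $h_{n,i}(x)$ certifies $x \in U_i$, and the second process produces an $\varepsilon$-approximation of $f_j(T_i(x))$ that combines with the approximation of $h_{n,i}(x)$.

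With $\{g_{n,j,i}\}$ uniformly computable, Proposition~\ref{prop:computability of integral operator} renders $\mu \mapsto \functional{\mu}{f_j} - \functional{\mu}{g_{n,j,i}}$ uniformly computable on $\cP(X)$. Proposition~\ref{prop:computable function} then identifies its preimage of $(-\infty, 0)$ as uniformly lower semi-computable open, and since $\cP(X)$ is recursively compact by Proposition~\ref{prop:recursively compactness of measure space}, Proposition~\ref{thecomplementofrecursivelycompactset}~(iii) yields uniform recursive compactness of the complementary closed condition. Proposition~\ref{prop:operatorsonrecursivelycompactsets}~(i) preserves this property under the countable intersection over $(n,j) \in \N^2$, giving the uniform recursive compactness of $\{\cM(X, T_i; U_i)\}_{i \in I}$. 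The ``in particular'' assertion is the special case $U_i = X$.
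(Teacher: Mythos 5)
Your proof is correct. The paper does not actually prove this proposition (it is cited from \cite[Lemma~4.12]{binder2025computability} and \cite[Theorem~3.36]{Qiandu2025Lecture}), but your argument follows exactly the strategy of the paper's analogous Theorem~\ref{thm:recursivelycompactofJacobian}: an integral test-function characterization of the defining condition, the cutoff functions from Proposition~\ref{prop:characterfunctionislowersemicomputable}, and the realization of the set as the complement in the recursively compact space $\cP(X)$ of a union of uniformly lower semi-computable open sets; your dovetailing argument correctly resolves the one delicate point, namely that $T_i$ is computable only with respect to $U_i$, by using a certified positive lower bound on $h_{n,i}(x)$ as a witness that $x\in U_i$ before consulting the algorithm for $T_i$.
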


The following result is useful in the proof of the main result of this article (see e.g.~\cite[Proposition~3.35]{Qiandu2025Lecture}).

\begin{prop}\label{prop:test function}
	Let $(X,\,\rho,\,\mathcal{S})$ be a computable metric space, and $X$ be a recursively compact set in $(X,\,\rho,\,\cS)$. Then there exists a sequence $\{\tau_n\}_{n\in\N}$ of uniformly computable functions $\tau_n\colon X\mapping\R$ such that $\{\tau_n:n\in\N\}$ is dense in $C(X)$. Moreover, for all $\mu,\,\nu \in \cM(X)$, $\mu(A)\geq\nu(A)$ for each $A\in\cB(X)$ if and only if $\Functional{\mu}{\tau_n^+}\geq\Functional{\nu}{\tau_n^+}$ for each $n\in\N$.
\end{prop}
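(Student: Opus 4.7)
The plan is to construct $\{\tau_n\}_{n\in\N}$ as an effective enumeration of a countable $\Q$-algebra of uniformly computable functions that is dense in $C(X)$ by Stone--Weierstrass, and then deduce the moreover part by a uniform approximation together with the Riesz representation theorem.

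First I would show that each distance-to-ideal-point function $\rho_i\colon x \mapsto \rho(x, s_i)$ is computable uniformly in $i$. Given any oracle $\tau$ for $x$, the triangle inequality yields $\abs{\rho(s_{\tau(n)}, s_i) - \rho(x, s_i)} < 2^{-n}$, and $\rho(s_{\tau(n)}, s_i)$ is uniformly computable in $(n, i)$ by Definition~\ref{def:computablemetricspace}. I would then effectively enumerate the countable $\Q$-algebra $\cA$ generated by $\{1\} \cup \{\rho_i : i\in\N\}$ (i.e., finite $\Q$-linear combinations of finite products of the $\rho_i$'s) as a sequence $\{\tau_n\}_{n\in\N}$. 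Since $X$ is recursively compact, Proposition~\ref{prop:computabilityoffunctionspreservedunderoperators} gives that finite sums and products preserve uniform computability, so $\{\tau_n\}_{n\in\N}$ is a sequence of uniformly computable functions.

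Next, for density I would invoke the Stone--Weierstrass theorem. Since $X$ is compact (being recursively compact), it suffices to check that $\cA$ contains the constants and separates points. The former is built in; for the latter, given $x \neq y$, density of $\cS$ furnishes $s_i$ with $\rho(x, s_i) < \rho(x,y)/3$, so $\rho_i(y) \geq \rho(x,y) - \rho(x, s_i) > \rho_i(x)$. Hence $\overline{\cA} = C(X)$, so $\{\tau_n : n\in\N\}$ is dense in $C(X)$.

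Finally, for the moreover claim, the ``only if'' direction is immediate: if $\mu(A) \geq \nu(A)$ for every $A \in \cB(X)$, then $\mu - \nu$ is a positive measure, and since $\tau_n^+ \geq 0$ we get $\Functional{\mu}{\tau_n^+} \geq \Functional{\nu}{\tau_n^+}$. For the converse, take any $f \in C(X)$ with $f \geq 0$; density yields $\tau_{n_k} \to f$ uniformly, and the contractive bound $\abs{a^+ - b^+} \leq \abs{a - b}$ gives $\tau_{n_k}^+ \to f^+ = f$ uniformly. Uniform-norm continuity of the functionals $\Functional{\mu}{\,\cdot\,}$ and $\Functional{\nu}{\,\cdot\,}$ then yields $\Functional{\mu}{f} = \lim_k \Functional{\mu}{\tau_{n_k}^+} \geq \lim_k \Functional{\nu}{\tau_{n_k}^+} = \Functional{\nu}{f}$, so $\mu - \nu$ is a positive linear functional on $C(X)$; the Riesz representation theorem on the compact metric space $X$ then shows $\mu - \nu$ is a positive Borel measure, i.e., $\mu(A) \geq \nu(A)$ for every $A \in \cB(X)$. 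I do not foresee any serious obstacle: the only delicate point is arranging the enumeration of $\cA$ so that uniform computability is preserved under the algebraic operations, which is routine given Proposition~\ref{prop:computabilityoffunctionspreservedunderoperators}.
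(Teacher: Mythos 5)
Your proposal is correct. The paper itself defers the proof to an external reference, but its earlier draft (visible in the source) builds the dense family from ``hat functions'' $x\mapsto\bigl(1-(1/\epsilon)(\rho(x,u)-r)^+\bigr)^+$ centered at ideal points, closed under additions, maxima, minima, and rational scalar multiples, and settles the measure-comparison clause via monotone/dominated convergence against indicator approximations. You instead generate the $\Q$-algebra of the distance functions $\rho_i(x)=\rho(x,s_i)$ and the constants, invoke the algebra form of Stone--Weierstrass, and close with the Riesz representation theorem. Both routes are sound; the lattice version avoids products entirely (so no appeal to boundedness of the generators is needed and the effective enumeration is marginally simpler), whereas your algebra version is the more standard Stone--Weierstrass setup and your use of the contractivity $\abs{a^+-b^+}\leq\abs{a-b}$ cleanly transfers density to the family $\{\tau_n^+\}$. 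The one point you rightly flag as routine --- producing a \emph{single} algorithm that, given $n$, decodes the expression of $\tau_n$ and evaluates it, rather than merely applying Proposition~\ref{prop:computabilityoffunctionspreservedunderoperators} pairwise --- does need the standard encoding of expressions by elements of $\N^*$, exactly as in the paper's (commented-out) uniform closure statement; with that in place your argument is complete.
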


\subsection{Thermodynamic formalism}
\label{sub:thermodynamic formalism}

We review basic concepts from ergodic theory. For more detailed discussions, we refer the reader to \cite[Section~4]{walters1982introduction}.

Let $(X,\cB,\mu)$ be a probability space. A \emph{partition} $\xi=\{A_h:h\in H\}$ of $(X,\cB,\mu)$ is a disjoint collection of elements of $\cB$ whose union is $X$, where $H$ is a countable index set. For each pair of partitions $\xi = \{A_h:h \in H\}$ and $\eta = \{B_l:l \in L\}$ of $X$, their \emph{join} is the partition $\xi\vee\eta\=\{A_h\cap B_l \describe h\in H,\,l\in L\}$.

Assume that $T\colon X\mapping X$ is a measure-preserving transformation of $(X,\cB,\mu)$. Consider a partition $\xi = \{A_h \describe h\in H\}$ of $X$. For each $n\in\N$, $T^{-n}(\xi)$ denotes the partition $\bigl\{T^{-1}(A_h)\describe h\in H\bigr\}$, and $\xi^n_T$ denotes the join $\xi\vee T^{-1}(\xi)\vee\cdots\vee T^{-(n-1)}(\xi)$. The \emph{entropy} of $\xi$ is $H_{\mu}(\xi)\=-\sum_{h\in H} \mu(A_h)\log(\mu(A_h))\in[0, +\infty]$, where $0\log 0$ is defined to be zero. One can show that if $H_{\mu}(\xi)<+\infty$, then $\lim\limits_{n\to +\infty} H_{\mu}(\xi^n_T)/n$ exists (see e.g.\ \cite[Chapter~4]{walters1982introduction}). We denote this limit by $h_{\mu}(T,\xi)$ and call it the \emph{measure-theoretic entropy of $T$ relative to $\xi$}.
The \emph{measure-theoretic entropy} of $T$ for $\mu$ is defined as
\begin{equation}   \label{eq:def:measure-theoretic entropy}
h_{\mu}(T)\=  \sup\{h_{\mu}(T,\xi) \describe \xi \text{ is a partition of } X  \text{ with } H_{\mu}(\xi) < +\infty\}.   
\end{equation}

We now introduce thermodynamic formalism, a particular branch of ergodic theory.
The main objects of study are the topological pressure and equilibrium states (see e.g.~\cite{przytycki2010conformal, walters1982introduction}; for the general Borel-measurable setting used in Approach~II, see e.g.~\cite[Definition~1.1]{iommi2010natural}, \cite[Section~2.3]{demers2017equilibrium}, and \cite[Chapter~1.4]{dobbs2023free}).

Let $(X,\rho)$ be a compact metric space, $T\colon X\mapping X$ be a Borel-measurable transformation such that $\cM(X,T)\neq\emptyset$, and $\phi\colon X\mapping[-\infty,+\infty]$ be a Borel function. Then the \emph{topological pressure} of the potential $\phi$ with respect to the transformation $T$ is given by
\begin{equation}  \label{eq:Variational Principle for pressure}
    P(T, \phi)\coloneqq \sup \bigl\{h_\mu (T) + \functional{\mu}{\phi} \describe \mu \in \cM(X, T)\text{ and }\functional{\mu}{\phi}>-\infty\bigr\}.
\end{equation}
A measure $\mu\in\cM(X,T)$ that attains the supremum in \eqref{eq:Variational Principle for pressure} is called an \emph{equilibrium state} for the transformation $T$ and the potential $\phi$. Denote the set of all such measures by $\mathcal{E}(T,\phi)$. In particular, when the potential $\phi$ is the constant function $0$, we denote $h_{\mathrm{top}}(T)\coloneqq P(T,0)$ and say that a measure $\mu\in\cM(X,T)$ is a \emph{measure of maximal entropy} of $T$ if $\mu\in\mathcal{E}(T,0)$.


\section{Approach I: Global approach via convex analysis}
\label{sec:Approach I} 

In this section, we establish the computability of equilibrium states for certain dynamical systems with upper semicontinuous measure-theoretic entropy functions. We begin by recalling several definitions and results from functional analysis: a characterization of the set $\tangent{C(X)}{\potential}{P_T}$ in \eqref{eq:expressionoftangentspace} and its relation to the set $\mathcal{E}(T,\potential)$ of equilibrium states in \eqref{eq:relationbetweentangentspaceandsetofequilibriumstate}. 
We then apply these results to verify the recursive compactness of $\tangent{C(X)}{\potential}{P_T}$, thereby completing the proof of Theorem~\ref{theorem A}.

Let $(X,\rho)$ be a compact metric space, and $T\colon X\rightarrow X$ a continuous map with finite topological entropy.
The \emph{measure-theoretic entropy function} of $T$ is the function $\mu\mapsto h_{\mu}(T)$ (defined in \eqref{eq:def:measure-theoretic entropy}) on the space $\cM(X,T)$ of $T$-invariant Borel probability measures, where $\cM(X,T)$ is equipped with the weak$^*$ topology.
The \emph{topological pressure function} of $T$, denoted by $P_T$, is the function $\potential\mapsto P(T,\potential)$ (defined in \eqref{eq:Variational Principle for pressure}) on $C(X)$.
For each $\mu\in\cM(X,T)$, we denote by $\overline{h}_{\mu}(T)$ the \emph{upper semicontinuous regularization} of the measure-theoretic entropy function.
More precisely, $\overline{h}_{\mu}(T)$ is the supremum of all limit suprema $\limsup_{n\to\infty} h_{\mu_n}(T)$, where $\{\mu_n\}_{n\in\N}$ ranges over all sequences in $\cM(X,T)$ that converge to $\mu$ in the weak$^*$ topology. 

\begin{definition}\label{def:tangent space}
	Let $V$ be a real topological vector space, and $G\colon V\rightarrow\R$ be a convex continuous function.
	A continuous linear functional $F\colon V\rightarrow\R$ is \emph{tangent to $G$ at $x\in V$} if
	\begin{equation*}
		F(y) \leqslant G(x + y) - G(x)
	\end{equation*}
	for each $y\in V$.
	We denote the set of all such functionals by $V_{x,G}^*$.
\end{definition}

The following lemma is well-known (see e.g.\ \cite[Theorem~9.7~(iv) and~(v)]{walters1982introduction}).

\begin{lemma}	\label{lem:continuityandconvexityofpressurefunction}
    Let $(X,\rho)$ be a compact metric space and $T\colon X\rightarrow X$ be a continuous map with finite topological entropy.
    Then the topological pressure function $P_T\colon C(X)\rightarrow\R$ is convex and continuous.
\end{lemma}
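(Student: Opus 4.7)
The plan is to verify both properties directly from the Variational Principle \eqref{eq:Variational Principle for pressure}, exploiting the fact that $P_T(\phi)$ is expressed as a supremum of functionals that depend affinely on $\phi$. The affine dependence will yield convexity, and a uniform Lipschitz estimate will yield continuity; no deep dynamical input beyond finiteness of topological entropy (which ensures $P_T$ takes only finite values) is needed.

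For convexity, I would take arbitrary $\phi, \psi \in C(X)$ and $t \in [0,1]$. For each $\mu \in \cM(X,T)$ with $\langle \mu, t\phi + (1-t)\psi\rangle > -\infty$, the integrand in the Variational Principle splits as
\begin{equation*}
    h_\mu(T) + \langle \mu, t\phi + (1-t)\psi\rangle = t\bigl(h_\mu(T) + \langle \mu, \phi\rangle\bigr) + (1-t)\bigl(h_\mu(T) + \langle \mu, \psi\rangle\bigr).
\end{equation*}
Taking the supremum over such $\mu$ and using that the supremum of a sum is bounded by the sum of suprema then yields $P_T(t\phi + (1-t)\psi) \le t\, P_T(\phi) + (1-t)\, P_T(\psi)$. (Continuity of $\phi, \psi$ ensures that $\langle \mu, \phi\rangle, \langle \mu, \psi\rangle$ are finite for every $\mu \in \cM(X,T)$, so there is no issue with ruling out measures at the sup.)

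For continuity, I would establish the Lipschitz bound $\abs{P_T(\phi) - P_T(\psi)} \le \uniformnorm{\phi - \psi}$. For any $\mu \in \cM(X,T)$, one has $\langle \mu, \phi\rangle \le \langle \mu, \psi\rangle + \uniformnorm{\phi - \psi}$, so $h_\mu(T) + \langle \mu, \phi\rangle \le h_\mu(T) + \langle \mu, \psi\rangle + \uniformnorm{\phi - \psi}$. Taking the supremum on both sides gives $P_T(\phi) \le P_T(\psi) + \uniformnorm{\phi - \psi}$, and by symmetry the reverse inequality. Here the finiteness of $h_{\mathrm{top}}(T) = P_T(0)$ together with $h_\mu(T) \le h_{\mathrm{top}}(T)$ and $\abs{\langle \mu, \phi\rangle} \le \uniformnorm{\phi}$ ensures that $P_T(\phi) < +\infty$ for every $\phi \in C(X)$, so these manipulations are legitimate and Lipschitz continuity, hence continuity, follows.

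There is essentially no obstacle here beyond bookkeeping: the only mild subtlety is confirming that $P_T$ is finite-valued (so that the convex inequality does not degenerate into $+\infty \le +\infty$), which is guaranteed by the hypothesis of finite topological entropy. Since this lemma is classical (\cite[Theorem~9.7]{walters1982introduction}), I would simply cite Walters rather than reproduce the details.
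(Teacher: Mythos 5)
Your proposal is correct and matches the paper, which simply cites \cite[Theorem~9.7~(iv) and~(v)]{walters1982introduction} without reproducing the argument; your sketch is precisely the standard proof behind that citation (affine dependence of $h_\mu(T)+\functional{\mu}{\phi}$ on $\phi$ gives convexity, and the $\uniformnorm{\phi-\psi}$-Lipschitz bound gives continuity, both legitimate because finite topological entropy and the Variational Principle keep every $h_\mu(T)$ uniformly bounded). The only cosmetic omission is the remark that $\cM(X,T)\neq\emptyset$ (Krylov--Bogolyubov), which is needed to conclude $P_T(\phi)>-\infty$ so that $P_T$ is genuinely real-valued.
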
 

Lemma~\ref{lem:continuityandconvexityofpressurefunction} ensures that the set $C(X)_{\potential,P_T}^*$ is well defined for any continuous map $T$ on a compact metric space $X$ and any continuous function $\potential$.
We now record a characterization of this space from \cite[Theorem~3~(i)]{walters1992differentiability}.

\begin{lemma}	\label{lem:representationoninvariantmeasure}
    Let $(X,\rho)$ be a compact metric space, $T\colon X\rightarrow X$ be a continuous map with finite topological entropy, and $\potential\colon X\rightarrow\R$ be a continuous function.
    A functional $F\in C(X)^*$ belongs to $C(X)_{\potential,P_T}^*$ if and only if there exists a measure $\mu_F \in \cM(X,T)$ such that $F(f)=\functional{\mu_F}{f}$ for each $f\in C(X)$, and $\mu_F$ is the weak$^*$ limit of a sequence of measures $\{\mu_n\}_{n\in\N}$ in $\cM(X,T)$ satisfying $h_{\mu_n}(T)+\functional{\mu_n}{\potential}\to P(T,\potential)$ as $n\to+\infty$.
\end{lemma}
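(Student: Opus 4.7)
The plan is to prove the two implications separately. The backward direction is a direct application of the Variational Principle~\eqref{eq:Variational Principle for pressure}, while the forward direction consists of identifying $F$ with an invariant probability measure and then realizing that measure as a weak$^*$ limit via a Hahn--Banach separation argument.

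For the backward direction, I would fix $g\in C(X)$ and invoke the Variational Principle for the potential $\potential+g$:
\begin{equation*}
P(T,\potential+g)\ge h_{\mu_n}(T)+\functional{\mu_n}{\potential+g}=\bigl(h_{\mu_n}(T)+\functional{\mu_n}{\potential}\bigr)+\functional{\mu_n}{g}.
\end{equation*}
Passing to the limit, using the hypothesis $h_{\mu_n}(T)+\functional{\mu_n}{\potential}\to P(T,\potential)$ and weak$^*$ continuity of $\mu\mapsto\functional{\mu}{g}$, yields $P(T,\potential+g)-P(T,\potential)\ge\functional{\mu_F}{g}$, which is precisely the tangent condition in Definition~\ref{def:tangent space} applied to the functional $F\:f\mapsto\functional{\mu_F}{f}$.

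For the forward direction, I would first show that any $F\in C(X)^*_{\potential,P_T}$ is represented by integration against some $\mu_F\in\cM(X,T)$. Tangency at the constant functions $g\equiv\pm c$ together with the identity $P_T(\potential+c)=P_T(\potential)+c$ yields $F(c)=c$, so $F(1)=1$; tangency at any $g\le 0$ together with $P_T(\potential+g)\le P_T(\potential)$ forces $F(g)\le 0$, giving positivity. The Riesz representation theorem then provides a Borel probability measure $\mu_F$ with $F=\functional{\mu_F}{\,\cdot\,}$. For $T$-invariance, apply tangency to $g=f-f\circ T$ for $f\in C(X)$: since $\functional{\nu}{g}=0$ for all $\nu\in\cM(X,T)$, the Variational Principle gives $P_T(\potential+g)=P_T(\potential)$, so tangency at $\pm g$ forces $\functional{\mu_F}{f-f\circ T}=0$. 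The substantive remaining step is to realize $\mu_F$ as a weak$^*$ limit of the desired sequence. Let $\cV\subseteq\cM(X,T)$ denote the set of all weak$^*$ limits of sequences $\{\mu_n\}\subseteq\cM(X,T)$ satisfying $h_{\mu_n}(T)+\functional{\mu_n}{\potential}\to P_T(\potential)$; a diagonal argument combined with the affinity of $\mu\mapsto h_{\mu}(T)$ on $\cM(X,T)$ shows that $\cV$ is nonempty, weak$^*$ closed, and convex. Assume for contradiction that $\mu_F\notin\cV$. Since the continuous dual of $\cM(X)$ in the weak$^*$ topology is $C(X)$, the Hahn--Banach separation theorem furnishes $g\in C(X)$ with $\functional{\mu_F}{g}>\sup_{\nu\in\cV}\functional{\nu}{g}$. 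Choose $t_n\downarrow 0$ with $nt_n\to+\infty$, and apply the Variational Principle to $\potential+t_n g$ to pick $\nu_n\in\cM(X,T)$ with $h_{\nu_n}(T)+\functional{\nu_n}{\potential+t_n g}>P_T(\potential+t_n g)-1/n$. Using $h_{\nu_n}(T)+\functional{\nu_n}{\potential}\le P_T(\potential)$ together with continuity of $P_T$ at $\potential$ gives $h_{\nu_n}(T)+\functional{\nu_n}{\potential}\to P_T(\potential)$, so every weak$^*$ cluster point $\nu^*$ of $\{\nu_n\}$ lies in $\cV$. The tangent inequality $t_n F(g)\le P_T(\potential+t_n g)-P_T(\potential)$ combined with the near-maximizing bound yields $F(g)\le\functional{\nu_n}{g}+1/(nt_n)$, whence $\functional{\mu_F}{g}=F(g)\le\functional{\nu^*}{g}\le\sup_{\nu\in\cV}\functional{\nu}{g}$, contradicting the separation.

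The main obstacle is this last step: one must both construct the correct family of near-maximizers $\nu_n$ for the perturbed potentials and verify that their weak$^*$ cluster points actually land inside $\cV$, all without any a priori differentiability of $P_T$ (only convexity and continuity from Lemma~\ref{lem:continuityandconvexityofpressurefunction}). The Hahn--Banach separation is precisely what bypasses the more delicate diagonal alternative of directly producing a single sequence $\mu_n\to\mu_F$ that simultaneously maximizes pressure along infinitely many test directions at once.
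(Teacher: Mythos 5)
Your argument is correct. Note that the paper does not actually prove this lemma: it records it as \cite[Theorem~3~(i)]{walters1992differentiability}, so there is no in-paper proof to compare against. Your write-up is a legitimate self-contained reconstruction along the standard convex-analytic lines. The backward direction via the Variational Principle applied to $\potential+g$ is exactly right, and the forward direction correctly splits into (a) positivity and normalization of $F$ via tangency at constants and at nonpositive functions, giving a Borel probability measure by Riesz; (b) $T$-invariance via coboundaries $f-f\circ T$, for which $P_T(\potential\pm g)=P_T(\potential)$; and (c) membership of $\mu_F$ in the set $\cV$ of weak$^*$ limits of near-maximizing sequences, proved by Hahn--Banach separation against near-equilibrium states for the perturbed potentials $\potential+t_n g$. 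Two points are worth making explicit if you write this out in full: the diagonal argument showing $\cV$ is weak$^*$ closed uses that the weak$^*$ topology on $\cP(X)$ is metrizable (which holds since $C(X)$ is separable for $X$ compact metric), and the separation step uses that the continuous dual of $\cM(X)$ in the weak$^*$ topology is $C(X)$, so the separating functional really is integration against some $g\in C(X)$. With those two facts named, the chain $F(g)\leqslant\functional{\nu_n}{g}+1/(nt_n)$ and the choice $t_n\downarrow 0$ with $nt_n\to+\infty$ close the contradiction cleanly.
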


\begin{remark}  \label{rm:identification}
    Recall that $C(X)^*$ can be naturally identified with the space $\cM(X)$ of finite signed Borel measures on $X$.
    Then Lemma~\ref{lem:representationoninvariantmeasure} allows us to identify $C(X)_{\potential,P_T}^*$ as a subset of $\cM(X,T)$.
    We will adopt this identification in the remaining part of this article.
\end{remark}

The following proposition characterizes $C(X)_{\potential,P_T}^*$ and its relation to the set of equilibrium states $\mathcal{E}(T,\potential)$.

\begin{prop} \label{prop:relationbetweentangentspaceandsetofequilibriumstate}
	Let $(X,\rho)$ be a compact metric space, $T\colon X\rightarrow X$ be a continuous map with finite topological entropy, and $\potential\colon X\rightarrow\R$ be a continuous function.
	Then
	\begin{align}
		C(X)_{\potential,P_T}^*&=\bigl\{\mu\in\cM(X,T) : \overline{h}_{\mu}(T)+\functional{\mu}{\potential}=P(T,\potential)\bigr\}\text{ and }\label{eq:expressionoftangentspace}\\
		\mathcal{E}(T,\potential)&=C(X)_{\potential,P_T}^*\cap\bigl\{\mu\in\cM(X,T) : \overline{h}_{\mu}(T)\leqslant h_{\mu}(T)\bigr\}.\label{eq:relationbetweentangentspaceandsetofequilibriumstate}
	\end{align}			
\end{prop}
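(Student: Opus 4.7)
The plan is to prove the two set equalities by direct chase through the definitions, with Lemma~\ref{lem:representationoninvariantmeasure} and the Variational Principle \eqref{eq:Variational Principle for pressure} as the main tools.

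First I would prove \eqref{eq:expressionoftangentspace}. For the inclusion $\subseteq$, take $\mu \in C(X)_{\potential,P_T}^*$. By Lemma~\ref{lem:representationoninvariantmeasure}, there is a sequence $\{\mu_n\}_{n\in\N}$ in $\cM(X,T)$ with $\mu_n\to\mu$ weakly and $h_{\mu_n}(T)+\functional{\mu_n}{\potential}\to P(T,\potential)$. Since $\potential$ is continuous, $\functional{\mu_n}{\potential}\to\functional{\mu}{\potential}$, hence $\limsup_{n\to+\infty}h_{\mu_n}(T)=P(T,\potential)-\functional{\mu}{\potential}$, which yields $\overline{h}_{\mu}(T)\geq P(T,\potential)-\functional{\mu}{\potential}$ by the definition of the upper semicontinuous regularization. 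The reverse inequality follows from the Variational Principle \eqref{eq:Variational Principle for pressure} applied along any sequence $\{\nu_n\}_{n\in\N}$ in $\cM(X,T)$ with $\nu_n\to\mu$: taking $\limsup$ in $h_{\nu_n}(T)+\functional{\nu_n}{\potential}\leq P(T,\potential)$ and using weak$^*$ continuity of $\nu\mapsto\functional{\nu}{\potential}$ gives $\overline{h}_{\mu}(T)+\functional{\mu}{\potential}\leq P(T,\potential)$.

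For the inclusion $\supseteq$ in \eqref{eq:expressionoftangentspace}, suppose $\overline{h}_{\mu}(T)+\functional{\mu}{\potential}=P(T,\potential)$. By definition of $\overline{h}_{\mu}(T)$, choose a sequence $\{\mu_n\}_{n\in\N}$ in $\cM(X,T)$ with $\mu_n\to\mu$ weakly and $\limsup_{n\to+\infty}h_{\mu_n}(T)=\overline{h}_{\mu}(T)$. Passing to a subsequence, $\lim h_{\mu_n}(T)=\overline{h}_{\mu}(T)$, and combined with $\functional{\mu_n}{\potential}\to\functional{\mu}{\potential}$, we get $h_{\mu_n}(T)+\functional{\mu_n}{\potential}\to P(T,\potential)$. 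Lemma~\ref{lem:representationoninvariantmeasure} then places $\mu$ into $C(X)_{\potential,P_T}^*$.

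Next I would deduce \eqref{eq:relationbetweentangentspaceandsetofequilibriumstate}. Note that $\overline{h}_{\mu}(T)\geq h_{\mu}(T)$ always holds (take the constant sequence $\mu_n=\mu$), so $\overline{h}_{\mu}(T)\leq h_{\mu}(T)$ is equivalent to equality. If $\mu\in\mathcal{E}(T,\potential)$, then $h_{\mu}(T)+\functional{\mu}{\potential}=P(T,\potential)$, so by the already-established inequality $\overline{h}_{\mu}(T)+\functional{\mu}{\potential}\leq P(T,\potential)$ we obtain $\overline{h}_{\mu}(T)\leq h_{\mu}(T)$; equality then forces $\overline{h}_{\mu}(T)+\functional{\mu}{\potential}=P(T,\potential)$, placing $\mu$ in $C(X)_{\potential,P_T}^*$ by \eqref{eq:expressionoftangentspace}. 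Conversely, if $\mu\in C(X)_{\potential,P_T}^*$ with $\overline{h}_{\mu}(T)\leq h_{\mu}(T)$, then $\overline{h}_{\mu}(T)=h_{\mu}(T)$ and \eqref{eq:expressionoftangentspace} gives $h_{\mu}(T)+\functional{\mu}{\potential}=P(T,\potential)$, so $\mu\in\mathcal{E}(T,\potential)$.

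No step is particularly hard: the only subtlety is verifying that the two inequalities bounding $\overline{h}_{\mu}(T)$ combine correctly, which hinges on weak$^*$ continuity of $\nu\mapsto\functional{\nu}{\potential}$ for continuous $\potential$ together with the appropriate choice of approximating sequences. Once \eqref{eq:expressionoftangentspace} is in hand, \eqref{eq:relationbetweentangentspaceandsetofequilibriumstate} is essentially a tautology via the Variational Principle.
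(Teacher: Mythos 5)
Your proof is correct and follows essentially the route the paper intends: \eqref{eq:expressionoftangentspace} is derived from Lemma~\ref{lem:representationoninvariantmeasure} together with the defining inequality \eqref{eq:Variational Principle for pressure}, and \eqref{eq:relationbetweentangentspaceandsetofequilibriumstate} then falls out by comparing $h_{\mu}(T)$ with $\overline{h}_{\mu}(T)$ (the paper simply cites \cite[Theorem~5]{walters1992differentiability} for this step, which you instead verify directly). The only point worth flagging is that choosing a single sequence attaining the supremum defining $\overline{h}_{\mu}(T)$ requires a routine diagonal extraction in the metrizable weak$^*$ topology, which you should mention but which poses no difficulty.
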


The characterization~\eqref{eq:expressionoftangentspace} follows from Lemma~\ref{lem:representationoninvariantmeasure}, and \eqref{eq:relationbetweentangentspaceandsetofequilibriumstate} is a consequence of~\cite[Theorem~5]{walters1992differentiability}.

The final ingredient of our argument is the following variational characterization of the regularized entropy, which can be proved by the same method as~\cite[Theorem~9.12]{walters1982introduction}.

\begin{lemma}	\label{lem:computationforuppersemicontinuityofentropy}
    Let $(X,\rho)$ be a compact metric space and $T\colon X\rightarrow X$ be a continuous map with finite topological entropy.
    Then $\overline{h}_{\mu}(T)=\inf\{P(T,\theta)-\functional{\mu}{\theta} : \theta\in C(X)\}$ for each $\mu\in\cM(X,T)$.
\end{lemma}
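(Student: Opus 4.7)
The plan is to prove the identity by establishing the two inequalities separately, following the strategy of Walters' proof of \cite[Theorem~9.12]{walters1982introduction} with $h_\mu(T)$ replaced by its upper semicontinuous regularization $\overline{h}_\mu(T)$. For the easy direction $\overline{h}_\mu(T) \leq \inf_\theta\{P(T,\theta) - \functional{\mu}{\theta}\}$, I would fix $\theta \in C(X)$ and take a sequence $\{\nu_n\}$ in $\cM(X,T)$ with $\nu_n \to \mu$ weakly$^*$ realizing $\limsup_n h_{\nu_n}(T) = \overline{h}_\mu(T)$. The Variational Principle gives $h_{\nu_n}(T) + \functional{\nu_n}{\theta} \leq P(T,\theta)$ for every $n$, and the weak$^*$ continuity of $\nu \mapsto \functional{\nu}{\theta}$ yields $\overline{h}_\mu(T) + \functional{\mu}{\theta} \leq P(T,\theta)$ upon passing to $\limsup$; taking the infimum over $\theta$ then gives the bound.

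For the reverse direction I would argue by contradiction. Suppose there exists $\alpha \in \R$ with $\overline{h}_\mu(T) < \alpha < P(T,\theta) - \functional{\mu}{\theta}$ for every $\theta \in C(X)$. Form the set $K = \bigl\{(\nu, s) \in \cM(X,T) \times \R : s \leq \overline{h}_\nu(T)\bigr\}$ inside $\cM(X) \times \R$, where $\cM(X)$ carries the weak$^*$ topology. Then $K$ is closed (since $\cM(X,T)$ is weak$^*$ closed and $\overline{h}$ is upper semicontinuous by construction) and convex (this uses the concavity of $\overline{h}$, discussed as the obstacle below), and $(\mu,\alpha) \notin K$. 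Applying the Hahn--Banach separation theorem in the locally convex space $\cM(X) \times \R$---whose continuous linear functionals take the form $(\nu,s) \mapsto \functional{\nu}{\theta} + cs$ with $\theta \in C(X)$ and $c \in \R$---produces $\theta \in C(X)$, $c \in \R$, and $\gamma \in \R$ such that $\functional{\nu}{\theta} + cs \leq \gamma$ for all $(\nu,s) \in K$ while $\functional{\mu}{\theta} + c\alpha > \gamma$. Sending $s \to -\infty$ inside $K$ forces $c \geq 0$, and $c = 0$ is excluded by testing the strict inequality against $\nu = \mu \in \cM(X,T)$ (any constant $\theta$ yields a contradiction); after rescaling, $c = 1$. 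Taking $s = \overline{h}_\nu(T)$ and combining $h_\nu(T) \leq \overline{h}_\nu(T)$ with the Variational Principle yields $P(T,\theta) \leq \gamma$, whereas the standing assumption on $\alpha$ gives $\functional{\mu}{\theta} + \alpha < P(T,\theta) \leq \gamma$, contradicting $\functional{\mu}{\theta} + \alpha > \gamma$.

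The main obstacle I anticipate is verifying the convexity of $K$, which reduces to proving that $\overline{h}$ is concave on $\cM(X,T)$. I plan to handle this by invoking the classical affinity of $\nu \mapsto h_\nu(T)$ on $\cM(X,T)$: given $\mu = \lambda \mu_1 + (1-\lambda)\mu_2$ and approximating sequences $\mu_i^{(n)} \to \mu_i$ chosen so that $h_{\mu_i^{(n)}}(T) \to \overline{h}_{\mu_i}(T)$ along a common subsequence, the convex combinations $\mu^{(n)} \= \lambda \mu_1^{(n)} + (1-\lambda)\mu_2^{(n)}$ converge weakly$^*$ to $\mu$ and, by affinity, satisfy $h_{\mu^{(n)}}(T) \to \lambda \overline{h}_{\mu_1}(T) + (1-\lambda)\overline{h}_{\mu_2}(T)$, so that $\overline{h}_\mu(T)$ dominates this limit by definition. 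A minor bookkeeping issue will be the diagonal selection of a subsequence along which both component entropies converge, which is routine under the finiteness of topological entropy.
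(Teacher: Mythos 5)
Your proof is correct and takes essentially the same route the paper intends: the paper establishes this lemma only by citing the method of Walters' Theorem~9.12, and your argument is precisely that method --- Hahn--Banach separation of the point $(\mu,\alpha)$ from the closed convex subgraph of $\overline{h}_{\cdot}(T)$ in $\cM(X)\times\R$, with closedness supplied by the upper semicontinuity of the regularization, convexity by the affinity of the entropy map, and the variational principle converting the separating functional into the bound $P(T,\theta)\leqslant\gamma$. I see no gaps.
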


We now apply these results to establish the computability of $C(X)_{\potential,P_T}^*$.

\begin{theorem}\label{thm:recursivecompactoftangentspace}
    Let $(X,\,\rho,\,\cS,\,\{X_n\}_{n\in\N},\,\{T_n\}_{n\in\N},\,\{\phi_n\}_{n\in\N})$ be a uniformly computable system with $X_n=X$ for all $n\in\N$.
    Suppose $T_n$ has finite topological pressure for each $n\in\N$.
    Assume that the sequence $\{T_n\}_{n\in\N}$ of transformations and the sequence $\{\potential_n\}_{n\in\N}$ of functions satisfy properties~(i)~and~(ii) in Theorem~\ref{theorem A}.
    For each $n\in\N$, define 
    \begin{equation}\label{eq:defofGamman}
        \Gamma_n\coloneqq\bigl\{\mu\in\cM(X,T_n):\overline{h}_{\mu}(T_n)+\functional{\mu}{\phi_n}=P(T_n,\phi_n)\bigr\}.
    \end{equation}
    Then $\{\Gamma_n\}_{n\in\N}$ is uniformly recursively compact in $(\cP(X),\,W_{\rho},\,\cQ_{\cS})$.
\end{theorem}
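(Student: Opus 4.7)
The plan is to reformulate $\Gamma_n$ via tangency and then reduce the continuum of tangency inequalities to a countable collection indexed by the given dense set, which can be tested algorithmically.

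By \eqref{eq:expressionoftangentspace} in Proposition~\ref{prop:relationbetweentangentspaceandsetofequilibriumstate} together with Definition~\ref{def:tangent space}, we have $\Gamma_n=C(X)^*_{\phi_n,P_{T_n}}$, so a measure $\mu\in\cM(X,T_n)$ lies in $\Gamma_n$ if and only if
\begin{equation*}
    \functional{\mu}{\psi-\phi_n}\leq P(T_n,\psi)-P(T_n,\phi_n)\quad\text{for every }\psi\in C(X).
\end{equation*}
The first (and main) step is to show that this continuum of inequalities is equivalent to the countable family
\begin{equation*}
    g_{n,i}(\mu)\coloneqq\functional{\mu}{\psi_{n,i}-\phi_n}+P(T_n,\phi_n)-P(T_n,\psi_{n,i})\leq 0\quad\text{for every }i\in\N.
\end{equation*}
For the nontrivial direction, I first pass from this countable family to the corresponding inequality at every $\psi\in\overline{D}_n$ by approximating $\psi$ by a sequence in $D_n$ and letting $k\to+\infty$, using continuity of $P_{T_n}$ from Lemma~\ref{lem:continuityandconvexityofpressurefunction} together with continuity of $\psi\mapsto\functional{\mu}{\psi}$. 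Since $\overline{D}_n$ contains an open neighborhood $V$ of $\phi_n$, I then propagate the inequality from $V$ to an arbitrary $\theta\in C(X)$ by the convexity of $P_{T_n}$: choosing $t\in(0,1)$ small enough that $\phi_n+t(\theta-\phi_n)\in V$, the inequality at this point together with the convexity estimate $P(T_n,\phi_n+t(\theta-\phi_n))\leq(1-t)P(T_n,\phi_n)+tP(T_n,\theta)$ yields, after dividing by $t$, the required inequality at $\theta$.

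Next, I verify that $\{g_{n,i}\}_{(n,i)\in\N^2}$ is a sequence of uniformly lower semi-computable functions on the recursively compact computable metric space $(\cP(X),W_\rho,\cQ_\cS)$: the first summand is uniformly computable in $(n,i)$ by Proposition~\ref{prop:computability of integral operator} together with Proposition~\ref{prop:computabilityoffunctionspreservedunderoperators} (since $\psi_{n,i}-\phi_n$ is uniformly computable); the second summand is uniformly lower semi-computable by assumption~(ii); and the third is uniformly lower semi-computable as the negative of the upper semi-computable sequence from assumption~(i). Hence by Proposition~\ref{prop:semicomputability} the set $U_{n,i}\coloneqq\{\mu\in\cP(X):g_{n,i}(\mu)>0\}$ is uniformly lower semi-computable open in $(n,i)$, and Proposition~\ref{thecomplementofrecursivelycompactset}~(iii) applied inside the recursively compact ambient space $\cP(X)$ shows that $K_{n,i}\coloneqq\cP(X)\setminus U_{n,i}=\{\mu:g_{n,i}(\mu)\leq 0\}$ is uniformly recursively compact in $(n,i)$.

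To conclude, I combine $\{\cM(X,T_n)\}_{n\in\N}$, which is uniformly recursively compact by Proposition~\ref{prop:recursivecompactofsetinvariant} (with the hypothesis $X_n=X$ ensuring that the $T_n$ are uniformly computable on the whole space), with $\{K_{n,i}\}_{(n,i)\in\N^2}$: the equivalence from the reduction step identifies $\Gamma_n=\cM(X,T_n)\cap\bigcap_{i\in\N}K_{n,i}$, and a single application of Proposition~\ref{prop:operatorsonrecursivelycompactsets}~(i)—indexing the family $\{\cM(X,T_n)\}_n\cup\{K_{n,i}\}_{n,i}$ over a recursively enumerable subset of a product set whose second coordinate is $n$—yields that $\{\Gamma_n\}_{n\in\N}$ is uniformly recursively compact in $(\cP(X),W_\rho,\cQ_\cS)$. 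The main obstacle is the reduction from the uncountable tangency condition to the countable family $\{g_{n,i}\leq 0\}_{i\in\N}$: the topological hypothesis on $\overline{D}_n$ and the convexity of $P_{T_n}$ must be combined so that the resulting countable test retains the precise semi-computability structure needed for the subsequent steps.
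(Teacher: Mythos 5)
Your proposal is correct and follows essentially the same route as the paper: reduce the tangency characterization of $\Gamma_n$ to the countable family of inequalities indexed by $D_n$ using the convexity and continuity of $P_{T_n}$ together with the neighborhood hypothesis on $\overline{D}_n$, then exploit the semi-computability assumptions to realize $\Gamma_n$ as the complement of a uniformly lower semi-computable open set inside the uniformly recursively compact family $\{\cM(X,T_n)\}_{n\in\N}$. The only difference is presentational: the paper packages the countable conditions into a single infimum function $f_n$ and shows it is uniformly upper semi-computable, whereas you keep the conditions separate as sets $K_{n,i}$ and intersect them via Proposition~\ref{prop:operatorsonrecursivelycompactsets}~(i); both are valid and rest on the same underlying facts.
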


\begin{proof}
    For each $n\in\N$, by (\ref{eq:expressionoftangentspace}) in Proposition~\ref{prop:relationbetweentangentspaceandsetofequilibriumstate}, $\Gamma_n$ is indeed the space of functionals which are tangent to $P_{T_n}$ at $\phi_n$.
    Recall that the sequence $\{\psi_{n,i}\}_{(n,i)\in\N^2}$ of functions and  the sequence $\{D_n\}_{n\in\N}$ defined by $D_n\=\{\psi_{n,i}:i\in\N\}$ are given in Theorem~\ref{theorem A}~property~(i).
    In what follows, we establish the following characterization of the spaces $\Gamma_n$.

    \smallskip
    \emph{Claim~1.} For each $n\in\N$, we have $\Gamma_n=\{\mu\in\cM(X,T_n):\inf\{P(T_n,\psi)-\functional{\mu}{\psi}:\psi\in D_n\}\geqslant P(T_n,\potential_n)-\functional{\mu}{\potential_n}\}$.
    \smallskip
    
    \emph{Proof of Claim~1.}
    Fix an arbitrary $n\in\N$. By \eqref{eq:defofGamman} and Lemma~\ref{lem:computationforuppersemicontinuityofentropy}, we have that
    \begin{equation*}		
		\Gamma_n=\{\mu\in\cM(X,T_n) : \inf\{P(T_n,\theta)-\functional{\mu}{\theta}:\theta\in C(X)\}=P(T_n,\potential_n)-\functional{\mu}{\potential_n}\}.
    \end{equation*}
    Hence, to show Claim~1, it suffices to show that for each $\mu\in\cM(X,T_n)$, the following two relations are equivalent:
    \begin{align}
		\inf \{P(T_n,\theta)-\functional{\mu}{\theta} : \theta\in C(X)\} 
        &=P(T_n,\potential_n)-\functional{\mu}{\potential_n},\label{eq:equationequivalent1} \\
		\inf\{P(T_n,\psi)-\functional{\mu}{\psi}:\psi\in D_n\}
        &\geqslant P(T_n,\potential_n)-\functional{\mu}{\potential_n}.\label{eq:equationequivalent2} 
    \end{align}
    
    First, assume $\mu\in\cM(X,T_n)$ satisfies \eqref{eq:equationequivalent1}. It follows from $D_n\subseteq C(X)$ that (\ref{eq:equationequivalent1}) implies (\ref{eq:equationequivalent2}).
    Conversely, assume $\mu\in\cM(X,T_n)$ satisfies \eqref{eq:equationequivalent2}.
    We proceed by contradiction. Suppose $\mu$ does not satisfy \eqref{eq:equationequivalent1}.
    Then there exists $\phi\in C(X)$ such that $P(T_n,\phi)-\functional{\mu}{\phi}<P(T_n,\potential_n)-\functional{\mu}{\potential_n}$. Since $\overline{D}_n$ (the closure of $D_n\coloneqq\{\psi_{n,i}:i\in\N\}$) contains a neighborhood of $\potential_n$ by property~(i) in Theorem~\ref{theorem A}, there exists $c\in(0,1]$ such that $c\phi+(1-c)\phi_n\in \overline{D}_n$.
    Thus, by the convexity and continuity of the pressure function $\phi \mapsto P(T_n, \phi)$, we obtain that
    \begin{align*}
        P(T_n,\potential_n)-\functional{\mu}{\potential_n}
        &>P(T_n,c\phi+(1-c)\phi_n)-\functional{\mu}{c\phi+(1-c)\phi_n}\\
        &\geqslant\inf\{P(T_n,\psi)-\functional{\mu}{\psi}:\psi\in\overline{D}_n\}=\inf\{P(T_n,\psi)-\functional{\mu}{\psi}:\psi\in D_n\},
    \end{align*}
    which contradicts the assumption that $\mu$ satisfies \eqref{eq:equationequivalent2}.
    Hence, $\mu$ must satisfy \eqref{eq:equationequivalent1}, completing the proof of Claim~1.

    \smallskip
    
	For each $n\in\N$, define a function $f_n\colon\PPP(X)\rightarrow\R$ by
	\begin{equation}		\label{eq:definitionoffunctionf}
		f_n(\nu)\define\inf\{P(T_n,\psi_{n,i})-\functional{\nu}{\psi_{n,i}}:i\in\N\}+\functional{\nu}{\potential_n}\quad\text{ for }\nu\in\PPP(X)\text{ and }n\in\N.
	\end{equation} 
	
    \smallskip
    \emph{Claim~2.} $\{f_n\}_{n\in\N}$ is a sequence of uniformly upper semi-computable functions.
    \smallskip

    \emph{Proof of Claim~2.}
    We construct a sequence $\{F_{m,n}\}_{(m,n)\in\N^2}$ of uniformly computable functions such that for each $n\in\N$, the sequence $\{F_{m,n}\}_{m\in\N}$ is nonincreasing and converges pointwise to $f_n$ as $m\to\infty$.
    By property~(i) of Theorem~\ref{theorem A}, $\{P(T_n,\psi_{n,i})\}_{(n,i)\in\N^2}$ is a sequence of uniformly upper semi-computable real numbers.
    Hence, by Definition~\ref{def:semicomputabilityoverR}, there exists an algorithm $\cA_p$ such that for all $m, \, n, \, i\in\N$, on input $m, \, n, \, i$, the algorithm $\cA_p$ outputs $p_{m,n,i}\in\cQ$ such that $\{p_{m,n,i}\}_{(m,n,i)\in\N^3}$ is nonincreasing in $m$ and converges to $P(T_n,\psi_{n,i})$ as $m\rightarrow+\infty$ for all $n,i\in\N$.
    Define integral functionals $\cI_{n,i}(\nu)\define\functional{\nu}{\potential_n-\psi_{n,i}}$ for all $n,i\in\N$ and $\nu\in\PPP(X)$.
    Then by Proposition~\ref{prop:computability of integral operator}, it follows from the uniform computability of $\{\psi_{n,i}\}_{(n,i) \in \N^2}$ and $\{\potential_n\}_{n \in \N}$ that $\{\cI_{n,i}\}_{(n,i) \in \N^2}$ is a sequence of uniformly computable functions.
    Define $F_{m,n}(\nu)\define\min\{p_{m,n,i} + \cI_{n,i}(\nu) \describe i \in \N \cap[1,m]\}$ for all $n,m\in\N$ and $\nu\in\PPP(X)$.
    By the uniform computability of $\{\cI_{n,i}\}_{(n,i)\in\N^2}$, there exists an algorithm $\cA_{\cI}$ such that for all $n,\,i,\,k\in\N$ and $\nu\in\cP(X)$, on input $n,\,i,\,k$ and an oracle for $\nu$, $\cA_{\cI}$ outputs $q_{i,k}$ with $\abs{q_{i,k}-\cI_{n,i}(\nu)}\leqslant 2^{-k}$.
    We can now design an algorithm $\cA_F$ to compute $\{F_{m,n}\}_{(m,n)\in\N^2}$.
    For each $m,\,n,\,k\in\N$ and each $\nu\in\cP(X)$, we apply $\cA_p$ and $\cA_{\cI}$ to compute $\{p_{m,n,i}\}_{i=1}^m$ and $\{q_{i,k}\}_{i=1}^m$ such that $\abs{q_{i,k}-\cI_{n,i}(\nu)}\leqslant 2^{-k}$ for each integer $1\leqslant i\leqslant m$.
    Then we compute $\min\{p_{m,n,i}+q_{i,k}:i\in\N\cap[1,m]\}$ as the output of $\cA_F$.
    Note that $\min\{p_{m,n,i}+q_{i,k}:i\in\N\cap[1,m]\}+2^{-k}\geqslant F_{m,n}(\nu)\geqslant\min\{p_{m,n,i}+q_{i,k}:i\in\N\cap[1,m]\}-2^{-k}$.
    It follows that $\cA_F$ demonstrates the uniform computability of the sequence $\{F_{m,n}\}_{(m,n)\in\N^2}$ of functions.
    
    Now fix an integer $n\in\N$ and a measure $\nu\in\PPP(X)$.
    Since $\{p_{m,n,i}\}_{(m,i)\in\N^2}$ is nonincreasing in $m$ for each $i\in\N$, the sequence $\{F_{m,n}(\nu)\}_{m\in\N}$ is nonincreasing; so its limit exists.
    Note that $p_{m,n,i}\geqslant P(T_n,\psi_{n,i})$ for all $m,\,i\in\N$.
    Then by construction, we have $F_{m,n}(\nu)\geqslant f_n(\nu)$ for each $m\in\N$.
    Hence, $\lim_{m\to+\infty}F_{m,n}(\nu)\geqslant f_n(\nu)$.
    On the other hand, since $\lim_{m\to+\infty}p_{m,n,i}=P(T_n,\psi_{n,i})$ for each $i\in\N$, it follows from \eqref{eq:definitionoffunctionf} that for each $\epsilon>0$, there exist integers $i, j$ with $1 \leq i \leq j$ such that $f_n(\nu)+\epsilon>P(T_n,\psi_{n,i})+\cI_{n,i}(\nu)+\epsilon/2>p_{j,n,i}+\cI_{n,i}(\nu)\geqslant F_{j,n}(\nu)\geqslant\lim_{m\to+\infty}F_{m,n}(\nu)$.
    Consequently, $f_n(\nu)\geqslant\lim_{m\to +\infty}F_{m,n}(\nu)$.
    This shows that $\lim_{m\to+\infty}F_{m,n}(\nu)=f_n(\nu)$. 
    
    Since $\{F_{m,n}\}_{(m,n)\in\N^2}$ is a sequence of uniformly computable functions and $\{F_{m,n}(\nu)\}_{m\in\N}$ is nonincreasing for all $\nu$, by Definition~\ref{def:domain of lower or upper computability}, $\{f_n\}_{n\in\N}$ is a sequence of uniformly upper semi-computable functions.
    This establishes Claim~2.
	
    \smallskip

    We now complete the proof by expressing $\Gamma_n$ as the complement of a uniformly lower semi-computable open set within the uniformly recursively compact set $\cM(X,T_n)$.
    By property~(ii) of Theorem~\ref{theorem A}, $\{P(T_n,\potential_n)\}_{n\in\N}$ is a sequence of uniformly lower semi-computable real numbers. By Definition~\ref{def:semicomputabilityoverR}, there exists a sequence $\{q_{m,n}\}_{(m,n)\in\N^2}$ of uniformly computable real numbers such that, for each $n\in\N$, the sequence $\{q_{m,n}\}_{m\in\N}$ is nondecreasing in $m$ and converges to $P(T_n,\potential_n)$ as $m\to +\infty$.
    Hence, it follows from the definition of $\{D_n\}_{n\in\N}$, Claim~1, and \eqref{eq:definitionoffunctionf} that
	\begin{equation}		\label{eq:tangentspaceisintersection}
		\Gamma_n=\cM(X,T_n)\cap f_n^{-1}([P(T_n,\potential_n),+\infty))=\cM(X,T_n)\smallsetminus\bigcup_{m\in\N}f_n^{-1}((-\infty,q_{m,n})).
	\end{equation}
    Since $\{q_{m,n}\}_{(m,n)\in\N^2}$ is a sequence of uniformly computable real numbers, by Claim~2 and the implication from (i) to (iii) in Proposition~\ref{prop:semicomputability}, $\bigl\{f_n^{-1}((-\infty,q_{m,n}))\bigr\}_{(m,n)\in\N^2}$ is uniformly lower semi-computable open in $(\cP(X),\,W_{\rho},\,\cQ_{\cS})$.
    Hence, by Proposition~\ref{prop:lower semi-computable open}, $\bigl\{\bigcup_{m\in\N}f_n^{-1}((-\infty,q_{m,n}))\bigr\}_{n\in\N}$ is uniformly lower semi-computable open.
    Moreover, since $\{T_n\}_{n\in\N}$ is a sequence of uniformly computable functions, by Proposition~\ref{prop:recursivecompactofsetinvariant}, $\{\cM(X,T_n)\}_{n\in\N}$ is uniformly recursively compact.
    Therefore, by Proposition~\ref{thecomplementofrecursivelycompactset}~(iii) and \eqref{eq:tangentspaceisintersection}, $\{\Gamma_n\}_{n\in\N}$ is uniformly recursively compact.
\end{proof}

Now we turn to prove Theorem~\ref{theorem A}.

\begin{proof}[\bf Proof of Theorem~\ref{theorem A}]
    Let $\Gamma_n$ be as defined in (\ref{eq:defofGamman}). Consider an arbitrary $n\in\N$. Then the measure-theoretic entropy map $\nu \mapsto h_{\nu}(T_n)$ is upper semicontinuous by hypothesis. This implies that $\overline{h}_{\nu}(T_n)=h_{\nu}(T_n)$ for all $\nu\in\cM(X,T_n)$.
    It follows from (\ref{eq:defofGamman}) that the set $\Gamma_n$ coincides with the set of equilibrium states $\mathcal{E}(T_n,\potential_n)$.
    Hence, by property~(iii), we have $\Gamma_n = \mathcal{E}(T_n,\potential_n) = \{\mu_n\}$.
    
    By Theorem~\ref{thm:recursivecompactoftangentspace}, the sequence $\{\{\mu_n\}\}_{n\in\N}$ of singletons is uniformly recursively compact in $(\cP(X),\,W_{\rho},\,\cQ_{\cS})$.
    Therefore, by Proposition~\ref{thecomplementofrecursivelycompactset}~(i), $\{\mu_n\}_{n\in\N}$ is uniformly computable.
\end{proof}

\section{Approach II: Local approach via transfer operator and Jacobian}
\label{sec:Approach II}

This section is dedicated to the proof of Theorem~\ref{theorem B} and its applications. 
Subsection~\ref{subsec:jacobian and transfer operator} recalls essential notions and establishes Theorem~\ref{thm:prescribed Jacobian of equilibrium state}, which states the relationship between the set $\cM(X,T;Y,J)$ and $\cE(T,\phi)$ under additional conditions.
The proof of Theorem~\ref{theorem B} follows in Subsection~\ref{subsec:proof of theorem B}. 
Finally, in Subsection~\ref{sub:applications}, we apply these results to demonstrate the computability of equilibrium states.


\subsection{Jacobian and the transfer operator}
\label{subsec:jacobian and transfer operator}

We begin by recalling the definition of Jacobians and establishing their existence, uniqueness, and Rokhlin's formula. We then define transfer operators and establish their key properties in our setting.
Using these tools, we provide an equivalent characterization of Jacobians with respect to invariant measures in Theorem~\ref{thm:description of Jacobian}.
Building on this, the subsection concludes with Theorem~\ref{thm:prescribed Jacobian of equilibrium state}.


\begin{definition}[Jacobian]    \label{def:Jacobian}
	Let $(X,\rho)$ be a compact metric space, and $T\colon X\rightarrow X$ be a Borel-measurable transformation. We say that $A\subseteq X$ is \emph{admissible} (for $T$) if $A,\,T(A)\in\cB(X)$, and $T|_{A}$ is injective. Suppose $J\colon X\rightarrow [0,+\infty)$ is a Borel function, $\mu\in\mathcal{P}(X)$, and $E\in\cB(X)$ with $\mu(E)=1$. Then $J$ is said to be a \emph{Jacobian on $E$} for $T$ with respect to $\mu$ if for all admissible sets $A \subseteq E$,
	\begin{equation*}    \label{eq:def:Jacobian}
		\mu(T(A)) = \int_{A} \! J \,\mathrm{d}\mu.
	\end{equation*}
	Moreover, we say that $J$ is a \emph{Jacobian} for $T$ with respect to $\mu$ if there exists $\widetilde{E}\in\cB(X)$ with $\mu\bigl(\widetilde{E}\bigr)=1$ such that $J$ is a Jacobian on $\widetilde{E}$ for $T$ with respect to $\mu$.  
\end{definition}

Recall that $\cP(X;Y)=\{\mu\in\cP(X):\mu(Y)=1\}$ for $Y\in\cB(X)$. We state below the hypotheses under which we will develop our theory in this section. 

\begin{definition}\label{def:admissible}
    We say that the sextuple $(X,\,\rho,\,T,\,Y,\,\{Y_k\}_{k\in\N},\,\mu)$ is \emph{admissible} if it has the following properties:
\begin{enumerate}
	\smallskip
	\item $(X,\rho)$ is a compact metric space.
	\smallskip
	\item $T\colon X\mapping X$ is a Borel-measurable transformation.
    \smallskip
	\item $\{Y_k\}_{k\in\N}$ is a sequence of pairwise disjoint admissible sets for $T$.
    \smallskip
    \item $Y=\bigcup_{k\in\N}Y_k$.
    \smallskip
    \item $\mu\in\cM(X,T)\cap\cP(X;Y)$.
\end{enumerate}
\end{definition}

The following proposition states the uniqueness of the Jacobian and provides a lower bound for the measure-theoretic entropy in terms of the Jacobian. 

\begin{prop}\label{prop:rokhlin'sformula}
	Let $(X,\,\rho,\,T,\,Y,\,\{Y_k\}_{k\in\N},\,\mu)$ be admissible. Assume that $J\colon X\mapping[0,+\infty)$ is a Jacobian for $T$ with respect to $\mu$. Then $J(x)\geq 1$ for $\mu$-a.e.\ $x\in X$, and $h_{\mu}(T)\geq\functional{\mu}{\log(J)}$. Moreover, for each Borel function $\widetilde{J}\colon X\mapping[0,+\infty)$, $\widetilde{J}$ is a Jacobian for $T$ with respect to $\mu$ if and only if $J(x)=\widetilde{J}(x)$ for $\mu$-a.e.\ $x\in X$.
\end{prop}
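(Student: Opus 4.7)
The plan is to prove the three conclusions in order of depth, handling the direct consequences of the Jacobian identity first and the entropy bound last.

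For $J \geq 1$ $\mu$-a.e., let $\widetilde{E} \in \cB(X)$ with $\mu(\widetilde{E}) = 1$ be the set on which $J$ is a Jacobian. For any admissible $A \subseteq \widetilde{E}$, the inclusion $A \subseteq T^{-1}(T(A))$ combined with $T$-invariance of $\mu$ and the Jacobian identity gives
\begin{equation*}
\int_A J \,\mathrm{d}\mu = \mu(T(A)) = \mu(T^{-1}(T(A))) \geq \mu(A).
\end{equation*}
Since every Borel $B \subseteq Y_k \cap \widetilde{E}$ is admissible (by injectivity of $T|_{Y_k}$), we obtain $\int_B (J - 1) \,\mathrm{d}\mu \geq 0$ for all such $B$, forcing $J \geq 1$ $\mu$-a.e.\ on $\bigcup_k (Y_k \cap \widetilde{E}) = Y \cap \widetilde{E}$, a full-measure set. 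For uniqueness, if $\widetilde{J}$ is a Jacobian on $\widetilde{E}' \in \cB(X)$ with $\mu(\widetilde{E}') = 1$, then for each $k$ and each Borel $B \subseteq Y_k \cap \widetilde{E} \cap \widetilde{E}'$ the two defining integral identities give $\int_B J \,\mathrm{d}\mu = \mu(T(B)) = \int_B \widetilde{J} \,\mathrm{d}\mu$, so $J = \widetilde{J}$ $\mu$-a.e.\ on the full-measure set $Y \cap \widetilde{E} \cap \widetilde{E}'$. The converse direction is immediate: if $\widetilde{J} = J$ $\mu$-a.e., then $\widetilde{J}$ satisfies the Jacobian integral identity on any full-measure set where they coincide.

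The heart of the proposition, and the main obstacle, is Rokhlin's inequality $h_\mu(T) \geq \functional{\mu}{\log(J)}$. My plan is to adapt the classical fiberwise argument to the present admissible-partition setting. For each $N \in \N$, consider the finite partition $\xi_N = \{Y_1, \ldots, Y_N, Z_N\}$ with $Z_N \= X \setminus \bigcup_{k=1}^N Y_k$; since $H_\mu(\xi_N) < +\infty$, one has $h_\mu(T, \xi_N) = H_\mu(\xi_N \mid \xi_N^-)$, where $\xi_N^- \= \bigvee_{n=1}^\infty T^{-n} \xi_N$. The key step is the fiberwise identity
\begin{equation*}
\mu(Y_k \mid T^{-1} \cB(X))(x) = 1/J(x) \quad \text{for $\mu$-a.e.\ } x \in Y_k, \ 1 \leq k \leq N,
\end{equation*}
which expresses that on the fiber $T^{-1}\{T(x)\}$ the unique preimage in $Y_k$ (namely $x$ itself) carries conditional mass $1/J(x)$. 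This identity is obtained by disintegrating the $T$-invariant measure $\mu$ along fibers of $T$ on the standard Borel space $(X, \rho)$ and applying the Jacobian definition to admissible subsets of $Y_k$. Combined with $\xi_N^- \subseteq T^{-1} \cB(X)$ and the monotonicity of conditional entropy under refinement of the conditioning $\sigma$-algebra, this yields
\begin{equation*}
h_\mu(T) \geq h_\mu(T, \xi_N) \geq H_\mu(\xi_N \mid T^{-1} \cB(X)) \geq \sum_{k=1}^N \int_{Y_k} \log J \,\mathrm{d}\mu,
\end{equation*}
where the final inequality discards the nonnegative conditional-entropy contribution from $Z_N$. Since $\log J \geq 0$ by the first part of the proposition, monotone convergence as $N \to \infty$ gives $h_\mu(T) \geq \int_Y \log J \,\mathrm{d}\mu = \functional{\mu}{\log(J)}$. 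The most delicate point is the rigorous justification of the fiberwise identity, for which I would invoke a measurable selection argument to produce Borel inverse branches $(T|_{Y_k})^{-1} \colon T(Y_k) \to Y_k$ and then unwind the disintegration using the Jacobian integral formula.
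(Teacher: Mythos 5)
Your argument is correct, but note that the paper does not actually prove this proposition: it defers the entropy inequality to the cited references (Sarig, Coud\`ene, and the authors' lecture notes) and the uniqueness to Rohlin and Przytycki--Urba\'nski. Your treatment of $J\geq 1$ and of uniqueness is clean and complete; the key observation that every Borel subset of $Y_k\cap\widetilde{E}$ is admissible (injectivity of $T|_{Y_k}$ plus the Lusin--Souslin theorem for Borelness of the image, which the paper invokes elsewhere via \cite[Corollary~15.2]{kechris2012classical}) is exactly what makes the localization to the $Y_k$ work. For the Rokhlin inequality, your chain
$h_\mu(T)\geq h_\mu(T,\xi_N)=H_\mu(\xi_N\mid\xi_N^-)\geq H_\mu(\xi_N\mid T^{-1}\cB(X))\geq\sum_{k=1}^N\int_{Y_k}\log J\,\mathrm{d}\mu$
is the standard argument and every link is valid (monotonicity of conditional entropy in the conditioning $\sigma$-algebra, nonnegativity of the $Z_N$ term, and monotone convergence using $\log J\geq 0$ from the first part). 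The one step you leave at plan level, the identity $E_\mu[\mathbbm{1}_{Y_k}\mid T^{-1}\cB(X)]=1/J$ a.e.\ on $Y_k$, is indeed the crux, but it does not require a measurable selection or a full disintegration: since any $T^{-1}\cB(X)$-measurable version is of the form $h\circ T$, invariance of $\mu$ reduces its defining property to $\mu(T^{-1}(B)\cap Y_k)=\int_B h\,\mathrm{d}\mu$, i.e.\ $h$ is the Radon--Nikodym derivative $\Phi_k$ of Proposition~\ref{prop:transfer operator}~(i), and parts (i)--(ii) of that proposition (whose proofs do not rely on the present statement) give $\Phi_k\circ T=\Psi=1/J_\mu=1/J$ $\mu$-a.e.\ on $Y_k$, using the uniqueness you have already established. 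So the proposal is sound; it just needs that last identity written out.
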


The lower bound given above is a classical result in ergodic theory known as the Rokhlin entropy formula. We refer the reader to \cite[Theorem~4.2]{sarig1999thermodynamic} for a version for topological Markov shifts and to \cite[Corollary~12.1]{coudène2016ergodic} for a version for finite admissible partitions; see \cite[Proposition~4.3]{Qiandu2025Lecture} for the proof in our context. The uniqueness of the Jacobian immediately follows from \cite[Theorem~2.7]{rohlin1949fundamental}, \cite[Definition~2.9.2~\&~Proposition~2.9.5]{przytycki2010conformal}.

Next, we construct a specific Jacobian, define a useful operator, and establish its key properties in the following proposition.
With these properties, this operator can be seen as a normalized transfer operator, which is essential for the proof of Theorem~\ref{thm:description of Jacobian}.

\begin{prop}\label{prop:transfer operator}
	Let $(X,\,\rho,\,T,\,Y,\,\{Y_k\}_{k\in\N},\,\mu)$ be admissible. Then the following statements are true: 
	\begin{enumerate}
		
		\smallskip
		
		\item For each $k\in\N$, there exists a nonnegative $\mu$-integrable Borel function $\Phi_k$ on $X$ such that 
		\begin{equation}\label{eq:inverseJacobian}
			\Phi_k(x)=0\quad\text{ for each }x\notin T(Y_k)\quad\text{ and}\quad\mu\bigl(T^{-1}(B)\cap Y_k\bigr)=\int_{B}\!\Phi_k\,\mathrm{d}\mu\quad\text{ for each }B\in\cB(X).
		\end{equation}
				
		\smallskip
		
		\item Define a function $\Psi\colon X\mapping\R$ by
		\begin{equation}		\label{eq:def of psi}
			\Psi(x)\=0\quad\text{ for each }x\in Y^c\quad\text{ and}\quad\Psi(x)\=\Phi_k(T(x))\quad\text{ for each }k\in\N\text{ and each }x\in Y_k.
		\end{equation}
		Write $\widetilde{Y}\=Y\smallsetminus\Psi^{-1}(0)$. Then there exists a Borel function $J_{\mu}\colon X\mapping [0,+\infty)$ that is a Jacobian on $\widetilde{Y}$ for $T$ with respect to $\mu$ and satisfies $J_{\mu}(x)\cdot\Psi(x)=1$ for $\mu$-a.e.\ $x\in X$.
		
		\smallskip
		
		\item Denote by $L^+(X)$ the space of all Borel functions from $X$ to $[0,+\infty]$. Then $\cL_{\mu}\colon L^+(X)\mapping L^+(X)$ given by
		\begin{equation}		\label{eq:transfer operator}
			\cL_{\mu}(u)(x) \= \sum_{y\in T^{-1}(x)\cap Y}u(y)\Psi(y),\quad\text{ for }u\in L^+(X)\text{ and }x\in X,
		\end{equation}
		satisfies the following: for all $u,v\in L^+(X)$, $c\geq 0$, and $\mu$-a.e.\ $x\in X$,
		\begin{align}
			\cL_{\mu}(\mathbbm{1})(x)&=1,\label{eq:invariance of transfer operator}\\
			\cL_{\mu}(u+v)(x)=\cL_{\mu}(u)(x)+\cL_{\mu}(v)&(x),\quad\cL_{\mu}(cu)(x)=c\cL_{\mu}(u)(x),\quad\text{ and}\label{eq:operatoronL^+X}\\
			\functional{\mu}{\cL_{\mu}(u)}&=\functional{\mu}{u}.\label{eq:integration of transfer opertor}
		\end{align}
	\end{enumerate}
\end{prop}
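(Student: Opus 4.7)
The plan is to build $\Phi_k$ via Radon--Nikodym, set $J_\mu \coloneqq 1/\Psi$ where $\Psi > 0$, and verify the operator identities by direct computation, exploiting the pairwise disjointness of $\{Y_k\}$ together with the injectivity of $T|_{Y_k}$.

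For (i), I would define the Borel measure $\nu_k(B) \coloneqq \mu(T^{-1}(B) \cap Y_k)$ for $B \in \cB(X)$. The bound $\nu_k(B) \leq \mu(T^{-1}(B)) = \mu(B)$, using $T$-invariance of $\mu$, gives $\nu_k \ll \mu$, so the Radon--Nikodym theorem supplies a nonnegative Borel density $\Phi_k$ that is automatically $\mu$-integrable (since $\nu_k(X) \leq 1$). Since $B \cap T(Y_k) = \emptyset$ forces $T^{-1}(B) \cap Y_k = \emptyset$, the measure $\nu_k$ is concentrated on $T(Y_k)$, so we may take $\Phi_k = 0$ off $T(Y_k)$.

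For (ii), I would first verify $\mu(\widetilde{Y}) = 1$: the set $\{x \in Y_k : \Psi(x) = 0\}$ equals $(T|_{Y_k})^{-1}(\{\Phi_k = 0\})$ and so has $\mu$-measure $\nu_k(\{\Phi_k = 0\}) = 0$; summing over $k$ and using $\mu(Y) = 1$ yields $\mu(\widetilde{Y}) = 1$. Set $J_\mu \coloneqq 1/\Psi$ on $\widetilde{Y}$ and $0$ elsewhere, so that $J_\mu \cdot \Psi = 1$ $\mu$-a.e. The key geometric observation is that for any admissible $A \subseteq \widetilde{Y}$, the images $\{T(A \cap Y_k)\}_k$ are pairwise disjoint: if $T(x_1) = T(x_2)$ with $x_j \in A \cap Y_{k_j}$, the injectivity of $T|_A$ forces $x_1 = x_2$, hence $k_1 = k_2$. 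Reading the defining identity of $\Phi_k$ as the pushforward formula $\int_{T(Y_k)} f \Phi_k \,\mathrm{d}\mu = \int_{Y_k} (f \circ T)\,\mathrm{d}\mu$, and applying it with $f = J_\mu \mathbbm{1}_{T(A \cap Y_k)}$ (noting $T(A \cap Y_k) \subseteq \{\Phi_k > 0\}$ when $A \subseteq \widetilde{Y}$), yields $\mu(T(A \cap Y_k)) = \int_{A \cap Y_k} J_\mu\,\mathrm{d}\mu$, whose sum over $k$ gives the Jacobian identity.

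For (iii), the linearity statements in \eqref{eq:operatoronL^+X} are immediate from the pointwise definition \eqref{eq:transfer operator}. For \eqref{eq:invariance of transfer operator}, the injectivity of $T|_{Y_k}$ and the vanishing of $\Phi_k$ off $T(Y_k)$ give the pointwise identity $\cL_\mu(\mathbbm{1})(x) = \sum_k \Phi_k(x)$; meanwhile $\sum_k \nu_k(B) = \mu(T^{-1}(B) \cap Y) = \mu(B)$ by $T$-invariance, so $\sum_k \Phi_k = 1$ $\mu$-a.e. For \eqref{eq:integration of transfer opertor}, the same pushforward formula applied with $f = u \circ (T|_{Y_k})^{-1}$ on $T(Y_k)$ reduces $\int \cL_\mu(u)\,\mathrm{d}\mu$ to $\sum_k \int_{Y_k} u\,\mathrm{d}\mu = \int u\,\mathrm{d}\mu$. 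The main subtlety throughout is establishing the disjointness of $\{T(A \cap Y_k)\}_k$ used in (ii), which combines the \emph{global} injectivity of $T|_A$ with the \emph{local} injectivity of each $T|_{Y_k}$; this is what ultimately permits summing the Jacobian contributions across the partition.
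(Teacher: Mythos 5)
Your proposal is correct in substance and, for parts (i) and (iii), follows essentially the same route as the paper: Radon--Nikodym for $\Phi_k$, the pointwise identity $\cL_{\mu}(\mathbbm{1})=\sum_k\Phi_k$ combined with $\sum_k\nu_k=\mu$ for \eqref{eq:invariance of transfer operator}, and the pushforward identity $\int_{T(Y_k)}f\,\Phi_k\,\mathrm{d}\mu=\int_{Y_k}(f\circ T)\,\mathrm{d}\mu$ with $f=u\circ(T|_{Y_k})^{-1}$ for \eqref{eq:integration of transfer opertor}. Part (ii) is where you genuinely diverge: the paper runs a \emph{second} Radon--Nikodym argument (for $\tilde{\nu}_k(B)=\mu(T(B))$ against $\mu$ on $\widetilde{Y}_k$, whose absolute continuity itself requires the positivity of $\Phi_k$ on $T(A)$) to produce $J_{\mu}$, proves the Jacobian property, and only then deduces $J_{\mu}\cdot\Psi=1$ a.e.\ by change of variables. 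You instead \emph{define} $J_{\mu}\define 1/\Psi$ on $\widetilde{Y}$, so the reciprocity is free, and verify the Jacobian identity directly from the pushforward formula of (i). This is a legitimate and somewhat leaner route; its only cost is that you must be careful with measurability of $T(A\cap Y_k)$ (Lusin--Souslin, i.e.\ \cite[Corollary~15.2]{kechris2012classical}, which the paper also invokes) and with the countable additivity $\mu(T(A))=\sum_k\mu(T(A\cap Y_k))$, for which your disjointness observation is exactly the right point (the paper uses the same decomposition without spelling it out).

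One formula in (ii) needs repair. As written, the test function $f=J_{\mu}\mathbbm{1}_{T(A\cap Y_k)}$ does not yield the claimed identity: the pushforward formula turns it into $\int_{A\cap Y_k}J_{\mu}(T(x))\,\mathrm{d}\mu(x)$, whereas you need $\int_{A\cap Y_k}J_{\mu}(x)\,\mathrm{d}\mu(x)$. The correct choice, consistent with what you use in (iii), is $f=(J_{\mu}\mathbbm{1}_{A})\circ(T|_{Y_k})^{-1}$ on $T(Y_k)$ (extended by zero), which equals $\mathbbm{1}_{T(A\cap Y_k)}/\Phi_k$ on $T(A\cap Y_k)$ because $\Psi=\Phi_k\circ T$ on $Y_k$; then $\int f\,\Phi_k\,\mathrm{d}\mu=\mu(T(A\cap Y_k))$ and $\int_{Y_k}(f\circ T)\,\mathrm{d}\mu=\int_{A\cap Y_k}J_{\mu}\,\mathrm{d}\mu$ as desired. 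Your parenthetical remark that $T(A\cap Y_k)\subseteq\{\Phi_k>0\}$ is precisely what makes this quotient well defined, so the fix is local and the argument otherwise goes through.
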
 

We prove (i) and (ii) by using the Radon--Nikodym theorem to construct ${\Phi_k}$ and $J_{\mu}$, then showing their reciprocal relationship with $\Psi$ via change of variables. 
To establish (iii), we shall apply the monotone convergence theorem repeatedly. 
We prove (\ref{eq:invariance of transfer operator}) by testing $\int_A\!\cL_{\mu}(\mathbbm{1})\,\mathrm{d}\mu = \mu(A)$ for all Borel $A$. 
Finally, (\ref{eq:operatoronL^+X}) follows directly from (\ref{eq:transfer operator}), and the integration formula (\ref{eq:integration of transfer opertor}) is checked on characteristic functions.

\begin{proof}
	(i) Fix an arbitrary $k\in\N$. Since $(X,\,\rho,\,T,\,Y,\,\{Y_k\}_{k\in\N},\,\mu)$ is admissible, by Definition~\ref{def:admissible}~(ii), (iii), and~(v), $T$ is Borel measurable, $Y_k,\,T(Y_k)\in\cB(X)$, and $\mu\in\cM(X,T)\cap\cP(X;Y)$.
	Define $\mu_k(B)\coloneqq\mu(B)$ and $\widetilde{\mu}_k(B) \=  \mu\bigl((T|_{Y_k})^{-1}(B)\bigr)=\mu\bigl(T^{-1}(B)\cap Y_k\bigr)$ for each Borel subset $B\subseteq T(Y_k)$. Then $\mu_k$ and $\widetilde{\mu}_k$ are both $\sigma$-finite positive Borel measures on $T(Y_k)$.
	Since $\mu\in\cM(X,T)\cap\cP(X;Y)$, we obtain that $\widetilde{\mu}_k(B)=\mu\bigl(T^{-1}(B)\cap Y_k\bigr)\leq\mu\bigl(T^{-1}(B)\bigr)=\mu(B)=\mu_k(B)$ for each Borel subset $B\subseteq T(Y_k)$. 
	This implies that $\widetilde{\mu}_k$ is absolutely continuous with respect to $\mu_k$.
	By the Radon--Nikodym theorem, there exists a nonnegative $\mu_k$-integrable derivative $\frac{\mathrm{d}\widetilde{\mu}_k}{\mathrm{d}\mu_k}$. Define $\Phi_k:X\rightarrow\R$ by
	\begin{equation*}
		\Phi_k(x) \=
		\begin{cases}
			\frac{\mathrm{d}\widetilde{\mu}_k}{\mathrm{d}\mu_k}(x)&\text{if }x\in T(Y_k);\\
			0&\text{if }x\notin T(Y_k).
		\end{cases}
	\end{equation*}
	By construction, $\Phi_k$ is a nonnegative $\mu$-integrable Borel function. Moreover, by the definition of the Radon--Nikodym derivative,
	\begin{equation*}
		\mu\bigl(T^{-1}(B)\cap Y_k\bigr)=\mu\bigl(T^{-1}(B\cap T(Y_k))\cap Y_k\bigr)=\int_{B\cap T(Y_k)}\!\Phi_k\,\mathrm{d}\mu=\int_B\!\Phi_k\,\mathrm{d}\mu\quad\text{ for each }B\in\cB(X).
	\end{equation*}
	Hence, $\Phi_k$ satisfies (\ref{eq:inverseJacobian}), establishing Proposition~\ref{prop:transfer operator}~(i).
	
	\smallskip

	(ii)
	By (\ref{eq:def of psi}), $\Psi$ is a Borel function on $X$, and thus $\widetilde{Y}\in\cB(X)$. We first prove that $\mu\bigl(\widetilde{Y}\bigr)=1$. By Proposition~\ref{prop:transfer operator}~(i) and (\ref{eq:def of psi}), we have $\mu\bigl(\Psi^{-1}(0)\cap Y_k\bigr)=\mu\bigl(T^{-1}\bigl(\Phi_k^{-1}(0)\bigr)\cap Y_k\bigr)=\int_{\Phi_k^{-1}(0)}\!\Phi_k\,\mathrm{d}\mu=0$ for each $k\in\N$. Since $\{Y_k\}_{k\in\N}$ is a sequence of pairwise disjoint Borel subsets with $\mu(Y)=\mu\bigl(\bigcup_{k\in\N}Y_k\bigr)=1$ by Definition~\ref{def:admissible}~(iv)~and~(v), we obtain $\mu\bigl(\widetilde{Y}\bigr)=\mu(Y)-\mu\bigl(\Psi^{-1}(0)\cap Y\bigr)=1-\sum_{k\in\N}\mu\bigl(\Psi^{-1}(0)\cap Y_k\bigr)=1$.

	Now fix an arbitrary $k\in\N$ and write $\widetilde{Y}_k\=Y_k\smallsetminus\Psi^{-1}(0)$.
    Since $\Psi$ is a Borel function, we have $\widetilde{Y}_k\in\cB(X)$. Define $\nu_k(B)\coloneqq\mu(B)$ and $\tilde{\nu}_k(B)\=\mu(T(B))$ for each Borel subset $B\subseteq\widetilde{Y}_k$.
    By \cite[Corollary~15.2]{kechris2012classical}, it follows from Definition~\ref{def:admissible}~(iii) that $T$ is a Borel isomorphism of $Y_k$ with $T(Y_k)$. 
    Hence, by Definition~\ref{def:admissible}~(iii)~and~(v), $\nu_k$ and $\tilde{\nu}_k$ are both $\sigma$-finite positive Borel measures on $\widetilde{Y}_k$. We next prove that $\tilde{\nu}_k$ is absolutely continuous with respect to $\nu_k$. To this end, consider an arbitrary Borel set $A\subseteq\widetilde{Y}_k$ with $\mu(A)=0$; we show that $\mu(T(A))=0$. Indeed, since $T$ is injective on $Y_k$, we have $T^{-1}(T(A))\cap Y_k=A$. Thus, by Proposition~\ref{prop:transfer operator}~(i),
	\begin{equation}\label{eq:integrationequalstozero}
		0=\mu(A)=\mu\bigl(T^{-1}(T(A))\cap Y_k\bigr)=\int_{T(A)}\!\Phi_k\,\mathrm{d}\mu.
	\end{equation}
	Since $A\subseteq\widetilde{Y}_k=Y_k\smallsetminus\Psi^{-1}(0)$ and $\Psi(x)=\Phi_k(T(x))$ for each $x\in Y_k$ by (\ref{eq:def of psi}), we have $\Phi_k(x)>0$ for each $x\in T(A)$.
    Combined with (\ref{eq:integrationequalstozero}), this implies that $\mu(T(A))=0$. Thus, $\tilde{\nu}_k$ is absolutely continuous with respect to $\nu_k$. By the Radon--Nikodym theorem, there exists a nonnegative $\mu$-integrable derivative $\frac{\mathrm{d}\tilde{\nu}_k}{\mathrm{d}\nu_k}$.
	
	We define $J_{\mu}$ by
	\begin{equation}\label{eq:constructionofJmu}
		J_{\mu}(x)\=0\text{ for each } x \in \Psi^{-1}(0) \quad\text{ and }\quad J_{\mu}(x)\=\frac{\mathrm{d}\tilde{\nu}_k}{\mathrm{d}\nu_k}(x)\text{ for all }k\in\N\text{ and }x\in \widetilde{Y}_k.
	\end{equation}
	We now verify that $J_{\mu}$ is a Jacobian on $\widetilde{Y}$ for $T$ with respect to $\mu$. Since $\widetilde{Y}=\bigcup_{k\in\N}\widetilde{Y}_k$, by the definitions of $\{\nu_k\}_{k\in\N}$ and $\{\tilde{\nu}_k\}_{k\in\N}$, it follows from (\ref{eq:constructionofJmu}) that
	\begin{equation*}
		\begin{aligned}
			\mu(T(A))&=\sum_{k\in\N}\mu\bigl(T\bigl(A\cap\widetilde{Y}_k\bigr)\bigr)=\sum_{k\in\N}\tilde{\nu}_k\bigl(A\cap\widetilde{Y}_k\bigr)=\sum_{k\in\N}\int_{A\cap\widetilde{Y}_k}\!\frac{\mathrm{d}\tilde{\nu}_k}{\mathrm{d}\nu_k}\,\mathrm{d}\nu_k\\
			&=\sum_{k\in\N}\int_{A\cap\widetilde{Y}_k}\!\frac{\mathrm{d}\tilde{\nu}_k}{\mathrm{d}\nu_k}\,\mathrm{d}\mu=\sum_{k\in\N}\int_{A\cap\widetilde{Y}_k}\!J_{\mu}\,\mathrm{d}\mu=\int_{A\cap\widetilde{Y}}\!J_{\mu}\,\mathrm{d}\mu=\int_{A}\!J_{\mu}\,\mathrm{d}\mu,
		\end{aligned}
	\end{equation*}
	for each admissible set $A\subseteq\widetilde{Y}$ for $T$. Since $\mu\bigl(\widetilde{Y}\bigr)=1$, we conclude that $J_{\mu}$ is a Jacobian on $\widetilde{Y}$.

	Finally, we verify that $J_{\mu}(x)\cdot\Psi(x)=1$ for $\mu$-a.e.\ $x\in X$. By \cite[Corollary~15.2]{kechris2012classical}, it follows from Definition~\ref{def:admissible}~(iii) that $T$ is a Borel isomorphism of $Y_k$ with $T(Y_k)$ for each $k\in\N$. Since $J_{\mu}$ is a Jacobian on $\widetilde{Y}$ for $T$ with respect to $\mu$, by the definition of $\{\tilde{\nu}_k\}_{k\in\N}$, we have
	\begin{equation}		\label{eq:bypropertyofJacobian}
		\tilde{\nu}_k(A)=\mu(T(A))=\int_{A}\!J_{\mu}\,\mathrm{d}\mu\quad\text{ for each }k\in\N\text{ and each Borel }A\subseteq\widetilde{Y}_k.
	\end{equation}
	Moreover, by Proposition~\ref{prop:transfer operator}~(i), we obtain that
	\begin{equation*}
		\mu(A)
        =\mu\bigl(T^{-1}(T(A))\cap Y_k\bigr)
        =\int_{T(A)}\!\Phi_k\,\mathrm{d}\mu
        =\int_A\! \Phi_k\circ T \,\mathrm{d}(\mu\circ T)
        =\int_A\!\Psi\,\mathrm{d}\tilde{\nu}_k
    \end{equation*}
	for each $k\in\N$ and each Borel $A\subseteq\widetilde{Y}_k$. Thus, by (\ref{eq:bypropertyofJacobian}) and the change-of-variable formula, we obtain $J_{\mu}(x)\cdot\Psi(x)=1$ for $\mu$-a.e.\ $x\in\bigcup_{k\in\N}\widetilde{Y}_k$. Since $\mu\bigl(\bigcup_{k\in\N}\widetilde{Y}_k\bigr)=\mu\bigl(\widetilde{Y}\bigr)=1$, we conclude that $J_{\mu}(x)\cdot\Psi(x)=1$ for $\mu$-a.e.\ $x\in X$.
	
	\smallskip

	(iii) By Definition~\ref{def:admissible}~(iii)~and~(iv), we have that $Y=\bigcup_{k\in\N}Y_k$ and $T$ is injective on $Y_k$ for each $k\in\N$. Since $\Psi$ is nonnegative and Borel, the expression for $\cL_{\mu}(u)(x)$ given in (\ref{eq:transfer operator}) is the sum of countably (possibly infinitely) many nonnegative terms for each $u\in L^+(X)$ and each $x\in X$. Hence, $\cL_{\mu}(u)\in L^+(X)$ for each $u\in L^+(X)$, and $\cL_{\mu}\colon L^+(X)\mapping L^+(X)$ is monotone.

	We now establish (\ref{eq:invariance of transfer operator}). By (\ref{eq:transfer operator}), we have that
	\begin{equation}\label{eq:expressionofl1d}
		\cL_{\mu}(\mathbbm{1}_A)(x)=\sum_{y\in T^{-1}(x)\cap Y}\mathbbm{1}_A(y)\Psi(y)=\sum_{y\in T^{-1}(x)\cap A\cap Y}\Psi(y)\quad\text{ for each }A\in\cB(X).
	\end{equation}
	Then by (\ref{eq:expressionofl1d}), (\ref{eq:def of psi}), $Y=\bigcup_{k\in\N}Y_k$, (\ref{eq:inverseJacobian}), the monotone convergence theorem, and $\mu\in\cM(X,T)\cap\cP(X;Y)$, we obtain that
	\begin{equation*}
		\begin{aligned}
			\int_A\!\cL_{\mu}(\mathbbm{1})\,\mathrm{d}\mu
			&=\int_A  \sum_{y\in T^{-1}(x)\cap Y}\Psi(y) \,\mathrm{d}\mu(x)
			=\int_A  \sum_{k\in\N:T^{-1}(x)\cap Y_k\neq\emptyset}\Phi_k \,\mathrm{d}\mu
            =\int_A  \sum_{k\in\N}\Phi_k \,\mathrm{d}\mu\\
			&=\sum_{k\in\N}\int_A\!\Phi_k\,\mathrm{d}\mu
            =\sum_{k\in\N}\mu\bigl(T^{-1}(A)\cap Y_k\bigr)=\mu\bigl(T^{-1}(A)\cap Y\bigr)=\mu\bigl(T^{-1}(A)\bigr)=\mu(A)
		\end{aligned}
	\end{equation*}
	for each $A\in\cB(X)$. 
	This establishes (\ref{eq:invariance of transfer operator}).

	To establish (\ref{eq:operatoronL^+X}), consider arbitrary $u,\,v\in L^+(X)$ and $c\geq 0$. By (\ref{eq:transfer operator}), we have $\cL_{\mu}(u+v)(x)=\cL_{\mu}(u)(x)+\cL_{\mu}(v)(x)$ for each $x\in X$. Moreover, $\cL_{\mu}(cu)(x)=c\cL_{\mu}(u)(x)$ for each $x\in X$ and each nonnegative $c\in\Q$. By the monotonicity of $\cL_{\mu}$, it follows from the monotone convergence theorem that $\cL_{\mu}(cu)(x)=c\cL_{\mu}(u)(x)$ for each $c\geq 0$. This establishes (\ref{eq:operatoronL^+X}).
	
	Finally, we establish (\ref{eq:integration of transfer opertor}).
	Consider an arbitrary $D\in\cB(X)$. Fix $k\in\N$ and write $D_k\=D\cap Y_k$. By (\ref{eq:expressionofl1d}), $\cL_{\mu}\bigl(\mathbbm{1}_{D_k}\bigr)(x)=0$ for each $x\notin T(D_k)$. Now consider an arbitrary $x\in T(D_k)$. Note that $T$ is injective on $Y_k$ by Definition~\ref{def:admissible}~(iii). Then the set $T^{-1}(x)\cap D_k$ contains a unique element, which we denote by $y_k(x)$. 
	Using (\ref{eq:expressionofl1d}) and (\ref{eq:def of psi}), we deduce that $\cL_{\mu}\bigl(\mathbbm{1}_{D_k}\bigr)(x)=\Psi(y_k(x))=\Phi_k(x)$. 
	Integrating this and applying Proposition~\ref{prop:transfer operator}~(i) yields
	\begin{equation*}
    	\Functional{\mu}{\cL_{\mu}\bigl(\mathbbm{1}_{D_k}\bigr)} = \int_{T(D_k)}\!\Phi_k\,\mathrm{d}\mu = \mu\bigl(T^{-1}(T(D_k))\cap Y_k\bigr) = \mu(D_k).
	\end{equation*}
	Thus it follows from (\ref{eq:expressionofl1d}) and Definition~\ref{def:admissible}~(iv) that $\Functional{\mu}{\cL_{\mu}(\mathbbm{1}_D)}=\sum_{k\in\N}\Functional{\mu}{\cL_{\mu}\bigl(\mathbbm{1}_{D_k}\bigr)}=\sum_{k\in\N}\mu(D_k)=\mu(D\cap Y)=\mu(D)$, where the last equality holds as $\mu\in\cP(X;Y)$.
	This establishes the identity for indicator functions. The general result~\eqref{eq:integration of transfer opertor} for all nonnegative measurable functions then follows from a standard argument using the monotone convergence theorem.
\end{proof}

The following theorem characterizes Jacobians in terms of an eigenfunction equation for the transfer operator. 
This characterization will be key in identifying equilibrium states.

\begin{theorem}\label{thm:description of Jacobian}
	Let $(X,\,\rho,\,T,\,Y,\,\{Y_k\}_{k\in\N},\,\mu)$ be admissible. Assume that $J_{\mu}$ is given as in Proposition~\ref{prop:transfer operator}, and $J$ is a positive Borel function on $X$. Then $J$ is a Jacobian on $Y$ for $T$ with respect to $\mu$ if and only if			\begin{align}
		\sum_{y\in T^{-1}(x)\cap Y}\frac{1}{J(y)}&=1\quad\text{ for } \mu\text{-a.e.\ } x\in X,\label{eq:1 is the eigen function of normalized Ruelle operator}\\
		\log(J)\in L^1(\mu),\quad&\text{ and }\quad \functional{\mu}{\log(J)}=\Functional{\mu}{\log\bigl(J_{\mu}\bigr)}.\label{eq:integrationoflogJ}
	\end{align}
\end{theorem}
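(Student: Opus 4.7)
The plan is to prove the equivalence in two directions, using the uniqueness of Jacobians from Proposition~\ref{prop:rokhlin'sformula} for the forward direction and Jensen's inequality applied to the transfer operator $\cL_\mu$ for the backward direction. In both parts I will compare $J$ against the reference Jacobian $J_{\mu}$ produced in Proposition~\ref{prop:transfer operator}~(ii), exploiting the identity $\Psi(x) \cdot J_{\mu}(x) = 1$ for $\mu$-a.e.\ $x \in \widetilde{Y}$ to translate statements about $1/J$ into statements about $\Psi$.

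For the ``only if'' direction, assume $J$ is a Jacobian on $Y$. The uniqueness clause of Proposition~\ref{prop:rokhlin'sformula} immediately gives $J = J_{\mu}$ $\mu$-a.e., whence $\log J \in L^1(\mu)$ and $\Functional{\mu}{\log J} = \Functional{\mu}{\log J_{\mu}}$, settling~\eqref{eq:integrationoflogJ}. To derive~\eqref{eq:1 is the eigen function of normalized Ruelle operator}, set $N \coloneqq \{y \in Y : \Psi(y) J(y) \neq 1\}$, so $\mu(N) = 0$ by Proposition~\ref{prop:transfer operator}~(ii). Since each $N \cap Y_k$ is admissible for $T$ and $J$ is a Jacobian on $Y$, I get $\mu(T(N)) \leq \sum_{k \in \N} \mu(T(N \cap Y_k)) = \sum_{k \in \N} \int_{N \cap Y_k}\! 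J \,\mathrm{d}\mu = 0$. Hence $T^{-1}(x) \cap N = \emptyset$ for $\mu$-a.e.\ $x$, so $1/J(y) = \Psi(y)$ for every $y \in T^{-1}(x) \cap Y$, and~\eqref{eq:1 is the eigen function of normalized Ruelle operator} follows by substituting into the identity $\cL_\mu(\mathbbm{1})(x) = 1$ from~\eqref{eq:invariance of transfer operator}.

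For the ``if'' direction, assume~\eqref{eq:1 is the eigen function of normalized Ruelle operator} and~\eqref{eq:integrationoflogJ}. The strategy is to show $J = J_{\mu}$ $\mu$-a.e.\ and $\mu(T(Y \smallsetminus \widetilde{Y})) = 0$, which together imply $J$ is a Jacobian on $Y$ by splitting any admissible $A \subseteq Y$ as $(A \cap \widetilde{Y}) \sqcup (A \smallsetminus \widetilde{Y})$ and invoking the Jacobian property of $J_{\mu}$ on $\widetilde{Y}$. With $\varphi \coloneqq 1/J$ and $u \coloneqq (\log J_{\mu} - \log J) \mathbbm{1}_{\widetilde{Y}} = \log(\varphi/\Psi) \mathbbm{1}_{\widetilde{Y}}$, I will apply Jensen's inequality to the concave function $\log$ using the probability weights $\{\Psi(y)\}_{y \in T^{-1}(x) \cap \widetilde{Y}}$ (which sum to $1$ by~\eqref{eq:invariance of transfer operator}) to obtain
\begin{equation*}
\cL_{\mu}(u)(x) \;\leq\; \log\!\biggl(\sum_{y \in T^{-1}(x) \cap \widetilde{Y}}\!\! \varphi(y)\biggr) \;\leq\; \log 1 \;=\; 0 \quad\text{for $\mu$-a.e.\ } x,
\end{equation*}
where the second inequality uses~\eqref{eq:1 is the eigen function of normalized Ruelle operator} together with $\varphi > 0$. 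On the other hand, $u \in L^1(\mu)$ by~\eqref{eq:integrationoflogJ}, so applying~\eqref{eq:integration of transfer opertor} separately to $u^+$ and $u^-$ yields $\Functional{\mu}{\cL_\mu(u)} = \Functional{\mu}{u} = 0$. Combining these forces $\cL_\mu(u)(x) = 0$ for $\mu$-a.e.\ $x$, and strict concavity of $\log$ makes both inequalities above into equalities: $\varphi(y) = \Psi(y)$ for all $y \in T^{-1}(x) \cap \widetilde{Y}$, and $T^{-1}(x) \cap (Y \smallsetminus \widetilde{Y}) = \emptyset$. A $\mu(T(\,\cdot\,))$-argument identical to the one used in the forward direction (now applied to $\{y \in \widetilde{Y} : J(y) \neq J_\mu(y)\}$ together with the Jacobian $J_\mu$ on $\widetilde{Y}$) upgrades the first into $J = J_\mu$ $\mu$-a.e., while the second yields $\mu(T(Y \smallsetminus \widetilde{Y})) = 0$ directly.

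The main obstacle will be the equality-case analysis of Jensen's inequality: extracting simultaneously the pointwise identity on preimages and the ``no mass escapes'' condition from a single $\mu$-a.e.\ scalar equality, and then correctly translating both into $\mu$-a.e.\ statements on $X$ via the injectivity of $T|_{Y_k}$. Routine but worth spelling out are the extension of~\eqref{eq:integration of transfer opertor} from $L^+(X)$ to signed $L^1(\mu)$ functions by splitting $u = u^+ - u^-$, and the Lusin--Souslin-type fact that $A \cap \widetilde{Y}$ remains admissible whenever $A$ is admissible.
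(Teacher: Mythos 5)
Your proof is correct. The forward direction is essentially the paper's argument: get $J=J_{\mu}$ $\mu$-a.e.\ from the uniqueness clause of Proposition~\ref{prop:rokhlin'sformula}, then push the null set $N$ forward through $T$ using the Jacobian property of $J$ on each $Y_k$ to conclude that $\mu$-a.e.\ $x$ has all its $Y$-preimages satisfying $J(y)\Psi(y)=1$, and plug into \eqref{eq:invariance of transfer operator}.

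The backward direction follows the same architecture as the paper (compare $J$ with $J_{\mu}$ via $\cL_{\mu}$, integrate, and exploit an equality case of a convexity inequality), but the convexity step is genuinely different. The paper applies $\cL_{\mu}$ to the \emph{nonnegative} function $J_{\mu}/J\in L^{+}(X)$, bounds $\cL_{\mu}\bigl(J_{\mu}/J\bigr)\leqslant 1$ pointwise using \eqref{eq:1 is the eigen function of normalized Ruelle operator}, and then uses the scalar inequality $t\geqslant 1+\log t$ (with equality iff $t=1$) \emph{after} integrating, via \eqref{eq:integrationoflogJ}; this keeps everything inside $L^{+}(X)$, where $\cL_{\mu}$ and \eqref{eq:integration of transfer opertor} are already defined, and the equality case immediately yields $J=J_{\mu}$ $\mu$-a.e.\ together with $T^{-1}(x)\cap\widetilde{Y}=T^{-1}(x)\cap Y$ a.e. Your route applies $\cL_{\mu}$ to the \emph{signed} function $\log\bigl(J_{\mu}/J\bigr)\mathbbm{1}_{\widetilde{Y}}$ and invokes Jensen's inequality with the probability weights $\{\Psi(y)\}$. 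This works, but it costs you two extra pieces of bookkeeping that the paper avoids: extending \eqref{eq:integration of transfer opertor} to signed $L^{1}(\mu)$ functions (which requires first observing that $\log J_{\mu}\in L^{1}(\mu)$, using $J_{\mu}\geqslant 1$ a.e.\ from Proposition~\ref{prop:rokhlin'sformula} together with the finiteness forced by \eqref{eq:integrationoflogJ}), and the equality-case analysis of countable Jensen, from which you must extract \emph{both} the constancy of $\varphi/\Psi$ on fibers and the vanishing of $T^{-1}(x)\cap\bigl(Y\smallsetminus\widetilde{Y}\bigr)$. You correctly flag both as the delicate points. One small caveat on phrasing: the ``$\mu(T(\,\cdot\,))$-argument'' you use to upgrade the fiberwise identity $J=J_{\mu}$ to a $\mu$-a.e.\ identity runs in the \emph{opposite} direction from the forward one — there you deduced $\mu(T(N))=0$ from $\mu(N)=0$, whereas here you must deduce $\mu(M)=0$ from $\mu(T(M))=0$, which needs the lower bound $J_{\mu}\geqslant 1$ (or at least positivity) a.e.\ in the identity $\mu(T(M\cap Y_k))=\int_{M\cap Y_k}J_{\mu}\,\mathrm{d}\mu$. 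With that adjustment spelled out, the argument is complete.
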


\begin{proof}
	Let $\{\Phi_k\}_{k\in\N},\,\Psi$, and $\cL_{\mu}$ be as in Proposition~\ref{prop:transfer operator}. Write $\widetilde{Y}\=Y\smallsetminus\Psi^{-1}(0)$.

	We first prove the forward implication.
    Assume that $J$ is a Jacobian on $Y$. We show that $J$ satisfies (\ref{eq:1 is the eigen function of normalized Ruelle operator}) and (\ref{eq:integrationoflogJ}). By Proposition~\ref{prop:transfer operator}~(ii), $J_{\mu}$ is a Jacobian on $\widetilde{Y}$. By Proposition~\ref{prop:rokhlin'sformula}, $J(x)=J_{\mu}(x)\geq 1$ for $\mu$-a.e.\ $x\in X$. Thus, $\log(J),\,\log\bigl(J_{\mu}\bigr)\in L^1(\mu)$ and $\functional{\mu}{\log(J)}=\Functional{\mu}{\log\bigl(J_{\mu}\bigr)}$. This establishes (\ref{eq:integrationoflogJ}).
	
	We now establish (\ref{eq:1 is the eigen function of normalized Ruelle operator}). By Proposition~\ref{prop:transfer operator}~(ii), there exists a Borel set $H_1\subseteq\widetilde{Y}$ such that $\mu(H_1)=1$ and $J(x)\cdot\Psi(x)=1$ for each $x\in H_1$. For each $x\notin T(Y\smallsetminus H_1)$, we have $T^{-1}(x)\cap Y\subseteq H_1$, and hence $J(y)\cdot\Psi(y)=1$ for each $y\in T^{-1}(x)\cap Y$. Since $J$ is a Jacobian on $Y$, it follows from $\mu(Y\smallsetminus H_1)=0$ that $\mu(T(Y\smallsetminus H_1))=0$. Thus, for $\mu$-a.e.\ $x\in X$, we have $J(y)\cdot\Psi(y)=1$ for each $y\in T^{-1}(x)\cap Y$. Combined with (\ref{eq:invariance of transfer operator}), this implies that $\sum_{y\in T^{-1}(x)\cap Y}\frac{1}{J(y)}=\sum_{y\in T^{-1}(x)\cap Y}\Psi(y)=\cL_{\mu}(\mathbbm{1}_X)(x)=1$ for $\mu$-a.e.\ $x\in X$.

	We now establish the backward implication. Assume that $J$ satisfies (\ref{eq:1 is the eigen function of normalized Ruelle operator}) and (\ref{eq:integrationoflogJ}). We show that $J$ is a Jacobian on $Y$. By Proposition~\ref{prop:transfer operator}~(ii), there exists a Borel set $H_2\subseteq\widetilde{Y}$ such that $\mu(H_2)=1$ and $J_{\mu}(x)\cdot\Psi(x)=1$ for each $x\in H_2$. For each $x\notin T\bigl(\widetilde{Y}\smallsetminus H_2\bigr)$, we have $T^{-1}(x)\cap\widetilde{Y}\subseteq H_2$, and hence $J_{\mu}(y)\cdot\Psi(y)=1$ for each $y\in T^{-1}(x)\cap\widetilde{Y}$. Since $J_{\mu}$ is a Jacobian on $\widetilde{Y}$, it follows from $\mu\bigl(\widetilde{Y}\smallsetminus H_2\bigr)=0$ that $\mu\bigl(T\bigl(\widetilde{Y}\smallsetminus H_2\bigr)\bigr)=0$. Thus, we obtain that
	\begin{equation}\label{eq:psiisinverseofJ}
		\text{for }\mu\text{-a.e.\ }x\in X,\quad J_{\mu}(y)\cdot\Psi(y)=1\quad\text{ for each }y\in T^{-1}(x)\cap\widetilde{Y}.
	\end{equation}
	Since $J$ is a positive Borel function and $J_{\mu}$ is a nonnegative Borel function, we have $J_{\mu}\big/J\in L^+(X)$. By (\ref{eq:transfer operator}), $\widetilde{Y}=Y\smallsetminus\Psi^{-1}(0)$, (\ref{eq:psiisinverseofJ}), and (\ref{eq:1 is the eigen function of normalized Ruelle operator}), we obtain that for $\mu$-a.e.\ $x\in X$,
	\begin{equation}\label{eq:1 is the eigen function of normalized Ruelle operator''}
		\begin{aligned}
			\cL_{\mu}\bigl(J_{\mu}\big/J\bigr)(x)
            &=\sum_{y\in T^{-1}(x)\cap Y}\frac{J_{\mu}(y)\Psi(y)}{J(y)}
            =\sum_{y\in T^{-1}(x)\cap\widetilde{Y}}\frac{J_{\mu}(y)\Psi(y)}{J(y)}\\
			&=\sum_{y\in T^{-1}(x)\cap\widetilde{Y}}\frac{1}{J(y)}
            \leq\sum_{y\in T^{-1}(x)\cap Y}\frac{1}{J(y)}
            =1.
		\end{aligned}
	\end{equation}
	By (\ref{eq:integration of transfer opertor})~and~(\ref{eq:integrationoflogJ}), we obtain that
	\begin{equation}		\label{eq:inequality become equality}
		1
		= \functional{\mu}{\mathbbm{1}_X}
		\geq\functional[\big]{\mu}{\cL_{\mu}\bigl(J_{\mu}\big/J\bigr)}
		= \functional[\big]{\mu}{J_{\mu}\big/J}
		\geq 1 - \functional{\mu}{\log(J)} + \functional[\big]{\mu}{\log\bigl(J_{\mu}\bigr)}
		= 1.
	\end{equation}
	The last inequality holds since $x \geq 1 + \log(x)$ for each $x>0$, with equality if and only if $x=1$. 
	Thus, all inequalities in (\ref{eq:inequality become equality}) are equalities. 
	Hence, $J_{\mu}(x)=J(x)$ and $\cL_{\mu}\bigl(J_{\mu} \big/ J\bigr)(x)=1$ for $\mu$-a.e.\ $x\in X$. 
	This implies that the inequality in (\ref{eq:1 is the eigen function of normalized Ruelle operator''}) is an equality.
	Since $J$ is a positive function, we have $T^{-1}(x)\cap\widetilde{Y}=T^{-1}(x)\cap Y$ for $\mu$-a.e.\ $x\in X$. For each $x\in T \bigl(Y\smallsetminus\widetilde{Y}\bigr)$, we have $T^{-1}(x)\cap\widetilde{Y}\neq T^{-1}(x)\cap Y$. This implies $\mu\bigl(T\bigl(Y\smallsetminus\widetilde{Y}\bigr)\bigr)=0$. Since $J_{\mu}$ is a Jacobian on $\widetilde{Y}$ and $J(x)=J_{\mu}(x)$ for $\mu$-a.e.\ $x\in X$, we conclude that $J$ is a Jacobian on $\widetilde{Y}$. By $\mu\bigl(\widetilde{Y}\bigr)=1$, we obtain $\mu(T(A))=\mu\bigl(T\bigl(A\cap \widetilde{Y}\bigr)\bigr)+\mu\bigl(T\bigl(A\smallsetminus\widetilde{Y}\bigr)\bigr)=\int_{A\cap\widetilde{Y}}\!J\,\mathrm{d}\mu+0=\int_{A}\!J\,\mathrm{d}\mu$ for each admissible set $A\subseteq Y$. Therefore, $J$ is a Jacobian on $Y$.
\end{proof}

Recall the definition of $\cM(X,T;Y,J)$ from (\ref{eq:defofcMforJonsubset}). For each triple $(X,\,\rho,\,T)$ satisfying properties~(i)~and~(ii) in Definition~\ref{def:admissible}, each Borel $Y\subseteq X$ such that there exists a unique Jacobian $J_{\mu}$ (cf.\ Proposition~\ref{prop:rokhlin'sformula}) for $T$ with respect to $\mu$ for each $\mu\in\cM(X,T)\cap\cP(X,Y)$, and each Borel function $\phi\colon X\mapping\R$, we define 
\begin{equation}\label{eq:defofspecialequilibriumstate}
    \cE_0(T,\phi;Y)\coloneqq\bigl\{\mu\in\cE(T,\phi)\cap\cP(X;Y):h_{\mu}(T)=\Functional{\mu}{\log\bigl(J_{\mu}\bigr)}\bigr\},
\end{equation}

Indeed, for a Borel subset $Y\subseteq X$ satisfying that there exists $\{Y_k\}_{k\in\N}$ such that properties~(iii)~and~(iv) in Definition~\ref{def:admissible} are satisfied, the existence and uniqueness  of the Jacobians follow from Propositions~\ref{prop:transfer operator}~(ii) and~\ref{prop:rokhlin'sformula}. Moreover, for $Y$ with a weaker assumption, the uniqueness and existence of Jacobians still hold (cf.\ Lemma~\ref{lem:admissibleweaker}).

Combining Theorem~\ref{thm:description of Jacobian} with the definition of equilibrium states yields the following result, which provides a verifiable criterion for identifying equilibrium states.

\begin{theorem}\label{thm:prescribed Jacobian of equilibrium state}		
	Let $(X,\,\rho,\,T,\,Y,\,\{Y_k\}_{k\in\N},\,\mu)$ be admissible, and $\phi\colon X\mapping\R$ be a Borel function with $\functional{\mu}{\phi}\in\R$. Suppose that $J$ is a positive Borel function on $X$ which satisfies the following properties:
	\begin{enumerate}
		\smallskip
		\item There exists a bounded Borel function $h\colon X\rightarrow\R$ such that for each $x \in Y$,
		\begin{equation*}
			J(x)=\exp(P(T,\potential)-\potential(x)+h(T(x))-h(x)).
		\end{equation*}
		\item $\sum\limits_{y\in T^{-1}(x)\cap Y}\frac{1}{J(y)}=1$ for each $x\in\bigcap_{i\in\N_0}T^i(Y)$.
	\end{enumerate}
	Then $\mu\in\cM(X,T;Y,J)$ if and only if $\mu\in\cE_0(T,\phi;Y)$.
\end{theorem}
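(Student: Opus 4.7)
The plan is to route both implications through the Jacobian characterization in Theorem~\ref{thm:description of Jacobian}, combined with the Variational Principle. The arithmetic core is that $\mu$-integrating hypothesis~(i) over $Y$ (on which $\mu$ is concentrated), together with $T$-invariance killing the coboundary $h \circ T - h$, yields
\begin{equation*}
\langle \mu, \log J \rangle = P(T,\phi) - \langle \mu, \phi \rangle.
\end{equation*}
As a preliminary observation, $T$-invariance of $\mu$ and $\mu(Y)=1$ propagate inductively to $\mu(T^i(Y))=1$ for every $i \in \N_0$ via $T^{-1}(T^i(Y)) \supseteq T^{i-1}(Y)$. Consequently $\mu\bigl(\bigcap_{i \in \N_0} T^i(Y)\bigr) = 1$, so hypothesis~(ii) upgrades to the $\mu$-a.e.\ identity $\sum_{y \in T^{-1}(x) \cap Y} 1/J(y) = 1$.

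For the direction $\mu \in \cE_0(T,\phi;Y) \Rightarrow \mu \in \cM(X,T;Y,J)$: the definition of $\cE_0$ in~\eqref{eq:defofspecialequilibriumstate} supplies $h_\mu(T) + \langle \mu, \phi \rangle = P(T,\phi)$ and $h_\mu(T) = \langle \mu, \log J_\mu \rangle$, and combined with the displayed identity above this gives $\langle \mu, \log J_\mu \rangle = \langle \mu, \log J \rangle$. This is precisely~\eqref{eq:integrationoflogJ}, while the preliminary observation supplies~\eqref{eq:1 is the eigen function of normalized Ruelle operator}. Theorem~\ref{thm:description of Jacobian} then concludes that $J$ is a Jacobian on $Y$, so $\mu(T(A)) = \int_A J \,\mathrm{d}\mu$ for every admissible $A \subseteq Y$, and in particular $\mu \in \cM(X,T;Y,J)$.

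For the converse, let $J_\mu$ and $\widetilde Y \subseteq Y$ (of $\mu$-full measure) be as in Proposition~\ref{prop:transfer operator}~(ii). The Jacobian identity $\mu(T(A)) = \int_A J_\mu \,\mathrm{d}\mu$ for admissible $A \subseteq \widetilde Y$ combines with the defining inequality of $\cM(X,T;Y,J)$ to give $\int_A (J_\mu - J) \,\mathrm{d}\mu \geq 0$ for every admissible $A \subseteq \widetilde Y$. Restricting to Borel subsets of each $\widetilde Y \cap Y_k$ (which remain admissible by injectivity of $T|_{Y_k}$ and the Lusin--Souslin theorem) forces $J_\mu \geq J$ $\mu$-a.e.\ on $Y$. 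Integrating and invoking Rokhlin's formula (Proposition~\ref{prop:rokhlin'sformula}) yields
\begin{equation*}
h_\mu(T) \geq \langle \mu, \log J_\mu \rangle \geq \langle \mu, \log J \rangle = P(T,\phi) - \langle \mu, \phi \rangle,
\end{equation*}
and the Variational Principle reverses this inequality, forcing equality throughout. This simultaneously places $\mu$ in $\cE(T,\phi)$ and verifies $h_\mu(T) = \langle \mu, \log J_\mu \rangle$, i.e.\ $\mu \in \cE_0(T,\phi;Y)$.

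The main friction I anticipate is the bookkeeping between $Y$ and $\widetilde Y = Y \smallsetminus \Psi^{-1}(0)$: $J_\mu$ is only guaranteed to be a Jacobian on $\widetilde Y$, whereas the definition of $\cM(X,T;Y,J)$ is phrased on admissible subsets of $Y$, and hypothesis~(ii) is phrased on $\bigcap_i T^i(Y)$. I must therefore repeatedly exploit $\mu(Y \smallsetminus \widetilde Y) = 0$ and $\mu\bigl(\bigcap_i T^i(Y)\bigr) = 1$ to transfer pointwise conditions to $\mu$-almost-everywhere ones. I must also verify that the integrability of $\log J$ required by Theorem~\ref{thm:description of Jacobian} follows from the explicit formula in~(i), the boundedness of $h$, and the assumption $\langle \mu, \phi \rangle \in \R$.
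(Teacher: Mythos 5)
Your proposal is correct and follows essentially the same route as the paper's proof: the coboundary computation $\langle\mu,\log J\rangle = P(T,\phi)-\langle\mu,\phi\rangle$, the sandwich $h_\mu(T)\geq\langle\mu,\log J_\mu\rangle\geq\langle\mu,\log J\rangle$ via $J_\mu\geq J$ a.e.\ and Rokhlin's formula closed by the Variational Principle for the direction $\cM(X,T;Y,J)\Rightarrow\cE_0(T,\phi;Y)$, and Theorem~\ref{thm:description of Jacobian} (after checking $\mu\bigl(\bigcap_i T^i(Y)\bigr)=1$) for the converse. The bookkeeping issues you flag ($Y$ versus $\widetilde Y$, integrability of $\log J$) are handled in the paper exactly as you anticipate.
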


\begin{proof}
    Since $h$ is a bounded Borel function and $\mu\in\cM(X,T)$, we have $\functional{\mu}{h}=\functional{\mu}{h\circ T}\in(-\infty,+\infty)$. Since $J$ satisfies property~(i), $\mu\in\cP(X;Y)$, and $\functional{\mu}{\phi}<+\infty$, we obtain that
	\begin{equation}\label{eq:commonpart}
		\functional{\mu}{\log(J)}=P(T,\potential)-\functional{\mu}{\potential}+\functional{\mu}{h\circ T}-\functional{\mu}{h}=P(T,\potential)-\functional{\mu}{\potential}.
	\end{equation}
    By Proposition~\ref{prop:transfer operator}~(ii), there exists a Borel function $J_{\mu}\colon X\rightarrow\R^+$ that is a Jacobian for $T$ with respect to $\mu$.

	We first establish the forward implication. Consider $\mu\in\cM(X,T;Y,J)$. 
	Since $\mu\in\cP(X;Y)$, it follows from (\ref{eq:defofcMforJonsubset}) that $J_{\mu}(x)\geq J(x)$ for $\mu$-a.e.\ $x\in X$.
	By (\ref{eq:commonpart}), (\ref{eq:Variational Principle for pressure}), and Proposition~\ref{prop:rokhlin'sformula}, we have
	\begin{equation}\label{eq:JmuequalJ}
		\functional{\mu}{\log(J)}=P(T,\phi)-\functional{\mu}{\phi}\geq h_{\mu}(T)\geq\Functional{\mu}{\log\bigl(J_{\mu}\bigr)}.
	\end{equation}
	Since $J_{\mu}(x)\geq J(x)$ for $\mu$-a.e.\ $x\in X$, the inequalities in (\ref{eq:JmuequalJ}) must be equalities, which implies $\mu\in\cE(T,\phi)$ and $h_{\mu}(T)=\Functional{\mu}{\log\bigl(J_{\mu}\bigr)}$. 
	This implies $\mu\in\cE_0(T,\phi;Y)$ (recall \eqref{eq:defofspecialequilibriumstate}).

	We now establish the backward implication. Suppose $\mu\in\cE(T,\phi)$ and $h_{\mu}(T)=\Functional{\mu}{\log\bigl(J_{\mu}\bigr)}$. By (\ref{eq:Variational Principle for pressure}) and (\ref{eq:commonpart}), we obtain $\Functional{\mu}{\log\bigl(J_{\mu}\bigr)}=h_{\mu}(T) = P(T,\potential) - \functional{\mu}{\potential}=\functional{\mu}{\log(J)}$. Since $\mu\in\cM(X,T)\cap\cP(X;Y)$, we have $1\geq\mu\bigl(T^i(Y)\bigr)\geq\mu(Y)=1$ for each $i\in\N_0$. Thus $\mu\bigl(\bigcap_{i\in\N_0}T^i(Y)\bigr)=1$. Since $J$ satisfies property~(ii), by Theorem~\ref{thm:description of Jacobian}, $J$ is a Jacobian on $Y$ for $T$ with respect to $\mu$. By (\ref{eq:defofcMforJonsubset}), we conclude that $\mu\in\cM(X,T;Y,J)$.
\end{proof}

Finally, we state the following lemma to weaken property~(iii) in Definition~\ref{def:admissible}.

\begin{lemma}\label{lem:admissibleweaker}
    Let $(X,\,\rho,\,T,\,Y,\,\{Y_k\}_{k\in\N},\mu)$ satisfy properties~(i), (ii), (iv), and~(v) of Definition~\ref{def:admissible}. Assume that $Y_k$ is admissible for $T$ for each $k\in\N$. Then there exists a sequence $\{Y_k^{\prime}\}_{k\in\N}$ of Borel subsets such that $(X,\,\rho,\,T,\,Y,\,\{Y_k^{\prime}\}_{k\in\n},\,\mu)$ is an admissible sextuple. 
\end{lemma}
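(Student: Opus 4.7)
The natural plan is the standard disjointification: set $Y_1' \= Y_1$ and, for each integer $k \geq 2$, define
\begin{equation*}
    Y_k' \= Y_k \smallsetminus \bigcup_{j=1}^{k-1} Y_j.
\end{equation*}
Then I would verify directly that the sextuple $(X,\,\rho,\,T,\,Y,\,\{Y_k'\}_{k\in\N},\,\mu)$ satisfies all of Definition~\ref{def:admissible}. Properties~(i),~(ii), and~(v) are unchanged, and property~(iv) follows because $\bigcup_{k\in\N} Y_k' = \bigcup_{k\in\N} Y_k = Y$. Pairwise disjointness in property~(iii) is immediate from the construction.

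The content of the proof is to check that each $Y_k'$ is admissible for $T$ in the sense of Definition~\ref{def:Jacobian}. First, each $Y_k'$ is Borel as a Boolean combination of Borel sets. Second, since $Y_k' \subseteq Y_k$ and $T|_{Y_k}$ is injective by hypothesis, $T|_{Y_k'}$ is injective as well. The nontrivial point is to show that $T(Y_k') \in \cB(X)$, which is where the injectivity hypothesis really enters.

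For this last step I would invoke \cite[Corollary~15.2]{kechris2012classical} (used already in the proof of Proposition~\ref{prop:transfer operator}): because $Y_k$ and $T(Y_k)$ are Borel and $T|_{Y_k}$ is an injective Borel map between standard Borel spaces, $T|_{Y_k}$ is in fact a Borel isomorphism onto $T(Y_k)$. Consequently, $T$ sends Borel subsets of $Y_k$ to Borel subsets of $T(Y_k) \subseteq X$. Applying this to the Borel set $Y_k' \subseteq Y_k$ gives $T(Y_k') \in \cB(X)$, completing the verification that $Y_k'$ is admissible.

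The only potential obstacle is that image-Borelness for Borel-measurable maps fails in general (analytic sets appear), but this difficulty is precisely circumvented by injectivity together with the cited descriptive set theoretic result; no additional hypothesis on $T$ is needed beyond what Definition~\ref{def:admissible}~(ii) already provides. Thus the proof is short and essentially bookkeeping once the correct disjointification is written down.
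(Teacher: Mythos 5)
Your proposal is correct and uses exactly the same disjointification $Y_k' \= Y_k \smallsetminus \bigcup_{j<k} Y_j$ as the paper's proof. The only difference is that you explicitly justify why $T(Y_k')$ is Borel via \cite[Corollary~15.2]{kechris2012classical}, a point the paper's proof passes over with the bare assertion that $Y_k' \subseteq Y_k$ implies admissibility of $Y_k'$; your added care here is warranted and consistent with how the paper uses the same descriptive set theoretic fact elsewhere (e.g.\ in the proof of Proposition~\ref{prop:transfer operator}).
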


\begin{proof}
    Set $Y_k^{\prime}\=Y_k\smallsetminus\bigcup_{n=1}^{k-1}Y_n$ for each $k\in\N$. Then we have $\bigcup_{k\in\N}Y_k^{\prime}=\bigcup_{k\in\N}Y_k=Y$. By Definition~\ref{def:Jacobian}, it follows from $Y_k^{\prime}\subseteq Y_k$ that $Y_k^{\prime}$ is admissible for $T$ for each $k\in\N$. Hence, by definition, $\{Y_k^{\prime}\}_{k\in\N}$ is a sequence of pairwise disjoint admissible sets for $T$, which implies that property~(iii) of Definition~\ref{def:admissible} is satisfied for the sextuple $(X,\,\rho,\,T,\,Y,\,\{Y_k^{\prime}\}_{k\in\n},\,\mu)$. Therefore, by Definition~\ref{def:admissible}, $(X,\,\rho,\,T,\,Y,\,\{Y_k^{\prime}\}_{k\in\n},\,\mu)$ is admissible.
\end{proof}

\subsection{Proof of Theorem~\ref{theorem B}}    \label{subsec:proof of theorem B}

Theorem~\ref{theorem B} follows immediately from the following theorem.

\begin{theorem}\label{thm:recursivelycompactofJacobian}
	Let $(X,\,\rho,\,\cS,\,\{X_n\}_{n\in\N},\,\{T_n\}_{n\in\N})$ be a uniformly computable system, and $Y_n$ be an open subset of $X_n$ for each $n\in\N$. Assume that there exist two recursively enumerable sets $K$ and $L$ with $L\subseteq\N\times K$, and a sequence $\{Y_{n,k}\}_{(n,k)\in L}$ of uniformly lower semi-computable open sets in $(X,\,\rho,\,\cS)$ such that $Y_{n,k}$ is admissible for $T_n$, and $Y_n=\bigcup_{(n,k)\in L_n}Y_{n,k}$, where $L_n\coloneqq\{(n,k)\in L:k\in K\}$ for each $n\in\N$. 
    
    Suppose $\{J_n\}_{n\in\N}$ is a sequence of uniformly lower semi-computable functions $J_n\colon X \rightarrow[0,+\infty)$ with respect to $\{Y_n\}_{n\in\N}$ such that $J_n$ is nonnegative on $Y_n$ and Borel for each $n\in\N$.
	Then $\{\cM(X,T_n;Y_n,J_n)\}_{n\in\N}$ is uniformly recursively compact in $(\cP(X),\,W_{\rho},\,\cQ_{\cS})$.
\end{theorem}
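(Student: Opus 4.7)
The plan is to express each set $\cM(X,T_n;Y_n,J_n)$ as the complement, within the recursively compact space $\cP(X)$ (Proposition~\ref{prop:recursively compactness of measure space}), of a uniformly lower semi-computable open set, and then invoke Proposition~\ref{thecomplementofrecursivelycompactset}~(iii). First I would reduce to a purely local condition: enumerating $L_n=\{(n,k_j)\}_{j\in\N}$ and setting $Y'_{n,k_j}\coloneqq Y_{n,k_j}\setminus\bigcup_{i<j}Y_{n,k_i}$ yields a pairwise disjoint Borel decomposition of $Y_n$ into admissible sets. For any admissible $A\subseteq Y_n$, the injectivity of $T_n$ on $A$ makes the images $\{T_n(A\cap Y'_{n,k_j})\}_j$ pairwise disjoint, and a summation argument gives
\begin{equation*}
\cM(X,T_n;Y_n,J_n)=\bigcap_{(n,k)\in L_n}\cM_{n,k},
\end{equation*}
where $\cM_{n,k}\coloneqq\{\mu\in\cP(X):\mu(T_n(A))\geq\int_A J_n\,\mathrm{d}\mu\text{ for every admissible }A\subseteq Y_{n,k}\}$.

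Next, for each $(n,k)\in L$, a standard change-of-variables argument exploiting the injectivity of $T_n|_{Y_{n,k}}$ identifies $\cM_{n,k}$ with the set of $\mu\in\cP(X)$ satisfying $\nu_{n,k}\leq\mu$ as finite Borel measures on $X$, where $\nu_{n,k}\coloneqq(T_n|_{Y_{n,k}})_*(J_n\mathbbm{1}_{Y_{n,k}}\mu)$. By Proposition~\ref{prop:test function}, this is in turn equivalent to the countable family of inequalities $\Functional{\mu}{\tau_m^+}\geq\Functional{\mu}{(\tau_m^+\circ T_n)\mathbbm{1}_{Y_{n,k}}J_n}$ for $m\in\N$, where $\{\tau_m\}_{m\in\N}$ is the uniformly computable sequence dense in $C(X)$ furnished by Proposition~\ref{prop:test function}.

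The main technical task is to verify that $F_{n,k,m}\coloneqq(\tau_m^+\circ T_n)\mathbbm{1}_{Y_{n,k}}J_n$, extended by zero off $Y_{n,k}$, defines a uniformly (in $(n,k,m)\in L\times\N$) lower semi-computable nonnegative function on $X$. To construct a uniformly computable monotone approximation from below, I would combine: uniformly computable approximations $h_{p,k}\nearrow\mathbbm{1}_{Y_{n,k}}$ on $X$ supported in $Y_{n,k}$, obtained from Proposition~\ref{prop:characterfunctionislowersemicomputable}; uniformly computable truncated approximations $\widetilde{J}_n^{(p)}\nearrow J_n$ on $Y_n$ (with $\widetilde{J}_n^{(p)}\leq p$); and the uniform computability of $\tau_m^+\circ T_n$ on $X_n\supseteq Y_{n,k}$. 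The product $G_{p,n,k,m}\coloneqq(\tau_m^+\circ T_n)h_{p,k}\widetilde{J}_n^{(p)}$, set to zero off $Y_{n,k}$, should then be uniformly computable on all of $X$: outside $Y_{n,k}$ its value is forced to vanish by $h_{p,k}$, while on $Y_{n,k}\subseteq X_n\cap Y_n$ the other two factors are accessible through their oracles, and an effective upper bound on $h_{p,k}(x)$ combined with the global boundedness of $\tau_m^+$ and the truncation bound $p$ on $\widetilde{J}_n^{(p)}$ allows the dichotomy between these two cases to be decided algorithmically to any prescribed precision.

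Granting the preceding step, Corollary~\ref{cor:semicomputability of integral operator} and Proposition~\ref{prop:computability of integral operator} imply that $\mu\mapsto\Functional{\mu}{F_{n,k,m}}-\Functional{\mu}{\tau_m^+}$ is uniformly lower semi-computable on $\cP(X)$, so by Proposition~\ref{prop:semicomputability} its strict positivity defines a uniformly lower semi-computable open set in $(\cP(X),W_\rho,\cQ_\cS)$; taking the union over $m\in\N$ and $(n,k)\in L_n$ via Proposition~\ref{prop:lower semi-computable open} and then the complement in $\cP(X)$ via Proposition~\ref{thecomplementofrecursivelycompactset}~(iii) yields the uniform recursive compactness of $\{\cM(X,T_n;Y_n,J_n)\}_{n\in\N}$. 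The main obstacle will be the bookkeeping in the third paragraph: bundling the three factors $\tau_m^+\circ T_n$, $\mathbbm{1}_{Y_{n,k}}$, and $J_n$, which live on different domains and have different computability types, into a genuinely uniformly lower semi-computable function on all of $X$. In particular, relaxing the hypothesis on $J_n$ from computable (as in \cite{binder2011computability,binder2025computability}) to merely lower semi-computable with respect to $\{Y_n\}_{n\in\N}$ is precisely what forces the truncation-and-support argument sketched above.
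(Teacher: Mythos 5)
Your argument is correct and shares the overall architecture of the paper's proof (mollify $\mathbbm{1}_{Y_{n,k}}$ by the computable functions from Proposition~\ref{prop:characterfunctionislowersemicomputable}, reduce the defining inequalities to a countable family of integral inequalities against the $\tau_m^+$ via Proposition~\ref{prop:test function}, and realize $\cM(X,T_n;Y_n,J_n)$ as the complement in the recursively compact space $\cP(X)$ of a uniformly lower semi-computable open union), but it differs in one genuine and substantive respect: the treatment of the ``image'' side $\mu(T_n(A))$. The paper encodes this side as the function $W_{m,s}^{n,k}(x)=\sup\{\tau_s^+(y)h_{m,n,k}(y):y\in T_n^{-1}(x)\}$ on $T_n(Y_{n,k})$ and must prove it is uniformly \emph{upper} semi-computable, which is the most technical step of the paper's Claim~1: the superlevel sets are written as forward images $T_n\bigl((\tau_s^+\cdot h_{m,n,k})^{-1}(Q_v^c)\bigr)$ of recursively compact sets and handled via Proposition~\ref{thecomplementofrecursivelycompactset}~(v). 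Your reformulation via the pushforward $\nu_{n,k}=(T_n|_{Y_{n,k}})_*(J_n\mathbbm{1}_{Y_{n,k}}\mu)$ and the comparison $\mu\geq\nu_{n,k}$ makes the left-hand side of each tested inequality simply $\Functional{\mu}{\tau_m^+}$, which is plainly computable by Proposition~\ref{prop:computability of integral operator}; the composition with the dynamics migrates into the right-hand integrand as the harmless factor $\tau_m^+\circ T_n$ (computable with respect to $X_n\supseteq Y_{n,k}$). This eliminates the forward-image machinery entirely, at no extra cost on the $J_n$-side, and your disjointification of $Y_n$ together with the two-sided verification that $\cM_{n,k}=\{\mu:\mu\geq\nu_{n,k}\}$ (using \cite[Corollary~15.2]{kechris2012classical} for the Borelness of $T_n(A)$) is sound.

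Two points deserve a little more care than your sketch gives them. First, the ``uniformly computable truncated approximations $\widetilde{J}_n^{(p)}\nearrow J_n$'': Definition~\ref{def:domain of lower or upper computability} produces oracle-dependent rational outputs at each stage, not canonically a sequence of functions, so you must extract genuine uniformly computable functions, e.g.\ via Proposition~\ref{prop:semicomputability}~(ii) applied to the superlevel sets of $J_n$ together with Proposition~\ref{prop:characterfunctionislowersemicomputable} and finite maxima; the paper avoids this by manipulating superlevel sets of the full product $V_{m,s}^{n,k}$ directly rather than pointwise approximations of $J_n$. Second, $\nu_{n,k}$ need not be a finite measure a priori (no integrability of $J_n$ is assumed), so before invoking Proposition~\ref{prop:test function} you should note that if $\nu_{n,k}(X)=+\infty$ then some tested inequality already fails (take $\tau_m$ uniformly close to the constant $1$), so the equivalence survives. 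Neither point is a gap in the strategy, only in the bookkeeping.
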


\begin{proof}[\bf Proof of Theorem~\ref{theorem B}]
    By Theorem~\ref{thm:recursivelycompactofJacobian}, $\{\cM(X,T_n;Y_n,J_n)\}_{n\in\N}$ is uniformly recursively compact in $(\cP(X),\,W_{\rho},\,\cQ_{\cS})$. By Proposition~\ref{prop:operatorsonrecursivelycompactsets}~(i), since $\{\cK_n\}_{n\in\N}$ is uniformly recursively compact, so is $\{\cM(X,T_n;Y_n,J_n)\cap\cK_n\}_{n\in\N}$. Note that $\cM(X,T_n;Y_n,J_n)\cap\cK_n=\{\mu_n\}$ for each $n\in\N$ by (\ref{eq:propertyofKn}). Therefore, by Proposition~\ref{thecomplementofrecursivelycompactset}~(i), $\{\mu_n\}_{n\in\N}$ is uniformly computable in $(\cP(X),\,W_{\rho},\,\cQ_{\cS})$.
\end{proof}

We devote the rest of this subsection to a proof of Theorem~\ref{thm:recursivelycompactofJacobian}.
Our proof proceeds by constructing a sequence of uniformly lower semi-computable open sets and expressing the set $\cM(X,T_n;Y_n,J_n)$ as the complement of their union. 
The uniform recursive compactness then follows from established properties of recursively compact spaces.

\begin{proof}[\bf Proof of Theorem~\ref{thm:recursivelycompactofJacobian}]
	By Proposition~\ref{prop:test function}, there exists a sequence $\{\tau_s\}_{s\in\N}$ of uniformly computable functions $\tau_s\colon X\mapping\R$ such that $\{\tau_s:s\in\N\}$ is dense in $C(X)$. By the computability of the absolute value function, the sequence $\bigl\{\tau_s^+\bigr\}_{s\in\N}$ is a sequence of uniformly computable functions. Since $\{Y_{n,k}\}_{(n,k)\in L}$ is uniformly lower semi-computable open, by Proposition~\ref{prop:characterfunctionislowersemicomputable}, there exists a sequence $\{h_{m,n,k}\}_{(m,(n,k))\in\N\times L}$ of uniformly computable functions $h_{m,n,k}\colon X\rightarrow\R$ such that for each $(n,k)\in L$, the following properties are satisfied:
	\begin{enumerate}
		\smallskip

		\item For each $x\in X$, $\{h_{m,n,k}(x)\}_{m\in\N}$ is nondecreasing and $h_{m,n,k}(x)\to\mathbbm{1}_{Y_{n,k}}(x)$ as $m\to+\infty$.

		\smallskip

		\item For each $m\in\N$, $h_{m,n,k}(x)\geq 0$ for each $x\in X$ and $h_{m,n,k}(x)=0$ for each $x\notin Y_{n,k}$.
	\end{enumerate}
	We define for all $m,s\in\N,\,x\in X$, and $(n,k)\in L$,
	\begin{align}
		W_{m,s}^{n,k}(x)&\=\begin{cases}
		    \sup\bigl\{\tau^+_s(y)\cdot h_{m,n,k}(y):y\in T_n^{-1}(x)\bigr\}&\textrm{if }x\in T_n(Y_{n,k});\\
            0&\textrm{otherwise},
		\end{cases}\label{eq:def of W}\\ 
        V_{m,s}^{n,k}(x)&\coloneqq J_{n}(x)\cdot\tau^+_s(x)\cdot h_{m,n,k}(x),\qquad\qquad\qquad\,\,\, \text{and}\label{eq:def of V}\\
        \Psi_{m,s}^{n,k}&\coloneqq\bigl\{\mu\in\PPP(X):\Functional{\mu}{W_{m,s}^{n,k}-V_{m,s}^{n,k}}<0\bigr\}\label{eq:def of Psi}.
	\end{align}
	We first establish the following claim.

	\smallskip
	\emph{Claim~1.}
	$\bigl\{\Psi_{m,s}^{n,k}\big\}_{(m,s,(n,k))\in\N^2\times L}$ is uniformly lower semi-computable open in $(\cP(X),\,W_{\rho},\,\cQ_{\cS})$.
	\smallskip

	\emph{Proof of Claim~1.} Let $\cS_{\Q}=\{q_v\}_{v\in\N}$ (see Subsection~\ref{sub:Computable Analysis}). First, we show that $\bigl\{W_{m,s}^{n,k}\bigr\}_{(m,s,(n,k))\in\N^2\times L}$ is a sequence of uniformly upper semi-computable functions. Denote $Q_v\=(-\infty,q_v)$ for each $v\in\N$. Indeed, it is not hard to derive from (\ref{eq:def of W}) and property~(ii) of $\{h_{m,n,k}\}_{(m,(n,k))\in\N\times L}$ that 
    \begin{align}\label{eq:formulaforW}
        \bigl(W_{m,s}^{n,k}\bigr)^{-1}(Q_v^c)&=\begin{cases}
		    X&\textrm{if }q_v\leq 0;\\
            T_n\bigl(\bigl(\tau_s^+\cdot h_{m,n,k}\bigr)^{-1}(Q_v^c)\bigr)&\textrm{otherwise}
		\end{cases}
    \end{align}
    for all $m,\,s,\,v\in\N$, and $(n,k)\in L$.  
    
    By the uniform computability of $\bigl\{\tau_s^+\bigr\}_{s\in\N}$ and $\{h_{m,n,k}\}_{(m,(n,k))\in\N\times L}$, Proposition~\ref{prop:computabilityoffunctionspreservedunderoperators} implies that $\bigl\{\tau_s^+\cdot h_{m,n,k}\bigr\}_{(m,s,(n,k))\in\N^2\times L}$ is a sequence of uniformly computable functions. Since $\{Q_v\}_{v\in\N}$ is uniformly lower semi-computable open in $\bigl(\R,\,d_{\R},\,\cS_{\Q}\bigr)$, by Proposition~\ref{prop:computable function}, the sequence $\bigl\{\bigl(\tau_s^+\cdot h_{m,n,k}\bigr)^{-1}(Q_v)\bigr\}_{(m,s,v,(n,k))\in\N^3\times L}$ is uniformly lower semi-computable open in $(X,\,\rho,\,\cS)$. Note that $\bigl(\tau_s^+\cdot h_{m,n,k}\bigr)^{-1}(Q_v^c)=\bigl(\bigl(\tau_s^+\cdot h_{m,n,k}\bigr)^{-1}(Q_v)\bigr)^c$ for all $m,\,s,\,v\in\N$, and $(n,k)\in L$. Then since $X$ is recursively compact in $(X,\,
    \rho,\,\cS)$, by Proposition~\ref{thecomplementofrecursivelycompactset}~(iii), $\bigl\{\bigl(\tau_s^+\cdot h_{m,n,k}\bigr)^{-1}(Q_v^c)\bigr\}_{(m,s,v,(n,k))\in\N^3\times L}$ is uniformly recursively compact. Note that by property~(ii) of $\{h_{m,n,k}\}_{(m,(n,k))\in\N\times L}$ and the hypotheses of Theorem~\ref{thm:recursivelycompactofJacobian}, we have that $\bigl(\tau_s^+\cdot h_{m,n,k}\bigr)^{-1}(Q_v^c)\subseteq Y_{n,k}\subseteq X_n$ for all $m,\,s,\,v\in\N$, and $(n,k)\in L$. Hence, since $\{T_n\}_{n\in\N}$ is a sequence of uniformly computable functions with respect to $\{X_n\}_{n\in\N}$, by Proposition~\ref{thecomplementofrecursivelycompactset}~(v), we obtain that $\bigl\{T_n\bigl(\bigl(\tau_s^+\cdot h_{m,n,k}\bigr)^{-1}(Q_v^c)\bigr)\bigr\}_{(m,s,v,(n,k))\in\N^3\times L}$ is uniformly recursively compact. Since $X$ is recursively compact, and $\cS_{\Q}=\{q_v\}_{v\in\N}$, by (\ref{eq:formulaforW}), $\bigl\{\bigl(W_{m,s}^{n,k}\bigr)^{-1}(Q_v^c)\bigr\}_{(m,s,v,(n,k))\in\N^3\times L}$ is uniformly recursively compact. Thus, since $\bigl(W_{m,s}^{n,k}\bigr)^{-1}(Q_v^c)=\bigl(\bigl(W_{m,s}^{n,k}\bigr)^{-1}(Q_v)\bigr)^c$ for all $m,\,s,\,v\in\N$, and $(n,k)\in L$, by Proposition~\ref{thecomplementofrecursivelycompactset}~(ii), the sequence $\bigl\{\bigl(W_{m,s}^{n,k}\bigr)^{-1}(Q_v)\bigr\}_{(m,s,v,(n,k))\in\N^3\times L}$ is uniformly lower semi-computable open. Hence, it follows from the implication from (ii) to (i) in Proposition~\ref{prop:semicomputability} that $\bigl\{W_{m,s}^{n,k}\bigr\}_{(m,s,(n,k))\in\N^2\times L}$ is a sequence of uniformly upper semi-computable functions. 
	 
	We next show that $\bigl\{V_{m,s}^{n,k}\bigr\}_{(m,s,(n,k))\in\N^2\times L}$ is a sequence of uniformly lower semi-computable functions. Denote $R_v\coloneqq (q_v,+\infty)$ for each $v\in\N$. Since $\bigl\{\tau_s^+\bigr\}_{s\in\N}$ and $\{h_{m,n,k}\}_{(m,(n,k))\in\N\times L}$ are sequences of uniformly computable nonnegative functions, and $\{J_n\}_{n\in\N}$ is a sequence of uniformly lower semi-computable functions with respect to $\{Y_n\}_{n\in\N}$, by (\ref{eq:def of V}), Definition~\ref{def:domain of lower or upper computability}, and Proposition~\ref{prop:computabilityoffunctionspreservedunderoperators}, $\bigl\{V_{m,s}^{n,k}\bigr\}_{(m,s,(n,k))\in\N^2\times L}$ is a sequence of uniformly lower semi-computable functions with respect to $\{Y_n\}_{n\in\N}$. Hence, by Proposition~\ref{prop:semicomputability}, there exists a sequence $\bigl\{Y_{m,s,v}^{n,k}\bigr\}_{(m,s,v,(n,k))\in\N^3\times L}$ of uniformly lower semi-computable open sets such that $\bigl(V_{m,s}^{n,k}\bigr)^{-1}(R_v)\cap Y_n=Y_{m,s,v}^{n,k}\cap Y_n$ for all $m,\,s,\,v\in\N$ and $(n,k)\in L$. Indeed, it is not hard to derive from (\ref{eq:def of V}) and property~(ii) of $\{h_{m,n,k}\}_{(m,(n,k))\in\N\times L}$ that 
    \begin{align}\label{eq:formulaforV}
        \bigl(V_{m,s}^{n,k}\bigr)^{-1}(R_v)&=\begin{cases}
		    X&\textrm{if }q_v<0;\\
            Y_{m,s,v}^{n,k}\cap Y_n&\textrm{otherwise}
		\end{cases}
        \quad\text{ for all }m,\,s,\,v\in\N,\text{ and }(n,k)\in L.
    \end{align}
    
    Since $\{Y_n\}_{n\in\N}$ and $\bigl\{Y_{m,s,v}^{n,k}\bigr\}_{(m,s,v,(n,k))\in\N^3\times L}$ are both uniformly lower semi-computable open in $(X,\,\rho,\,\cS)$, by Proposition~\ref{thecomplementofrecursivelycompactset}~(ii),~(iii), and Proposition~\ref{prop:operatorsonrecursivelycompactsets}~(ii), it follows from the recursive compactness of $X$ that $\bigl\{Y_{m,s,v}^{n,k}\cap Y_n\bigr\}_{(m,s,v,(n,k))\in\N^3\times L}$ is uniformly lower semi-computable open. Since $\cS_{\Q}=\{q_v\}_{v\in\N}$, by (\ref{eq:formulaforV}), $\bigl\{\bigl(V_{m,s}^{n,k}\bigr)^{-1}(R_v)\bigr\}_{(m,s,v,(n,k))\in\N^3\times L}$ is uniformly lower semi-computable open.
    Thus, by Proposition~\ref{prop:semicomputability}, we conclude that $\bigl\{V_{m,s}^{n,k}\bigr\}_{(m,s,(n,k))\in\N^2\times L}$ is a sequence of uniformly lower semi-computable functions.
    
    Since $\bigl\{W_{m,s}^{n,k}\bigr\}_{(m,s,(n,k))\in\N^2\times L}$ (resp.\ $\bigl\{V_{m,s}^{n,k}\bigr\}_{(m,s,(n,k))\in\N^2\times L}$) is a sequence of uniformly upper (resp.\ lower) semi-computable functions, by Definition~\ref{def:domain of lower or upper computability}, $\bigl\{W_{m,s}^{n,k}-V_{m,s}^{n,k}\bigr\}_{(m,s,(n,k))\in\N^2\times L}$ is a sequence of uniformly upper semi-computable functions. Thus, by Corollary~\ref{cor:semicomputability of integral operator}, (\ref{eq:def of Psi}), and Proposition~\ref{prop:semicomputability}, we conclude that $\bigl\{\Psi_{m,s}^{n,k} \bigr\}_{(m,s,n,k)\in\N^2\times L}$ is uniformly lower semi-computable open in $(\cP(X),\,W_{\rho},\,\cQ_{\cS})$. This establishes Claim~1.
	\smallskip

	We now characterize the set $\cM(X,T_n;Y_n,J_n)$ (recall \eqref{eq:defofcMforJonsubset}).

	\smallskip
	\emph{Claim~2.}
	$\cM(X,T_n;Y_n,J_n)=\cP(X)\smallsetminus\bigcup_{(m,s)\in\N^2}\bigcup_{(n,k)\in L_n}\Psi_{m,s}^{n,k}$ for each $n\in\N$.
	\smallskip

	\emph{Proof of Claim~2.} We fix an arbitrary $n\in\N$ and establish our claim by showing that these two sets are mutually inclusive for $n$.
    
	Suppose $\mu\in \cP(X)\smallsetminus\bigcup_{(m,s)\in\N^2}\bigcup_{(n,k)\in L_n}\Psi_{m,s}^{n,k}$.
	By (\ref{eq:def of Psi}), we have $\Functional{\mu}{W_{m,s}^{n,k}}\geq\Functional{\mu}{V_{m,s}^{n,k}}$ for all $m,\,s\in\N$, and $(n,k)\in L_n$. 
    Consider an arbitrary $(n,k)\in L_n$.
    Since $Y_{n,k}$ is admissible for $T_n$, by \cite[Corollary~15.2]{kechris2012classical}, $T_n$ is a Borel isomorphism of $Y_{n,k}$ with $T_n(Y_{n,k})$.  
    It follows from (\ref{eq:def of W}) and (\ref{eq:def of V}) that
	\begin{equation}		\label{eq:integration with varphik inequality}
		\int_{T_n(Y_{n,k})}\!\bigl(\tau_s^+\cdot h_{m,n,k}\bigr)\circ\bigl(T_n|_{Y_{n,k}} \bigr)^{-1}\,\mathrm{d}\mu
		\geq\Functional{\mu}{J_n\cdot\tau_s^+\cdot h_{m,n,k}}\quad\text{ for all }m,\,s\in\N.
	\end{equation}
	Letting $m\to+\infty$ in (\ref{eq:integration with varphik inequality}) and applying the monotone convergence theorem, we derive from properties~(i)~and~(ii) of $\{h_{m,n,k}\}_{(m,(n,k))\in\N\times L}$ that
	\begin{equation}\label{eq:integration with varphik}
		\int_{T_n(Y_{n,k})}\!\tau_s^+\circ \bigl(T_n|_{Y_{n,k}}\bigr)^{-1} \,\mathrm{d}\mu
		\geq\int_{Y_{n,k}}\!\bigl(J_n\cdot \tau_s^+\bigr)\,\mathrm{d}\mu\quad\text{ for each }s\in\N.
	\end{equation}
    For each $A\in\cB(X)$, we define $\mu_{1,n,k}(A)\=\mu(T_n(A\cap Y_{n,k}))$ and $\mu_{2,n,k}(A) \= \int_{A\cap Y_{n,k}}\!J_n\,\mathrm{d}\mu$. Then $\mu_{1,n,k}$ and $\mu_{2,n,k}$ are finite Borel measures on $X$. By the change-of-variable formula and (\ref{eq:integration with varphik}), we have $\Functional{\mu_{1,n,k}}{\tau_s^+}\geq\Functional{\mu_{2,n,k}}{\tau_s^+}$ for each $s\in\N$. By Proposition~\ref{prop:test function}, for each $A\in\cB(X)$, $\mu(T_n(A\cap Y_{n,k}))\geq\int_{A\cap Y_{n,k}}\!J_n\,\mathrm{d}\mu$.
    Thus, since $Y_n=\bigcup_{(n,k)\in L_n}Y_{n,k}$, we obtain that $\mu(T_n(A))\geq\int_A\!J_n\,\mathrm{d}\mu$ for each admissible set $A\subseteq Y_n$ for $T_n$. By (\ref{eq:defofcMforJonsubset}), this implies that $\mu\in\cM(X,T_n;Y_n,J_n)$.

	Conversely, consider an arbitrary $\mu\in M(X,T_n;Y_n,J_n)$. Note that $T_n$ is a Borel isomorphism of $Y_{n,k}$ with $T_n(Y_{n,k})$ for each $(n,k)\in L$. Then by (\ref{eq:def of W}), (\ref{eq:def of V}), the change-of-variable formula, and property~(ii) of $\{h_{m,n,k}\}_{(m,(n,k))\in\N\times L}$, we obtain that
	\begin{equation*}
		\begin{aligned}
        \int\! W_{m,s}^{n,k}\,\mathrm{d}\mu
        &\geq \int_{T_n(Y_{n,k})}\!\bigl(\tau_s^+\cdot h_{m,n,k}\bigr) \circ\bigl(T_n|_{Y_{n,k}} \bigr)^{-1}\,\mathrm{d}\mu
		= \int_{Y_{n,k}}\!\bigl(\tau_s^+\cdot h_{m,n,k}\bigr)\,\mathrm{d}\bigl(\mu\circ\bigl(T_n|_{Y_{n,k}}\bigr)\bigr)\\
		&\geq\int_{Y_{n,k}}\!\bigl(J_n\cdot\tau_s^+\cdot h_{m,n,k}\bigr)\,\mathrm{d}\mu=\int\!\bigl(J_n\cdot\tau_s^+\cdot h_{m,n,k}\bigr)\,\mathrm{d}\mu=\int\!V_{m,s}^{n,k}\,\mathrm{d}\mu.
		\end{aligned}
	\end{equation*}
	for all $m,\,s\in\N$ and $(n,k)\in L_n$. 
	By (\ref{eq:def of Psi}), this implies that $\mu\notin\Psi_{m,s}^{n,k}$, establishing Claim~2.

	\smallskip
	
	By Proposition~\ref{prop:lower semi-computable open} and Claim~1, $\bigl\{\bigcup_{(m,s)\in\N^2}\bigcup_{(n,k)\in L_n}\Psi_{m,s}^{n,k}\bigr\}_{n\in\N}$ is uniformly lower semi-computable open in $(\cP(X),\,W_{\rho},\,\cQ_{\cS})$. Since $X$ is recursively compact in $(X,\,\rho,\,\cS)$, by Proposition~\ref{prop:recursively compactness of measure space}, we have that $\cP(X)$ is recursively compact in $(\cP(X),\,W_{\rho},\,\cQ_{\cS})$. By Proposition~\ref{thecomplementofrecursivelycompactset}~(iii) and Claim~2, we conclude that $\{\cM(X,T_n;Y_n,J_n)\}_{n\in\N}$ is uniformly recursively compact in $(\cP(X),\,W_{\rho},\,\cQ_{\cS})$.
\end{proof}

\subsection{Applications of Theorem~\ref{theorem B}}
\label{sub:applications}

In this subsection, we demonstrate how to apply Theorem~\ref{theorem B} to different systems by constructing appropriate sequences $\{\cK_n\}_{n\in\N}$, with the main applications being Theorems~\ref{thm:additionalassumptionone}~and~\ref{thm:additionalassumptiontwo}.


 We state the following hypotheses under which we establish our results.

\begin{asss}
    \quad
    \begin{enumerate}
        \smallskip
        \item Let $(X,\,\rho,\,\cS,\,\{X_n\}_{n\in\N},\,\{T_n\}_{n\in\N})$ be a uniformly computable system. Let $\{Y_n\}_{n\in\N}$ be a sequence of open sets and $\{J_n\}_{n\in\N}$ be a sequence of Borel functions that satisfy the hypotheses of Theorem~\ref{thm:recursivelycompactofJacobian}.
        \smallskip
        \item Let $\phi_n\colon X\mapping\R$ be a bounded Borel function for each $n\in\N$.
        \smallskip
        \item For each $n\in\N$, properties~(i)~and~(ii) in Theorem~\ref{thm:prescribed Jacobian of equilibrium state} are satisfied in the case where $T\coloneqq T_n$, $Y\coloneqq Y_n$, $J\coloneqq J_n$, and $\phi\coloneqq\phi_n$. 
        \smallskip
        \item $\{C_n\}_{n\in\N}$ is a sequence of uniformly recursively compact subsets in $(X,\,\rho,\,\cS)$ such that $\card{C_n\smallsetminus Y_n}<+\infty$ and $\cE_0(T_n,\phi_n;Y_n)\cap\cP(X;C_n\cap Y_n)=\{\mu_n\}$ for each $n\in\N$.\footnote{For the definition of the set $\cE_0(T_n,\phi_n;Y_n)$, see (\ref{eq:defofspecialequilibriumstate}); its existence and uniqueness are discussed immediately thereafter.}
        \smallskip
        \item $H$ and $I$ are nonempty recursively enumerable sets with $H\subseteq\N\times I$. Moreover, $\{p_{n,i}\}_{(n,i)\in H}$ is a sequence of uniformly computable points such that $\{p_{n,i}:(n,i)\in H_n\}$ is the union of the set $C_n\smallsetminus X_n$ and the set $B_n$ of periodic points of $T_n$ in $C_n\cap(X_n\smallsetminus Y_n)$ whose orbits for $T_n$ are contained in $C_n\cap X_n$, where $H_n\coloneqq\{(n,i)\in H:i\in I\}$ for each $n\in\N$.
    \end{enumerate}
\end{asss}

We need the following lemma, whose proof is verbatim the same as that of \cite[Lemma~6.3~(iii)]{shi2024entropy}.

\begin{lemma}\label{lem:positivesupportonapoint}
	Let $(X,\rho)$ be a compact metric space, $T\colon X\rightarrow X$ be a Borel-measurable transformation, and $\mu\in\cM(X,T)$. Suppose $x \in X$ satisfies $\mu(\{x\})>0$. Then $x$ is a periodic point of $T$. If we assume in addition that $\mu$ is ergodic, then $\mu=\frac{1}{n}\sum_{i=0}^{n-1}\delta_{T^i(x)}$, where $n$ is the period of $x$.
\end{lemma}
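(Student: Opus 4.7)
The plan is to exploit $T$-invariance in order to propagate the atomic mass at $x$ along its forward orbit, to use a pigeonhole argument to force that orbit to be finite, and then to invoke a mass-doubling trick to rule out any pre-periodic portion.

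Setting $c \= \mu(\{x\}) > 0$, $T$-invariance yields $\mu(\{T(y)\}) = \mu\bigl(T^{-1}(\{T(y)\})\bigr) \geqslant \mu(\{y\})$ for every $y \in X$, so inductively $\mu(\{T^m(x)\}) \geqslant c$ for every $m \in \N_0$. Since $\mu$ is a probability measure, it admits at most $\lfloor 1/c \rfloor$ atoms of mass $\geqslant c$, and therefore the forward orbit $O \= \{T^m(x) : m \in \N_0\}$ is a finite set. Let $i \geqslant 0$ and $p \geqslant 1$ be the minimal integers with $T^{i+p}(x) = T^i(x)$; the claim is that $i = 0$. Suppose instead $i \geqslant 1$. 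Then $T^{i-1}(x)$ and $T^{i+p-1}(x)$ are two distinct preimages of $T^i(x)$ lying in $O$, and invariance yields
\begin{equation*}
\mu(\{T^i(x)\}) = \mu\bigl(T^{-1}(\{T^i(x)\})\bigr) \geqslant \mu(\{T^{i-1}(x)\}) + \mu(\{T^{i+p-1}(x)\}) \geqslant 2c.
\end{equation*}
Because $\mu(\{T(y)\}) \geqslant \mu(\{y\})$ and $T^p$ is the identity on the periodic part, the atomic mass is constant around the cycle, so every cycle point (in particular $T^{i+p-1}(x)$) carries mass $\geqslant 2c$. Reinserting this into the same inequality gives $\mu(\{T^i(x)\}) \geqslant 3c$, and inductively $\mu(\{T^i(x)\}) \geqslant kc$ for every $k \in \N$, contradicting $\mu(X) = 1$. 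Hence $i = 0$, and $x$ is periodic of some period $n \in \N$.

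For the ergodic case, set $C \= \{T^j(x) : 0 \leqslant j < n\}$, so that $T(C) = C$, and hence $T^{-1}(C) \supseteq C$; invariance then forces $\mu\bigl(T^{-1}(C) \smallsetminus C\bigr) = 0$, i.e., $T^{-1}(C) = C$ modulo $\mu$. Ergodicity gives $\mu(C) \in \{0, 1\}$, and the hypothesis $\mu(\{x\}) > 0$ forces $\mu(C) = 1$. Since the atomic masses around the cycle are equal and sum to $\mu(C) = 1$, each equals $1/n$, yielding $\mu = \frac{1}{n} \sum_{j=0}^{n-1} \delta_{T^j(x)}$.

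The principal difficulty is the periodicity step: finiteness of the forward orbit by itself does not preclude a non-trivial pre-periodic tail, so one must combine the counting of preimages in $T^{-1}(\{T^i(x)\})$ with the propagation of mass around the cycle in order to produce an arbitrarily large lower bound on $\mu(\{T^i(x)\})$ and thereby a contradiction. All remaining steps reduce to routine consequences of $T$-invariance and, for the second assertion, of ergodicity.
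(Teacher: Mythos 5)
Your proof is correct. The paper gives no proof of this lemma, deferring verbatim to \cite[Lemma~6.3~(iii)]{shi2024entropy}, and your argument --- propagating the atomic mass forward by invariance, a pigeonhole count to force the forward orbit to be finite, and the two-preimage inequality $\mu(\{T^i(x)\})\geq\mu(\{T^{i-1}(x)\})+\mu(\{T^{i+p-1}(x)\})$ to exclude a pre-periodic tail --- is the standard and complete one. (A minor shortcut: since the mass is constant on the cycle, that single inequality already yields $\mu(\{T^{i-1}(x)\})\leq 0$, so the iteration to $kc$ is not needed.)
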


We now consider dynamical systems satisfying the hypotheses of Theorem~\ref{theorem B} and the Assumptions. 
Before detailing the technical statements, we emphasize their applicability to expanding Thurston maps and rational maps. 
Specifically, Theorem~\ref{thm:additionalassumptionone} can be applied to establish the computability of equilibrium states for expanding Thurston maps with \holder continuous potentials. As a corollary, Theorem~\ref{thm:additionalassumptiontwo} directly implies the computability of equilibrium states for rational maps with hyperbolic \holder potentials.

However, a full treatment of general potentials requires verifying the computability of the associated Jacobians, a task that relies on independent techniques such as the cone method introduced in \cite[Subsection~4.1]{binder2025computability}. 
To demonstrate the efficacy of our approach while avoiding the extensive technicalities associated with these estimates, we restrict our explicit application in this article to expanding Thurston maps with the zero potential (see Theorem~\ref{Application B}).

\begin{theorem}\label{thm:additionalassumptionone}
    Under the Assumptions, assume, in addition, that the following statement is true:  
    
    There exists a sequence $\{U_{n,i,k}\}_{((n,i),k)\in H\times\N}$ of uniformly lower semi-computable open sets in $(X,\,\rho,\,\cS)$ such that $p_{n,i}\in U_{n,i,k}$ for all $(n,i)\in H$ and $k\in\n$. Moreover, there exists a sequence $\{t_{n,i,k}\}_{((n,i),k)\in H\times\N}$ of uniformly computable real numbers such that for each $(n,i)\in H$, 
        \begin{equation}\label{eq:additionalassumptionone}
        \inf_{k\in\N}t_{n,i,k}=0,\quad\text{ and}\quad\mu_n(U_{n,i,k})\leq t_{n,i,k}\quad\text{ for each }k\in\N.
        \end{equation}
    Then $\{\mu_n\}_{n\in\N}$ is uniformly computable in $(\cP(X),\,W_{\rho},\,\cQ_{\cS})$.
\end{theorem}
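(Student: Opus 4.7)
The plan is to apply Theorem~\ref{theorem B}. To do so, I would produce a sequence $\{\cK_n\}_{n\in\N}$ of uniformly recursively compact subsets of $(\cP(X),\,W_\rho,\,\cQ_\cS)$ satisfying $\cM(X,T_n;Y_n,J_n)\cap\cK_n=\{\mu_n\}$ for every $n\in\N$, by setting
\begin{equation*}
    \cK_n\coloneqq\cM(X,T_n)\cap\cP(X;C_n)\cap\bigcap_{(n,i)\in H_n,\,k\in\N}\{\mu\in\cP(X):\mu(U_{n,i,k})\leq t_{n,i,k}\}.
\end{equation*}
The three factors are designed to encode $T_n$-invariance, to confine the support to $C_n$, and, through the collapse $\inf_k t_{n,i,k}=0$, to annihilate the $\mu$-mass at every singular point $p_{n,i}$.

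Next, I would verify that $\{\cK_n\}_{n\in\N}$ is uniformly recursively compact. By Proposition~\ref{prop:recursivecompactofsetinvariant}, $\{\cM(X,T_n)\}_{n\in\N}$ is uniformly recursively compact. Since $\{C_n\}_{n\in\N}$ is uniformly recursively compact and $X$ is recursively compact, Proposition~\ref{thecomplementofrecursivelycompactset}~(ii) makes $\{X\smallsetminus C_n\}_{n\in\N}$ uniformly lower semi-computable open, so that Corollary~\ref{cor:seminormrecursivelycompact} with the constant bound $0$ yields the uniform recursive compactness of $\{\cP(X;C_n)\}_{n\in\N}$. A second application of Corollary~\ref{cor:seminormrecursivelycompact}, after re-indexing $\{U_{n,i,k}\}_{((n,i),k)\in H\times\N}$ in the form required by that corollary (which only uses the recursive enumerability of $H\times\N$ and the hypotheses on $\{U_{n,i,k}\}$ and $\{t_{n,i,k}\}$), handles the third factor. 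Combining these via Proposition~\ref{prop:operatorsonrecursivelycompactsets}~(i) gives the desired uniform recursive compactness of $\{\cK_n\}_{n\in\N}$. The inclusion $\mu_n\in\cK_n\cap\cM(X,T_n;Y_n,J_n)$ then follows from Assumption~(iv), (\ref{eq:additionalassumptionone}), and Theorem~\ref{thm:prescribed Jacobian of equilibrium state} (after using Lemma~\ref{lem:admissibleweaker} to refine $\{Y_{n,k}\}$ into a pairwise disjoint admissible family).

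The main obstacle is the reverse inclusion: any $\mu\in\cM(X,T_n;Y_n,J_n)\cap\cK_n$ must coincide with $\mu_n$. Since $p_{n,i}\in U_{n,i,k}$ and $\inf_k t_{n,i,k}=0$, the bounds $\mu(U_{n,i,k})\leq t_{n,i,k}$ force $\mu(\{p_{n,i}\})=0$ for every $(n,i)\in H_n$, so in particular $\mu(C_n\smallsetminus X_n)=0$. The delicate step is the orbit-tracing argument for the finite set $C_n\cap(X_n\smallsetminus Y_n)$: for any point $q$ there with $\mu(\{q\})>0$, Lemma~\ref{lem:positivesupportonapoint} forces $q$ to be $T_n$-periodic; $T_n$-invariance of $\mu$ propagates positive mass to every point of the orbit; the inclusion $\supp{\mu}\subseteq C_n$ (as $C_n$ is closed with full $\mu$-measure) confines the orbit to $C_n$; and the vanishing of $\mu$ on $C_n\smallsetminus X_n$ prevents any orbit point from escaping $X_n$. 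The orbit therefore lies in $C_n\cap X_n$, placing $q\in B_n$; but then $q$ coincides with some $p_{n,i'}$ and has $\mu$-mass zero, contradicting $\mu(\{q\})>0$. Hence $\mu(C_n\smallsetminus Y_n)=0$, so $\mu\in\cP(X;C_n\cap Y_n)\subseteq\cP(X;Y_n)$. Theorem~\ref{thm:prescribed Jacobian of equilibrium state}, applied via Lemma~\ref{lem:admissibleweaker}, now yields $\mu\in\cE_0(T_n,\phi_n;Y_n)$, and Assumption~(iv) forces $\mu=\mu_n$, completing the verification of (\ref{eq:propertyofKn}).
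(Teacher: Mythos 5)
Your overall strategy is the same as the paper's: build $\cK_n$ from the invariance constraint, the support constraint $\cP(X;C_n)$, and the mass-annihilation constraints $\mu(U_{n,i,k})\leq t_{n,i,k}$, then run the orbit-tracing contradiction to pin down uniqueness. The uniqueness argument you sketch is essentially identical to the paper's. However, there is one genuine gap in the recursive-compactness step.

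You take the first factor of $\cK_n$ to be $\cM(X,T_n)$ and justify its uniform recursive compactness by Proposition~\ref{prop:recursivecompactofsetinvariant}. But the ``in particular'' clause of that proposition requires $\{T_n\}_{n\in\N}$ to be uniformly computable on all of $X$, whereas the Assumptions only guarantee computability with respect to $\{X_n\}_{n\in\N}$, and $X_n$ may be a proper open subset of $X$ (this is the whole point of the singular-point machinery: item~(v) of the Assumptions explicitly deals with $C_n\smallsetminus X_n$, and the intended applications include maps that are discontinuous off $X_n$ and hence cannot be computable on all of $X$). For a merely Borel-measurable $T_n$ the set $\cM(X,T_n)$ need not even be weak$^*$-closed, so its recursive compactness is not available. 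The proposition only yields uniform recursive compactness of $\{\cM(X,T_n;X_n)\}_{n\in\N}$. The paper's proof therefore uses $\cM(X,T_n;X_n)$ as the first factor of $\cK_n$ and recovers genuine invariance a posteriori: once the atom bounds force $\mu(C_n\smallsetminus X_n)=0$, one has $\mu\in\cP(X;X_n)$, and $\cP(X;X_n)\cap\cM(X,T_n;X_n)=\cP(X;X_n)\cap\cM(X,T_n)$, after which your orbit argument goes through verbatim. You should make the same substitution; with $\cM(X,T_n;X_n)$ in place of $\cM(X,T_n)$ the rest of your proof (the two applications of Corollary~\ref{cor:seminormrecursivelycompact}, the intersection via Proposition~\ref{prop:operatorsonrecursivelycompactsets}~(i), and the identification $\cM(X,T_n;Y_n,J_n)\cap\cK_n=\{\mu_n\}$ via Theorem~\ref{thm:prescribed Jacobian of equilibrium state} and Lemma~\ref{lem:admissibleweaker}) is correct and matches the paper.
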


\begin{proof}
    Define for each $n\in\N$,
    \begin{equation}\label{eq:defofKn}
    \cK_n\coloneqq\{\mu\in\cP(X;C_n)\cap\cM(X,T_n;X_n):\mu(U_{n,i,k})\leq t_{n,i,k}\text{ for all }k\in\N\text{ and }(n,i)\in H_n\}.
    \end{equation}
    
    First, we establish the uniformly recursive compactness of $\{\cK_n\}_{n\in\N}$ as follows.

    \smallskip
    \emph{Claim~1.} $\{\cK_n\}_{n\in\N}$ is uniformly recursively compact in $(\cP(X),\,W_{\rho},\,\cQ_{\cS})$.
    \smallskip

    \emph{Proof of Claim~1.}
    Since $(X,\,\rho,\,\cS,\,\{X_n\}_{n\in\N},\,\{T_n\}_{n\in\N})$ satisfy the Assumptions, in $(X,\,\rho,\,\cS)$, $X$ is recursively compact, $\{X_n\}_{n\in\N}$ is uniformly lower semi-computable open, and $\{T_n\}_{n\in\N}$ is a sequence of uniformly computable functions with respect to $\{X_n\}_{n\in\N}$. Thus, by Proposition~\ref{prop:recursivecompactofsetinvariant}, $\{\cM(X,T_n;X_n)\}_{n\in\N}$ is uniformly recursively compact in $(\cP(X),\,W_{\rho},\,\cQ_{\cS})$.
    
    Moreover, since in $(X,\,\rho,\,\cS)$, $X$ is recursively compact, and $\{C_n\}_{n\in\N}$ is uniformly recursively compact, by Proposition~\ref{thecomplementofrecursivelycompactset}~(ii), $\{X\smallsetminus C_n\}_{n\in\N}$ is uniformly lower semi-computable open. Note that by definition, $\cP(X;C_n)=\{\mu\in\cP(X):\mu(X\smallsetminus C_n)\leq 0\}$ for each $n\in\N$. By Corollary~\ref{cor:seminormrecursivelycompact}, $\{\cP(X;C_n)\}_{n\in\N}$ is uniformly recursively compact in $(\cP(X),\,W_{\rho},\,\cQ_{\cS})$.

    Since in $(X,\,\rho,\,\cS)$, $X$ is recursively compact, $\{U_{n,i,k}\}_{((n,i),k)\in H\times\N}$ is uniformly lower semi-computable open, and $\{t_{n,i,k}\}_{((n,i),k)\in H\times\N}$ is a sequence of uniformly computable real numbers, by Corollary~\ref{cor:seminormrecursivelycompact}, $\{\{\mu\in\cP(X):\mu(U_{n,i,k})\leq t_{n,i,k}\}\}_{((n,i),k)\in H\times\N}$ is uniformly recursively compact in $(\cP(X),\,W_{\rho},\,\cQ_{\cS})$. Combined with the uniformly recursive compactness of $\{\cP(X;C_n)\}_{n\in\N}$ and $\{\cM(X,T_n;X_n)\}_{n\in\N}$, by Proposition~\ref{prop:operatorsonrecursivelycompactsets} and (\ref{eq:defofKn}), this implies that $\{\cK_n\}_{n\in\n}$ is uniformly recursively compact.

    Next, we establish the following claim.

    \smallskip
    \emph{Claim~2.} $\cM(X,T_n;Y_n,J_n)\cap\cK_n=\{\mu_n\}$ for each $n\in\N$.
    \smallskip

    \emph{Proof of Claim~2.}
    We consider an arbitrary $n\in\N$ and prove our claim for $n$.
    
    First, we show that 
    \begin{equation}\label{eq:lemmaforclaim2}
    \cM(X,T_n;Y_n,J_n)\cap\cM(X,T_n;X_n)\cap\PPP(X;C_n\cap Y_n)=\{\mu_n\}.
    \end{equation} 
    Since $\{Y_n\}_{n\in\N}$ satisfies the hypotheses of Theorem~\ref{theorem B}, there exist two recursively enumerable sets $K,\,L$ with $L\subseteq\N\times K$, and a sequence $\{Y_{n,k}\}_{(n,k)\in L}$ of open sets such that $Y_{n,k}$ is admissible for $T_n$ for each $(n,k)\in L$. Since $L$ is a recursively enumerable set, there exists a function $f\colon\N\mapping L_n$ with $f(\N)=L_n$. Define $Y_m^{\prime}\coloneqq Y_{f(m)}\smallsetminus\bigcup_{k=1}^{m-1}Y_{f(k)}$ for each $m\in\N$. Then by Definition~\ref{def:admissible}, $(X,\,\rho,\,T_n,\,Y_n,\,\{Y^{\prime}_{m}\}_{m\in\N},\,\mu)$ is admissible. Note that $\phi_n$ is a bounded Borel function by item~(ii) in the Assumptions. Then we have $\functional{\mu}{\phi_n}\in\R$ for each $\mu\in\cP(X)$ and each $n\in\N$. Hence, by item~(iii) in the Assumptions and Theorem~\ref{thm:prescribed Jacobian of equilibrium state}, we obtain that $\cM(X,T_n;Y_n,J_n)\cap\cM(X,T_n)\cap\PPP(X;C_n\cap Y_n)=\cE_0(T_n,\phi_n;Y_n)\cap\PPP(X;C_n\cap Y_n)$.
    Note that $Y_n\subseteq X_n$ by item~(ii) in the Assumptions. Then by (\ref{eq:defofcMonsubset}), we have $\cM(X,T_n;X_n)\cap\cP(X;C_n\cap Y_n)=\cM(X,T_n)\cap\cP(X;C_n\cap Y_n)$. Thus, by item~(iv) in the Assumptions, we obtain that $\cM(X,T_n;Y_n,J_n)\cap\cM(X,T_n;X_n)\cap\PPP(X;C_n\cap Y_n)=\cM(X,T_n;Y_n,J_n)\cap\cM(X,T_n)\cap\PPP(X;C_n\cap Y_n)=\cE_0(T_n,\phi_n;Y_n)\cap\PPP(X;C_n\cap Y_n)=\{\mu_n\}$.

    Now we prove our claim by showing that these two sets are mutually inclusive. Indeed, by (\ref{eq:additionalassumptionone}) and (\ref{eq:defofKn}), $\mu_n\in\cK_{n}$. Hence, by (\ref{eq:lemmaforclaim2}), we obtain that $\mu_n\in\cM(X,T_n;Y_n,J_n)\cap\cK_n$.   
    
    Finally, we show that $\cM(X,T_n;Y_n,J_n)\cap\cK_n\subseteq\{\mu_n\}$. We argue this for $n$ by contradiction and consider a measure $\mu\in\cM(X,T_n;Y_n,J_n)\cap\cK_n$ with $\mu\neq\mu_n$. Then it is not hard to derive from (\ref{eq:lemmaforclaim2}) that $\mu\in\cP(X;C_n)\smallsetminus\PPP(X;C_n\cap Y_n)$. Note that $\card{C_n\smallsetminus Y_n}<+\infty$ by item~(iv) in the Assumptions. Thus, since $\mu\in\cP(X;C_n)\smallsetminus\PPP(X;C_n\cap Y_n)$, we have $\mu(\{x_0\})>0$ for some $x_0\in C_n\smallsetminus Y_n$.
    Then we show that $\mu\in\cP(X;X_n)\cap \cM(X,T_n)$. By the assumptions in Theorem~\ref{thm:additionalassumptionone}, $p_{n,i}\in U_{n,i,k}$ for all $(n,i)\in H_n$ and $k\in\n$. Hence, by $\mu\in\cK_n$, it follows from (\ref{eq:defofKn}) that $0\leq \mu(\{p_{n,i}\})\leq \mu(U_{n,i,k})\leq t_{n,i,k}$ for all $(n,i)\in H_n$ and $k\in\n$. 
    Note that $\inf_{k\in\N}t_{n,i,k}=0$ for each $(n,i)\in H_n$ by (\ref{eq:additionalassumptionone}). 
    Then we have $\mu(\{p_{n,i}\})=0$ for each $(n,i)\in H_n$. 
    Hence, by item~(v) in the Assumptions, we obtain that $\mu(C_n\smallsetminus X_n)=\mu(B_n)=0$. 
    Note that $\mu\in\cP(X;C_n)\cap\cM(X,T_n;X_n)$ by $\mu\in\cK_n$ and (\ref{eq:defofKn}). 
    Then by (\ref{eq:defofcMonsubset}), we have $\mu\in\cP(X;X_n)\cap\cM(X,T_n;X_n)=\cP(X;X_n)\cap\cM(X,T_n)$. Hence, by Lemma~\ref{lem:positivesupportonapoint}, it follows from $\mu(\{x_0\})>0$ that $x_0$ is a periodic point of $T_n$ in $C_n\cap(X_n\smallsetminus Y_n)$. Moreover, by $\mu\in\cP(X;C_n\cap X_n)$, the orbit of $x_0$ for $T_n$ is contained in $C_n\cap X_n$. 
    Thus, $x_0\in B_n$ by the definition of $B_n$. 
    This implies that $\mu(B_n)\geq\mu(\{x_0\})>0$, which contradicts with $\mu(B_n)=0$.
    Hence, we obtain that $\cM(X,T_n;Y_n,J_n)\cap\cK_n\subseteq\{\mu_n\}$. This completes the proof of Claim~2.
    \smallskip
    
    By Claims~1~and~2, $\{\cK_n\}_{n\in\N}$ is uniformly recursively compact and satisfies (\ref{eq:propertyofKn}). 
    Hence, it follows from Theorem~\ref{theorem B} that $\{\mu_n\}_{n\in\N}$ is uniformly computable in $(\cP(X),\,W_{\rho},\,\cQ_{\cS})$.
\end{proof}

We now consider dynamical systems satisfying the hypotheses of Theorem~\ref{theorem B} and the Assumptions whose maps $T_n$ are uniformly contracting near all periodic orbits in $B_n$ for $T_n$. 

\begin{theorem}
\label{thm:additionalassumptiontwo}
    Under the Assumptions, assume, in addition, that the following statements are true:
    \begin{enumerate}
        \smallskip
        \item $C_n\subseteq X_n$ for each $n\in\N$.
        \smallskip
        \item There exists a computable function $m\colon H\mapping\N$ and two sequences $\{r_{n,i}\}_{(n,i)\in H}$ and $\{\lambda_{n,i}\}_{(n,i)\in H}$ of uniformly computable real numbers such that for each $(n,i)\in H$, $m(n,i)$ is the period of $p_{n,i}$ for $T_n$, $r_{n,i}>0$, $0<\lambda_{n,i}<1$, and 
        \begin{equation*}
            \rho\bigl(p_{n,i},T_n^{m(n,i)}(q)\bigr)\leq\lambda_{n,i}\cdot\rho(p_{n,i},q)\quad\text{ for each }q\in B(p_{n,i},r_{n,i}).
        \end{equation*}
    \end{enumerate}
    Then $\{\mu_n\}_{n\in\N}$ is a sequence of uniformly computable measures.
\end{theorem}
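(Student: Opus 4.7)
The plan is to reduce Theorem~\ref{thm:additionalassumptiontwo} to Theorem~\ref{thm:additionalassumptionone} by using the contraction data to manufacture neighbourhoods $U_{n,i,k}$ and computable upper bounds $t_{n,i,k}$ verifying the additional hypothesis of that theorem. The natural choice is to take $U_{n,i,k}\coloneqq B(p_{n,i},r_{n,i})$ (independent of $k$) and $t_{n,i,k}\coloneqq 2^{-k}$, and the entire proof will amount to showing that these data meet the required computability and measure-theoretic conditions.

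The heart of the argument is a contraction--invariance lemma: for every $\mu\in\cM(X,T_n)$ and every $(n,i)\in H_n$,
\begin{equation*}
    \mu(B(p_{n,i},r_{n,i}))=\mu(\{p_{n,i}\}).
\end{equation*}
To establish it, I iterate hypothesis~(ii) to obtain $T_n^{km(n,i)}(B(p_{n,i},r_{n,i}))\subseteq B(p_{n,i},\lambda_{n,i}^k r_{n,i})$ for each $k\in\N$, and hence $B(p_{n,i},r_{n,i})\subseteq T_n^{-km(n,i)}(B(p_{n,i},\lambda_{n,i}^k r_{n,i}))$. Invoking $T_n^{km(n,i)}$-invariance of $\mu$ yields $\mu(B(p_{n,i},r_{n,i}))\leq\mu(B(p_{n,i},\lambda_{n,i}^k r_{n,i}))$. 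Since $0<\lambda_{n,i}<1$, the nested balls on the right intersect in the singleton $\{p_{n,i}\}$, so downward continuity of the finite measure $\mu$ forces $\mu(B(p_{n,i},r_{n,i}))\leq\mu(\{p_{n,i}\})$, and the reverse inequality is trivial.

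Applied to the equilibrium state $\mu_n$, which is $T_n$-invariant by virtue of lying in $\cE_0(T_n,\phi_n;Y_n)$, the lemma combines with item~(v) of the Assumptions and the new hypothesis~(i) (which together force $\{p_{n,i}:(n,i)\in H_n\}=B_n\subseteq X_n\smallsetminus Y_n$, so $p_{n,i}\notin Y_n$) and with $\mu_n\in\cP(X;C_n\cap Y_n)$ (which gives $\mu_n(Y_n)=1$) to yield $\mu_n(\{p_{n,i}\})=0$, and therefore $\mu_n(B(p_{n,i},r_{n,i}))=0$.

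It remains to verify the computability requirements. Uniform lower semi-computability of $\{U_{n,i,k}\}_{((n,i),k)\in H\times\N}$ follows from Proposition~\ref{prop:lowersemicomputableraiusball} applied with the uniformly computable centres $\{p_{n,i}\}_{(n,i)\in H}$ (from item~(v) of the Assumptions) and the uniformly computable radii $\{r_{n,i}\}_{(n,i)\in H}$ (from the new hypothesis~(ii), which implies uniform lower semi-computability). The constant-exponent sequence $t_{n,i,k}=2^{-k}$ is manifestly uniformly computable with $\inf_{k\in\N}t_{n,i,k}=0$, and the bound $\mu_n(U_{n,i,k})=0\leq 2^{-k}=t_{n,i,k}$ holds by the previous paragraph. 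All other hypotheses of Theorem~\ref{thm:additionalassumptionone} are inherited from the Assumptions, so that theorem applies and delivers uniform computability of $\{\mu_n\}_{n\in\N}$. The only substantive step is the contraction--invariance equality; every computability verification is a direct consequence of the basic results recalled in Subsection~\ref{sub:Computable Analysis}.
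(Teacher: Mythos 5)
Your proposal is correct and follows essentially the same route as the paper: both reduce to Theorem~\ref{thm:additionalassumptionone} with $U_{n,i,k}=B(p_{n,i},r_{n,i})$, both derive $\mu_n(\{p_{n,i}\})=0$ from item~(v), hypothesis~(i), and $\mu_n\in\cP(X;C_n\cap Y_n)$, and both use the contraction plus $T_n$-invariance to conclude $\mu_n(B(p_{n,i},r_{n,i}))=0$. The only cosmetic differences are that you phrase the invariance step via preimages ($\mu(A)\leq\mu(T_n^{-km}(B(p_{n,i},\lambda_{n,i}^k r_{n,i})))$) where the paper uses forward images, and you take $t_{n,i,k}=2^{-k}$ where the paper takes $t_{n,i,k}=0$; both choices are valid.
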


\begin{proof}
    We establish that $\mu_n(B(p_{n,i},r_{n,i}))=0$ for each $(n,i)\in H$. We consider an arbitrary pair $(n,i)\in H$ and define $B(l)\=T_n^{lm(n,i)}(B(p_{n,i},r_{n,i}))$ for each $l\in\N_0$. 
    
    By item~(iv) in the Assumptions, it follows from $C_n\subseteq X_n$ that $p_{n,i}\in B_n\subseteq C_n\cap (X_n\smallsetminus Y_n)=C_n\smallsetminus Y_n$. Note that $\mu_n\in\PPP(X;C_n\cap Y_n)$ by item~(iv) in the Assumptions. Then we have $\mu_n(\{p_{n,i}\})=0$. Moreover, by the definition of $\{B(l)\}_{l\in\N_0}$, it follows from $\mu_n\in\cE_0(T_n,\phi_n;Y_n)\subseteq\cM(X,T_n)$ that $\mu_n(B(0))\leq\mu_n(B(l))$ for each $l\in\N_0$. 
    Since $\lambda_{n,i}\in(0,1)$, we have $B(l)\subseteq B\bigl(p_{n,i},r_{n,i}\lambda_{n,i}^l\bigr)\subseteq B(0)$ for each $l\in\N_0$. Thus, we have $\mu_n(B(0))=\mu_n\bigl(B\bigl(p_{n,i},r_{n,i}\lambda_{n,i}^l\bigr)\bigr)=\mu_n(B(l))$, hence, $\mu_n\bigl(B(0)\smallsetminus B\bigl(p_{n,i},r_{n,i}\lambda_{n,i}^l\bigr)\bigr) = 0$. Letting $l\rightarrow+\infty$, we obtain that $\mu_n(B(0)\smallsetminus\{p_{n,i}\})=0$. Combined with $\mu_n(\{p_{n,i}\})=0$, this proves that $\mu_n(B(p_{n,i},r_{n,i}))=0$.

    By Proposition~\ref{prop:lowersemicomputableraiusball}, it follows from the uniform computability of $\{r_{n,i}\}_{(n,i)\in H}$ and $\{p_{n,i}\}_{(n,i)\in H}$ that $\{B(p_{n,i},r_{n,i})\}_{(n,i)\in H}$ is a sequence of uniformly lower semi-computable open sets in $(X,\,\rho,\,\cS)$.
	
    Therefore, by the above result, the hypotheses of Theorem~\ref{thm:additionalassumptionone} are satisfied with $U_{n,i,k}=B(p_{n,i},r_{n,i})$ and $t_{n,i,k}=0$ for each $(n,i)\in H$ and each $k\in\N$. Hence, it follows from Theorem~\ref{thm:additionalassumptionone} that $\{\mu_n\}_{n\in\N}$ is uniformly computable in $(\cP(X),\,W_{\rho},\,\cQ_{\cS})$. 
\end{proof}

\section{Computability of equilibrium states for expanding Thurston maps}
\label{sec:Computable Analysis on thermodynamic formalism}

In this section, we consider a class of nonuniformly expanding maps on the topological $2$-sphere known as \emph{expanding Thurston maps}. There has been active research on both thermodynamic formalism for expanding Thurston maps (see e.g.\ \cite{bonk2010expanding,bonk2017expanding, haissinsky2009coarse, li2018equilibrium,li2015weak,li2017ergodic,shi2024entropy}) and algorithmic aspects of these maps (see e.g.\ \cite{selinger2015constructive,rafi2020centralizers}).

We first review the definition of expanding Thurston maps, along with some key concepts and results.
Then in Subsection~\ref{subsec:Misiurewicz--Thurston rational maps}, we focus on Misiurewicz--Thurston rational maps and apply Approach~I (see Theorem~\ref{theorem A}) to establish Theorem~\ref{Application A}. 
Finally, in Subsection~\ref{subsec:The measures of maximal entropy for computable expanding Thurston maps}, we study computable expanding Thurston maps whose critical points are computable. By Theorem~\ref{thm:upper semicontinuous iff no periodic critical points}, their measure-theoretic entropy functions may not be upper semicontinuous, which prevents us from using Approach~I (see Theorem~\ref{theorem A}) to demonstrate the computability of their corresponding equilibrium states. Instead, we apply Approach~II (see Theorem~\ref{theorem B}) to prove Theorem~\ref{Application B}.

\subsection{Expanding Thurston maps}
\label{sub:Expanding Thurston_maps}

We review some key concepts and results concerning expanding Thurston maps.
For a more thorough treatment of the subject, we refer the reader to the monographs \cite{bonk2017expanding,li2017ergodic}.

Let $S^2$ denote an oriented topological $2$-sphere and $f \colon S^2 \rightarrow S^2$ be a branched covering map. 
We denote by $\deg_f(x)$ the local degree of $f$ at $x \in S^2$.
The \emph{degree} of $f$ is $\deg{f} = \sum_{x\in f^{-1}(y)} \deg_{f}(x)$ for $y\in S^2$ and is independent of $y$. 

A point $x\in S^2$ is a \emph{critical point} of $f$ if $\deg_f(x) \geqslant 2$. 
The set of critical points of $f$ is denoted by $\crit{f}$. 
A point $y\in S^2$ is a \emph{postcritical point} of $f$ if $y = f^n(x)$ for some $x \in \crit{f}$ and $n\in \n$. 
The set of postcritical points of $f$ is denoted by $\post{f}$. 
If $\card{\post{f}}<+\infty$, then $f$ is said to be \emph{postcritically-finite}.

\begin{definition}[Thurston maps]
	A Thurston map is a branched covering map $f \colon S^2 \rightarrow S^2$ with $\deg f \geqslant 2$ and $\card{\post{f}}< +\infty$.
\end{definition}

We can now define expanding Thurston maps.

\begin{definition}[Expanding Thurston maps]     \label{def:expanding_Thurston_maps}
	A Thurston map $f \colon S^2 \rightarrow S^2$ is called \emph{expanding} if there exists a metric $d$ on $S^2$ that induces the standard topology on $S^2$ and a Jordan curve $\mathcal{C} \subseteq S^2$ containing $\post{f}$ such that
	\begin{equation*}    \label{eq:definition of expansion}
		\lim_{n \to +\infty} \sup \bigl\{ \diamn{d}{X} \describe X \text{ is a connected component of } f^{-n}\bigl(S^2 \smallsetminus \mathcal{C} \bigr)  \bigr\} = 0.
	\end{equation*}
\end{definition}

For an expanding Thurston map $f$, we can fix a particular metric $d_v$ on $S^2$ called a \emph{visual metric for $f$}. 
Such a metric induces the standard topology on $S^2$ (\cite[Proposition~8.3]{bonk2017expanding}). 
For the existence of such a metric, see \cite[Chapter~8]{bonk2017expanding}. 
For a visual metric $d_v$ for $f$, there exists a unique constant $\Lambda > 1$ called the \emph{expansion factor} of $d_v$ (see \cite[Chapter~8]{bonk2017expanding} for more details).

We summarize the existence and the uniqueness of equilibrium states for expanding Thurston maps in the following theorem, which is part of \cite[Theorem~1.1]{li2018equilibrium}.

\begin{theorem}[Li \cite{li2018equilibrium}]     \label{thm:properties of equilibrium state}
	Let $f \colon S^2 \rightarrow S^2$ be an expanding Thurston map, and $d_v$ be a visual metric on $S^2$ for $f$. Assume that $\phi\in C^{0, \holderexp}\bigl(S^2,d_v\bigr)$ is a real-valued \holder continuous function with an exponent $\holderexp \in (0,1]$. 
	Then there exists a unique equilibrium state $\mu_{\phi}$ for $f$ and $\phi$.
\end{theorem}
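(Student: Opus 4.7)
The plan is to adapt the classical Ruelle (transfer operator) method to the setting of expanding Thurston maps, following the approach developed in \cite{li2018equilibrium}. First, fix a Jordan curve $\mathcal{C} \subseteq S^2$ with $\post{f} \subseteq \mathcal{C}$ (such a curve exists by Definition~\ref{def:expanding_Thurston_maps}), and work with the associated cell decompositions of $S^2$ from \cite{bonk2017expanding}: closures of connected components of $f^{-n}\bigl(S^2 \smallsetminus \mathcal{C}\bigr)$ are called $n$-tiles, and by the expansion condition combined with properties of the visual metric, their diameters in $d_v$ decay like $\Lambda^{-n}$, where $\Lambda > 1$ is the expansion factor of $d_v$. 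Combined with the $\alpha$-H\"older continuity of $\phi$, this yields the fundamental bounded distortion estimate: there exists $C_0 > 0$ such that $\abs{S_n\phi(x) - S_n\phi(y)} \leqslant C_0$ whenever $x$ and $y$ lie in a common $n$-tile.

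Next, introduce the Ruelle operator $\mathcal{L}_\phi \colon C(S^2) \to C(S^2)$ defined by
\begin{equation*}
    \mathcal{L}_\phi(u)(y) = \sum_{x \in f^{-1}(y)} \deg_f(x)\, u(x)\, e^{\phi(x)}.
\end{equation*}
The distortion estimate implies that $\mathcal{L}_\phi^n \mathbbm{1}$ is uniformly comparable to $e^{nP(f,\phi)}$ and that $\mathcal{L}_\phi$ acts nicely on a suitable cone of H\"older-like functions. A Schauder--Tychonoff fixed-point argument applied to the normalized dual operator on $\mathcal{P}(S^2)$ produces an eigenmeasure $m_\phi$ satisfying $\mathcal{L}_\phi^* m_\phi = e^{P(f,\phi)} m_\phi$, and a compactness argument extracts from $e^{-nP(f,\phi)} \mathcal{L}_\phi^n \mathbbm{1}$ (or its Ces\`aro averages) a positive continuous eigenfunction $u_\phi$ with $\mathcal{L}_\phi u_\phi = e^{P(f,\phi)} u_\phi$, normalized so that $\int\! u_\phi \, \mathrm{d} m_\phi = 1$. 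The candidate equilibrium state is $\mu_\phi \coloneqq u_\phi \, m_\phi$, which is $f$-invariant by the conformality of $m_\phi$.

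To verify that $\mu_\phi$ attains the supremum in the variational principle, establish the Gibbs property: there exists $C \geqslant 1$ such that for any $n \in \mathbb{N}$, any $n$-tile $X^n$, and any $x \in X^n$,
\begin{equation*}
    C^{-1} \leqslant \mu_\phi(X^n) \exp(nP(f,\phi) - S_n\phi(x)) \leqslant C.
\end{equation*}
Summing over the partition by $n$-tiles and taking logarithms yields $h_{\mu_\phi}(f) + \int\! \phi \, \mathrm{d}\mu_\phi = P(f,\phi)$. For uniqueness, show that $\mathcal{L}_\phi$ acts with a spectral gap on an appropriate H\"older cone (via a Lasota--Yorke-type inequality) to deduce ergodicity of $\mu_\phi$; combined with Theorem~\ref{thm:description of Jacobian} applied to the Jacobian prescribed by conformality of $m_\phi$, any other equilibrium state must share the same Jacobian and hence coincide with $\mu_\phi$.

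The principal obstacle is bounded distortion near critical orbits: expanding Thurston maps may possess periodic critical points, so the local degrees $\deg_{f^n}(x)$ can be unbounded on iterated preimages and classical ball-based arguments for inverse branches fail. The tile-based decomposition from \cite{bonk2017expanding} sidesteps this issue, since $n$-tiles are sets on which the combinatorics of $f^n$ are coherently controlled even when critical points lie on their boundaries, which is precisely what makes the distortion estimate above feasible uniformly in $n$. This combinatorial substitute for the missing uniform expansion is the key innovation relative to the classical rational-map setting.
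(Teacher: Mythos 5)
This theorem is not proved in the paper at all: it is imported verbatim as part of \cite[Theorem~1.1]{li2018equilibrium}, so there is no internal proof to compare against. Your outline is essentially a faithful summary of the strategy of that cited reference (tile decompositions with respect to a Jordan curve containing $\post{f}$, the $\Lambda^{-n}$ decay of tile diameters in the visual metric yielding bounded distortion of Birkhoff sums, the Ruelle operator weighted by local degrees, eigenmeasure and eigenfunction, the Gibbs property, and uniqueness via the Jacobian). The one place where your sketch is materially thinner than the actual argument is uniqueness: ``any other equilibrium state must share the same Jacobian and hence coincide with $\mu_\phi$'' is not a valid inference by itself --- the cited proof needs, in addition, ergodicity of $\mu_\phi$ and an absolute-continuity comparison of an arbitrary equilibrium state with the eigenmeasure $m_\phi$ (equivalently, uniqueness of the eigenmeasure of $\mathcal{L}_\phi^*$), together with the fact that equilibrium states assign no mass to $\post{f}$ so that the Jacobian/Rokhlin machinery applies.
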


The main tool used in \cite{li2018equilibrium} to develop the thermodynamic formalism for expanding Thurston maps is the Ruelle operator.
We recall the definition of the Ruelle operator below and refer the reader to \cite[Chapter~3.3]{li2017ergodic} for a detailed discussion.

Let $f \colon S^{2} \rightarrow S^2$ be an expanding Thurston map and $\phi\in C\bigl(S^{2}\bigr)$ be a real-valued continuous function.
The \emph{Ruelle operator} $\ruelleopt[\phi]$ (associated with $f$ and $\phi$) acting on $C\bigl(S^{2}\bigr)$ is given by
\begin{equation}    \label{eq:def:Ruelle operator}
	\ruelleopt[\phi](u)(x)  \=  \sum_{y \in f^{-1}(x)} \deg_{f}(y) u(y) \myexp{\phi(y)}
\end{equation}
for each $u \in C\bigl(S^{2}\bigr)$.
Note that $\ruelleopt[\phi]$ is a well-defined and continuous operator on $C\bigl(S^{2} \bigr)$.

Recall that the measure-theoretic entropy function of a continuous map $T \colon X \rightarrow X$ defined on a compact metrizable topological space $X$ is the function $\mu \mapsto h_{\mu}(T)$ defined on the space $\cM(X,T)$ equipped with the weak$^*$ topology.

The following result regarding the upper semicontinuity of the measure-theoretic entropy function is established in \cite[Theorem~1.1]{shi2024entropy}, extending \cite[Corollary~1.3]{li2015weak}.

\begin{theorem}[Li \& Shi \cite{shi2024entropy}]    \label{thm:upper semicontinuous iff no periodic critical points}
	Let $f \colon S^2 \rightarrow S^2$ be an expanding Thurston map. 
	Then the measure-theoretic entropy function of $f$ is upper semicontinuous if and only if $f$ has no periodic critical points.
\end{theorem}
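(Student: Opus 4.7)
The plan is to handle the two implications separately; the harder direction is ``no periodic critical points $\Rightarrow$ upper semicontinuity'' and will proceed via Misiurewicz's theorem (if the tail entropy $h^{*}(f)$ vanishes, then the measure-theoretic entropy function is upper semicontinuous). The easier direction ``periodic critical point exists $\Rightarrow$ failure of upper semicontinuity'' will be shown by constructing an explicit sequence of invariant measures accumulating on a periodic orbit measure whose entropies fail to converge down to zero.

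For the direction $(\Leftarrow)$, fix a Jordan curve $\mathcal{C} \subseteq S^2$ containing $\post{f}$ so that the associated $n$-tiles $\mathbf{X}^n(f,\mathcal{C})$ form a refining sequence of measurable partitions of $S^2$ (modulo the cell skeleton). Equip $S^2$ with a visual metric $d_v$ of expansion factor $\Lambda>1$; then $n$-tiles have diameters comparable to $\Lambda^{-n}$. The tail entropy $h^{*}(f)$ can be estimated from the asymptotic logarithmic growth of the number of $(n+k)$-tiles contained in a single $n$-tile that meet a common orbit. Under the hypothesis that no critical point is periodic, the postcritically-finite condition forces each critical orbit to be strictly pre-periodic, so every periodic point of $f$ has local degree $1$, and there exists a finite $N_0$ after which no iterated image of a critical point is itself critical. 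Using this, one shows that the nested flowers $\overline{W}^{n}(v)$ at a vertex $v$ shrink in bounded geometry uniformly away from the critical set, i.e., the combinatorial fan-out across a single $n$-tile stays bounded. The key estimate I would prove is that for every $\varepsilon>0$ there exist $n_0,\,C$ such that for all $n \geq n_0$, the number of $(n+k)$-tiles intersecting a given $n$-tile is at most $Ck^{0}$ (or at least sub-exponential), uniformly in $k$; this is the place where the absence of periodic critical points is indispensable, since the presence of a critical periodic cycle would produce a sustained multiplicative factor (the local degree) at every period. This sub-exponential growth translates via the tile-partition characterization of tail entropy into $h^{*}(f)=0$, and Misiurewicz's theorem then yields upper semicontinuity of $\mu\mapsto h_{\mu}(f)$ on $\cM(S^{2},f)$.

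For the direction $(\Rightarrow)$, suppose $p$ is a periodic critical point of period $n$, and set $g\coloneqq f^n$. Since entropy scales as $h_{\mu}(g)=n\,h_{\mu}(f)$ and weak$^{*}$ convergence is preserved, it suffices to produce a failure of upper semicontinuity for $g$ at $\delta_p$. Write $d\coloneqq \deg_{g}(p)\geq 2$. The strategy is to construct a shrinking sequence of $g$-invariant compact sets $K_m$ containing $p$ on which $g$ carries a full shift on $d$ symbols (or at least a subshift of finite type with topological entropy bounded below by some positive constant $\log d$ minus an error tending to zero). Concretely, using the tile structure for $g$ one selects, for each large $m$, the $m$-th subtile flower around $p$ and then exploits the fact that, because $\deg_{g}(p)=d$, there are $d$ disjoint $m$-tiles in $g^{-1}(U_m)$ contained in $U_m$ where $U_m$ is an appropriate neighborhood of $p$; iterating this selection yields a Cantor-like $g$-invariant set $K_m\subseteq U_m$ on which $g$ is conjugate to the full one-sided shift on $d$ symbols. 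Let $\mu_m$ be any ergodic invariant measure supported on $K_m$ with $h_{\mu_m}(g)\geq \tfrac{1}{2}\log d$; then $\supp \mu_m \subseteq U_m$ forces $\mu_m \weakconverge \delta_p$, while $h_{\delta_p}(g)=0 < \tfrac{1}{2}\log d \leq \liminf_m h_{\mu_m}(g)$, contradicting upper semicontinuity.

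The main obstacle is the sub-exponential growth estimate for tiles in the first direction: controlling how the flowers $\overline{W}^{n}(v)$ at pre-periodic critical orbits interact combinatorially with the tile hierarchy requires a careful inductive argument that trades the iteration count $k$ against the refinement depth $n$, and the exact counting becomes delicate at vertices close to critical values. In a full write-up I would isolate this as a lemma about uniform bounded-geometry of iterated flowers outside a neighborhood of $\post{f}$, borrowing techniques from the tile/flower analysis in \cite{bonk2017expanding} and the weak-expansion estimates of \cite{li2015weak}; the construction of $K_m$ in the converse direction, by contrast, is essentially formal once one extracts $d$ disjoint inverse branches from the local degree at $p$.
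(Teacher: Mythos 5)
First, a point of context: the paper does not prove this statement at all --- it is imported verbatim as \cite[Theorem~1.1]{shi2024entropy}, so there is no internal proof to compare against. Judged on its own merits, your proposal contains a genuine error in the direction you call $(\Rightarrow)$ (periodic critical point $\Rightarrow$ failure of upper semicontinuity), which is precisely the direction that goes beyond \cite{li2015weak}. You propose to build a $g$-invariant Cantor set $K_m \subseteq U_m$ inside a shrinking neighborhood $U_m$ of the fixed critical point $p$ of $g = f^n$, carrying entropy $\geq \tfrac12\log d$ with $d = \deg_g(p)$. No such set exists. Because $g$ is expanding, the component of $g^{-1}(U_m)$ containing $p$ is a strictly smaller neighborhood of $p$ (it is a single connected set on which $g$ is a $d$-fold cover branched at $p$, not $d$ disjoint inverse branches mapping $U_m$ into itself), and all other components of $g^{-1}(U_m)$ lie near the other preimages of $p$, hence outside $U_m$. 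In tile language, for $m$ large one has $g\bigl(\cflower{m+1}{p}\bigr) = \cflower{m}{p}$ and $x \notin \cflower{m+1}{p}$ forces $g(x) \notin \cflower{m}{p}$, so the maximal $g$-invariant subset of $\cflower{m}{p}$ is $\bigcap_k \cflower{m+k}{p} = \{p\}$. The only invariant measure supported in a small neighborhood of $p$ is $\delta_p$, and your $\mu_m$ cannot exist as described. The correct mechanism must use measures that merely concentrate most (not all) of their mass near $p$: orbit segments starting in $\cflower{n+m}{p}$ that spend $n$ steps climbing down the flower hierarchy (acquiring $\approx \log d$ entropy per step from the $2d^{n+m}$ petals), then leave and are routed back in a uniformly bounded number of steps using surjectivity/mixing of $g$. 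The resulting invariant measures converge weak$^*$ to $\delta_p$ while their entropies stay bounded below, and it is this return mechanism --- entirely absent from your sketch --- that carries the real content of the implication.

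Your direction $(\Leftarrow)$ follows the standard route (no periodic critical points $\Rightarrow$ $h^*(f)=0$ $\Rightarrow$ upper semicontinuity by Misiurewicz), which is essentially \cite[Theorem~1.2 and Corollary~1.3]{li2015weak} and is sound in outline. However, the key estimate as you state it --- that the number of $(n+k)$-tiles intersecting a given $n$-tile is bounded or sub-exponential in $k$ --- is false: an $n$-tile maps homeomorphically onto a $0$-tile under $f^n$ and therefore contains on the order of $(\deg f)^k$ tiles of level $n+k$. The quantity that must be controlled is instead the number of level-$(n+k)$ tiles that a single length-$k$ orbit segment fails to distinguish, which reduces to the local degrees $\deg_{f^k}(x)$; absence of periodic critical points gives the uniform bound $\deg_{f^k}(x) \leq (\deg f)^{\operatorname{card}\crit{f}}$ for all $k$, and that is what makes the tail entropy vanish. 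You would need to reformulate the lemma accordingly before the first direction closes.
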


\subsection{Misiurewicz--Thurston rational maps}
\label{subsec:Misiurewicz--Thurston rational maps}

We apply Theorem~\ref{theorem A} to show the computability of equilibrium states for Misiurewicz--Thurston rational maps.

A \defn{Misiurewicz--Thurston rational map} is a postcritically-finite rational map on the Riemann sphere $\ccx$ without periodic critical points.
We remark that a postcritically-finite rational map is an expanding Thurston map (in the sense of Definition~\ref{def:expanding_Thurston_maps}) if and only if it has no periodic critical point (see \cite[Proposition~2.3]{bonk2017expanding}). To discuss the computability of rational maps, we give a computable structure of $\ccx$ as follows.

\begin{prop}		\label{prop:hatCiscompactmetricspace}
    Let $\cS\bigl(\ccx\bigr) \= \{s_n\}_{n\in\N}$ be an enumeration of $\Q\bigl(\ccx\bigr)\=\{a+b\mathbf{i}:a,\,b\in\Q\}$ such that there exists an algorithm that for each $n\in\N$, upon input $n$, outputs $p_1,\,q_1,\,r_1,\,p_2,\,q_2,\,r_2\in\N$ with $s_n=(-1)^{r_1} \frac{p_1}{q_1}+(-1)^{r_2}\frac{p_2}{q_2}\mathbf{i}$. Then $\bigl(\widehat{\cx},\,\sigma,\,\cS\bigl(\widehat{\cx}\bigr)\bigr)$ is a computable metric space in which $\widehat{\cx}$ is recursively compact, where $\sigma$ is the chordal metric on $\widehat{\cx}$.  
\end{prop}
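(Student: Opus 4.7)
The plan is to check the three defining conditions of a computable metric space (Definition~\ref{def:computablemetricspace}) for $\bigl(\ccx, \sigma, \cS\bigl(\ccx\bigr)\bigr)$, and then deduce the recursive compactness of $\ccx$ from its recursive precompactness via Proposition~\ref{prop:relationbetweencompactandprecompact}.

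The first two conditions are straightforward. Separability of $\bigl(\ccx, \sigma\bigr)$ and $\sigma$-density of $\cS\bigl(\ccx\bigr) = \Q\bigl(\ccx\bigr)$ in $\ccx$ follow from Euclidean density of $\Q\bigl(\ccx\bigr)$ in $\cx$ (the chordal and Euclidean metrics induce the same topology on $\cx$), together with the fact that $\sigma(n, \infty) = 2/\sqrt{1 + n^2} \to 0$ as $n \to +\infty$, so that $\infty$ is also $\sigma$-approximable by elements of $\Q\bigl(\ccx\bigr)$. For the uniform computability of $\{\sigma(s_m, s_n)\}_{(m,n) \in \N^2}$, I would use the algorithm provided in the hypothesis to extract rational coordinates $s_m = p_m + q_m \mathbf{i}$ and $s_n = p_n + q_n \mathbf{i}$ from $(m, n)$, and then evaluate
\begin{equation*}
\sigma(s_m, s_n) = \frac{2\sqrt{(p_m - p_n)^2 + (q_m - q_n)^2}}{\sqrt{1 + p_m^2 + q_m^2}\,\sqrt{1 + p_n^2 + q_n^2}},
\end{equation*}
which is a sequence of uniformly computable real numbers by standard closure of uniformly computable reals under arithmetic operations and under $\sqrt{\,\cdot\,}$.

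For recursive compactness, since $\bigl(\ccx, \sigma\bigr)$ is compact, hence complete, Proposition~\ref{prop:relationbetweencompactandprecompact} reduces the problem to verifying recursive precompactness. Given $n \in \N$, set $R_n \= 2^{n+3}$ and $h_n \= 2^{-n-3}$, and consider the finite rational grid
\begin{equation*}
T_n \= \bigl\{a h_n + b h_n \mathbf{i} \describe a, b \in \Z, \, |a| h_n \leq R_n + h_n, \, |b| h_n \leq R_n + h_n \bigr\} \subseteq \Q\bigl(\ccx\bigr).
\end{equation*}
Using the bound $\sigma(z, w) \leq 2 |z - w|$ on $\cx$ together with $\sigma(z, \infty) = 2/\sqrt{1 + |z|^2}$, a routine case split shows that $T_n$ is a $\sigma$-$2^{-n}$-net for $\ccx$: for $z \in \cx$ with $|z| \leq R_n$, the nearest grid point lies within Euclidean distance $h_n/\sqrt{2}$, hence within $\sigma$-distance $\sqrt{2}\, h_n < 2^{-n}$; for $z \in \ccx$ with $|z| > R_n$, one has $\sigma(z, R_n) \leq \sigma(z, \infty) + \sigma(R_n, \infty) \leq 4/\sqrt{1 + R_n^2} < 2^{-n}$. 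To output the required indices in $\cS\bigl(\ccx\bigr)$, one algorithmically locates each of the finitely many elements of $T_n$ by scanning $s_1, s_2, \dots$ until every element of $T_n$ has appeared.

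The task is essentially a routine verification; the main care needed is to ensure that the entire construction is \emph{uniform} in $n$. This is automatic, since the parameters $R_n$, $h_n$, and the grid size are themselves recursive functions of $n$, and the scanning procedure terminates for every element of $T_n$ because $T_n \subseteq \Q\bigl(\ccx\bigr) = \{s_n : n \in \N\}$.
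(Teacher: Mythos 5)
Your proof is correct and takes exactly the route the paper intends: the paper dismisses this as an immediate consequence of Definitions~\ref{def:computablemetricspace} and~\ref{def:recursivelyprecompact} together with Proposition~\ref{prop:relationbetweencompactandprecompact}, and your argument simply carries out that routine verification in full (the density check, the explicit chordal-distance formula for ideal points, and the rational grid witnessing recursive precompactness all check out).
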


The above result immediately follows from Definitions~\ref{def:computablemetricspace},~\ref{def:recursivelyprecompact}, and Proposition~\ref{prop:relationbetweencompactandprecompact}. We call such an enumeration given in Proposition~\ref{prop:hatCiscompactmetricspace} an \emph{effective enumeration of $\Q\bigl(\ccx\bigr)$}.

The following result states that there exists an algorithm that computes all zeros of a computable polynomial (cf.\ \cite[Proposition~3.2]{braverman2009computability}; see also \cite[Theorem~3.38]{Qiandu2025Lecture}).
 
\begin{lemma}		\label{lem:rootfindingalgorithm}
	Let $\cS\bigl(\ccx\bigr)=\{s_n\}_{n\in\N}$ be an effective enumeration of $\Q\bigl(\ccx\bigr) \= \{a+b\mathbf{i} : a,\,b\in\Q\}$. Then there exists an algorithm that satisfies the following property:
	
	For all $k,\,l\in\N$, and complex polynomial $p$ of degree $k$, this algorithm outputs a sequence $\{q_i\}_{i=1}^k$ of integers such that there exists an enumeration $\{x_i\}_{i=1}^k$ of all the zeros of $p$ (counting with multiplicity) satisfying that $\sigma\bigl(s_{q_i},x_i\bigr)<2^{-l}$ for each integer $1\leq i\leq k$, after we input the following data into this algorithm:
	\begin{enumerate}
		\smallskip

        \item the integers $k$ and $l$,

        \smallskip
		
		\item an algorithm computing all the coefficients of the polynomial $p$.
	\end{enumerate}
\end{lemma}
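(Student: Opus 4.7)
The plan is to adapt the classical Weyl exclusion method, certified by the argument principle, in order to effectively locate all zeros of $p$ to arbitrary precision. Since $\sigma(z,w)\leqslant 2\abs{z-w}$ for all $z,w\in\cx$, it suffices to produce rational approximations within $2^{-l-1}$ in the Euclidean metric, after which the indices $\{q_i\}_{i=1}^k$ of the nearest ideal points of $\cS\bigl(\ccx\bigr)$ can be read off directly from the effective enumeration of $\Q\bigl(\ccx\bigr)$.

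First I would use the coefficient oracle to compute a rational $R>0$ such that every zero of $p=\sum_{j=0}^{k} a_j z^j$ (with $a_k\ne 0$) lies in the closed square $Q_0\=[-R,R]^2\subseteq\cx$; the classical Cauchy bound $R > 1+\max_{0\leqslant j<k}\abs{a_j}/\abs{a_k}$ becomes effective once the coefficients are known to sufficient precision. The core of the algorithm is then a recursive subdivision: I would maintain a queue of axis-aligned squares with rational vertices, initialized with $Q_0$. At each step, I pop a square $Q$ of side length $\delta$ and attempt to compute
\begin{equation*}
    N(Q) \= \frac{1}{2\pi\mathbf{i}} \oint_{\partial Q} \! \frac{p'(z)}{p(z)} \,\mathrm{d}z,
\end{equation*}
which, by the argument principle, equals the number of zeros of $p$ inside $Q$ counted with multiplicity, provided $p$ does not vanish on $\partial Q$. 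Since $N(Q)$ is a nonnegative integer, any rational approximation of it with error strictly less than $1/2$ determines it exactly. Such an approximation is effective once one has a rational lower bound for $\min_{z\in\partial Q}\abs{p(z)}$, which can in turn be certified by sampling $\abs{p}$ on a sufficiently fine rational mesh of $\partial Q$ together with a Lipschitz estimate derived from the coefficient bound. If $N(Q)=0$, the square is discarded; if $N(Q)\geqslant 1$ and $\delta<2^{-l-2}$, the center of $Q$ is recorded with multiplicity $N(Q)$; otherwise $Q$ is subdivided into four congruent subsquares, which are re-enqueued.

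The hardest part will be dealing with the fact that this procedure breaks down whenever $p$ has a zero on $\partial Q$, and no algorithm based solely on a finite-precision coefficient oracle can certify the absence of such a zero. To circumvent this, I would dovetail the subdivision over a countable family of grids obtained by translating $Q_0$ by small rational vectors: as soon as a boundary integral fails to stabilize within an allotted time budget, the corresponding shift is abandoned and another is tried. Since $p$ has at most $k$ zeros and these are isolated, only finitely many shifts place a zero on a grid line at any finite subdivision depth, so almost every shift succeeds. Once a successful shift has been identified, the recursion terminates after $O(l+\log R)$ levels of subdivision---each surviving square being refined only while its side length exceeds $2^{-l-2}$---and the recorded centers produce $k$ rational points, each within $2^{-l-1}$ of a distinct zero of $p$ (counted with multiplicity). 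Transcribing them into the nearest points of $\cS\bigl(\ccx\bigr)$ yields the enumeration $\{q_i\}_{i=1}^k$ required by the lemma, and the entire construction is uniform in $k$, $l$, and the coefficient oracle for $p$.
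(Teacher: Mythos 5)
The paper does not actually prove this lemma; it is quoted from \cite[Proposition~3.2]{braverman2009computability}, so your argument is necessarily an independent one. What you propose is the classical Weyl exclusion algorithm certified by the argument principle, and it is correct in outline: the reduction $\sigma(z,w)\leqslant 2\abs{z-w}$ is right, the effective Cauchy bound localizes all $k$ roots, the integral $N(Q)$ is an integer and hence exactly computable from any approximation with error below $1/2$ once a positive lower bound for $\min_{\partial Q}\abs{p}$ has been certified, and the additivity of $N$ over the final subdivision guarantees that the recorded multiplicities sum to $k$. This is a perfectly legitimate, self-contained route to the cited result.

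Two points deserve to be made explicit rather than glossed. First, the "effective Cauchy bound" requires a certified rational lower bound on $\abs{a_k}$, which no finite-precision reading of the coefficient oracle provides by itself; it is obtained by an unbounded search that terminates only because the exact degree $k$ is part of the input (so $a_k\ne 0$ is promised). You should say this, since it is the one place where the hypothesis "degree $k$" is genuinely used. Second, your claim that "only finitely many shifts place a zero on a grid line at any finite subdivision depth" is imprecise as stated: in the two-dimensional space of shifts the bad set is a finite union of lines, not a finite set. The fix is what you implicitly intend: restrict to a one-parameter family of shifts (say $(j\epsilon,j\epsilon)$ for $j=0,1,\dots$), for which the bad set at the fixed terminal depth $D=O(l+\log R)$ really is finite, so some candidate succeeds. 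You should also record why the dovetailing is sound and not merely terminating: a shift for which some zero lies on a grid line can never certify a positive lower bound for $\abs{p}$ on the offending boundary, so it never halts with an output; hence whichever branch halts first is automatically correct. With these two clarifications the proof is complete.
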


In this subsection, we investigate computable Misiurewicz--Thurston rational maps, i.e., Misiurewicz--Thurston rational maps $f=h_1/h_2$ for which the coefficients of the polynomials $h_1,\,h_2$ are all computable. Here $h_1,\,h_2$ are two polynomials without common roots. We now design an algorithm to compute the Ruelle operator $\cL_{\potential}$ (recall \eqref{eq:def:Ruelle operator}).

\begin{prop}\label{prop:computability of Ruelle operators}
	There exists an algorithm that satisfies the following property:
    
    For all $n,\,m\in\N$, $\phi,\,u\in C \bigl( \ccx,\sigma \bigr)$, a Misiurewicz--Thurston rational map $f=h_1/h_2$, and $x\in\ccx\smallsetminus\bigl\{f^i(\infty):1\leq i\leq m\bigr\}$, the algorithm outputs a rational $2^{-n}$-approximation of $\cL^m_{\potential}(u)(x)$, given the following input data:
	\begin{enumerate}
        \smallskip
		
		\item an algorithm computing the function $\potential:\ccx\rightarrow\R$,

        \smallskip

        \item an algorithm computing the function $u:\ccx\rightarrow\R$,

        \smallskip

        \item an algorithm computing all the coefficients of the polynomials $h_1$ and $h_2$.
		
		\smallskip
		
		\item an oracle $\tau:\N\rightarrow\N$ for the point $x$,
		
		\smallskip
		
		\item the integers $n$ and $m$.
	\end{enumerate}
\end{prop}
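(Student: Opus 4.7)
The plan is to express $\cL_\phi^m(u)(x)$ as a finite sum over the iterated preimages of $x$ and then approximate each summand using the given algorithms. Unwinding \eqref{eq:def:Ruelle operator} inductively gives
\begin{equation*}
    \cL_\phi^m(u)(x) = \sum_{y \in f^{-m}(x)} \deg_{f^m}(y) \, u(y) \, \exp(S_m \phi(y)),
\end{equation*}
a sum of exactly $(\deg f)^m$ terms when preimages are counted with multiplicity. The hypothesis $x \notin \{f^i(\infty) : 1 \leq i \leq m\}$ ensures that no element of $f^{-k}(x)$ equals $\infty$ for any $1 \leq k \leq m$, so every iterated preimage is a finite complex number and the polynomial approach below is valid.

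To compute the elements of $f^{-1}(w)$ for a finite $w \in \cx$ under $f = h_1/h_2$, I would form the polynomial
\begin{equation*}
    P_w(z) \coloneqq h_1(z) - w \cdot h_2(z)
\end{equation*}
of degree $\deg f$ (the exclusion hypothesis prevents a degree drop caused by $w = f(\infty)$ at every stage of the iteration). Its coefficients are effectively computable from any algorithm for $w$ and from the coefficients of $h_1, h_2$ via computable arithmetic. By Lemma~\ref{lem:rootfindingalgorithm}, for any desired precision I obtain rational approximations of all $\deg f$ roots of $P_w$, listed with multiplicity; these are exactly the elements of $f^{-1}(w)$, each appearing $\deg_f(y)$ times. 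Iterating this procedure $m$ times, starting from the supplied oracle for $x$, yields rational approximations of the $(\deg f)^m$ elements of $f^{-m}(x)$ (with multiplicity) to any precision; the approximations produced at level $k$ serve as oracles (in the sense of Definition~\ref{def:oracle}) for the corresponding preimages, to be consumed by the polynomial at level $k+1$. For each approximate preimage $\widetilde{y}$, I then evaluate $\phi(\widetilde{y})$ and $u(\widetilde{y})$ using the supplied algorithms for $\phi$ and $u$, apply the computability of $\exp$ (Example~\ref{exp:computablefunctionexample}) together with computable arithmetic on $\R$, and finally sum.

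The main obstacle is the coordination of precisions across the $m$ nested invocations of Lemma~\ref{lem:rootfindingalgorithm}. Given target precision $2^{-n}$ for the final output, I would split it among the $(\deg f)^m$ summands and further distribute it to the factors $u(\widetilde{y})$ and $\exp(S_m \phi(\widetilde{y}))$; the latter is uniformly bounded by a computable $M \coloneqq \exp\bigl(m \cdot \sup_{\ccx}\!|\phi|\bigr)$, obtained from the algorithm for $\phi$ via Proposition~\ref{prop:hatCiscompactmetricspace} and Proposition~\ref{thecomplementofrecursivelycompactset}~(iv). The computability of $\phi$ and $u$ translates each required output precision into a precision requirement on their input preimage, which is then passed to Lemma~\ref{lem:rootfindingalgorithm}. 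Because the algorithm computing the coefficients of $P_w$ in turn queries its input $w$ at some precision determined by the root-finder, these precision requirements propagate backward through the $m$ iterations; the classical continuous dependence of polynomial roots on coefficients (well-behaved away from the degree-dropping locus $w = f(\infty)$, ruled out by our hypothesis) makes this propagation effective. Assembling these pieces yields a single algorithm meeting the specification.
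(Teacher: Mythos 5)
Your proposal is correct and follows essentially the same route as the paper's proof: reduce to computing preimages via the polynomial $h_1(z)-w\,h_2(z)$ (whose degree does not drop since $w\neq f(\infty)$ at every level, thanks to the exclusion hypothesis), apply Lemma~\ref{lem:rootfindingalgorithm} to locate the roots with multiplicity, and evaluate $u$, $\phi$, and $\exp$ at the approximate roots. The only organizational difference is that the paper iterates the $m=1$ algorithm recursively while you unroll the iteration into a single sum over $f^{-m}(x)$ with explicit precision bookkeeping; this is a presentational, not mathematical, distinction.
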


\begin{proof}
	Since we can compute the function $\cL_{\potential}^m(u)$ by iterating the operator $\cL_{\potential}$ on the function $u$, by (\ref{eq:def:Ruelle operator}), it suffices to establish the algorithm in the case where $m\=1$.
    
    First, we consider the preimage of $x\in\ccx\smallsetminus\{f(\infty)\}$ with respect to $f$ and define the polynomial $g(z)\=h_1(z)-xh_2(z)$ for each $z\in\ccx$. Since $x\neq f(\infty)$, we have $\deg(g)=\max\{\deg(h_1),\,\deg(h_2)\}=\deg(f)$ and can use data~(iii)~and~(iv) to compute all the coefficients of $g$. By Lemma~\ref{lem:rootfindingalgorithm}, we can compute all the zeros $\{y_i\}_{i=1}^k$ of $g$ (counting with multiplicity). 
    Since $x \neq f(\infty)$, the zeros $\{y_i\}_{i=1}^k$ of $g$ correspond precisely to the preimage of $x$ with respect to $f$ (counting with multiplicity). Thus, by (\ref{eq:def:Ruelle operator}), we obtain that
	\begin{equation*}
		\cL_{\potential}(u)(x)
        =\sum_{y\in f^{-1}(x)}\deg_{f}(y)u(y)\exp(\potential(y))=\sum_{i=1}^ku(y_i)\exp(\phi(y_i)).
	\end{equation*}
    Note that the exponential function $\exp\colon\R\mapping\R$ is computable by Example~\ref{exp:computablefunctionexample}. Then we can use data~(i)~and~(ii) to compute the value of $\cL_{\potential}(u)(x)$.
\end{proof}

We now prove the computability of the topological pressure $P(f,\potential)$.

\begin{prop}\label{prop:computabilityoftopologypressure}
	Let $f\colon\ccx\rightarrow\ccx$ be a computable Misiurewicz--Thurston rational map, $d_v$ be a visual metric on $\ccx$ for $f$, and $\alpha\in (0,1]$. Then there exists an algorithm that for all $n\in\N$ and $\potential\in C^{0,\holderexp}\bigl( \ccx, d_v\bigr)$, outputs a rational $2^{-n}$-approximation of $P(f,\potential)$, after inputting the following data:
    \begin{enumerate}
		\smallskip

		\item an algorithm $\Phi$ computing the function $\potential$,

		\smallskip

        \item a rational number $R$ with $\abs{\potential}_{\alpha,d_{v}}\leq R$,

        \smallskip

		\item the integer $n$.
    \end{enumerate}
\end{prop}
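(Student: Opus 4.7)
The plan is to use the classical identity $P(f,\phi) = \lim_{m\to\infty}\tfrac{1}{m}\log \cL_\phi^m(\mathbbm{1})(x)$ together with Proposition~\ref{prop:computability of Ruelle operators}, making the convergence rate effective in the \holder data. Specifically, I would first establish a quantitative Perron--Frobenius estimate of the form
\[
\Absbig{\, \tfrac{1}{m}\log\cL_\phi^m(\mathbbm{1})(x) - P(f,\phi) \,} \leq \frac{C_0}{m}
\qquad\text{for each } x\in\ccx \text{ and each } m\in\N,
\]
where $C_0=C_0(f,d_v,R)$ is a constant that can be extracted computably from the input data $(f,d_v,\alpha,R)$. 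This is a standard consequence of bounded-distortion estimates for iterates of the Ruelle operator applied to the constant function $\mathbbm{1}$ --- the distortion constant $C_1 \parentheses{\diamn{d_v}{\ccx}}^\alpha$ of the form used throughout the paper is of exactly this type --- and the key point is that $C_0$ depends computably on $R$, the expansion factor $\Lambda$ of $d_v$, the degree $\deg f$, and an effective upper bound on $\uniformnorm{\phi}$ (the latter being obtainable from $\abs{\phi(s)} + R \parentheses{\diamn{d_v}{\ccx}}^\alpha$ for any ideal $s\in\cS\bigl(\ccx\bigr)$).

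Granted such an effective estimate, the algorithm proceeds as follows. Given the target precision $2^{-n}$, choose $m=m(n,R)\in\N$ with $C_0/m < 2^{-n-1}$. Next, pick an ideal point $x_m\in\cS\bigl(\ccx\bigr)$ that avoids the finite exceptional set $\{f^i(\infty):1\leq i\leq m\}$; this is possible algorithmically by approximating each $f^i(\infty)$ (using the computability of $f$ and the fact that $f(\infty)$ is determined by the leading coefficients of the numerator and denominator) and enumerating $\cS\bigl(\ccx\bigr)$ until a candidate is found that is provably at positive distance from every $f^i(\infty)$. Once $x_m$ is fixed, Proposition~\ref{prop:computability of Ruelle operators} supplies rational approximations of $\cL_\phi^m(\mathbbm{1})(x_m)$ to arbitrary precision. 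Since $\cL_\phi^m(\mathbbm{1})(x_m) \geq (\deg f)^m \myexp{-m\uniformnorm{\phi}} > 0$, we can apply the computable logarithm (Example~\ref{exp:computablefunctionexample}) with controlled error and divide by $m$, yielding a rational $2^{-n}$-approximation of $P(f,\phi)$.

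The principal obstacle is the first step: extracting an explicit, computable constant $C_0(f,d_v,R)$ from the classical distortion arguments. This requires careful tracking of the effective dependence of each intermediate quantity --- the distortion constant, the visual metric expansion factor $\Lambda$, the diameter $\diamn{d_v}{\ccx}$, and $\uniformnorm{\phi}$ --- on the input data, so that the qualitative convergence of $\tfrac{1}{m}\log \cL_\phi^m(\mathbbm{1})$ to $P(f,\phi)$ becomes a quantitative algorithm rather than a mere existence statement. A secondary, more bookkeeping-level obstacle is to confirm that a visual metric $d_v$ and its expansion factor $\Lambda$ can be handled algorithmically in the chordal computable structure $\bigl(\ccx,\sigma,\cS\bigl(\ccx\bigr)\bigr)$; here one uses known comparability between visual metrics and the chordal metric for Misiurewicz--Thurston rational maps.
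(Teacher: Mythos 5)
Your proposal follows essentially the same route as the paper: evaluate $N^{-1}\log\cL_{\phi}^{N}(\mathbbm{1})(s_m)$ at an ideal point $s_m$ avoiding $\bigl\{f^i(\infty):1\leq i\leq N\bigr\}$ via Proposition~\ref{prop:computability of Ruelle operators}, and control the error by a uniform bound $\Absbig{N^{-1}\log\cL_{\phi}^{N}(\mathbbm{1})(x)-P(f,\phi)}\leq C_0\abs{\phi}_{\alpha,d_v}/N\leq C_0R/N$. The estimate you flag as the principal obstacle is precisely \cite[Lemma~5.15]{li2018equilibrium}, which the paper simply cites; moreover, you do not need $C_0$ to be computable from $(f,d_v,\alpha)$ --- these data are fixed before the quantifier ``there exists an algorithm,'' so the rational constant $C_0$ may be hard-coded, and only its uniformity over $\phi$ (entering through $\abs{\phi}_{\alpha,d_v}\leq R$) is needed; for the same reason your secondary concern about algorithmic access to the visual metric and its expansion factor does not arise.
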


\begin{proof}
    First, we design an algorithm $\cM_C(\Phi,R,n)$ as follows for each given $C\in\Q$.
    We begin with computing $N\in\N$ with $N>2^{n+1}CR$. Let $\cS\bigl(\ccx\bigr)=\{s_n\}_{n\in\N}$ be an effective enumeration of $\Q\bigl(\ccx\bigr)$. Then by computing the distance between ideal points and points in $\bigl\{f^i(\infty):1\leq i\leq N\bigr\}$, we can find $m\in\N$ such that $s_m\in\ccx\smallsetminus\bigl\{f^i(\infty):1\leq i\leq N\bigr\}$. Note that the logarithm function $\log\colon \R^+\mapping\R$ is computable by Example~\ref{exp:computablefunctionexample}. By employing the algorithm $\Phi$ and the algorithm in Proposition~\ref{prop:computability of Ruelle operators}, we can compute and output the value $v$ such that 
    \begin{equation*}
    |v-w|<2^{-n-1},\quad\text{ where }w \=  N^{-1}\log\bigl(\cL_{\potential}^{N}\bigl(\mathbbm{1}_{\ccx}\bigr)(s_m)\bigr).
	\end{equation*}
	
    By \cite[Lemma~5.15]{li2018equilibrium}, there exists $C_0\in\Q$ such that
	\begin{equation}		\label{eq:inequalityforiterationofruelleoperator}
		\abs[\big]{\log \parentheses[\big]{ \normopt^{n}\bigl(\indicator{\ccx}\bigr)(x) }} \leqslant C_0 \abs{\potential}_{\alpha,d_v}\quad\text{ for all }x\in\ccx,\,n\in\N_0,\text{ and }\potential\in C^{0,\alpha}\bigl( \ccx ,\sigma \bigr),
	\end{equation}
    where $\overline{\phi}(x) \= \phi(x)- P(f,\phi)$. 
    
    Finally, we demonstrate that the algorithm $\cM_{C_0}(\Phi,R,n)$ outputs $v$ with $\abs{v-P(f,\potential)}<2^{-n}$.
    Indeed, by the definition of $\cM_{C_0}(\Phi,R,n)$ and (\ref{eq:inequalityforiterationofruelleoperator}), we have
	\begin{equation*}
		\begin{aligned}
			\abs{ w-P(f,\potential) }
            &=\abs[\big]{ N^{-1}\log\bigl(e^{-NP(f,\potential)}\cL_{\potential}^{N}\bigl(\mathbbm{1}_{\ccx}\bigr)(x_0)\bigr)}<N^{-1}\abs[\big]{\log \parentheses[\big]{ \normopt^{N}\bigl(\indicator{\ccx}\bigr)(x_0)}}
            \leq N^{-1}C_0R<2^{-n-1}.
		\end{aligned}
	\end{equation*}
    Hence, by $\abs{v-w}<2^{-n-1}$, this implies that $\abs{v-P(f,\potential)}<2^{-n}$.
\end{proof}

Now we can apply Theorem~\ref{theorem A} to prove Theorem~\ref{Application A}.

\begin{proof}[\bf Proof of Theorem~\ref{Application A}]
    By \cite[Corollary~1, p.~379]{ljubichEntropyPropertiesRational1983}, the measure-theoretic entropy function $\nu\mapsto h_{\nu}(f)$ is upper semicontinuous. 
    By the hypothesis of Theorem~\ref{Application A}, we have $\cE(f,\phi_n)=\{\mu_n\}$ for each $n\in\N$. 
    Hence, to prove Theorem~\ref{Application A}, by Theorem~\ref{theorem A}, it suffices to verify that the sequence $\{\phi_n\}_{n\in\N}$ of functions $\potential_n\colon\ccx\rightarrow\R$ satisfies properties~(i)~and~(ii) in Theorem~\ref{theorem A} in the case where $T_n\=f$ for each $n\in\N$.
	
    We first apply Proposition~\ref{prop:computabilityoftopologypressure} to show the uniform computability of $\{P(f,\potential_n)\}_{n\in\N}$. Since $f$ is a Misiurewicz--Thurston rational map, by \cite[Lemma~18.10]{bonk2017expanding}, the identity map $\id{\ccx}\colon\bigl(\ccx,d_v\bigr)\rightarrow\bigl(\ccx,\sigma\bigr)$ is a quasisymmetric homeomorphism. Note that $\bigl(\ccx,\sigma\bigr)$ is a uniformly perfect space. By \cite[Corollary~11.5]{heinonen2001lectures}, the identity map $\id{\ccx}$ is H\"older continuous on bounded sets. Hence, by the compactness of $\bigl(\ccx,\sigma\bigr)$, there exist two constants $\beta\in(0,1]$ and $C\in\Q^+$ satisfying that
    \begin{equation}		\label{eq:relationbetweensigmametricandvisualmetric}
		\sigma(x,y)\leq Cd_v(x,y)^{\beta}\quad\text{ for all }x,y\in\ccx.
	\end{equation} 
    Thus, for each $n\in\N$, by $\potential_n\in C^{0,\alpha}\bigl(\ccx,\sigma\bigr)$ and $\abs{\potential_n}_{\alpha,\sigma}\leq Q_n$, we obtain that $\potential_n\in C^{0,\alpha\beta}\bigl(\ccx,d_v\bigr)$ and $\abs{\potential_n}_{\alpha\beta,d_v}\leq C^{\alpha}\abs{\potential_n}_{\alpha,\sigma}\leq C^{\alpha}Q_n$. Hence, by Proposition~\ref{prop:computabilityoftopologypressure}, $\{P(f,\potential_n)\}_{n\in\N}$ is a sequence of uniformly computable real numbers, i.e., $\{\potential_n\}_{n\in\N}$ satisfies property~(ii) in Theorem~\ref{theorem A}.
	
    Let $\cS\bigl(\ccx\bigr)=\{s_i\}_{i\in\N}$ be an effective enumeration of $\Q\bigl(\ccx\bigr)$. We define for all $i\in\N$ and $x\in\ccx$, $f_0(x) \=  1$ and $f_i(x) \= \sigma(x,s_i)$. Denote by $D$ the set of rational linear combinations of finitely many functions in $\bigl\{\prod_{j=1}^mf_{i_j} :  m\in\N\text{ and }i_j\in\N_0\text{ for each integer }1\leq j\leq m \bigr\}$. By the Stone--Weierstrass theorem, $D$ is dense in $C\bigl(\ccx \bigr)$. Since $\N^*$ is a recursively enumerable set, there exists an enumeration $\{\psi_k\}_{k\in\N}$ of $D$ such that there exists an algorithm that, for each $k\in\N$, on input $k$, outputs the following expression of $\psi_k$:
	\begin{equation}		\label{eq:expressionforpsij}
		\psi_k=\sum_{t=1}^{l_k}q_{k,t}\prod_{j=1}^{m_{k,t}}f_{i_{j,k,t}}, \quad\text{ where }l_k,\,m_{k,t},\,i_{j,k,t}\in\N_0\text{ and }q_{k,t}\in\Q\text{ for all } j,\,k,\,t\in\N.
	\end{equation}
    Thus, since $\{f_i\}_{i\in\N_0}$ is a sequence of uniformly computable functions, so is the sequence $\{\psi_k\}_{k\in\N}$.
	
	\smallskip
	\emph{Claim.} $\{P(f,\psi_k)\}_{k\in\N}$ is a sequence of uniformly computable real numbers.
	\smallskip
	
	\emph{Proof of the claim.} By the definition of the chordal metric $\sigma$ and the function $f_i$, we have $f_i(x)=\sigma(x,s_i)\leq 2$ for all $i\in\N_0$ and $x\in\ccx$. Thus, since $f_i\in C^{0,1}\bigl(\ccx,\sigma\bigr)$ and $\abs{f_i}_{1,\sigma}=1$ for each $i\in\N_0$, we obtain that for all $k,\,t\in\N$ and $x,\,y\in\ccx$,
	\begin{equation*}
		\begin{aligned}
			\abs[\Bigg]{\prod_{j=1}^{m_{k,t}}f_{i_{j,k,t}}(x)-\prod_{j=1}^{m_{k,t}}f_{i_{j,k,t}}(y)}
			&=\abs[\Bigg]{\sum_{j=1}^{m_{k,t}}\Biggl(\prod_{p=1}^{j-1}f_{i_{p,k,t}}(x)\cdot\prod_{q=j+1}^{m_{k,t}}f_{i_{q,k,t}}(y)\cdot\bigl(f_{i_{j,k,t}}(x)-f_{i_{j,k,t}}(y)\bigr)\Biggr)}\\
			&\leq\sum_{j=1}^{m_{k,t}}\Biggl(\prod_{p=1}^{j-1}\abs[\big]{f_{i_{p,k,t}}(x)}\cdot\prod_{q=j+1}^{m_{k,t}}\abs[\big]{f_{i_{q,k,t}}(y)}\cdot\abs[\big]{f_{i_{j,k,t}}(x)-f_{i_{j,k,t}}(y)}\Biggr)\\
			&\leq 2^{m_{k,t}-1}m_{k,t}\cdot\sigma(x,y).
		\end{aligned}
	\end{equation*}
	By (\ref{eq:expressionforpsij}), $\abs{\psi_k}_{1,\sigma}\leq\sum_{t=1}^{l_k} 2^{m_{k,t}-1}m_{k,t}\abs{q_{k,t}} $ for each $k\in\N$. Hence, by Definition~\ref{def:Algorithm about computable functions}, the function $F\colon\N\rightarrow\R$ given by $F(k) \= \sum_{t=1}^{l_k} 2^{m_{k,t}-1}m_{k,t}\abs{q_{k,t}}$ for each $k\in\N$ is a computable function satisfying $F(k)\geq\abs{\psi_k}_{1,\sigma}$ for each $k\in\N$. Thus, by (\ref{eq:relationbetweensigmametricandvisualmetric}), we have $\psi_k\in C^{0,\beta}\bigl(\ccx,d_v\bigr)$ and $\abs{\psi_k}_{\beta,d_v}\leq C\abs{\psi_k}_{1,\sigma}\leq C F(k)$ for each $k\in\N$. Note that $C\in\Q$ and $F$ is a computable function. It follows from Proposition~\ref{prop:computabilityoftopologypressure} that $\{P(f,\psi_k)\}_{k\in\N}$ is a sequence of uniformly computable real numbers. This completes the proof of the claim.
    \smallskip
	
    By the above claim, the sequence $\{\potential_n\}_{n\in\N}$ satisfies property~(i) in Theorem~\ref{theorem A}. Therefore, by Theorem~\ref{theorem A}, $\{\mu_n\}_{n\in\N}$ is a sequence of uniformly computable measures.
\end{proof}

\subsection{Computable expanding Thurston maps}
\label{subsec:The measures of maximal entropy for computable expanding Thurston maps}

In this subsection, we focus on computable expanding Thurston maps on $\ccx$, i.e., expanding Thurston maps which are computable functions from the computable metric space $\bigl(\ccx,\,\sigma,\,\cS\bigl(\ccx\bigr)\bigr)$ to itself, and investigate the computability of measures of maximal entropy for computable expanding Thurston maps with some additional computability assumptions.

Before the proof of Theorem~\ref{Application B}, we establish the following result.

\begin{lemma}\label{lem:lowersemicomputableopennessofconnectedcomponents}
    Let $(X,\,\rho,\,\cS)$ be a computable metric space in which $X$ is recursively compact. 
    Suppose all balls in $(X,\rho)$ are connected. 
    Assume that $I$ is a recursively enumerable set and that $\{U_i\}_{i\in I}$ is uniformly lower semi-computable open. Then there exists a recursively enumerable set $E\subseteq\N\times I$ and a sequence $\{V_{n,i}\}_{(n,i)\in E}$ of uniformly lower semi-computable open sets that are connected such that $U_i=\bigcup_{(n,i)\in E_i}V_{n,i}$, where $E_i\coloneqq\{(n,i)\in E:n\in\N\}$ for each $i\in I$.
\end{lemma}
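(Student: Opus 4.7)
The plan is to decompose each $U_i$ into its "chain‑connected" pieces by taking unions of finite sequences of overlapping members of the given cover by ideal balls. By Definition~\ref{def:lower semicomputability and uniform version of open set}, there is a recursive function $f\colon\N\times I\mapping\N$ such that $U_i=\bigcup_{n\in\N}D_{f(n,i)}$, where $\{D_l\}_{l\in\N}$ is an effective enumeration of extended ideal balls. For each $(n,i)\in E\=\N\times I$, I would define
\begin{equation*}
    V_{n,i}\=\bigcup\bigl\{D_{f(n_k,i)}:k\in\N_0,\,(n_0,\ldots,n_k)\in\N^{k+1},\,n_0=n,\,D_{f(n_j,i)}\cap D_{f(n_{j+1},i)}\neq\emptyset\text{ for }0\leq j<k\bigr\},
\end{equation*}
so that $V_{n,i}$ collects all points reachable from $D_{f(n,i)}$ via a finite chain of pairwise overlapping ideal balls drawn from the enumeration of $U_i$.

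For connectedness, each valid chain contributes the union $\bigcup_{j=0}^{k}D_{f(n_j,i)}$, which is a chain of connected sets (by the hypothesis that all balls in $(X,\rho)$ are connected) in which consecutive members overlap, hence is connected. When $D_{f(n,i)}\neq\emptyset$, every such chain union contains $D_{f(n,i)}$ as a common subset, so $V_{n,i}$ is a union of connected sets with a common nonempty intersection and is therefore connected; when $D_{f(n,i)}=\emptyset$, no chain of positive length can begin at $n$, so $V_{n,i}=\emptyset$ is trivially connected. The equality $\bigcup_{n\in\N}V_{n,i}=U_i$ is immediate: the trivial chain of length $0$ yields $D_{f(n,i)}\subseteq V_{n,i}\subseteq U_i$. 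For uniform lower semi-computability, the key point is that the overlap relation "$D_m\cap D_{m'}\neq\emptyset$" is semi-decidable in $(m,m')$; once this is in hand, the collection of valid chains forms a recursively enumerable subset of $\N^{*}\times\N\times I$, and $V_{n,i}$ is expressed as a uniformly enumerable union of extended ideal balls indexed by this set, so $\{V_{n,i}\}_{(n,i)\in E}$ is uniformly lower semi-computable open by Proposition~\ref{prop:lower semi-computable open}.

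The main obstacle is the semi-decidability of the overlap condition itself: in a general metric space, two open balls with $\rho(s,s')<r+r'$ need not intersect (as in ultrametric spaces), so no simple inequality test on the parameters suffices. The way out is to exploit density of the ideal set $\cS$: the intersection $B(s,r)\cap B(s',r')$, being open, is non-empty if and only if it contains an ideal point, and "$s_a\in B(s,r)\cap B(s',r')$" amounts to the conjunction of two strict inequalities $\rho(s,s_a)<r$ and $\rho(s',s_a)<r'$ on the uniformly computable distances in $\cS$, each of which is semi-decidable. Enumerating ideal points $s_a$ then yields a semi-decision procedure for overlap of any two enumerated ideal balls, and the remaining bookkeeping---parameterizing the recursively enumerable set of valid chains by a recursive function into $\N$ and collapsing it to an enumeration of the ideal balls comprising $V_{n,i}$---is routine.
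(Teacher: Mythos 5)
Your proof is correct, but it follows a genuinely different route from the paper's. You decompose $U_i$ into chain components of its cover by extended ideal balls: $V_{n,i}$ is the union of all balls $D_{f(m,i)}$ reachable from $D_{f(n,i)}$ through a finite chain of pairwise overlapping balls, connectedness follows because all chain unions share the nonempty set $D_{f(n,i)}$, and uniform lower semi-computability reduces to the semi-decidability of ball overlap, which you correctly obtain from density of $\cS$ (a nonempty open intersection must contain an ideal point, and membership of an ideal point in an ideal ball is a semi-decidable strict inequality between a computable real and a rational). The paper instead takes, for each ideal point $s_n\in U_i$, the single ball $V_{n,i}\=B\bigl(s_n,\rho\bigl(s_n,U_i^c\bigr)\bigr)$: connectedness of balls gives connectedness of each piece directly, the inclusion $V_{n,i}\subseteq U_i$ and the covering property follow from the triangle inequality, and uniform lower semi-computability of the radii $\rho\bigl(s_n,U_i^c\bigr)$ comes from Proposition~\ref{thecomplementofrecursivelycompactset}~(iii)~and~(iv) applied to the uniformly recursively compact complements $U_i^c$, followed by Proposition~\ref{prop:lowersemicomputableraiusball}. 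The paper's construction is shorter and avoids the chain bookkeeping, but it genuinely uses the recursive compactness of $X$; your argument never invokes recursive compactness and so in principle works in any computable metric space with connected balls, which is a mild gain in generality. The one step you should not gloss over entirely is the final ``routine bookkeeping'': to meet Definition~\ref{def:lower semicomputability and uniform version of open set} you must convert the uniformly recursively enumerable set of reachable indices into a total recursive function by padding with the index of the empty extended ideal ball $D_1$ while dovetailing the semi-decision procedures, but this is standard and your outline makes clear you know how to do it.
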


\begin{proof}
    Without loss of generality, we assume that $\bigcup_{i\in I}U_i\neq\emptyset$. Write $\cS = \{s_n\}_{n\in\N}$. Since $\{U_i\}_{i\in I}$ is uniformly lower semi-computable open, by Proposition~\ref{prop:semidecideinopenset}, there exists a nonempty recursively enumerable set $E\subseteq\N\times I$ such that $\{s_n:(n,i)\in E_i\}=\{s_n:n\in\N\}\cap U_i$, where $E_i=\{(n,i)\in E:n\in\N\}$ for each $i\in I$. We define $V_{n,i} \=  B\bigl(s_n,\rho\bigl(s_n,U_i^c\bigr)\bigr)$ for each $(n,i)\in E$. Since all open balls in $(X,\rho)$ are connected, $V_{n,i}$ is connected for each $(n,i)\in E$. 

    Now we fix $i\in I$ and check that $U_i=\bigcup_{(n,i)\in E_i}V_{n,i}$. Indeed, for each $(n,i)\in E_i$, since $B\bigl(s_n,\rho\bigl(s_n,U_i^c\bigr)\bigr)$ is connected and $s_n\in U_i$, we have $B\bigl(s_n,\rho\bigl(s_n,U_i^c\bigr)\bigr)\subseteq U_i$. By the definition of $\{V_{n,i}\}_{(n,i)\in E}$, this implies that $\bigcup_{(n,i)\in E}V_{n,i}\subseteq U_i$.
    Then we establish $U_i\subseteq\bigcup_{(n,i)\in E_i}V_{n,i}$. We consider an arbitrary $x_0\in U_i$, and show that $x_0\in V_{n_0,i}$ for some $n_0\in\N$ with $(n_0,i)\in E_i$. Since $\{s_n:n\in\N\}$ is dense in $X$ and $\{s_n:(n,i)\in E_i\}=\{s_n:n\in\N\}\cap U_i$, there exists $n_0\in\N$ with $(n_0,i)\in E_i$ such that $\rho(s_{n_0},x_0)<\rho\bigl(x_0,U_i^c\bigr)/2$. 
    Thus, we have $x_0\in B\bigl(s_{n_0},\rho\bigl(x_0,U_i^c\bigr)\big/2\bigr)\subseteq B\bigl(s_{n_0},\rho\bigl(x_{0},U_i^c\bigr)-\rho\bigl(x_0,s_{n_0}\bigr)\bigr)\subseteq B\bigl(s_{n_0},\rho\bigl(s_{n_0},U_i^c\bigr)\bigr)=V_{n_0,i}$.
    Therefore, we obtain that $U_i=\bigcup_{(n,i)\in E_i}V_{n,i}$ for each $i\in I$. 
    
    Finally, we show that $\{V_{n,i}\}_{(n,i)\in E}$ is uniformly lower semi-computable open. Indeed, since $X$ is recursively compact, by Proposition~\ref{thecomplementofrecursivelycompactset}~(iii), $\bigl\{U_i^c\bigr\}_{i\in I}$ is uniformly recursively compact. Note that $E$ is a nonempty recursively enumerable set. Then by Proposition~\ref{thecomplementofrecursivelycompactset}~(iv), the sequence $\bigl\{\rho\bigl(s_{n},U_i^c\bigr)\bigr\}_{(n,i)\in E}$ is uniformly lower semi-computable. Note that for each pair $(n,i)\in E$, $V_{n,i} =  B\bigl(s_n,\rho\bigl(s_n,U_i^c\bigr)\bigr)$ by definition. Then by Proposition~\ref{prop:lowersemicomputableraiusball}, $\{V_{n,i}\}_{(n,i)\in E}$ is uniformly lower semi-computable open. 
\end{proof}

To prove Theorem~\ref{Application B}, we need to recall some notions.
Let $f \colon \ccx \rightarrow \ccx$ be an expanding Thurston map and $\mathcal{C} \subseteq \ccx$ be a Jordan curve containing $\post{f}$.
For $n \in \n_0$, we define the set of $n$-tiles as 
\begin{equation*}
    \Tile{n} \coloneq \bigl\{ \overline{A} :  A \text{ is a connected component of } \ccx \smallsetminus f^{-n}(\mathcal{C}) \bigr\}.
\end{equation*}
For $n \in \n_0$ and $v \in f^{-n}(\post{f})$, we define the \emph{closed $n$-flower of $v$} as
\[ 
    \overline{W}^n(v) \define \bigcup \set{X \describe X \in \Tile{n}, \,  v \in X }.
\]

\begin{proof}[\bf Proof of Theorem~\ref{Application B}]
    Denote by $\mu_0$ the unique measure of maximal entropy of $f$ and by $\phi_0$ the constant function which equals $0$ on $\ccx$. 
    Recall that by Proposition~\ref{prop:hatCiscompactmetricspace}, $\bigl(\ccx,\,\sigma,\,\cS\bigl(\ccx\bigr)\bigr)$ is a computable metric space in which $\ccx$ is recursively compact. Denote by $\mathrm{Per}(g)$ the set of periodic points of a map $g\colon \ccx\rightarrow \ccx$. 
    
    Now we show that there exists $k\in\N$ such that $f^{km}(x)=x$ for all $m\in\N$ and $x\in\mathrm{Per}\bigl(f^{km}\bigr)\cap\mathrm{post}\bigl(f^{km}\bigr)$. 
    Indeed, since $\card{\mathrm{Per}(f)\cap\post{f}}<+\infty$, there exists $k\in\N$ such that for all $x\in\mathrm{Per}(f)\cap\post{f}$, we have $f^{k}(x)=x$.
    Thus, since $\mathrm{Per}(f^{n})\cap\mathrm{post}(f^{n})\subseteq\mathrm{Per}(f)\cap\post{f}$ for each $n\in\N$, we have $f^{km}(x)=x$ for all $m\in\N$ and $x\in\mathrm{Per}\bigl(f^{km}\bigr)\cap\post{f^{km}}$.

    Furthermore, by \cite[Theorem~15.1]{bonk2017expanding} we can find $m\in\N$ such that there exists an $f^m$-invariant Jordan curve $\mathcal{C} \subseteq S^{2}$ with $\mathrm{post}\bigl(f^{km}\bigr) = \post{f} \subseteq \mathcal{C}$. 
    We set $F \=  f^{km}$ and fix such a Jordan curve $\mathcal{C}$. 
    Then by the previous discussion, we obtain that $F(\mathcal{C}) \subseteq \mathcal{C}$ and all points in $\mathrm{Per}(F)\cap\post{F}$ are fixed points of $F$.

    \smallskip
    \emph{Claim~1.} Item~(i) in the Assumptions in Subsection~\ref{sub:applications} holds in the case where $(X,\,\rho,\,\cS)\=\bigl(\ccx,\,\sigma,\,\cS\bigl(\ccx\bigr)\bigr)$, 
    $X_n\=\ccx$, $T_n\=F$, $Y_n\=\ccx\smallsetminus F^{-1}(\post{F})$, and $J_n\=\deg{F}\cdot\mathbbm{1}_{\ccx}$ for each $n\in\N$. 

    \smallskip
    
    \emph{Proof of Claim~1.} Indeed, we have previously shown that $\bigl(\ccx,\,\sigma,\,\cS\bigl(\ccx\bigr)\bigr)$ is a computable metric space. 
    Since $f$ is computable, by Definition~\ref{def:Algorithm about computable functions}, $\{T_n\}_{n\in\N}$, with $T_n=F$ for all $n\in\N$, is a sequence of uniformly computable functions. By \cite[Lemma~6.5]{bonk2017expanding}, $F$ is an expanding Thurston map with $\post{F}=\post{f}$. Note that all critical points of $f$ are computable by hypothesis. Then by Definition~\ref{def:Algorithm about computable functions}, all postcritical points of $f$ are computable. Hence, $\post{F}$ is a finite set of computable points. Moreover, $\{J_n\}_{n\in\N}$, with $J_n=\deg{F}\cdot\mathbbm{1}_{\ccx}$ for all $n\in\N$, is a sequence of uniformly computable functions which are nonnegative and Borel.
	
    We begin by constructing a recursively enumerable set $K$ and a sequence $\{Y_{n,k}\}_{(n,k)\in L}$ of uniformly lower semi-computable open sets, where $L=\N\times K$. 
    Now we write $\cS\bigl(\ccx\bigr)=\{s_i\}_{i\in\N}$. Since $\post{F}$ is a finite set of computable points, by Proposition~\ref{thecomplementofrecursivelycompactset}~(i),~(ii),~and~(v), $\ccx\smallsetminus\post{F}$ is a nonempty lower semi-computable open set. By Proposition~\ref{prop:semidecideinopenset}, there exists a recursively enumerable set $I\subseteq\N$ such that $\Q\bigl(\ccx\bigr)\smallsetminus\post{F}=\{s_i:i\in I\}$. We define $r_i\=  \sigma(s_i,\post{F}) > 0$ for each $i\in I$. Thus, by Definition~\ref{def:computability of real}, $\{r_i\}_{i\in I}$ is uniformly computable. Consequently, by Proposition~\ref{prop:lowersemicomputableraiusball}, $\{D_i\}_{i\in I}$ is uniformly lower semi-computable open, where $D_i \= B(s_i,r_i)$ for each $i\in I$. As $F$ is computable, $\bigl\{F^{-1}(D_i)\bigr\}_{i\in I}$ is also uniformly lower semi-computable. By Lemma~\ref{lem:lowersemicomputableopennessofconnectedcomponents}, there exists a recursively enumerable set $K\subseteq\N\times I$ and a sequence $\{V_k\}_{k\in K}$ of uniformly lower semi-computable open sets that are connected such that $\bigcup_{k\in K_i}V_k=F^{-1}(D_i)$ for each $i\in I$. 
    
    Define $Y_{n,k}\=V_k$ for all $n\in\N$ and $k\in K$. Then we check that $Y_{n,k}$ is admissible for $T_n$, and $Y_{n,k}\subseteq X_n$ for each $(n,k)\in L$. By definitions, it suffices to show that $\bigcup_{k\in K}V_k=\ccx\smallsetminus F^{-1}(\post{F})$ and $V_k$ is admissible for $F$ for each $k\in\N$.
    
    First, we verify that $\bigcup_{k\in K}V_k=\ccx\smallsetminus F^{-1}(\post{F})$. 
    Indeed, by $\{s_i:i\in I\}=\Q\bigl(\ccx\bigr)\smallsetminus\post{F}$, $\card{\post{F}}<+\infty$, and the definition of $\{r_i\}_{i\in I}$, we obtain that $\bigcup_{i\in I}D_i=\ccx\smallsetminus\post{F}$. Hence, we have $\bigcup_{k\in K}V_k=\bigcup_{i\in I}\bigcup_{k\in K_i}V_k=\bigcup_{i\in I}F^{-1}(D_i)=F^{-1} \bigl( \bigcup_{i\in I} D_i \bigr)=\ccx\smallsetminus F^{-1}(\post{F})$.
    
    Next, we verify that $V_k$ is admissible for $F$ for each $k\in K$. Now we consider an arbitrary $k=(n,i)\in K$. Since $V_k$ is an open set and $F\colon\ccx\mapping\ccx$ is a Borel-measurable function, by \cite[Corollary~15.2]{kechris2012classical}, it suffices to show that $F$ is injective on $V_k$. Since $F$ is an expanding Thurston map, by \cite[Lemma~A.11]{bonk2017expanding}, we obtain that the restriction $F|_{\ccx\smallsetminus F^{-1}(\post{F})}\colon\ccx\smallsetminus F^{-1}(\post{F})\rightarrow\ccx\smallsetminus\post{F}$ is a covering map. 
    
    Now we demonstrate that the restriction $F|_{F^{-1}(D_i)}$ is a covering map onto $D_i$. We fix an arbitrary $x_0\in D_i$. Since $F$ is a finite map, $F^{-1}(x_0)$ is a finite set, say $\{z_n:n\in[1,b]\cap\N\}$. Note that $D_i\subseteq\ccx\smallsetminus\post{F}$ and $D_i$ is path-connected and locally path-connected. Then by \cite[Lemma~A.6~(ii)]{bonk2017expanding}, for each integer $1\leq n\leq b$, there exists a continuous map $h_n\colon D_i\rightarrow \ccx\smallsetminus F^{-1}(\post{F})$ such that $h_n(x_0)=z_n$ and $F\circ h_n=\mathrm{id}_{D_i}$.
	
    In order to show that for each pair of distinct integers $1\leq l<l^{\prime}\leq b$,    \begin{equation}\label{eq:homeomorphism1}
        h_l(D_i)\cap h_{l^{\prime}}(D_i)=\emptyset,
    \end{equation}
    we argue by contradiction and assume that (\ref{eq:homeomorphism1}) is not true for some pair of distinct integers $1\leq l<l^{\prime}\leq b$. Then there exist $z,\,z^{\prime}\in D_i$ with $h_l(z)=h_{l^{\prime}}(z^{\prime})$. Hence, $z=F(h_l(z))=F(h_{l^{\prime}}(z^{\prime}))=z^{\prime}$. Combined with \cite[Lemma~A.6~(i)]{bonk2017expanding}, this implies that $h_l=h_{l^{\prime}}$, which contradicts $l< l^{\prime}$. Thus, (\ref{eq:homeomorphism1}) is true for each pair of distinct integers $1\leq l<l^{\prime}\leq b$.
    
    Then we show that 
    \begin{equation}\label{eq:homeomorphism2}
        \bigcup_{n=1}^bh_n(D_i)=F^{-1}(D_i).
    \end{equation} 
    Indeed, since $F\circ h_n=\mathrm{id}_{D_i}$ for all integer $1\leq n\leq b$ and $x\in D_i$, we have $h_n(D_i)\subseteq F^{-1}(D_i)$ for each integer $1\leq n\leq b$. Hence, it suffices to show that $F^{-1}(D_i)\subseteq\bigcup_{n=1}^bh_n(D_i)$. We consider an arbitrary $y^{\prime}\in F^{-1}(D_i)$. Then by \cite[Lemma~A.6~(ii)]{bonk2017expanding}, there exists $h\colon D_i\rightarrow\ccx\smallsetminus F^{-1}(\post{F})$ such that $h(F(y^{\prime}))=y^{\prime}$ and $F\circ h=\mathrm{id}_{D_i}$. Note that $F^{-1}(x_0)=\{z_n:n\in[1,b]\cap\N\}$ and $F(h(x_0))=x_0$. Then there exists an integer $n_0\in [1,b]$ satisfying that $h_{n_0}(x_0)=z_{n_0}=h(x_0)$. Combined with \cite[Lemma~A.6~(i)]{bonk2017expanding}, this implies that $h_{n_0}=h$. Thus, we have $y^{\prime}=h(F(y^{\prime}))\subseteq h(D_i)=h_{n_0}(D_i)\subseteq\bigcup_{n=1}^bh_n(D_i)$. Hence, we complete the proof of (\ref{eq:homeomorphism2}).
    
    Since $h_n$ is continuous for each integer $1\leq n\leq b$, by (\ref{eq:homeomorphism1}) and (\ref{eq:homeomorphism2}), we have shown that the restriction $F|_{F^{-1}(D_i)}$ is a covering map onto $D_i$.
    
    Recall that $D_i$ is an open ball in $\ccx$. Hence, since $\ccx$ is locally path-connected, all connected components of $F^{-1}(D_i)$ are always path-connected. Thus, since $D_i$ is a simply connected open set, for each connected component $V$ of $F^{-1}(D_i)$, the restriction $F|_{V}$ is a homeomorphism onto $D_i$. Note that $V_k$ is connected and $V_k\subseteq F^{-1}(D_i)$. Then there exists a connected component $V$ of $F^{-1}(D_i)$ with $V\supseteq V_k$. Thus, $F$ is injective on $V_{k}$.
    
    So far we have shown Claim~1.



    \smallskip
    
    We can enumerate $\mathrm{Per}(F)\cap\post{F}$ as a sequence $\{y_i\}_{i=1}^N$ of uniformly computable (mutually distinct) points, since it is a finite set of computable points. 
    
    We now construct a sequence $\{\mathbb{B}_i\}_{i=1}^N$ of uniformly lower semi-computable open sets satisfying that $\{y_i\}\subseteq \mathbb{B}_i\subseteq \cflower{1}{y_i}$ for each integer $1\leq i\leq N$. 
    Fix an arbitrary integer $1\leq i\leq N$. By \cite[Lemma~5.28~(i)]{bonk2017expanding}, there exists $R_i\in\Q^+$ such that $B(y_i,R_i)\subseteq\cflower{1}{y_i}$.
    Since $y_i$ is a computable point, by Proposition~\ref{prop:lowersemicomputableraiusball}, $\mathbb{B}_i \=  B(y_i,R_i)$ is a lower semi-computable open set with $\{y_i\}\subseteq \mathbb{B}_i\subseteq \cflower{1}{y_i}$. Hence, by Definition~\ref{def:lower semicomputability and uniform version of open set}, $\{\mathbb{B}_i\}_{i=1}^N$ is uniformly lower semi-computable open.
    
    \smallskip
    \emph{Claim~2.} Items~(ii)~to~(v) in the Assumptions in Subsection~\ref{sub:applications} hold in the case where $(X,\,\rho,\,\cS)\=\bigl(\ccx,\,\sigma,\,\cS\bigl(\ccx\bigr)\bigr)$, $X_n=C_n\=\ccx,\,T_n\=F,\,Y_n\=\ccx\smallsetminus F^{-1}(\post{F}),\,\phi_n\=\phi_0,\,J_n\=\deg{F}\cdot\mathbbm{1}_{\ccx}$, $B_n\=\mathrm{Per}(F)\cap\post{F}$ for each $n\in\N$, $I\=\N\cap [1,N]$, $H\=\N\times I$, and $p_{n,i}\=y_i$ for each $(n,i)\in H$.
    \smallskip

    \emph{Proof of Claim~2.}
    Now we verify items~(ii)~to~(v) in the Assumptions in Subsection~\ref{sub:applications}.
    
    Item~(ii): Since $\phi_0$ is a constant function, this assumption holds.
    
    Item~(iv): Note that $\ccx$ is recursively compact in $(X,\,\rho,\,\cS)$ by Proposition~\ref{prop:hatCiscompactmetricspace}. Hence, by definitions, it suffices to show that $\card[\big]{F^{-1}(\post{F})}<+\infty$ and $\cE_0\bigl(F,\phi_0;\ccx\smallsetminus F^{-1}(\post{F})\bigr)\cap\cP\bigl(X;\ccx\smallsetminus F^{-1}(\post{F})\bigr)=\{\mu_0\}$. By Theorem~\ref{thm:properties of equilibrium state} and \cite[Corollary~17.3]{bonk2017expanding}, we have $\cE(F,\phi_0)=\cE(f,\phi_0)=\{\mu_0\}$. 
    Moreover, it follows from \cite[Corollary~7.4]{li2018equilibrium} that $\mu_0\bigl(F^{-1}(\post{F})\bigr)=\mu_0(\post{F})=0$. Combined with (19.5) in \cite{bonk2017expanding} and (\ref{eq:defofspecialequilibriumstate}), this implies that $\cE_0\bigl(F,\phi_0;\ccx\smallsetminus F^{-1}(\post{F})\bigr)=\{\mu_0\}$. 
    
    Item~(iii): Since $F$ is an expanding Thurston map, by \cite[Corollary~17.2]{bonk2017expanding}, we have $h_{\mathrm{top}}(F)=\log(\deg{F})$. Suppose $h=\phi_0$. Then property~(i) in Theorem~\ref{thm:prescribed Jacobian of equilibrium state} is satisfied in the case where $T\=F,\,Y\=\ccx\smallsetminus F^{-1}(\post{F}),\,J\=\deg{F}\cdot\mathbbm{1}_{\ccx}$, and $\phi\=\phi_0$.
    Moreover, since $F$ is an expanding Thurston map, we have $\card[\big]{F^{-1}(x)\smallsetminus F^{-1}(\post{F})}=\deg{F}$ for each $x\in F\bigl(\ccx\smallsetminus F^{-1}(\post{F})\bigr)$. Hence, property~(ii) in Theorem~\ref{thm:prescribed Jacobian of equilibrium state} is satisfied in the case where $T\=F,\,Y\=\ccx\smallsetminus F^{-1}(\post{F})$, and $J\=\deg{F}\cdot\mathbbm{1}_{\ccx}$. This verifies Item~(iii) in the Assumptions.
    
    Item~(v): Now we demonstrate that $\mathrm{Per}(F)\cap\post{F}$ is the set of periodic points of $F$ in $F^{-1}(\post{F})$, namely, $\mathrm{Per}(F)\cap\post{F}=\mathrm{Per}(F)\cap F^{-1}(\post{F})$. Indeed, this immediately follows from the property that $F(x)=x$ for each $x\in\mathrm{Per}(F)\cap\post{F}$.
    Since $\{y_i\}_{i\in I}$ is uniformly computable, $\{p_{n,i}\}_{(n,i)\in H}$, with $p_{n,i}=y_i$ for all $(n,i)\in H$, is also uniformly computable. Moreover, by the definition of $\{y_i\}_{i=1}^N$, we have that $\{p_{n,i}:(n,i)\in H_n\}=\{y_i:i\in I\}=\mathrm{Per}(F)\cap\post{F}$. 
    Therefore, Claim~2 follows.

    \smallskip
    \emph{Claim~3.} The additional statement in Theorem~\ref{thm:additionalassumptionone} is true in the case where $(X,\,\rho,\,\cS)\=\bigl(\ccx,\,\sigma,\,\cS\bigl(\ccx\bigr)\bigr),\,I\=\N\cap[1,N],\,H\=\N\times I,\,\mu_n\=\mu_0$, and $p_{n,i}\=y_i$ for each $(n,i)\in H$.
    \smallskip

    \emph{Proof of Claim~3.} Recall that $\{\mathbb{B}_i\}_{i=1}^N$ is uniformly lower semi-computable open and $F$ is a computable function. Define $U_{n,i,1}\=\mathbb{B}_i$ and $U_{n,i,k+1}\=F^{-1}(U_{n,i,k})\cap \mathbb{B}_i$ recursively for each $n,\,k\in\N$, and each $i\in I$. Then by Corollary~\ref{cor:preimageisloweropen}, we have $\{U_{n,i,k}\}_{((n,i),k)\in H\times\N}$ is uniformly lower semi-computable open. Recall that $y_i$ is a fixed point of $F$ and $y_i\in\mathbb{B}_i$ for each $i\in I$. Then we obtain that $y_i\in U_{n,i,k}$ for all $n,\,k\in\N$, and $i\in I$.
    
    Next, we set $t_{n,i,k} \=(\deg_{F}(y_i)/\deg{F})^{k-1}$ for each $n,\,k\in\N$, and each $i\in I$. Then by \cite[Lemma~4.27]{li2017ergodic}, $\deg_{F}(y_i)<\deg{F}$ for each $i\in I$. Thus, for all $n\in\N$ and $i\in I$, we obtain that $\inf_{k\in\N}t_{n,i,k}=\inf_{k\in\N}(\deg_{F}(y_i)/\deg{F})^{k}=0$. Moreover, since $\deg_{F}(y_i)/\deg{F}\in\Q$ for each $i\in I$, we have that $\{t_{n,i,k}\}_{((n,i),k)\in H\times\N}$ is a sequence of uniformly computable real numbers. 
    
    Finally, we consider an arbitrary pair of $n\in\N$ and $i\in I$ and prove that $\mu_0(U_{n,i,k})\leq t_{n,i,k}$ for each $k\in\N$. Recall that $y_i \in \post{F}$ and $F(y_i) = y_i$. 
    For each $m\in\N$, it follows from \cite[Lemma~5.28~(ii)]{bonk2017expanding} that $\card[\big]{\cflower{m}{y_i}} = 2  (\deg_{F}(y_i))^{m}$. 
    In particular, \cite[Proposition~5.16]{bonk2017expanding} implies that $F \bigl(\cflower{m+1}{y_i}\bigr) = \cflower{m}{y_i}$ for each $m\in\N$.
    Employing \cite[Proposition~17.12]{bonk2017expanding} and \cite[Proposition~7.1]{li2018equilibrium} (recall that the Jordan curve $\mathcal{C}$ is $F$-invariant), we deduce that $\mu_0\bigl(\cflower{m+1}{y_i}\bigr) = \mu_0\bigl( \cflower{m}{y_i} \bigr) \cdot(\deg_{F}(y_i) / \deg{F})$ for each $ m\in\N$. 
    Then it follows from the induction that $\mu_0\bigl(\cflower{m+1}{y_i}\bigr)=(\deg_{F}(y_i)/\deg{F})^{m}\mu_0\bigl(\cflower{1}{y_i}\bigr)\leq(\deg_{F}(y_i)/\deg{F})^{m}$ for each $m\in\N$. 
    Since $\mathbb{B}_i\subseteq \cflower{1}{y_i}$, by the definition of $\{U_{n,i,k}\}_{k\in\N}$, $U_{n,i,k}\subseteq \cflower{k}{y_i}$ for each $k\in\N$. Hence, $\mu_0(U_{n,i,k})\leq\mu_0\bigl(\cflower{k}{y_i}\bigr)\leq(\deg_{F}(y_i)/\deg{F})^{k-1}=t_{n,i,k}$ for each $k\in\N$.
    Therefore, we have shown Claim~3.

    \smallskip

    We are now in a position to complete the proof of Theorem~\ref{Application B}.
    By Claims~1, 2, and~3, it follows from Theorem~\ref{thm:additionalassumptionone} that the constant sequence $\{\mu_n\}_{n\in\N}$, defined by $\mu_n=\mu_0$ for all $n\in\N$, is a sequence of uniformly computable measures. 
    Thus, $\mu_0$ is computable.
\end{proof}

As concrete applications of Theorem~\ref{Application B}, we analyze two expanding Thurston maps $g_1$ and $g_2$ with periodic critical points. 
We verify that both maps satisfy the hypotheses of Theorem~\ref{Application B}, thereby establishing the computability of their measures of maximal entropy $\mu_1$ and $\mu_2$ (Corollary~\ref{cor:computabilityofmeasureofmaximalentropyforbarycentricg1}). 
Crucially, since both maps have periodic critical points, their measure-theoretic entropy functions fail to be upper semicontinuous (Theorem~\ref{thm:upper semicontinuous iff no periodic critical points}), precluding a direct application of Theorem~\ref{theorem A}.

Our first example $g_1$ is derived from the barycentric subdivision rule as defined in \cite[Example~12.21]{bonk2017expanding}.
We construct a sphere $S_{\triangle}$ by gluing two equilateral (Euclidean) triangles along their boundaries (see Figure~\ref{fig:example:Thurston map from barycentric subdivision}).
These two triangles serve as $0$-tiles. 
Subdividing each $0$-tile via bisectors yields six smaller triangles, producing twelve $1$-tiles.
Recursively, $n$-tiles are generated by analogously subdividing $(n-1)$-tiles via bisectors, and are Euclidean triangles.
The map $g_1 \colon S_{\triangle} \mapping S_{\triangle}$ is defined as a piecewise linear map on $S_{\triangle}$ in the following way: the orientation-preserving branched covering map $g_1$ is affine on each $1$-tile and maps the $1$-tile linearly onto a $0$-tile in a way such that $g_1$ is continuous on $S_{\triangle}$.
Then $g_1$ has a fixed critical point, and the shared boundary of $0$-tiles forming a $g_1$-invariant Jordan curve containing the postcritical set $\{A,\,B,\,C\}$ of $g_1$. 
It is not difficult to see that the diameters of $n$-tiles decay to zero as $n$ tends to $+\infty$, confirming that $g_1$ is an expanding Thurston map.

\begin{figure}[H]
	\vspace*{.2cm}
	\centering
	\begin{overpic}
		[width=13cm, tics=20]{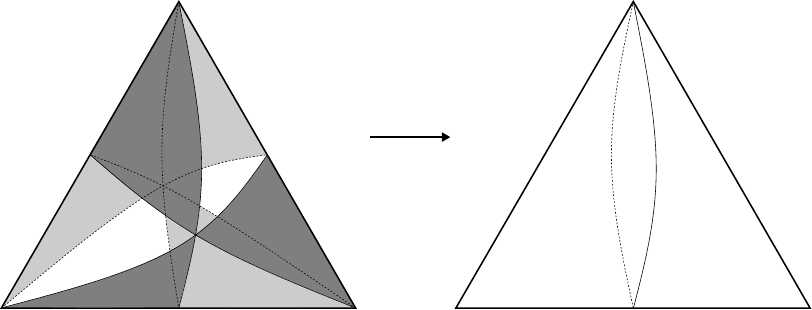}
		\put(50,23){$g_1$}
		\put(94,20){$S_{\triangle}$}
		
		\put(21,38.7){$A$}
		\put(-2.5,-2.5){$B$}
		\put(44,-2.5){$C$}
		\put(21,-2.5){$D$}
		\put(33.8,18.5){$E$}
		\put(8.6,18.4){$F$}
		\put(23.8,6){$G$}
		\put(17.4,16.7){$H$}

		\put(77,39){$A$}
		\put(53.5,-2.5){$B$}
		\put(100,-2.5){$C$}
		
	\end{overpic}
	\vspace*{.2cm}
	\caption{An expanding Thurston map from the barycentric subdivision rule.}
	\label{fig:example:Thurston map from barycentric subdivision}
\end{figure}

To further demonstrate the scope of Theorem~\ref{Application B} while illustrating the complexity of expanding Thurston maps, we construct a more intricate example $g_2$. 
The map $g_2$ acts on the same polyhedral sphere $S_{\triangle}$ with the postcritical set $\{A,\,B,\,C\}$.
While $g_1$ subdivides each $0$-tile into six congruent smaller triangles, $g_2$ implements a finer subdivision rule: each $0$-tile splits into eight triangles following the structure illustrated in Figure~\ref{fig:example:Thurston map with periodic critical points}.
Similarly, the orientation-preserving branched covering map $g_2 \colon S_{\triangle} \mapping S_{\triangle}$ is defined as a piecewise linear map on $S_{\triangle}$ that maps each $1$-tile onto a $0$-tile linearly.
The map $g_2$ also has a fixed critical point.
Recursive iteration of this subdivision generates $n$-tiles whose diameters decrease to zero as $n$ tends to $+\infty$, showing that $g_2$ is an expanding Thurston map.

Now we apply Theorem~\ref{Application B} to show that their measures of maximal entropy $\mu_1$ and $\mu_2$ are computable. 

We first define the topology of these two equilateral triangles by embedding them into $\R^2$. 
Let $d_{\triangle}$ denote the geodesic metric on $S_{\triangle}$, and let $\cS(S_{\triangle})$ be an effective enumeration of the set $\Q(S_{\triangle})$ of points in $S_{\triangle}$ with rational coordinates. 
Moreover, all marked points in Figure~\ref{fig:example:Thurston map from barycentric subdivision}, specifically $A,\,B,\,C,\,D,\,E,\,F,\,G,\,H$, are computable (recall Definition~\ref{def:computability and uniform computability of point}). 
Since $g_1$ acts as an affine map on each $1$-tile, we may compute $g_1(x)$ for each point $x$ within a $1$-tile by applying the corresponding matrix transformation. 
By Definition~\ref{def:Algorithm about computable functions}, this ensures that $g_1\colon S^2\rightarrow S^2$ is computable. 

We next construct a homeomorphism $\tau\colon S^2\rightarrow\ccx$ which satisfies that $\tau$ and $\tau^{-1}$ are both computable between computable metric spaces $(S_{\triangle},\,d_{\triangle},\,\cS(S_{\triangle}))$ and $\bigl(\ccx,\,\sigma,\,\cS\bigl(\ccx\bigr)\bigr)$.
Here we identify $\ccx$ with the unit sphere in $\R^3$ via stereographic projection, so that the chordal metric $\sigma$ on $\ccx$ is the restriction of the Euclidean metric in $\R^3$. 
Indeed, we consider the map $\tau$ which maps the equator of $S_{\triangle}$ (the shared boundary of the equilateral triangles) to the equator of $\ccx$. 
More precisely, $\tau$ maps the points $A,\,B,\,C$ to the three equally spaced points along the equator of $\ccx$ and maps the points $H,\,G$ to the south pole and the north pole of $\ccx$, respectively. 
For the rest part of $S_{\triangle}$, the map $\tau$ can be defined by subdivision rule. It is not hard to see that $\tau$ and $\tau^{-1}$ are both computable.

Thus, in the computable metric space $\bigl(\ccx,\,\sigma,\,\cS\bigl(\ccx\bigr)\bigr)$, the map $\widetilde{g}_1 \= \tau\circ g_1\circ\tau^{-1}\colon\ccx\rightarrow\ccx$ is computable and all points in $\crit{\widetilde{g}_1}=\tau(\crit{g_1})$ are computable. 
So far we have verified all conditions in Theorem~\ref{Application B} in the case where $f\=\widetilde{g}_1$. Then by Theorem~\ref{Application B}, the measure of maximal entropy $\widetilde{\mu}_1$ for the map $\widetilde{g}_1$ is computable. Note that $\mu_1(A)=\widetilde{\mu}_1(\tau(A))$ for each $A\in\cB(S_{\triangle})$. 
Then the computability of $\tau^{-1}$ and $\widetilde{\mu}_1$ implies that $\mu_1$ is computable.

Similarly, we can verify all conditions in Theorem~\ref{Application B} in the case where $f\=\widetilde{g}_2$. 
Therefore, by a proof that is verbatim the same as the above, except for replacing $g_1,\,\widetilde{g}_1,\,\mu_1,\,\widetilde{\mu}_1$ with $g_2,\,\widetilde{g}_2,\,\mu_2,\,\widetilde{\mu}_2$, it follows that $\mu_2$ is computable.

\begin{figure}[H]
	\vspace*{.2cm}
	\centering
	\begin{overpic}
		[width=13cm, tics=20]{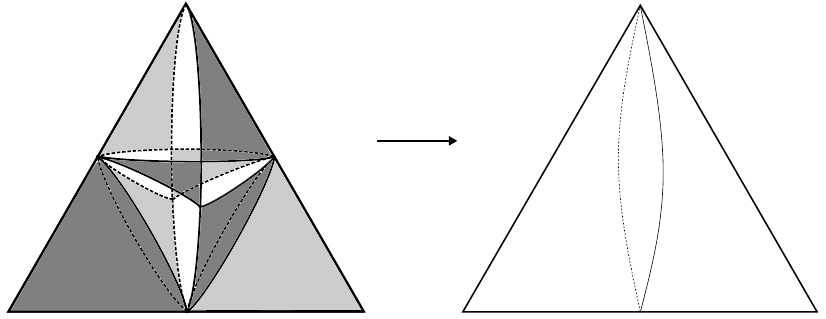}
		\put(50,23){$g_2$}
		\put(94,20){$S_{\triangle}$}
		
		\put(21.5,39){$A$}
		\put(-2,-2){$B$}
		\put(44.5,-2){$C$}
		\put(21.5,-2){$D$}
		\put(34,19.2){$E$}
		\put(8.9,19.2){$F$}
		\put(25.2,19.7){$G$}
		\put(17.8,21.2){$H$}
		\put(25,11.2){$I$}
		\put(18.4,12){$J$}
		
		\put(77,39){$A$}
		\put(53.5,-2.5){$B$}
		\put(100,-2.5){$C$}
		
	\end{overpic}
	\vspace*{.2cm}
	\caption{Expanding Thurston map $g_2$.}
	\label{fig:example:Thurston map with periodic critical points}
\end{figure}


\begin{cor}\label{cor:computabilityofmeasureofmaximalentropyforbarycentricg1}
     For each $i\in\{1,\,2\}$, the measure of maximal entropy of the expanding Thurston map $g_i$ is computable.
\end{cor}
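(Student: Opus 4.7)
The plan is to reduce the corollary to Theorem~\ref{Application B} by transferring each $g_i$ to a dynamically conjugate map on the Riemann sphere, and then to push the resulting computable measure back to $S_{\triangle}$ via the conjugating homeomorphism. The entire argument is already sketched immediately before the statement, so the proof proposal consists mainly of organizing these steps into a coherent unit and checking the computability assumptions demanded by Theorem~\ref{Application B}.

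First, I would equip $S_{\triangle}$ with a computable metric structure. Embedding each equilateral triangle in $\R^2$ via rational vertices and using the induced geodesic metric $d_{\triangle}$, the triple $(S_{\triangle},\,d_{\triangle},\,\cS(S_{\triangle}))$ becomes a computable metric space in which $S_{\triangle}$ is recursively compact, and every distinguished vertex marked in Figures~\ref{fig:example:Thurston map from barycentric subdivision} and~\ref{fig:example:Thurston map with periodic critical points} is a computable point. Since $g_i$ is piecewise affine with respect to the $1$-tile decomposition, and the collection of $1$-tiles and the corresponding affine pieces can be enumerated algorithmically with rational data, Definition~\ref{def:Algorithm about computable functions} implies that $g_i\colon S_{\triangle}\rightarrow S_{\triangle}$ is computable. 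In particular, the finite critical set $\crit{g_i}$ consists of computable points.

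Next, I would construct a homeomorphism $\tau\colon S_{\triangle}\rightarrow\ccx$ such that both $\tau$ and $\tau^{-1}$ are computable between $(S_{\triangle},\,d_{\triangle},\,\cS(S_{\triangle}))$ and $\bigl(\ccx,\,\sigma,\,\cS\bigl(\ccx\bigr)\bigr)$. As indicated above the corollary, one first maps the common equator of $S_{\triangle}$ to the equator of $\ccx\subseteq\R^3$, sending $A,\,B,\,C$ to three equally spaced rational points and sending the two ``polar'' vertices of the $1$-tile decomposition to the north and south poles. The remaining part of $\tau$ is built level by level on the subdivision, with each piecewise definition being affine between triangles whose vertices are algebraic points on $\ccx$; both $\tau$ and $\tau^{-1}$ are then computable. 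Setting $\widetilde{g}_i\=\tau\circ g_i\circ\tau^{-1}$, the map $\widetilde{g}_i\colon\ccx\rightarrow\ccx$ is a computable expanding Thurston map on $\bigl(\ccx,\,\sigma,\,\cS\bigl(\ccx\bigr)\bigr)$ whose critical set $\crit{\widetilde{g}_i}=\tau(\crit{g_i})$ consists of computable points.

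Theorem~\ref{Application B} then applies directly to $\widetilde{g}_i$, yielding that its unique measure of maximal entropy $\widetilde{\mu}_i$ is a computable measure in $(\cP(\ccx),\,W_{\sigma},\,\cQ_{\cS(\ccx)})$. Because $\widetilde{g}_i$ is topologically conjugate to $g_i$ via $\tau$, the measure of maximal entropy of $g_i$ is $\mu_i=\widetilde{\mu}_i\circ\tau$, i.e.\ $\mu_i(A)=\widetilde{\mu}_i(\tau(A))$ for each $A\in\cB(S_{\triangle})$. Since $\tau^{-1}$ is computable, the pushforward map on measures is computable (Proposition~\ref{prop:computability of integral operator} applied to test functions $u\circ\tau$), so $\mu_i$ is computable. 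The main obstacle, and the only point requiring any genuine care, is the construction of the computable conjugacy $\tau$; once $\tau$ and $\tau^{-1}$ are both established as computable maps between the two computable metric spaces, everything else follows mechanically from Theorem~\ref{Application B} and the computability of pushforwards.
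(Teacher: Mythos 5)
Your proposal is correct and follows essentially the same route as the paper: give $S_{\triangle}$ a computable metric structure, verify that the piecewise affine map $g_i$ and its critical points are computable, build a computable conjugacy $\tau$ with computable inverse to transfer the dynamics to $\bigl(\ccx,\,\sigma,\,\cS\bigl(\ccx\bigr)\bigr)$, apply Theorem~\ref{Application B} to $\widetilde{g}_i=\tau\circ g_i\circ\tau^{-1}$, and push the computable measure $\widetilde{\mu}_i$ back through $\tau$. The paper's own argument is exactly this discussion placed immediately before the corollary, so there is nothing to add.
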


\end{document}